\numberwithin{equation}{section}
\def\C{{\mathbb{C}}}
\def\be{{\beta}}
\def\eps{\epsilon}
\def\si{\sigma}
\def\varep{\varepsilon}
\def\al{\alpha}
\def\sgn{\mathrm{sgn}}
\def\curl{{\,\mbox{curl}\,}}
\def\wt{\widetilde}
\def\bar{\overline}
\def\R{{\mathbb R}}
\def\e{{\varepsilon}}
\def\H{{\mathbb H}}
\def\R{{\bf R}}
\def\Z{{\mathbb Z}}
\def\bar{\overline}
\def\what{\widehat}
\def\R{\mathbb{R}}
\def\T{{\mathbb T}}
\definecolor{bluegreen}{rgb}{0.0, 0.3, 0.9}
\newtheorem{theorem}{Theorem}[section]
\newtheorem{lemma}[theorem]{Lemma}
\newtheorem{proposition}[theorem]{Proposition}
\newtheorem{definition}[theorem]{Definition}
\newtheorem{remark}[theorem]{Remark}
\begin{document}


\author{Yu Deng}
\address{University of Southern California}
\email{yudeng@usc.edu}

\author{Alexandru D. Ionescu}
\address{Princeton University}
\email{aionescu@math.princeton.edu}

\author{Fabio Pusateri}
\address{University of Toronto}
\email{fabiop@math.toronto.edu}

\thanks{
\noindent
Y. D. was supported in part by NSF DMS-1900251 and a Sloan Fellowship.
A. D. I. was supported in part by NSF grant DMS-2007008 and a Simons Collaboration Grant on Weak Turbulence.
F. P. was supported in part by a start-up grant from the University of Toronto, 
and NSERC grant RGPIN-2018-06487. 
}

\title[Wave turbulence theory of gravity water waves in 2D, I]{On the wave turbulence theory of 2D gravity waves, I:\\
deterministic energy estimates}

\keywords{water waves, periodic solutions, long-term regularity, large box limit}

\begin{abstract}
Our goal in this paper is to initiate the rigorous investigation of wave turbulence and derivation of wave kinetic equations (WKE) 
for water waves models. This problem has received intense attention 
in recent years in the context of semilinear models, such as 
Schr\"odinger equations \cite{BGHS1, BGHS2, ColGer1, ColGer2, DeHa1, DeHa2, DeHa3} 
or multi-dimensional KdV-type equations \cite{StafTran, XMa}. 
However, our situation here is different since the water waves equations are quasilinear and 
solutions cannot be constructed by iteration of the Duhamel formula due to unavoidable derivative loss. 

This is the first of two papers in which we design 
a new strategy to address this issue. 
We investigate solutions of the gravity water waves system in two dimensions.
In the irrotational case this system can be reduced to an evolution equation on the one-dimensional
interface, which is a large torus $\T_R$ of size $R\geq 1$. 

Our first main result is a deterministic  
energy inequality which provides control of (possibly large) Sobolev norms of solutions
for long times, under the condition that a certain $L^\infty$-type norm is small. 
This energy inequality is of ``quintic'' type: if the $L^\infty$ norm is $O(\varepsilon)$, 
then the increment of the high order energies is controlled for times of the order $\varepsilon^{-3}$.
This is consistent with the approximate quartic integrability of the system, 
as proved recently in \cite{BeFePu,WuNew}. 
In the second paper in this sequence we will show how to use this
energy estimate and a propagation of randomness argument to prove a 
probabilistic regularity result up to times of the order $\e^{-3} R^{3/2-}$, 
in a suitable scaling regime relating $\e \rightarrow 0$ and $R \rightarrow \infty$. 

For our second main result we combine the quintic energy inequality with a bootstrap argument 
using a suitable $Z$-norm of Strichartz-type
to prove that deterministic solutions with Sobolev data of size $\e \ll 1$ 
are regular for times of the order $\e^{-3} \min \big( \e^{-3}, R^{3/4} \big)$. 
In particular, on the real line
solutions exist for times of order $O(\varepsilon^{-6})$. 
This improves substantially on the earlier extended lifespan results 
with times of order $O(\varepsilon^{-2})$ (proved in \cite{WuAG, IoPu2, ADb, HIT}) 
or of order $O(\varepsilon^{-3})$ (proved recently in \cite{WuNew}) or of order $O(\varepsilon^{-4})$ (proved recently in \cite{Zheng22}).
\end{abstract}

\maketitle

\setcounter{tocdepth}{1}

\begin{quote}
\tableofcontents
\end{quote}

\medskip
\section{Introduction}

\subsection{Free boundary Euler equations and water waves} 
The evolution of an inviscid perfect fluid that occupies a domain $\Omega_t \subset \R^n$, for $n \geq 2$,
at time $t$,
is described by the free boundary incompressible Euler equations.
If $v$ and $p$ denote the velocity and the pressure of the fluid (with constant density equal to $1$)
at time $t$ and position $x \in \Omega_t$, these equations are
\begin{equation}\label{E}
(\partial_t + v \cdot \nabla) v = - \nabla p - g e_n,\qquad \nabla \cdot v = 0,\qquad x \in \Omega_t,
\end{equation}
where $g$ is the gravitational constant. The first equation in \eqref{E} is the conservation of momentum equation, while the second is the incompressibility condition.
The free surface $S_t := \partial \Omega_t$ moves with the normal component of the velocity according to the kinematic boun\-dary condition
 \begin{equation}\label{BC1}
\partial_t + v \cdot \nabla  \,\, \mbox{is tangent to} \,\, {\bigcup}_t S_t \subset \R^{n+1}_{x,t}.
\end{equation}
The pressure on the interface is given by
\begin{equation}
\label{BC2}
p (x,t) = \sigma \kappa(x,t),  \qquad x \in S_t,
\end{equation}
where $\kappa$ is the mean-curvature of $S_t$ and $\sigma \geq 0$ is the surface tension coefficient. 
In this paper we consider the {\it{gravity water waves system}}, 
which corresponds to the case when the surface tension coefficient $\sigma$ 
is equal to $0$ and, without loss of generality, the gravity coefficient $g$ is equal to $1$.

The free boundary Euler equations \eqref{E}-\eqref{BC2} can be analyzed in various 
types of domains $\Omega_t$ (bounded, periodic, unbounded), for fluids with different 
characteristics (with or without vorticity, with gravity and/or surface tension),
or even more complicated scenarios where the moving interface separates two fluids. 
In this paper we consider fluid domains described
as regions in the plane below the graph of function over $\T_R^d :=[\R/(2\pi R\Z)]^d$, $R\geq 1$.
We restrict our attention to irrotational flows, i.e. $\rm{\curl} v = 0$, 
in which case one can reduce \eqref{E}-\eqref{BC2} to a system on the boundary.
More precisely let $h=h(x,t):\T_R^d \times I \rightarrow \R$, and let
\begin{align*}
\Omega_t = \{ (x,y) \in \T_R^d\times \R \, : y \leq h(x,t) \} \quad \mbox{and} \quad S_t = \{ (x,y) : y = h(x,t) \}.
\end{align*}
Let $\Phi$ denote the velocity potential, $\nabla_{x,y} \Phi(x,y,t) = v (x,y,t)$ for $(x,y) \in \Omega_t$.
If $\phi(x,t) := \Phi (x, h(x,t),t)$ is the restriction of $\Phi$ to the boundary $S_t$,
the equations of motion reduce to a system of equations for the functions 
$h, \phi : \T_R^d\times I \rightarrow \R$:
\begin{equation}\label{gWW}
\begin{cases}
&\partial_t h = G(h) \phi,
\\
&\partial_t \phi = -h - \dfrac{1}{2} |\nabla_x\phi|^2 + \dfrac{{\left( G(h)\phi
  + \nabla_xh\cdot\nabla_x\phi\right)}^2}{2(1+|\nabla_xh|^2)}.
\end{cases}
\end{equation}
Here
\begin{equation}
\label{defG0}
G(h) := \sqrt{1+|\nabla_xh|^2} \mathcal{N}(h),
\end{equation}
and $\mathcal{N}(h)$ is the Dirichlet-Neumann map associated to the domain $\Omega_t$.

The system \eqref{gWW} admits the conserved Hamiltonian
\begin{equation}\label{gWWHam}
\mathcal{H}(h,\phi) := \frac{1}{2} \int_{\T_R^d} G(h)\phi \cdot \phi \, dx + \frac{1}{2} \int_{\T_R^d} h^2 \, dx,
\end{equation}
which is the sum of the kinetic energy corresponding to the $L^2$ norm of the velocity 
field and the potential energy due to gravity. We refer to  \cite[chap. 11]{SulemBook} 
or the book of Lannes \cite{LannesBook} for the derivation of the system \eqref{gWW}. 
It was first observed by Zakharov \cite{Zak0} that \eqref{gWW} is the Hamiltonian flow associated to \eqref{gWWHam}.

The local well-posedness theory of water waves systems is well-understood in a variety of different scenarios, due to the contributions of many authors.
Without being exhaustive we mention \cite{Wu1,Wu2,CL,Lannes,CS,ShZ3,ABZ1} 
and refer the reader to the survey \cite[Section $2$]{IoPuRev} or to the book of Lannes \cite[Chapter 4]{LannesBook}
for extensive discussions concerning local well-posedness results for general water waves systems, and a more complete list of references. 
In short, for sufficiently regular Sobolev initial data classical smooth solutions exist
on a (small) time interval that depends on the size of the initial data 
and the arc-chord constant of the initial interface. 

\subsection{Motivation and main ideas}

Our work is motivated by the outstanding problem of understanding 
rigorously the wave turbulence theory of water waves.
This is a classical problem in Mathematical Physics, going back to work of Peierls 
\cite {Pei} and Hasselmann \cite{Has1, Has2}; 
we refer the reader to the recent book \cite{Naz} 
for a more complete description of the problem and many examples.

The wave turbulence problem has received intense attention in recent years, mainly in the context of 
dispersive models such as semilinear Schr\"{o}dinger equations;
see for example \cite{BGHS1, BGHS2, ColGer1, ColGer2, JinMa, DeHa1, DeHa2, DeHa3, StafTran}. 
The focus in these papers has been the rigorous derivation of the {\it{wave kinetic equation}} (WKE).
At the analytical level, this amounts 
to understanding precisely the dynamics of solutions corresponding to randomized initial data, 
in the large box and weakly nonlinear limit, and on long time intervals.

Our goal here is to initiate a similar investigation for irrotational water waves models. 
There is a fundamental difference between water waves, which are quasilinear evolutions, 
and semilinear models: the solution cannot be constructed using the Duhamel formula,
due to unavoidable loss of derivatives. In particular, the solution cannot be described using 
iterations of the Duhamel formula (Feynman trees), which is the central idea of the analysis of semilinear models. 

To address this fundamental difficulty we propose here a new mechanism, based on a combination of two main ingredients:

\smallskip
(1) {\it{Energy estimates:}} We prove first estimates on the energy increment 
of the highest order derivatives of the solution. 
These are {\it{deterministic}} estimates, in the sense that they hold for all solutions. 
The key point is to show that the increment is controlled by an $L^\infty$-based 
norm of the solution (not an $L^2$-based norm), which can then be exploited effectively
in the analysis of solutions with random initial data.

\smallskip
(2) {\it{Propagation of randomness and pointwise bounds:}} 
The second step consists in proving pro\-pa\-gation of randomness 
(in a suitable form), and using Khintchine-type arguments to
derive optimal pointwise bounds on randomized solutions in a large box. 
For this one can use the iterated Duhamel formula; the point is that losses 
of derivatives are allowed at this stage, thanks to the accompanying estimate on the high order energy.

\smallskip
In this paper we complete the first part of this program, for the problem of pure
gravity water waves in two dimensions (1D interface). More precisely we prove a refined energy inequality for solutions of the system \eqref{gWW} in a large box $\T_R$, $R\geq 1$; see Theorem \ref{IncremBound}.
The a priori energy inequality \eqref{BootstrapFull}-\eqref{MainGrowth} 
is of ``acceptable quintic'' type, consistent with the ``almost integrability'' of the system \eqref{gWW}
(on $\T$) up to order 4; see the works of Dyachenko-Zakharov \cite{Zak0},
Craig-Worfolk \cite{CW} and Craig-Sulem \cite{CS} on the ``almost integrability'' of the system,
and the rigorous proofs by Berti-Feola-Pusateri \cite{BeFePu} and Wu \cite{WuNew}. 
The second part of the program will be completed in our second paper \cite{DeIoPu2}. 

As a byproduct of our analysis, in this paper we also show that we can use this ``acceptable quintic energy inequality'' 
to prove a deterministic improved lifespan 
result on solutions of Sobolev size $\varepsilon\ll 1$. 
In particular we show that in the Euclidean case $R=\infty$ solutions of 
the gravity water waves system of Sobolev size $O(\varepsilon)$
remain regular on times of order $O(\varepsilon^{-6})$. 
This improves significantly on all the earlier enhanced lifespan results, 
with times of order $O(\varepsilon^{-2})$ (proved in \cite{WuAG, IoPu2, ADb, HIT})
or of order $O(\varepsilon^{-3})$ (proved in \cite{WuNew}) or of order $O(\varepsilon^{-4})$ (proved in \cite{Zheng22}).



\smallskip
\subsection{An acceptable quintic energy inequality}\label{IntroQuin} 
Our first main theorem provides a quintic a priori estimate on the increment of energy of solutions 
of the gravity water waves system in 2D.

\begin{theorem}\label{IncremBound}
Assume that $N_0\geq 6$, $N_1\in[4,N_0-2]$, $T_1\leq T_2\in\mathbb{R}$, and $(h,\phi)\in C([T_1,T_2]:H^{N_0}\times \dot{H}^{N_0,1/2})$
is a solution of the system \eqref{gWW}. Moreover, assume that\footnote{See 
\eqref{HW} for the definition of the spaces $\dot{W}^{N,b}$ and 
Lemma \ref{DNmainpro} for the definition of the good unknown $\omega=\phi-T_Bh$.}  
\begin{equation}\label{BootstrapFull}
\begin{split}
\|h(t)\|_{H^{N_0}}+\||\partial_x|^{1/2}\omega(t)\|_{H^{N_0}} +\||\partial_x|^{1/2}\phi(t)\|_{H^{N_0-1/2}}&\leq A,
\\
\|h(t)\|_{\dot{W}^{N_1,0}}+\||\partial_x|^{1/2}\omega(t)\|_{\dot{W}^{N_1,0}}
  + \||\partial_x|^{1/2}\phi(t)\|_{\dot{W}^{N_1-1/2,0}}&\leq \varep(t),
\end{split}
\end{equation}
for any $t\in[T_1,T_2]$, where $A\in (0,\infty)$ and $\varep:[T_1,T_2]\to [0,\infty)$ 
is a continuous function satisfying $\varep(t)\ll 1$.
Then, for any $t_1\leq t_2\in[T_1,T_2]$,
\begin{equation}\label{MainGrowth}
\big\|(h+i|\partial_x|^{1/2}\omega)(t_2)\big\|_{H^{N_0}}^2
  \lesssim \big\|(h+i|\partial_x|^{1/2}\omega)(t_1)\big\|_{H^{N_0}}^2+A^2\int_{t_1}^{t_2}[\varep(s)]^3\,ds,
\end{equation}
\end{theorem}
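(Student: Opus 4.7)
The plan is to control the $H^{N_0}$ norm of the diagonalized variable $U := h + i|\partial_x|^{1/2}\omega$ by building a modified energy whose time derivative is purely quintic. First I would paralinearize the Dirichlet--Neumann map in the good unknown $\omega = \phi - T_B h$, rewriting \eqref{gWW} schematically as
\begin{align*}
\partial_t h + T_V \partial_x h - |\partial_x|\omega &= \mathcal R_h, \\
\partial_t \omega + T_V \partial_x \omega + h &= \mathcal R_\omega,
\end{align*}
where $V,B$ are the traces of the horizontal and vertical velocity at the interface and $\mathcal R_{h,\omega}$ are smoother bilinear (and higher) remainders, controlled via the estimates alluded to in Lemma~\ref{DNmainpro}. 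Passing to $U$, the principal part becomes $\partial_t U - i\Lambda U$ with $\Lambda = |\partial_x|^{1/2}$, and after a paradifferential symmetrization of the transport $T_V\partial_x$ one can commute $\partial_x^{N_0}$ through up to controllable lower-order corrections. The base computation then reduces $\frac{d}{dt}\|U\|_{H^{N_0}}^2$ to real parts of $L^2$ pairings of $\partial_x^{N_0}U$ against the nonlinear remainders.

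A naive bound on these pairings would only give $A^2\varepsilon(t)$, which is insufficient. To upgrade the gain to $\varepsilon(t)^3$, I would implement a two-step normal form by adding cubic and quartic correctors to the energy:
\begin{equation*}
E_{N_0}(t) := \|U(t)\|_{H^{N_0}}^2 + \mathcal B_3(t) + \mathcal B_4(t).
\end{equation*}
The cubic corrector $\mathcal B_3$ is obtained by dividing the cubic energy symbol by the three-wave phase $\pm\Lambda(\xi_1)\pm\Lambda(\xi_2)\pm\Lambda(\xi_3)$, which on $\mathbb T_R$ with $\xi_1+\xi_2+\xi_3=0$ admits a uniform (in $R$) control for the dispersion $|\xi|^{1/2}$ in one space dimension, since no non-trivial three-wave resonance exists. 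The quartic corrector $\mathcal B_4$ handles the remaining quartic errors by exploiting the approximate quartic integrability of gravity water waves established in \cite{BeFePu, WuNew}: the quartic resonant symbol vanishes on the trivial resonant set $\{\xi_1,\xi_2\}=\{\xi_3,\xi_4\}$ with the appropriate signs, while the non-resonant part is eliminated by division by the four-wave phase. Via bilinear and trilinear multiplier estimates on $\mathbb T_R$ one obtains $|\mathcal B_3|+|\mathcal B_4|\lesssim A^2\varepsilon(t)$, which by $\varepsilon(t)\ll 1$ is absorbable into $\|U\|_{H^{N_0}}^2$, together with $\frac{d}{dt}E_{N_0}\lesssim A^2\varepsilon(t)^3$ after distributing two of the five factors into $H^{N_0}$ norms (each costing $A$) and three into $\dot W^{N_1,0}$ norms (each costing $\varepsilon(t)$). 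Integrating in $t$ then yields \eqref{MainGrowth}.

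The principal difficulty is executing this normal form at top regularity without losing derivatives. The paralinearized remainders contain terms carrying top-order derivatives (for instance, contributions proportional to $T_{\partial_x V}\partial_x U$), which must be symmetrized in tandem with the cubic and quartic correctors so that all divisions by three- and four-wave phases remain bounded operators on $H^{N_0}$ and the resulting multiplier kernels gain back whatever derivatives they lose. A second concern is that all the multiplier bounds, including the phase-division estimates, must be uniform in the torus radius $R\geq 1$, as required by the probabilistic analysis of the companion paper \cite{DeIoPu2}. Combining the top-regularity bookkeeping, the precise vanishing of the resonant four-wave symbol, and $R$-uniformity of the estimates is the technical heart of the argument.
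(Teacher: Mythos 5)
Your overall architecture matches the paper's: paralinearize in the good unknown $\omega=\phi-T_Bh$, pass to normalized variables, perform a cubic normal form using the absence of three-wave resonances for $|\xi|^{1/2}$, and then remove the quartic bulk by a second normal form, all with $R$-uniform multiplier bounds. However, there is a genuine gap in your treatment of the quartic step. You assert that the quartic resonant symbol vanishes only on the trivial set $\{\xi_1,\xi_2\}=\{\xi_3,\xi_4\}$ and that the rest can be ``eliminated by division by the four-wave phase.'' This is false as stated: the four-wave system $|\xi_1|^{1/2}+|\xi_2|^{1/2}=|\xi_3|^{1/2}+|\xi_4|^{1/2}$, $\xi_1+\xi_2+\xi_3+\xi_4=0$ admits a non-trivial two-parameter family of solutions (the Benjamin--Feir resonances \eqref{BFres}), on which the phase $\Phi_{++--}$ vanishes, so your corrector $\mathcal B_4$ is not a bounded operator there. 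The theorem is true only because the \emph{fully symmetrized} quartic symbol happens to vanish on the Benjamin--Feir set (Proposition \ref{BFprop}), a fact that is not a consequence of the Hamiltonian structure alone and must be verified by explicit computation of all the quartic multipliers after the first normal form; this occupies Sections \ref{secadmver} and \ref{uyt22sec} of the paper and is the algebraic heart of the result. Your proposal never mentions these resonances, so the central obstruction to the quintic estimate is not addressed.

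A second, related omission: even granting the vanishing, the division by $\Phi_{++--}$ near the degenerate sets $\xi_i+\xi_j=0$ produces symbols with singular factors $|\xi_i+\xi_j|^{\beta}$, which could generate quintic bulk terms involving Hilbert transforms of products and destroy the estimate. The paper handles this by isolating a class of ``admissible symbols'' (Definition \ref{defA}) closed under the required symmetrizations and by exploiting cancellations specific to the water-waves symbols (Lemma \ref{hol2}). Your sketch acknowledges derivative loss at top order but not this low-frequency/singular-factor issue, which is independent of it and equally essential.
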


\subsubsection{Remarks}\label{ssecdisc1}

We make some remarks on the statement of Theorem \ref{IncremBound} and discuss some of the main ideas of its proof.

\smallskip

(1) {\it The acceptable quintic increment}.
The main point of the theorem is to show that the energy increment is quintic and acceptable, 
that is, the integral term in \eqref{MainGrowth} involves $A^2$ and the third power
$[\varep(s)]^3$ of the $L^\infty$-based norm.
For future applications when $R$ is large, in particular in the random data setting and to the small-data result in Theorem \ref{LongTermDet} below,
we emphasize that it is crucial to distinguish these two quantities and allow $A$ (the high Sobolev energy) to be large,
while only $\e(t)$ (the controlling $L^\infty$ norm) is kept small. Acceptable energy increments can contain only 2 copies of the large Sobolev norm of the solution.

Several ``quartic'' energy inequalities, which improve on the standard cubic 
estimates used to establish local well-posedness, 
have been proved in the last ten years or so, in both one and two dimensions; 
see for example \cite{WuAG,Wu3DWW,IoPu2,ADb,HIT,ZhengWW} for gravity waves.
The key common ingredient in all of these results is the absence of quadratic resonances. 

Berti-Feola-Pusateri \cite{BFPnote,BeFePu} proved a quintic energy inequality in dimension $d=1$ for $R=1$, 
as a consequence of the conjugation of the flow of \eqref{gWWHam} to its integrable Birkhoff normal form
at order $4$. More recently, Wu \cite{WuNew} also proved a quintic energy inequality for the problem on $\R$. 

Our quintic energy inequality \eqref{MainGrowth} in the case $R=\infty$ is not directly comparable 
with the result of Wu. Our main concern, motivated by applications to random data
problems and wave turbulence, is to show that the energy 
increment can be bounded using 3 copies of the $L^\infty$-type norm in \eqref{BootstrapFull}, 
even for solutions of large (high-order) energy; this is what we call {\it acceptable}.
The focus of \cite{WuNew} instead is to prove a quintic energy inequality 
for energy-small initial data at improved levels of regularity and allow for the presence of steep waves.

\smallskip
(2) {\it Relevance to the random data problem in the large box limit}.
As explained earlier, our main goal is to apply Theorem \ref{IncremBound} to investigate the 
energy distribution for large times of 
solutions of \eqref{gWW} with randomized initial data, in the limit $R \rightarrow \infty$.
To do this, we first need to construct smooth solutions on long time intervals. While long-time
or even global regularity is not an issue in many semilinear problems, even at the deterministic level, 
this becomes a significant difficulty in quasilinear problems, where conservation laws are 
scaling-supercritical and cannot be used.
High-order energy inequalities such as \eqref{MainGrowth} are essentially the only known tool available
in the quasilinear setting to propagate regularity.

Roughly speaking, the randomization of the data can give enhanced control on $L^\infty$-type norms. More precisely, for a solution with Sobolev norm of size $A$ one can hope to prove that, with high probability, the $L^\infty$-norm is of size $\varepsilon\approx AR^{-1/2}$, consistent with the conservation of mass 
and the solution being uniformly distributed over $\T_R$. 
In principle, one can hope to prove such an enhanced $L^\infty$ estimate by propagating randomness (using Feynman's diagram expansions in the spirit of the recent works on NLS cited earlier), and combine this with the deterministic energy estimate \eqref{MainGrowth} to construct regular solutions up to times of the order $\e^{-3}$. 
We will prove a rigorous result of this type in the second paper of this sequence \cite{DeIoPu2}. 

\smallskip
(3) {\it Quadratic resonances.}
The main obstruction to obtaining an energy inequality of quintic type is the presence
of quadratic and cubic resonances in the system.
It is well-known that (in any dimension) there are no non-trivial quadratic resonances 
(or resonant three-waves interactions) for \eqref{gWW}; 
that is, there are no numbers $\xi_1,\xi_2,\xi_3\in\mathbb{R}\setminus \{0\}$ solving
\begin{equation}\label{quadres}
\sigma_1 |\xi_1|^{1/2} + \sigma_2 |\xi_2|^{1/2} + \sigma_3 |\xi_3|^{1/2} = 0,
  \qquad \xi_1 + \xi_2 + \xi_3 = 0,
\end{equation}
where $\sigma_1,\sigma_2,\sigma_3\in\{-1,1\}$.
This algebraic fact implies that the quadratic nonlinearities are inessential to the dynamics
and can be (at least formally) eliminated by normal forms or time-averaging.
Using appropriate variables, such as the ``good unknown'' $\omega$, or
Riemann mapping or holomorphic coordinates,
one can avoid losses of derivatives in normal form procedures 
and obtain an ``acceptable quartic'' energy inequality of the form 
\begin{equation}\label{MainGrowth2}
\big\|(h+i|\partial_x|^{1/2}\omega)(t_2)\big\|_{H^{N_0}}^2
  \lesssim \big\|(h+i|\partial_x|^{1/2}\omega)(t_1)\big\|_{H^{N_0}}^2+A^2\int_{t_1}^{t_2}[\varep(s)]^2\,ds.
\end{equation}
Basic inequalities of this type have been proven in various ways in several works, 
including the already cited \cite{WuAG,Wu3DWW,IoPu2,IoPu4,ADa,IT,IT2,ZhengWW}; 
for a longer discussion and more references also about the Klein-Gordon equation
and parameter dependent PDEs, see \S1.4 in \cite{IoPu5}.

\smallskip
(4) {\it Cubic resonances.} To improve \eqref{MainGrowth2} and obtain \eqref{MainGrowth} 
one needs to look at ``cubic resonances'' (or resonant 4-waves interactions) 
that is, solutions of the system
\begin{equation}\label{cubicres}
\sigma_1 |\xi_1|^{1/2} + \sigma_2 |\xi_2|^{1/2} + \sigma_3 |\xi_3|^{1/2} + \sigma_4 |\xi_4|^{1/2} = 0,
  \qquad \xi_1 + \xi_2 + \xi_3 +\xi_4 = 0,
\end{equation}
for $\xi_j\in\R\setminus\{0\}$ and $\sigma_j\in\{-1,1\}$, $j=1,\dots 4$.
Equation \eqref{cubicres} has a set of trivial solutions of the form $(\xi,\eta,-\xi,-\eta)$ with $\sigma_1=\sigma_2=1$,
$\sigma_3=\sigma_4=-1$,
together with their natural permutations; these are the so-called ``trivial resonances''
and cannot be avoided.
However, due to the Hamiltonian/reversible nature of the equations, these resonances give
an integrable contribution and do not appear in the energy increment.
We refer the reader to the introduction of \cite{BeFePu} for a more detailed discussion about this.

Moreover, a simple argument shows that \eqref{cubicres} has exactly one other two-parameter family of solutions (up to natural symmetries),
the so called {\it{Benjamin-Feir resonances}} \cite{DZ,CW,DLZ}, which are given by $(\sigma_1,\sigma_2,\sigma_3,\sigma_4) = (1,1,-1,-1)$ and
\begin{equation}\label{BFres}
\mathcal{R}_{BF} := \bigcup_{\lambda\in\R \setminus \{0\},\,b\in[0,\infty)} \Big\{ 
 \xi_1 = - \lambda (b^2+b+1)^2,\,\xi_2 = \lambda b^2,\,\xi_3 = \lambda (b+1)^2,\,\xi_4= \lambda (b+1)^2 b^2 \Big\}.
\end{equation}
The resonances \eqref{BFres} are a potential obstruction to proving quintic energy inequalities. 
The key point is that one can show that the contribution of these resonances to the energy increment also vanishes.
This is due to the fact that the symbol of the cubic interactions (after the first normal form step)
vanishes on the set \eqref{BFres}, after suitable symmetrizations.
This algebraic fact was already known since the works \cite{DZ,CW,DLZ,CS}.

However, substantial work is needed to turn this into a rigorous quintic energy inequality.
The first step is to calculate precisely the quartic Fourier multipliers appearing in our problem 
after performing the first normal form; see Lemmas \ref{quartic1}, \ref{quartic2}, and \ref{quartic3}.
In order to perform a second normal form step and obtain acceptable quintic expressions,
we need to identify a proper class of ``admissible symbols'' (see Definition \ref{defA}), 
which allows certain types of singularities. These singularities are present in our problem and cannot be avoided; 
such singularities could in principle lead to quintic bulk terms 
containing Hilbert transforms acting on the product of two unknowns, 
and preclude the basic inequality \eqref{MainGrowth}. 
The point is that this does not happen in our case due to the special structure of the equation.

Finally we need to verify that the symbols vanish at Benjamin-Feir resonant sites defined in \eqref{BFres}, and justify the division.
For this we rely instead on an explicit algebraic computation, which is done in Section \ref{uyt22sec} and requires full symmetrization of all the symbols. The symbols of the quartic bulks obtained in this way are long algebraic expressions with no clear simplifications, and the calculations necessary to verify the vanishing 
at the non-trivial resonances are lengthy, but manageable.{\footnote{This vanishing is expected, however, for conceptual reasons, as it is already known to hold at the formal level (the key observation of Zakharov and Dyatchenko \cite{Zak0}), which corresponds to setting $\chi=0$. We need a nontrivial function $\chi$ in our paper, in order to define correctly paradifferential operators, which increases substantially the complexity of the calculation, but this should be thought of as a technical tool needed to avoid derivative loss which does not affect the intrinsic structure of the system.} Once vanishing is proved, the division can be justified easily due to the smoothness properties of the symbols.

\smallskip
(5) {\it{Eulerian coordinates.}} In this paper we work in Eulerian coordinates. While other choices are possible
(such as Lagrangian coordinates as in \cite{WuAG,WuNew} or holomorphic coordinates as in \cite{HIT,IT2}), 
at the conceptual level Eulerian coordinates are the natural and robust choice to analyze small-amplitude waves, 
as they apply equally well to both two and three-dimensional problems. 
This choice leads to slightly longer calculations compared to other choices,
but these calculations are explicit and can be performed without any computer assistance.
\smallskip

(6) {\it Classes of symbols.} At a basic level, many of our energy estimates rely on controlling quartic expressions of the form
\begin{equation}\label{intro7}
I_M:=\Big|\frac{1}{(2\pi R)^3}\sum_{(\xi_1,\xi_2,\xi_3,\xi_4)\in\mathbb{H}^3_R}\widehat{h_1}(\xi_1)\widehat{h_2}(\xi_2)\widehat{h_3}(\xi_3)\widehat{h_4}(\xi_4) M(\xi_1,\xi_2,\xi_3,\xi_4)\Big|,
 \end{equation}
where $h_1,h_2,h_3,h_4$ are functions related to water waves variables and $\mathbb{H}^3_R:=\{(\xi_1,\xi_2,\xi_3,\xi_4)\in(\Z/R)^4:\,\xi_1+\xi_2+\xi_3+\xi_4=0\}$. Such expressions appear many times when estimating bulk contributions (in the Fourier space), for various multipliers $M$. In our applications, we have control of the functions $h_l$ in $L^2$-based Sobolev norms, $\|h_l\|_{H^{N_0}}\leq A_2$, as well as in $L^\infty$-based Sobolev norms, $\|h_l\|_{\widetilde{W}^2}\leq A_\infty$, and we would like to prove sharp bounds of the form
\begin{equation}\label{intro7.1}
I_M\lesssim A_2^2A_\infty^2.
\end{equation}

In order to prove such bounds (uniformly in $R$) we need good information on the symbols $M$. The ideal situation is when the symbols $M$ are {\it{regular}}, as described precisely in Definition \ref{defMNab}, satisfying suitable differential bounds of the H\"{o}rmander-Michlin type. But this is insufficient\footnote{These considerations are important only as $R\to\infty$, in order to prove bounds that are uniform in $R$. When $R$ is bounded, differentiation of symbols is not needed and one can use just pointwise bounds to measure symbols. At a more basic level, $L^\infty$-based norms in the physical space are not needed at all when $R$ is bounded, since $L^\infty$ is in fact stronger than $L^2$ in this case. This effectively eliminates one of the key difficulties of the problem, namely the requirement that acceptable energy estimates can contain only two copies of the (large) Sobolev norms.}  in our case: while many of our symbols are indeed regular, the application of normal forms leads to symbols that contain factors of the form $|\xi_i+\xi_j|$, which do not obey suitable derivative bounds. To deal with these contributions we need to define a special class of symbols, which we call {\it{admissible}}, see Definition \ref{defA}, which are well adapted to our problem. These symbols are more general than the regular symbols, but still lead to optimal quartic bounds like \eqref{intro7.1} and retain sufficient smoothness (after factoring out the singular factors) to allow for symmetrization and control of small divisors.
\smallskip

(7) {\it Small divisors.} One of the main issues in implementing normal form procedures is the loss of derivatives
that could occur when dividing by resonant phases.
On a fixed torus \cite{BeFePu} these losses can be avoided thanks to 
paradifferential reductions to smoothing operators.
In this paper we do not reduce matters to smoothing operators
(and this may not be actually possible uniformly as $R\rightarrow \infty$).
Our strategy here is different, based on using explicit formulas and identifying suitable properties of the symbols. 
The general idea is that if one knows that a symbol (1) vanishes at a resonant site
and (2) satisfies suitable differential bounds (such as those satisfied by regular symbols and by admissible symbols), 
then one can solve the division problem with no loss of derivatives.
For a relatively simple implementation of this idea
we refer the reader to Step 2 in the proof of Lemma \ref{hol2},
which deals with the frequency configuration where Benjamin-Feir resonances are present
and the symbols are regular.
The implementation in the case of the trivial resonances in the subsequent steps is more involved and uses
the full structure of the admissible class of symbols.
\smallskip

(8) {\it Symmetrization of symbols.} Effective estimates, for example on quartic expressions like \eqref{intro7}, can only be proved after accounting and exploiting the symmetries of the problem. For example, if $h_1=h_2$ then one can replace the symbol $M$ with its symmetrized version $[M(\xi_1,\xi_2,\xi_3,\xi_4)+M(\xi_2,\xi_1,\xi_3,\xi_4)]/2$ in the first two variables. 

In our proof, symmetrization serves three basic purposes: (i) in certain cases it eliminates the derivative loss coming from the quasi-linear nature of the problem and normal forms, (ii) it allows us to verify the key vanishing condition of the quartic symbols at the Benjamin-Feir resonant sites, and (iii) at a more subtle level, it prevents the emergence of worse singularities at low frequencies (like $\sgn(\xi_i+\xi_j)$ which appear naturally before symmetrization due to normal forms and Hilbert transforms) which would ultimately prevent us from proving \eqref{MainGrowth}.
\smallskip

(9) {\it Optimality.}
Finally, we note that a quintic energy inequality like \eqref{MainGrowth}
is expected to be optimal in terms of homogeneity in $\e(t)$, 
in view of the presence of quintic resonances as exhibited by Craig-Worfolk \cite{CW} 
and Dyachenko-Lvov-Zakharov \cite{DZ}. 
\smallskip

\subsection{Small data long-term regularity}\label{Application} 
As discussed above, Theorem \ref{IncremBound} is the first of two main steps
towards understanding the energy distribution for gravity water waves with random initial data.
In addition, we can also utilize it in the case of deterministic 
initial data with small Sobolev norm, to prove new extended lifespan results. 
More precisely we have:

\begin{theorem}\label{LongTermDet}
Assume that $N_0\geq 7$ and $(h_0,\phi_0)\in H^{N_0+1/2}(\T_R)\times H^{N_0+1/2}(\T_R)$ 
is initial data satisfying the assumptions
\begin{equation}
\label{mai1}
{\|\,\langle\partial_x\rangle^{1/2}h_0\|}_{H^{N_0}} + {\| \,|\partial_x|^{1/2} \phi_0\|}_{H^{N_0}} \leq \e \leq 1.
\end{equation}
Then there is a unique solution $(h,\phi)\in C([0,T_{R,\varep}]:H^{N_0} \times \dot{H}^{N_0,1/2})$ 
of the system \eqref{gWW} on $\T_R\times [0,T_{R,\varep}]$, with initial data $(h(0),\phi(0)) = (h_0,\phi_0)$, 
where
\begin{equation}\label{mai2}
T_{R,\varep} \gtrsim \e^{-3} \min \big( \e^{-3}, R^{3/4} \big).
\end{equation}
Moreover, letting $\omega = \phi - T_B h$ as in \eqref{DefBV}, the solution satisfies the bounds 
\begin{align}\label{mai3}
\sup_{t\in[0,T_{R,\varep}]} & {\big\|(h+i|\partial_x|^{1/2}\omega)(t) \big\|}_{H^{N_0}} \lesssim \e.
\end{align} 

\end{theorem}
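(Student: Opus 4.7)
The plan is to combine the quintic energy inequality of Theorem \ref{IncremBound} with a bootstrap argument on a Strichartz-type dispersive $Z$-norm. Let $u := h+i|\partial_x|^{1/2}\omega$, which is the natural complex-valued unknown appearing in \eqref{MainGrowth}. Starting from the local well-posedness theory mentioned in \S1.1, data of size $\varep$ produce a smooth solution on a short initial interval, and one then extends it by a standard continuity argument on a maximal time $T^\ast\leq T_{R,\varep}$ on which the bootstrap assumptions
\begin{equation*}
\sup_{0\leq t\leq T^\ast} \big\|u(t)\big\|_{H^{N_0}}\leq 2C_0\varep, \qquad \|u\|_{Z([0,T^\ast])}\leq 2C_0\varep
\end{equation*}
are enforced, with $C_0$ a large absolute constant. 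The $Z$-norm will be chosen (following the philosophy of earlier works on water waves) so that control of $\|u\|_Z$ at size $C_0\varep$ forces the $L^\infty$-type quantity $\varep(t)$ appearing in \eqref{BootstrapFull} to obey the sharp linear dispersive decay $\varep(t)\lesssim \varep\min(1,\langle t\rangle^{-1/2})$ on $\R$, and its analogue on $\T_R$ valid up to times of order $R^{3/2}$, before the linear flow $e^{it|\partial_x|^{1/2}}$ experiences recurrence on the torus.

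The energy side of the bootstrap closes directly from Theorem \ref{IncremBound}. Setting $A=2C_0\varep$ and using the decay $\varep(s)\lesssim \varep\langle s\rangle^{-1/2}$, one obtains
\begin{equation*}
\big\|u(t)\big\|_{H^{N_0}}^2\lesssim \varep^2+(2C_0\varep)^2\int_0^t[\varep(s)]^3\,ds\lesssim \varep^2+C_0^2\varep^5,
\end{equation*}
since the time integral is uniformly bounded under the dispersive decay (even when integrated up to time $\varep^{-6}$). For $\varep$ small this recovers the $H^{N_0}$-bootstrap with a strictly better constant, which also gives \eqref{mai3}. A preliminary step, routine but requiring the hypothesis $N_0\geq 7$, is to check via paralinearization and commutator arguments that the first line of \eqref{BootstrapFull} (involving $h$, $|\partial_x|^{1/2}\omega$, and $|\partial_x|^{1/2}\phi$) is indeed controlled by the single norm $\|u\|_{H^{N_0}}$, using the good-unknown relation $\omega=\phi-T_B h$.

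The real heart of the argument, and the step that actually determines the lifespan \eqref{mai2}, is closing the $Z$-norm bootstrap. The strategy will be to first perform a quadratic normal form on the equation for $u$, which is admissible thanks to the absence of quadratic resonances \eqref{quadres} and can be implemented paradifferentially to avoid derivative loss. The resulting Duhamel integral contains only cubic and higher semilinear source terms. The cubic terms are then split in frequency space into a resonant piece supported near the trivial and Benjamin-Feir sets \eqref{BFres} and a non-resonant piece, the latter being absorbed via either a second normal form or integration by parts in time. Trivial resonances produce a phase correction that is harmless for the $Z$-norm, while the Benjamin-Feir contribution is controlled using the same vanishing of symbols that powered Theorem \ref{IncremBound}. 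The main obstacle will be designing a $Z$-norm strong enough to imply the claimed pointwise decay but weak enough to close on the target time scale: balancing the remaining nonlinear contributions against the linear Strichartz and pointwise dispersive estimates for $e^{it|\partial_x|^{1/2}}$ will yield $T^\ast\gtrsim \varep^{-6}$ on $\R$, and on $\T_R$ will cut off at the recurrence time $R^{3/2}$, producing the $R^{3/4}$ factor in $\varep^{-3}\min(\varep^{-3},R^{3/4})$.
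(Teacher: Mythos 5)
There is a genuine gap, and it sits at the foundation of your argument: the claim that controlling a $Z$-norm at size $C_0\varep$ "forces" the pointwise decay $\varep(t)\lesssim\varep\langle t\rangle^{-1/2}$. For data that is merely small in $H^{N_0}$ — with no weighted/localized assumption, which is exactly the setting of the theorem — linear solutions of $\partial_t u+i|\partial_x|^{1/2}u=0$ do \emph{not} decay pointwise in time; the best available substitute is a time-averaged Strichartz bound such as $\|e^{-it\Lambda}P_kf\|_{L^4_tL^\infty_x}\lesssim 2^{3k/8}\|f\|_{L^2}$ (valid on $\T_R$ only for $T\lesssim R2^{k^-/2}$, not up to the recurrence time $R^{3/2}$ you invoke). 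Your energy-closing step, which integrates $[\varep(s)]^3\lesssim\varep^3\langle s\rangle^{-3/2}$ to a uniformly bounded quantity, therefore rests on a false estimate — and the conclusion it would yield (energy control for \emph{all} times) is a further warning sign. This also inverts the logic of where the lifespan comes from: in the correct argument the constraint $T_{R,\varep}\lesssim\varep^{-3}\min(\varep^{-3},R^{3/4})$ is produced by the \emph{energy} step, via H\"older in time, $\int_0^T\delta(s)^3\,ds\leq T^{1/4}\|\delta\|^3_{L^4_{[0,T]}}$, and a decomposition of $[0,T]$ into $\sim T/T_S$ windows of length $T_S=c_0\min(\varep^{-4},R)$ on each of which the $L^4_t$ dispersive norm is $O(\varep)$; the balance $\varep^5\,T_{R,\varep}\,T_S^{-3/4}\lesssim\varep^2$ is exactly what fixes \eqref{mai2}.

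The dispersive side of the bootstrap is also much simpler than you anticipate: no second normal form, no resonance splitting, and no Benjamin--Feir vanishing is needed there. One introduces a single explicit quadratic correction $v$ of $h+i|\partial_x|^{1/2}\phi$ (as in \cite[Lemma 5.1]{IoPu2}) solving $(\partial_t+i|\partial_x|^{1/2})v=\mathcal{N}_3$ with $\mathcal{N}_3$ cubic and bounded by $C_0\varep\,\delta(t)^2$ in $H^{N_0-2}$ without derivative loss, and then applies the Strichartz estimate to the Duhamel formula on each window of length $T_S$, estimating $\|\mathcal{N}_3\|_{L^1_tH^{N_1+1/2}}\leq T_S^{1/2}\|\mathcal{N}_3\|_{L^2_tH^{N_0-2}}\lesssim T_S^{1/2}\varep^3\lesssim\varep$ since $T_S\leq c_0\varep^{-4}$. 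All the resonance analysis lives in the proof of the quintic energy inequality, which you are correctly taking as a black box. To repair your proposal you should replace the pointwise-decay $Z$-norm with the windowed $L^4_t\dot{W}^{N_1-1/4,0}_x$ norm of \eqref{Zmai} and redo the energy accounting along the lines above.
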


\subsubsection{Remarks and main ideas}\label{ssecdisc2}

The main point of Theorem \ref{LongTermDet} is the enhanced lifespan of solutions corresponding 
to small initial data in Sobolev spaces, without any additional localization. This problem has been considered by many authors and for many dispersive models. We discuss below some related results in this direction, and some of the main ideas of our proof.

\smallskip
(1) {\it The case of a fixed-size torus}.
When $R\approx 1$, the local well-posedness theory
gives a time of existence of $O(\varep^{-1})$.
Existence times of the order $O(\varep^{-2})$ for gravity waves
follow the from the quartic energy inequalities in the already cited \cite{WuAG,IoPu2,ADb,HIT}.
Similar results have also been proved for capillary waves ($g=0,\sigma>0$) in \cite{IoPu4,IT2} and gravity-capillary waves ($g,\sigma>0$) in \cite{BeFeFr}.  

The more recent papers \cite{BeFePu, WuNew} provided the first regularity results up to times $O(\e^{-3})$. 
Theorem \ref{LongTermDet} matches these results in the case of a fixed torus $R\approx 1$, but improves significantly for large tori $R\gg 1$. 
This result is probably optimal for general Sobolev data due to the presence of resonant 
$5$-wave interactions, see \cite{CW,DLZ}.

Extended lifespan results have also been proved for many other dispersive and hyperbolic models. 
Without attempting to be exhaustive, we mention the monograph of Berti-Delort \cite{BertiDelort},
who obtained regularity up to times $O(\e^{-N})$, for any fixed $N$, for standing gravity-capillary waves over $\T$, 
for almost all values 
of the surface tension and gravity parameters.
In general, longer existence times can be achieved by iterating normal forms procedures, 
when non-trivial resonances can be avoided for almost all values of external parameters.
See also the work of Delort on Klein-Gordon equations \cite{Del2} and references therein. 
In the case $d=2$, an extended $O(\varep^{-2})$ lifespan for pure gravity waves
is obtained implicitly in the work of 
Wu \cite{Wu3DWW}, and is included 
in the more general results of Zheng \cite{ZhengWW} on $\mathbb{T}^2_R$.
For the gravity-capillary case on $\mathbb{T}^2$ an extended lifespan
for almost all values of the parameters $(g,\sigma)$ was obtained in \cite{IoPu5}.

\smallskip
(2) {\it Large $R$ and the Euclidean case}. If $R\gtrsim\e^{-4}$ or in the Euclidean case $R=\infty$, 
we obtain a time of existence $T_{R,\varep}\approx \varep^{-6}$. 
This improves substantially on all the previous results on 2D water waves with Sobolev initial data,
including the recent $O(\e^{-3})$ lifespan result of \cite{WuNew} for $R=\infty$ based on a quartic-type energy inequality,
and the very recent $O(\e^{-2} \min(\e^{-2}, \sqrt{R}))$ 
result of Zheng \cite{Zheng22} which uses the simpler quartic energy inequality \eqref{MainGrowth2} and a version of a Strichartz norm of the form \eqref{Zmai}. Similar extended lifespan results,
depending on the size $\varepsilon$ of the initial data and the size $R$ of the periodic box, 
have been proved recently by Zheng \cite{ZhengWW} in the case of pure gravity waves in 3D.

We mention that in the Euclidean case $R=\infty$, 
for localized initial data in a weighted Sobolev space, one can actually prove full global
regularity and scattering results,
relying on dispersion and pointwise decay.
We refer the reader to \cite{WuAG,IoPu2,ADb,IT,Wa1} for the case of $1$d interfaces, 
and to \cite{Wu3DWW,GMS2,DIPP,Wa2} for $2$d interfaces (see \cite[Section 3]{IoPuRev} 
for an overview of these results). However, localization is not compatible with randomized initial data,
so these results are not applicable to the problem of understanding wave turbulence.

\smallskip
(3) {\it Ideas of the proof}.
Our proof of Theorem \ref{LongTermDet} is based on what we call {\it{the $Z$-norm method}}. 
This general method is essentially a bootstrap argument 
that combines high-order energy estimates with dispersive and decay 
estimates at a lower order of regularity. 
In our problem the key $Z=Z_{\varep,R}$-norm is defined by means of the following Strichartz-type norm
\begin{align}\label{Zmai}
{\|F\|}_Z := \sup_{t_1\leq t_2\in[0,T],\,|t_2-t_1|\leq\,\min(\e^{-4},R)} &
  {\big\|F\big\|}_{L^4_{[t_1,t_2]}\dot{W}_x^{N_1-1/4,0}},
\end{align} 
for any function $F\in C([0,T]:H^{N_0}(\T_R))$, where $N_1 = N_0-3 \geq 4$.
See the second norm in \eqref{mai3'}, and \eqref{HW}, for the exact definition of the norm.

The main new ingredient is the Strichartz estimates in Lemma \ref{LemStrichartz},
which provide sharp $L^4_tL^\infty_x$ decay of linear solutions in Sobolev spaces.
We can then combine these Strichartz estimates (which are, essentially, 
efficient substitutes for Sobolev's embedding) and the control on high Sobolev norms from \eqref{MainGrowth}, 
to propagate regularity of solutions on the time interval $[0,T_{R,\varepsilon}]$,
with $T_{R,\varepsilon}$ as in \eqref{mai2}.

\subsection{Organization} In Section \ref{SecPrelim} we collect several basic definitions and results,
including some classes of multipliers that we will encounter, multilinear product estimates, 
the definitions of our function spaces, and elements of pseudo-differential calculus.

In Section \ref{paralinearization} we paralinearize of the water waves system, using 
the good unknown of Alinhac and following the general approach of Alazard-Burq-Zuily \cite{ABZ1,ABZ2} 
and our paralinearization of the Dirichlet-Neumann (DN) operator in Appendix B of \cite{IoPu4}.
The proofs of the main technical results, Lemmas \ref{DNmainpro} and \ref{BVexpands},
are postponed to Section \ref{TechnicalProofs}.
The equations that are eventually derived in Lemma \ref{Usor1} 
for the good variables $(h^\ast,\omega^\ast)$ are the starting point for our analysis.

In Section \ref{EnergyBounds} we define our energy functional \eqref{eni3}
and provide a bound for its increment in Lemma \ref{quartic1}, in terms of quartic ``bulk terms'';
this corresponds to a first normal form. We then rewrite these quartic bulk integrals in terms of our main variables $(h^\ast,\omega^\ast)$. 
These are mostly straightforward algebraic calculations, 
but need to be performed very accurately in order to be able to verify the 
necessary vanishing conditions at the non-trivial resonances later on.

In Section \ref{QuartGen} we prove acceptable quintic bounds for quartic bulk integrals, 
under suitable assumptions on the defining symbols. The key issue is to identify a suitable class 
of ``admissible multipliers'' (see Definition \ref{defA}), 
sufficiently large to capture the multipliers encountered in our problem, 
but restrictive enough to allow an additional normal form. The main result of this section, Lemma \ref{hol2},
gives the desired quintic-type bounds in the resonant case, 
provided that certain vanishing and regularity conditions are met.

The main acceptable quintic energy inequality \eqref{MainGrowth} is obtained in Subsection \ref{ssecquintic},
as a consequence of the work done in Sections \ref{uyt22sec} and \ref{secadmver}
to verify the necessary admissibility and vanishing conditions for the symbols close to both sets of resonances. 
To deal with the Benjamin-Feir resonances we need to pass to complex variables and fully symmetrize the expressions.
 
In Section \ref{Deterministic} we prove Theorem \ref{LongTermDet}. 
We first prove Strichartz estimates for the linear flow $e^{it|\partial_x|^{1/2}}$ on $\T_R$ 
in Lemma \ref{LemStrichartz}
and then bootstrap the $Z$-norm \eqref{Zmai} in Subsection \ref{LongTermDetpr}.

\medskip
\section{Preliminaries}\label{SecPrelim}

In this section we summarize some of our main notation and prove a few lemmas that are used later in the paper.

\subsection{The Fourier transform and Littlewood-Paley projections} 
For functions $f\in L^2(\mathbb{T}_R)$ we define the Fourier transform
\begin{equation}\label{FT1}
\widehat{f}(\xi)=\mathcal{F}(f)(\xi):=\int_{\mathbb{T}_R}f(x)e^{-i\xi x}\,dx,\qquad \xi\in\mathbb{Z}/R:=\{n/R:\,n\in\mathbb{Z}\}.
\end{equation}
The definition shows easily that
\begin{equation}\label{FT2}
\begin{split}
f(x)=\frac{1}{2\pi R}\sum_{\xi\in\mathbb{Z}/R}\widehat{f}(\xi)e^{i\xi x},\\
\int_{\mathbb{T}_R}f(x)\overline{g(x)}\,dx=\frac{1}{2\pi R}\sum_{\xi\in\mathbb{Z}/R}\widehat{f}(\xi)\overline{\widehat{g}(\xi)},\\
\mathcal{F}(fg)(\xi)=\frac{1}{2\pi R}\sum_{\eta\in\mathbb{Z}/R}\widehat{f}(\xi-\eta)\widehat{g}(\eta).
\end{split}
\end{equation}

For any $k\in\mathbb{Z}$ let $k^+:=\max(k,0)$ and $k^-:=\min(k,0)$. We fix an even smooth function $\varphi: \R\to[0,1]$ supported in $[-8/5,8/5]$ and equal to $1$ in $[-5/4,5/4]$,
and define
\begin{equation*}
\varphi_k(x) := \varphi(x/2^k) - \varphi(x/2^{k-1}) , \qquad \varphi_{\leq k}(x):=\varphi(x/2^k),
  \qquad\varphi_{\geq k}(x) := 1-\varphi(x/2^{k-1}),
\end{equation*}
for any $k\in\mathbb{Z}$. Let $P_k$, $P_{\leq k}$, and $P_{\geq k}$ denote the operators on $\mathbb{T}_R$ defined by the Fourier multipliers $\varphi_k$,
$\varphi_{\leq k}$, and $\varphi_{\geq k}$ respectively.
Moreover, let
\begin{equation}\label{P'_k}
P'_k := \sum_{a\in\{-2,-1,0,1,2\}}P_{k+a}\qquad \mbox{and} \qquad \varphi'_k :=  \sum_{a\in\{-2,-1,0,1,2\}}\varphi_{k+a}
\end{equation}
Notice that $P_k=0$ if $2^kR\leq 1/2$.
We also let $\overline{\Z}:=\Z\cup\{-\infty\}$ and define the operator $P_{-\infty}$ by
\begin{equation*}
P_{-\infty}f(x):=\frac{1}{2\pi R}\int_{\T_R}f(y)\,dy.
\end{equation*}

The operators $P_k$ are given by convolution in the physical space. 
More precisely, for $f\in L^1(\T_R)$ and $k\in\mathbb{Z}$ we have
\begin{equation}\label{Pkconv1}
\begin{split}
P_kf(x)&=\int_{\T_R}f(y) K_k(x-y),
\\
K_k(x)&:=\frac{1}{2\pi R}\sum_{\xi\in\Z/R}\varphi_k(\xi)e^{i x\xi}
  =\frac{1}{2\pi R}\sum_{n\in\Z}\varphi_0(n/(R2^k))e^{i (x/R)n},
\end{split}
\end{equation}
as a consequence of the Fourier inversion formula in \eqref{FT2}. The kernels $K_k$ can be estimated using the Poisson summation formula,
\begin{equation}\label{Pkconv2}
K_k(x)=\sum_{n\in\mathbb{Z}}\frac{1}{2\pi R}\int_{\R}\varphi_0(\eta/(R2^k))e^{i (x/R)\eta}e^{-i2\pi n\eta}\,d\eta=\sum_{n\in\Z}2^k\widetilde{\varphi_0}\big(2^k(x-2\pi nR)\big),
\end{equation}
where $\widetilde{\varphi_0}$ denotes the Euclidean inverse Fourier transform of $\varphi_0$. In particular $\|K_k\|_{L^1(\T_R)}\lesssim 1$ for any $k\in\Z$, and therefore the operators $P_k$ are uniformly bounded on $L^p$, $p\in[1,\infty]$.

\subsection{Multipliers and associated operators} We will often work with multipliers $m:\mathbb{R}^2\to\mathbb{C}$ or $m:\mathbb{R}^3\to\mathbb{C}$, and operators defined by such multipliers. We define the class of symbols
\begin{equation}
\label{Sinfty}
S^\infty := \{m: (\Z/R)^d \to \mathbb{C} :\, {\| m \|}_{S^\infty} := {\|\mathcal{F}^{-1}(m)\|}_{L^1(\T_R^d)} < \infty \}.
\end{equation}
Given a suitable symbol $m$ we define the associated bilinear operator $M$ by
\begin{equation}\label{Moutm}
\mathcal{F}\big[M(f,g)\big](\xi)=\frac{1}{2\pi R}\sum_{\eta\in\mathbb{Z}/R}m(\xi-\eta,\eta)\widehat{f}(\xi-\eta)\widehat{g}(\eta).
\end{equation}

For $d\geq 2$ we define also the hyperplanes
\begin{equation}\label{hyperplane}
\begin{split}
&\mathbb{H}^d_R:=\{(\xi_1,\ldots,\xi_{d+1})\in(\Z/R)^{d+1}:\,\xi_1+\ldots+\xi_{d+1}=0\},\\
&\mathbb{H}^d_\infty:=\{(\xi_1,\ldots,\xi_{d+1})\in\R^{d+1}:\,\xi_1+\ldots+\xi_{d+1}=0\},
\end{split}
\end{equation}
and the associated class of symbols
\begin{equation}\label{hyperplane2}
\begin{split}
&\widetilde{S}^\infty =\widetilde{S}^\infty(\mathbb{H}^d_R):= \big\{m: \mathbb{H}_R^d \to \mathbb{C} :\\
&{\| m \|}_{\widetilde{S}^\infty} := \Big\|\frac{1}{(2\pi R)^d}\sum_{\xi_1,\ldots,\xi_d\in\Z/R}m(\xi_1,\ldots,\xi_d,-\xi_1-\ldots-\xi_d)e^{i(x_1\xi_1+\ldots+x_d\xi_d)}\Big\|_{L^1_x(\T_R^d)} < \infty \big\}.
\end{split}
\end{equation}

\smallskip
Lemma \ref{touse} below summarizes some properties of symbols and associated operators.

\begin{lemma}\label{touse} 
With the notation \eqref{Sinfty} and \eqref{hyperplane2} we have the following:

(i) Assume $p,q,r\in[1,\infty]$ satisfy $1/p+1/q=1/r$, and $m\in S^\infty$. Then
\begin{equation}\label{mk6}
\|M(f,g)\|_{L^r(\mathbb{T}_R)} \lesssim \|m\|_{S^\infty} \|f\|_{L^p(\mathbb{T}_R)} \|g\|_{L^q(\mathbb{T}_R)}.
\end{equation}
for any $f,g\in L^2(\mathbb{T}_R)$. 

(ii) More generally, if $p_1,\ldots,p_n\in[1,\infty]$, $n\geq 3$, 
are exponents that satisfy $1/p_1+\ldots+1/p_n=1$ and $m\in\widetilde{S}^{\infty}(\mathbb{H}^{n-1}_R)$ then
\begin{equation}\label{mk6.5}
\begin{split}
\Big|\frac{1}{(2\pi R)^{n-1}}\sum_{(\xi_1,\ldots,\xi_{n})\in\mathbb{H}^{n-1}_R}
  \widehat{f_1}(\xi_1)\cdot\ldots\cdot\widehat{f_n}(\xi_{n})m(\xi_1,\ldots,\xi_n)\Big|
  \\
\lesssim \|f_1\|_{L^{p_1}(\mathbb{T}_R)}\cdot\ldots\cdot\|f_n\|_{L^{p_n}(\mathbb{T}_R)}\|m\|_{\widetilde{S}^\infty}.
\end{split}
\end{equation}
\end{lemma}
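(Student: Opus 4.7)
The plan is to prove both estimates by a transference argument: represent each symbol as the Fourier transform of its physical-side kernel, substitute into the multilinear expression, collapse the resulting Fourier sums via the inversion identities in \eqref{FT2}, and then conclude by Minkowski's inequality together with Hölder's inequality applied to translates of the $f_j$.

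For part (i), I would first use the definition \eqref{Sinfty} to write $m(\alpha,\beta)=\int_{\T_R^2}K(y,z)e^{-i(\alpha y+\beta z)}\,dy\,dz$ with $K:=\mathcal{F}^{-1}(m)$ satisfying $\|K\|_{L^1(\T_R^2)}=\|m\|_{S^\infty}$. Substituting this into \eqref{Moutm} and then into the Fourier inversion formula for $M(f,g)$, the two summations over $\xi$ and $\eta$ separate and collapse to Fourier inversions for $f$ and $g$, producing the physical-side representation
\begin{equation*}
M(f,g)(x)=\int_{\T_R^2}K(y,z)\,f(x-y)\,g(x-z)\,dy\,dz.
\end{equation*}
Taking $L^r$-norms and applying Minkowski's integral inequality, followed by the translation-invariant Hölder bound $\|f(\cdot-y)g(\cdot-z)\|_{L^r}\leq\|f\|_{L^p}\|g\|_{L^q}$ (valid when $1/p+1/q=1/r$), immediately yields \eqref{mk6}.

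For part (ii), the strategy is analogous but requires first eliminating the hyperplane constraint. I would parametrize $\mathbb{H}^{n-1}_R$ by $\xi_n=-\xi_1-\cdots-\xi_{n-1}$ and set $\widetilde{m}(\xi_1,\ldots,\xi_{n-1}):=m(\xi_1,\ldots,\xi_{n-1},-\xi_1-\cdots-\xi_{n-1})$, so that by definition \eqref{hyperplane2} the inverse Fourier transform $\widetilde{K}:=\mathcal{F}^{-1}(\widetilde{m})$ on $\T_R^{n-1}$ satisfies $\|\widetilde{K}\|_{L^1(\T_R^{n-1})}=\|m\|_{\widetilde{S}^\infty}$. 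Rewriting $\widehat{f_n}(-\xi_1-\cdots-\xi_{n-1})$ as $\int_{\T_R}f_n(z)e^{i(\xi_1+\cdots+\xi_{n-1})z}\,dz$ and expanding $\widetilde{m}$ through $\widetilde{K}$, the remaining $(n-1)$-fold sum in $(\xi_1,\ldots,\xi_{n-1})$ factors and each factor collapses by Fourier inversion, producing
\begin{equation*}
\int_{\T_R^{n-1}}\widetilde{K}(y_1,\ldots,y_{n-1})\int_{\T_R}f_n(z)\prod_{j=1}^{n-1}f_j(z-y_j)\,dz\,dy_1\cdots dy_{n-1}.
\end{equation*}
The inner $z$-integral is then bounded, uniformly in $y$, by the $n$-fold Hölder inequality with exponents satisfying $\sum_j 1/p_j=1$, yielding $\prod_j\|f_j\|_{L^{p_j}}$, and the outer integration in $y$ contributes exactly the factor $\|\widetilde{K}\|_{L^1}=\|m\|_{\widetilde{S}^\infty}$.

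Both parts are essentially routine transference arguments with no real analytic obstacle. The only mild point that needs care is the bookkeeping in (ii): one must check that the apparent asymmetry introduced by singling out $\xi_n$ is consistent with the symmetric-looking definition in \eqref{hyperplane2}. This is transparent once one observes that the inverse Fourier transform $\widetilde{K}$ defined above is precisely the physical-side kernel appearing in \eqref{hyperplane2}, so the identification $\|\widetilde{K}\|_{L^1}=\|m\|_{\widetilde{S}^\infty}$ is essentially a tautology.
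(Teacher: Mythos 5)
Your proposal is correct and follows essentially the same transference argument as the paper: rewrite the multilinear sum via the physical-side kernel $K$ with $\|K\|_{L^1}=\|m\|_{\widetilde{S}^\infty}$, then apply H\"older's inequality in the common variable $z$ and integrate the kernel in $L^1$. The only cosmetic difference is that the paper deduces (i) from (ii) with $n=3$ by duality, whereas you prove (i) directly via Minkowski; both are routine and equivalent.
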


\begin{proof}The bounds \eqref{mk6} follow from \eqref{mk6.5} with $n=3$. To prove \eqref{mk6.5} we rewrite
\begin{equation*}
\begin{split}
\frac{1}{(2\pi R)^{n-1}}&\sum_{(\xi_1,\ldots,\xi_{n})\in\mathbb{H}^{n-1}_R}\widehat{f_1}(\xi_1)\cdot\ldots\cdot\widehat{f_n}(\xi_{n})m(\xi_1,\ldots,\xi_n)\\
&=\int_{\T_R^n}f_1(x_1)\cdot\ldots\cdot f_n(x_n)K(x_n-x_1,\ldots,x_n-x_{n-1})\,dx_1\ldots dx_n\\
&=\int_{\T_R^n}f_1(z-y_1)\cdot\ldots\cdot f_{n-1}(z-y_{n-1})f_n(z)K(y_1,\ldots,y_{n-1})\,dy_1\ldots dy_{n-1}dz,
\end{split}
\end{equation*}
where 
\begin{equation*}
K(y_1,\ldots,y_{n-1}):=\frac{1}{(2\pi R)^{n-1}}\sum_{\xi_1,\ldots,\xi_{n-1}\in\Z/R}m(\xi_1,\ldots,\xi_{n-1},-\xi_1-\ldots-\xi_{n-1})e^{i(y_1\xi_1+\ldots+y_{n-1}\xi_{n-1})}.
\end{equation*}
Since $\|K\|_{L^1}={\| m \|}_{\widetilde{S}^\infty}$ the desired bounds \eqref{mk6.5} follow using 
H\"{o}lder's inequality in $z$.
\end{proof}

To apply Lemma \ref{touse} we need to bound the $S^\infty$ and $\widetilde{S}^\infty$ norms of various multipliers. 
We will often use the following simple properties:

\begin{lemma}\label{multiBound}
The following hold true: 

(i) If $m,m'\in S^\infty$ then $m\cdot m'\in S^\infty$ and
\begin{equation}\label{al8}
\|m\cdot m'\|_{S^\infty}\lesssim \|m\|_{S^\infty}\|m'\|_{S^\infty}.
\end{equation}
Similarly, if $m,m'\in \widetilde{S}^\infty$ then $m\cdot m'\in \widetilde{S}^\infty$ and
\begin{equation}\label{al8.4}
\|m\cdot m'\|_{\widetilde{S}^\infty}\lesssim \|m\|_{\widetilde{S}^\infty}\|m'\|_{\widetilde{S}^\infty}.
\end{equation}
Moreover, if $d\geq 2$, $\sigma:\{1,\ldots,d+1\}\to\{1,\ldots,d+1\}$ is a permutation, $m\in\widetilde{S}^\infty(\mathbb{H}_R^d)$, and $m_\sigma(\xi_1,\ldots,\xi_{d+1}):=m(\xi_{\sigma(1)},\ldots,\xi_{\sigma(d+1)})$ then 
\begin{equation}\label{al8.44}
\|m_\sigma\|_{\widetilde{S}^\infty}=\|m\|_{\widetilde{S}^\infty}.
\end{equation}

(ii) Assume that $d\geq 1$ and $m:\R^d\to\mathbb{C}$ is a continuous compactly supported function and $\check{m}\in L^1(\R^d)$ is the Euclidean inverse Fourier transform of $m$. Then
\begin{equation}\label{al8.6}
\Big\|\frac{1}{(2\pi R)^d}\sum_{\xi_1,\ldots,\xi_d\in\Z/R}m(\xi_1,\ldots,\xi_d)e^{i(x_1\xi_1+\ldots+x_d\xi_d)}\Big\|_{L^1_x(\T_R^d)}\lesssim \|\check{m}\|_{L^1(\R^d)}.
\end{equation}

(iii) In particular, assume that $\underline{k}=(k_1,\ldots,k_d)\in\mathbb{Z}^d$ and let $\varphi_{\underline{k}}(\xi):=\varphi_{k_1}(\xi_1)\ldots\varphi_{k_d}(\xi_d)$. Assume that the multiplier $m:(\Z/R)^d\to\mathbb{C}$ admits an extension $m':\R^d\to\mathbb{C}$ (such that $m(\xi)\varphi_{\underline{k}}(\xi)=m'(\xi)\varphi_{\underline{k}}(\xi)$ for all $\xi\in(\Z/R)^d$) satisfying the differential inequalities
\begin{equation}\label{al8.7}
\big|2^{\alpha\cdot \underline{k}}\partial^\al_{\xi}\big[m'(\xi_1,\ldots,\xi_d)\varphi_{k_1}(\xi_1)\ldots\varphi_{k_d}(\xi_d)\big]\big|\lesssim \Lambda
\end{equation}
and for any multi-index $\alpha=(\alpha_1,\ldots,\alpha_d)$ with $|\alpha|:=\alpha_1+\ldots+\alpha_d\leq (d+2)/2$ and any $\xi_1,\ldots,\xi_d\in \R$, where $2^{\alpha\cdot \underline{k}}=2^{\alpha_1k_1+\ldots+\alpha_dk_d}$, $\partial^\al_{\xi}=\partial^{\al_1}_{\xi_1}\ldots\partial^{\al_d}_{\xi_d}$, and $\Lambda\in(0,\infty)$. Then
\begin{equation}\label{al8.77}
\big\|m(\xi)\varphi_{\underline{k}}(\xi)\big\|_{S^\infty}\lesssim \Lambda.
\end{equation}
The conclusion also holds if one replaces some of the functions $\varphi_{k_j}(\xi_j)$ with $\varphi_{\leq k_j}(\xi_j)$ in both the assumption \eqref{al8.7} and the conclusion \eqref{al8.77}.

(iv) Similarly, if $\underline{k}=(k_1,\ldots,k_{d+1})\in\mathbb{Z}^{d+1}$, $\Lambda\in(0,\infty)$, and the multiplier $m:\mathbb{H}^d_R\to\mathbb{C}$ admits an extension $m'':\R^{d+1}\to\mathbb{C}$ (such that $m(\xi)\varphi_{\underline{k}}(\xi)=m''(\xi)\varphi_{\underline{k}}(\xi)$ for all $\xi\in\mathbb{H}_R^d$) that satisfies the differential inequalities
\begin{equation}\label{al8.8}
\big|2^{\alpha\cdot \underline{k}}\partial^\al_{\xi}\big[m''(\xi_1,\ldots,\xi_{d+1})\varphi_{k_1}(\xi_1)\ldots\varphi_{k_{d+1}}(\xi_{d+1})\big]\big|\lesssim \Lambda
\end{equation}
and for any multi-index $\alpha=(\alpha_1,\ldots,\alpha_{d+1})$ with $|\alpha|\leq (d+2)/2$ and any $\xi\in\mathbb{H}^d_\infty$, then
\begin{equation}\label{al8.87}
\big\|m(\xi)\varphi_{\underline{k}}(\xi)\big\|_{\widetilde{S}^\infty}\lesssim \Lambda.
\end{equation}
The conclusion also holds if one replaces some of the functions $\varphi_{k_j}(\xi_j)$
with $\varphi_{\leq k_j}(\xi_j)$ in both the assumption \eqref{al8.8} and the conclusion \eqref{al8.87}.
\end{lemma}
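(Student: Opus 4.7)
\emph{Plan.} I will treat the four parts more or less independently, each reducing to a standard identity.

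For part (i), I would derive the submultiplicativity bounds \eqref{al8} and \eqref{al8.4} from the convolution theorem on $(\Z/R)^d$: a short computation using the Fourier inversion formula \eqref{FT2} yields $\mathcal{F}^{-1}(m\cdot m')=\mathcal{F}^{-1}(m)\ast\mathcal{F}^{-1}(m')$ as functions on $\T_R^d$, and Young's inequality $L^1\ast L^1\to L^1$ immediately gives the product bound. The same argument works for $\widetilde{S}^\infty$, since its norm is likewise an $L^1$ norm on $\T_R^d$ of the kernel associated to the restriction of the symbol to the hyperplane. For the permutation invariance \eqref{al8.44}, I would parameterize $\mathbb{H}^d_R$ by $(\xi_1,\ldots,\xi_d)$ with $\xi_{d+1}=-\xi_1-\ldots-\xi_d$ and observe that the kernel associated to $m_\sigma$ differs from that of $m$ by an affine change of coordinates on $\T_R^d$ of unit Jacobian; since transpositions generate the symmetric group, this suffices.

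Part (ii) is a direct consequence of Poisson summation. With the normalization in \eqref{FT2}, one has the identity
\begin{equation*}
\frac{1}{(2\pi R)^d}\sum_{\xi\in(\Z/R)^d}m(\xi)\,e^{i\xi\cdot x}=\sum_{n\in\Z^d}\check m(x+2\pi Rn),
\end{equation*}
which identifies the left-hand side as the periodization of $\check m$; its $L^1(\T_R^d)$ norm is $\leq \|\check m\|_{L^1(\R^d)}$ by the triangle inequality applied to the shifted copies.

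For parts (iii) and (iv), the strategy is to reduce via part (ii) to an $L^1(\R^d)$ bound on the Euclidean inverse Fourier transform of the truncated extension $M:=m'\cdot \varphi_{\underline k}$. By Cauchy--Schwarz with the weight $w(x):=\bigl(1+\sum_{j}2^{2k_j}x_j^2\bigr)^{s}$,
\begin{equation*}
\|\check M\|_{L^1(\R^d)}\leq \|w^{-1/2}\|_{L^2(\R^d)}\cdot \|w^{1/2}\check M\|_{L^2(\R^d)}.
\end{equation*}
The change of variables $y_j=2^{k_j}x_j$ gives $\|w^{-1/2}\|_{L^2}\lesssim 2^{-|\underline k|/2}$ as soon as $s>d/2$. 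Expanding $w$ multinomially and invoking Plancherel converts the second norm into $L^2$-norms of $\xi$-derivatives of $M$ of order $|\alpha|\leq s$; the hypothesis \eqref{al8.7}, combined with the support size $\prod_j 2^{k_j}$, yields $\|w^{1/2}\check M\|_{L^2}\lesssim \Lambda\cdot 2^{|\underline k|/2}$, and the two scale factors cancel to produce \eqref{al8.77}. The smallest integer $s$ with $s>d/2$ satisfies $s\leq (d+2)/2$, matching the differentiability hypothesis. For part (iv), I would first identify $\mathbb{H}^d_\infty$ with $\R^d$ by eliminating $\xi_{d+1}=-\xi_1-\ldots-\xi_d$; since $|\xi_{d+1}|\lesssim \max_j 2^{k_j}$ on the relevant support, the bounds \eqref{al8.8} transfer (with the loss of only harmless constants) to bounds of the form \eqref{al8.7} in the remaining $d$ variables, after which the argument for (iii) applies verbatim.

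The main obstacle is purely bookkeeping: one must track the scales $2^{k_j}$ through the Cauchy--Schwarz/Plancherel computation and, in part (iv), verify that the elimination of $\xi_{d+1}$ does not degrade the differential hypothesis \eqref{al8.8} into unfavorable dependence on the $k_j$. Conceptually, however, the lemma is a routine compendium of facts about Fourier multiplier norms on $\T_R^d$, and no new ideas beyond Young's inequality, Poisson summation, and Plancherel are needed.
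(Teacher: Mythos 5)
Your proposal is correct and follows essentially the same route as the paper: (i) is the convolution theorem plus Young's inequality, (ii) is Poisson summation, and (iii)--(iv) are the reduction to a Euclidean $L^1$ bound followed by weighted Cauchy--Schwarz and Plancherel (the paper phrases the weighted estimate as a pointwise bound $\lesssim 2^{|\underline k|/2}(1+\sum_j 2^{k_j}|x_j|)^{-(d+1)/2}K(x)$ with $\|K\|_{L^2}\lesssim\Lambda$, which is the same computation).

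One detail in part (iv) needs to be stated more carefully. Your justification that the bounds \eqref{al8.8} ``transfer'' after eliminating $\xi_{d+1}$ because $|\xi_{d+1}|\lesssim\max_j 2^{k_j}$ is not the relevant point: the issue is the chain rule. Writing $N(\xi_1,\ldots,\xi_d):=\big(m''\varphi_{\underline k}\big)(\xi_1,\ldots,\xi_d,-\xi_1-\ldots-\xi_d)$ one gets $\partial_{\xi_j}N=\big[(\partial_{j}-\partial_{d+1})(m''\varphi_{\underline k})\big]$ restricted to the hyperplane, so \eqref{al8.8} only yields $|\partial_{\xi_j}N|\lesssim\Lambda(2^{-k_j}+2^{-k_{d+1}})$, which is of the required size $\Lambda 2^{-k_j}$ only if $k_{d+1}\geq k_j-O(1)$. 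Hence one must eliminate the variable carrying the \emph{largest} $k$, which is permissible by the permutation invariance \eqref{al8.44}; this is exactly what the paper does by first assuming $k_1=\max(k_1,\ldots,k_d)$ and solving for $\xi_1$. With that choice your argument for (iii) then applies verbatim, as you claim.
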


\begin{proof} 
(i) The claims \eqref{al8}--\eqref{al8.44} follow easily from definitions and \eqref{FT2}. 

(ii) To prove \eqref{al8.6} we use the Poisson summation formula to write
\begin{equation*}
\begin{split}
\frac{1}{(2\pi R)^d}\sum_{\xi\in(\Z/R)^d}m(\xi)e^{ix\cdot\xi}&=\frac{1}{(2\pi R)^d}\sum_{\xi\in\Z^d}m(\xi/R)e^{i(x/R)\cdot\xi}\\
&=\frac{1}{(2\pi R)^d}\sum_{a\in\Z^d}\int_{\R^d}m(\xi/R)e^{i(x/R)\cdot\xi}e^{-2\pi ia\cdot\xi}\,d\xi\\
&=\sum_{a\in\Z^d}\check{m}(x-2\pi Ra).
\end{split}
\end{equation*}
The desired bound \eqref{al8.6} follows.

(iii) In view of \eqref{al8.6} we have
\begin{equation}\label{alho2.6}
\big\|m(\xi)\varphi_{\underline{k}}(\xi)\big\|_{S^\infty}\lesssim\Big\|\int_{\R^d}m'(\xi)\varphi_{\underline{k}}(\xi)e^{ix\cdot\xi}\,d\xi\Big\|_{L^1_x}.
\end{equation}
We integrate by parts in $\xi$, and use the assumptions \eqref{al8.7} and Plancherel theorem to see that
\begin{equation}\label{alho2.7}
\Big|\int_{\R^d}m'(\xi)\varphi_{\underline{k}}(\xi)e^{ix\cdot\xi}\,d\xi\Big|\lesssim \frac{2^{k_1/2}\cdot\ldots\cdot 2^{k_d/2}}{(1+2^{k_1}|x_1|+\ldots+2^{k_d}|x_d|)^{(d+1)/2}}K(x_1,\ldots,x_d)
\end{equation}
for any $(x_1,\ldots,x_d)\in\R^d$, where the kernel $K$ satisfies $\|K\|_{L^2}\lesssim \Lambda$. The desired bounds \eqref{alge2.2} follow using the Cauchy-Schwarz inequality.

(iv) Without loss of gene\-ra\-lity we may assume that $k_1=\max(k_1,\ldots,k_{d})$. Recalling the definition \eqref{hyperplane2} and using the bounds \eqref{al8.6} it suffices to show that
\begin{equation}\label{algh6}
\begin{split}
\Big\|\int_{\R^{d}}&m''(-\xi_2-\ldots-\xi_{d+1},\xi_2,\ldots,\xi_{d+1})\varphi_{k_1}(-\xi_2-\ldots-\xi_{d+1})\\
&\times\varphi_{k_2}(\xi_2)\ldots\varphi_{k_{d+1}}(\xi_{d+1})e^{i(x_2\xi_2+\ldots+x_{d+1}\xi_{d+1})}\,d\xi_2\ldots d\xi_{d+1}\Big\|_{L^1(\R^{d})}\lesssim \Lambda.
\end{split}
\end{equation}
Using the assumptions \eqref{al8.8}, integration by parts, and the Plancherel theorem we have
\begin{equation*}
\begin{split}
\Big|\int_{\R^{d}}m''(-\xi_2&-\ldots-\xi_{d+1},\xi_2,\ldots,\xi_{d+1})\varphi_{k_1}(-\xi_2-\ldots-\xi_{d+1})\varphi_{k_2}(\xi_2)\ldots\varphi_{k_{d+1}}(\xi_{d+1})\\
&\times e^{i(x_2\xi_2+\ldots+x_{d+1}\xi_{d+1})}\,d\xi_2\ldots d\xi_{d+1}\Big|\lesssim \frac{2^{k_2/2}\cdot\ldots\cdot 2^{k_{d+1}/2}\cdot K(x_2,\ldots,x_{d+1})}{(1+2^{k_2}|x_2|+\ldots+2^{k_{d+1}}|x_{d+1}|)^{(d+1)/2}},
\end{split}
\end{equation*}
where $\|K\|_{L^2}\lesssim\Lambda$. The bounds \eqref{algh6} follow.
\end{proof}

The lemma above motivates the following definition:

\begin{definition}[H\"ormander-Mikhlin norm]\label{defHM}
Given a multiplier $m:\mathbb{H}^d_R\to\mathbb{C}$, 
a lattice point $\underline{k}=(k_1,\ldots,k_{d+1})\in\mathbb{Z}^{d+1}$, an integer $a\geq 1$,
and a number $\Lambda\in(0,\infty)$, we let 
$\varphi_{\underline{k}}(\xi):=\varphi_{k_1}(\xi_1)\ldots\varphi_{k_{d+1}}(\xi_{d+1})$ and write
\begin{equation}\label{HMD1}
\big\|m\cdot\varphi_{\underline{k}}\big\|_{\mathcal{HM}_a}\lesssim \Lambda
\end{equation}
if $m$ admits an extension $m'':\R^{d+1}\to\mathbb{C}$ such that 
$m(\xi)\varphi_{\underline{k}}(\xi)=m''(\xi)\varphi_{\underline{k}}(\xi)$ for all $\xi\in\mathbb{H}_R^d$ and 
\begin{equation}\label{HMD2}
\big|2^{\alpha\cdot \underline{k}}\partial^\al_{\xi}
\big[m''(\xi_1,\ldots,\xi_{d+1})\varphi_{k_1}(\xi_1)\ldots\varphi_{k_{d+1}}(\xi_{d+1})\big]\big|\lesssim \Lambda
\end{equation}
for any multi-index $\alpha=(\alpha_1,\ldots,\alpha_{d+1})$ with $|\alpha|\leq a$ 
and any $\xi\in\mathbb{H}^d_\infty$. 
Here $2^{\alpha\cdot \underline{k}}=2^{\alpha_1k_1+\ldots+\alpha_dk_d}$ 
and $\partial^\al_{\xi}=\partial^{\al_1}_{\xi_1}\ldots\partial^{\al_d}_{\xi_d}$.
\end{definition}

\subsection{Function spaces} We control our solutions using two types of spaces: the $L^2$-based spaces $\dot{H}^{N,b}(\T_R)$ and the $L^\infty$-based spaces $\dot{W}^{N,b}(\T_R)$, defined by the norms
\begin{equation}\label{HW}
\begin{split}
\|f\|_{\dot{H}^{N,b}}&:=\Big\{\|P_{-\infty}f\|_{L^2}^2R^{-2b}+\sum_{k\in\mathbb{Z}}\|P_kf\|_{L^2}^2(2^{2Nk}+2^{2bk})\Big\}^{1/2},\\
\|f\|_{\dot{W}^{N,b}}&:=\|P_{-\infty}f\|_{L^\infty}R^{-b}+\sum_{k\in\mathbb{Z}}\|P_kf\|_{L^\infty}(2^{Nk}+2^{bk}),
\end{split}
\end{equation}
where $b\in[-1,1]$ and $N\geq \max(b,0)$. Notice that $\dot{H}^{N,0}=H^{N}$ (the standard Sobolev spaces on $\T_R$). The exponent $b$ is important as it allows us to measure accurately low frequency contributions, which is a key issue in the long-term dynamics of solutions. We notice also that the spaces $\dot{W}^{N,b}$ are invariant under the action of the Hilbert transform.

For technical reasons, connected to the algebra properties in Lemma \ref{algeProp} (i) below, we use also an additional family of $L^\infty$-based Sobolev spaces $\widetilde{W}^N$, defined by the norms
\begin{equation}\label{Wtilde}
\|f\|_{\widetilde{W}^{N}}:=\|f\|_{L^\infty}+\sum_{k\geq 0}2^{Nk}\|P_kf\|_{L^\infty}.
\end{equation}

Recall the Sobolev embedding estimates
\begin{equation}\label{SobEmb}
\|P_kf\|_{L^\infty}\lesssim (2^{k/2}+R^{-1/2})\|P_kf\|_{L^2}\qquad\text{ for any }k\in\mathbb{Z}\cup\{-\infty\}.
\end{equation}
In particular, for any $N\geq 1$ and $f\in H^{N+1}(\T_R)$,
\begin{equation}\label{SobEmb2}
\|f\|_{\widetilde{W}^N}\lesssim \|f\|_{\dot{W}^{N,0}}\lesssim \|f\|_{\widetilde{W}^N}\ln\{2+\|f\|_{H^{N+1}}/\|f\|_{\widetilde{W}^N}\}.
\end{equation}
For any $l\in\R$ let $l^+:=\max(l,0)$ and $l^-:=\min(l,0)$. We will often use the following two lemmas to estimate products of functions and multilinear integrals.

\begin{lemma}\label{algeProp}
(i) Assume $s_0\geq 4$, $s_1\in[1,s_0-1]$, and the functions $f,g:\mathbb{T}_R\to\mathbb{C}$ satisfy the bounds
\begin{equation}\label{alge1}
\|f\|_{H^{s_0}}+\|g\|_{H^{s_0}}\lesssim A_2,\qquad \|f\|_{\widetilde{W}^{s_1}}+\|g\|_{\widetilde{W}^{s_1}}\lesssim A_\infty,
\end{equation}
for some numbers $A_\infty\lesssim A_2\in(0,\infty)$. If $M$ is a bilinear operator defined as in \eqref{Moutm} then
\begin{equation}\label{alge2}
\|M(f,g)\|_{H^{s_0}}\lesssim A_2A_\infty,\qquad \|M(f,g)\|_{\widetilde{W}^{s_1}}\lesssim A^2_\infty,
\end{equation}
provided that the multiplier $m:(\Z/R)^2\to\C$ of the operator $M$ satisfies the bounds
\begin{equation}\label{alge2.2}
\big\|\mathcal{F}^{-1}[m(\rho,\eta)\varphi^\ast_k(\rho)\varphi_j^\ast(\eta)\varphi_l^\ast(\rho+\eta)]\big\|_{L^1(\T_R^2)}\lesssim 2^{s_1\min(k,j)}
\end{equation}
for any $k,j,l\in\Z_+$, where $\varphi^\ast_a:=\varphi_a$ if $a\geq 1$ and $\varphi^\ast_0:=\varphi_{\leq 0}$. In particular
\begin{equation}\label{alge2.3}
\|f\cdot g\|_{H^{s_0}}\lesssim A_2A_\infty,\qquad \|f\cdot g\|_{\widetilde{W}^{s_1}}\lesssim A^2_\infty.
\end{equation}

(ii) The bounds \eqref{alge2.2} hold if $m$ extends to a $C^2$ function $m':\R^2\to\mathbb{C}$ satisfying
\begin{equation}\label{alge2.4}
2^{\alpha k}\big|\partial_\rho^\al[m'(\rho,\eta)\varphi^\ast_k(\rho)\varphi_j^\ast(\eta)]\big|+2^{\alpha j}\big|\partial_\eta^\al[m'(\rho,\eta)\varphi^\ast_k(\rho)\varphi_j^\ast(\eta)]\big|\lesssim 2^{s_1\min(k,j)},
\end{equation}
for any $k,j\in\Z_+$, $\al\in\{0,1,2\}$, and $\rho,\eta\in\R$.

(iii) The bounds \eqref{alge2.2} also hold if we assume that 

(1) $m(\rho,0)=m(0,\rho)=m(-\rho,\rho)=0$ for all $\rho\in\Z/R$;

(2) for any $(k_1,k_2,k_3)\in\Z^3$ there is a $C^2$ function $m'':\R^3\to\mathbb{C}$ such that
\begin{equation}\label{alge3.1}
m(\rho,\eta)\varphi_{k_1}(\rho)\varphi_{k_2}(\eta)\varphi_{k_3}(\rho+\eta)=m''(\rho,\eta,-\rho-\eta)\varphi_{k_1}(\rho)\varphi_{k_2}(\eta)\varphi_{k_3}(\rho+\eta)
\end{equation}
for all $\rho,\eta\in\Z/R$, and
\begin{equation}\label{alge3.2}
\big|\varphi_{k_1}(\xi_1)\varphi_{k_2}(\xi_2)\varphi_{k_3}(\xi_3)\cdot 2^{\alpha\cdot k}(\partial^\al_{\xi}m'')(\xi_1,\xi_2,\xi_3)\big|\lesssim 2^{s_1\min(k_1^+,k_2^+)}2^{\min(k_1^-,k_2^-,k_3^-)/8}
\end{equation}
for any multi-index $\alpha=(\alpha_1,\alpha_2,\alpha_3)$ with $|\alpha|\leq 2$ and any $\xi_1,\xi_2,\xi_3\in \R$ satisfying $\xi_1+\xi_2+\xi_3=0$. Here $2^{\alpha\cdot k}=2^{\alpha_1k_1+\alpha_2k_2+\alpha_3k_3}$ and  $\partial^\al_{\xi}=\partial^{\al_1}_{\xi_1}\partial^{\al_2}_{\xi_2}\partial^{\al_3}_{\xi_3}$. 
\end{lemma}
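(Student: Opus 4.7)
\textbf{Proof plan for Lemma \ref{algeProp}.} The strategy is to prove part (i) by a dyadic paraproduct decomposition combined with the bilinear bound of Lemma \ref{touse}(i), and then to derive \eqref{alge2.2} from the differential hypotheses of parts (ii) and (iii) by invoking Lemma \ref{multiBound}(iv) on the hyperplane $\mathbb{H}_R^2$.

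\emph{Part (i).} Write $f = \sum_{k\geq 0} P_k^\ast f$ and $g = \sum_{j\geq 0} P_j^\ast g$. By \eqref{alge2.2} and Lemma \ref{touse}(i) one has
\[
\|P_l[M(P_k^\ast f, P_j^\ast g)]\|_{L^r} \lesssim 2^{s_1 \min(k,j)} \|P_k^\ast f\|_{L^p} \|P_j^\ast g\|_{L^q}
\]
for any H\"older triple $(p,q,r)$. I would split the $(k,j)$-sum into the usual paraproduct regions: low-high ($k \leq j-5$, forcing $l \sim j$), high-low ($j \leq k-5$, forcing $l \sim k$), and high-high ($|k-j| \leq 5$, with $l \leq \max(k,j) + O(1)$). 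For the $H^{s_0}$ bound take $(p,q,r) = (\infty,2,2)$ or $(2,\infty,2)$, placing $L^\infty$ on the low-frequency factor. In the LH/HL regions the sum $\sum_{k \leq l-5} 2^{s_1 k}\|P_k^\ast f\|_{L^\infty}$ is controlled by $\|f\|_{\widetilde W^{s_1}}$, leaving an $\ell^2$ sum in $l$ that reconstructs $\|g\|_{H^{s_0}}$. In HH, Schur's test on the kernel $2^{s_0(l-k)}\mathbf{1}_{l \leq k+O(1)}$ (bounded because $s_0 \geq 1$) reduces matters to the $\ell^2$-norm in $k$ of $(2^{s_1 k}\|P_k^\ast f\|_{L^\infty})\cdot(2^{s_0 k}\|P_j^\ast g\|_{L^2})$, which by the $\ell^\infty$--$\ell^2$ inequality is bounded by $\|f\|_{\widetilde W^{s_1}}\|g\|_{H^{s_0}}$. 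The $\widetilde W^{s_1}$ bound is analogous with $(p,q,r)=(\infty,\infty,\infty)$; the hypothesis $s_1 \geq 1$ ensures $\sum_{l \leq k} 2^{s_1 l} \lesssim 2^{s_1 k}$ in the HH case. The product bound \eqref{alge2.3} is the special case $m \equiv 1$.

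\emph{Parts (ii) and (iii).} In both cases I would identify $(\Z/R)^2 \ni (\rho,\eta)$ with the hyperplane $\mathbb{H}_R^2$ via $\xi_1 = \rho$, $\xi_2 = \eta$, $\xi_3 = -\rho - \eta$, so that the $L^1$-norm in \eqref{alge2.2} coincides with the $\widetilde{S}^\infty$-norm to which Lemma \ref{multiBound}(iv) applies. For (ii), take the trivial extension $m''(\xi_1,\xi_2,\xi_3) := m'(\xi_1,\xi_2)$, independent of $\xi_3$; derivatives in the $\xi_3$-direction act only on $\varphi_{k_3}(\xi_3)$ and are balanced by $2^{\alpha_3 k_3}$, so the hypothesis \eqref{alge2.4} supplies the bound \eqref{al8.8} with $\Lambda = 2^{s_1 \min(k,j)}$, and \eqref{alge2.2} follows. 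For (iii), decompose dyadically
\[
m(\rho,\eta)\,\varphi_k^\ast(\rho)\varphi_j^\ast(\eta)\varphi_l^\ast(\rho+\eta) = \sum_{k_1,k_2,k_3 \in \Z} m(\rho,\eta)\,\varphi_{k_1}(\rho)\varphi_{k_2}(\eta)\varphi_{k_3}(\rho+\eta),
\]
restricted to triples compatible with the outer cutoffs. For each piece, \eqref{alge3.1}--\eqref{alge3.2} and Lemma \ref{multiBound}(iv) yield
\[
\|m \cdot \varphi_{k_1}\varphi_{k_2}\varphi_{k_3}\|_{\widetilde{S}^\infty} \lesssim 2^{s_1\min(k_1^+,k_2^+)} \cdot 2^{\min(k_1^-,k_2^-,k_3^-)/8}.
\]
When $k,j,l \geq 1$ each $k_i$ is pinned to $O(1)$ values near $k,j,l$ and the sum trivially yields $\lesssim 2^{s_1\min(k,j)}$. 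When some outer index equals $0$, the corresponding $k_i$ ranges over $\Z_{\leq O(1)}$ but the factor $2^{\min^-/8}$ produces absolutely convergent geometric sums, while the positive-part factor stays $\lesssim 2^{s_1\min(k,j)}$ since $k_1^+ \leq k$ and $k_2^+ \leq j$. This establishes \eqref{alge2.2}.

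\emph{Main obstacle.} The most delicate step is the dyadic summation in (iii): one must simultaneously match the two-variable symbol in \eqref{alge2.2} with the three-variable hyperplane formulation of Lemma \ref{multiBound}(iv), reconcile the outer $\varphi_k^\ast\varphi_j^\ast\varphi_l^\ast$ cutoffs with an interior three-parameter dyadic grid, and verify that the low-frequency decay $2^{\min(k_1^-,k_2^-,k_3^-)/8}$---which reflects the vanishing of $m$ on the three coordinate axes $\{m(\rho,0)=m(0,\rho)=m(-\rho,\rho)=0\}$ of $\mathbb{H}_R^2$---is strong enough to close the sum over three independent low-frequency parameters. The paraproduct analysis in (i) is standard once \eqref{alge2.2} is known, and (ii) is a direct application of the H\"ormander--Mikhlin machinery of Lemma \ref{multiBound}.
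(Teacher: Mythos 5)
Your parts (i) and (iii) follow the paper's proof essentially verbatim: the paraproduct decomposition with the low-frequency factor placed in $L^\infty$ for (i), and for (iii) the dyadic decomposition, the application of Lemma \ref{multiBound} (iv) to each localized piece, and the summation closed by the low-frequency gain $2^{\min(k_1^-,k_2^-,k_3^-)/8}$ (noting, as you do, that when all outer indices vanish the hyperplane constraint pins the two largest inner parameters together, so the sum converges).

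The gap is in part (ii). You reduce \eqref{alge2.2} to Lemma \ref{multiBound} (iv) via the trivial extension $m''(\xi_1,\xi_2,\xi_3):=m'(\xi_1,\xi_2)$ and assert that \eqref{alge2.4} supplies the hypothesis \eqref{al8.8}. But \eqref{al8.8} with $d=2$ requires the differential inequalities for \emph{all} multi-indices $|\alpha|\leq 2$, in particular the mixed derivative $\partial_{\xi_1}\partial_{\xi_2}\big[m''\varphi_{k_1}\varphi_{k_2}\varphi_{k_3}\big]$, which involves $\partial_\rho\partial_\eta m'$. The hypothesis \eqref{alge2.4} controls only the pure derivatives $\partial_\rho^\alpha$ and $\partial_\eta^\alpha$ separately and says nothing about mixed derivatives, so this step is not justified as written. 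The paper's proof of (ii) deliberately avoids the hyperplane lemma: after discarding the factor $\varphi_l^\ast(\rho+\eta)$ using the algebra property \eqref{al8} (see \eqref{alge2.6}), it integrates by parts twice in $\rho$ alone or twice in $\eta$ alone — whichever variable is favorable — to obtain the pointwise kernel bound \eqref{alge2.7} with denominator $(1+2^k|x|)^2+(1+2^j|y|)^2$, and concludes by Plancherel and Cauchy--Schwarz. You should either carry out this one-variable-at-a-time integration by parts, or note explicitly that your argument needs mixed-derivative control, which is strictly stronger than what the lemma assumes.
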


\begin{proof} (i) For $k\geq 0$ and $p\in\{2,\infty\}$ we estimate
\begin{equation*}
\begin{split}
\|P_kM(f,g)\|_{L^p}&\lesssim \|P_kM(P'_kf,P_{\leq k-4}g)\|_{L^p}+\|P_kM(P_{\leq k-4}f,P'_kg)\|_{L^p}\\
&+\sum_{a,b\geq k-4,\,|a-b|\leq 8}\|P_kM(P_af,P_bg)\|_{L^p}\\
&\lesssim A_\infty\sum_{a\geq k-4}\big(\|P_af\|_{L^p}+\|P_ag\|_{L^p}\big),
\end{split}
\end{equation*}
using the assumption \eqref{alge2.2} and the bounds \eqref{mk6} (with the low frequency placed in $L^\infty$ and the high frequency in $L^p$). Moreover, letting $P_a^\ast:=P_a$ if $a\geq 1$ and $P_0^\ast:=P_{\leq 0}$, we also have
\begin{equation*}
\begin{split}
\|P_{\leq 0}M(f,g)\|_{L^p}&\lesssim \sum_{a,b\geq 0,\,|a-b|\leq 8}\|P_0^\ast M(P^\ast_af,P^\ast_bg)\|_{L^p}\lesssim A_\infty\sum_{a\geq 0}\big(\|P_a^\ast f\|_{L^p}+\|P_a^\ast g\|_{L^p}\big).
\end{split}
\end{equation*}
The desired bounds \eqref{alge2} follow from these two estimates.

(ii) We show now that the differential assumptions \eqref{alge2.4} imply the $S^\infty$ bounds \eqref{alge2.2}. This is similar to the proof of Lemma \ref{multiBound} (iii). Indeed, in view of \eqref{al8} and \eqref{al8.6} we have
\begin{equation}\label{alge2.6}
\begin{split}
\big\|\mathcal{F}^{-1}[m(\rho,\eta)\varphi^\ast_k(\rho)\varphi_j^\ast(\eta)&\varphi_l^\ast(\rho+\eta)]\big\|_{L^1(\T_R^2)}\lesssim \big\|\mathcal{F}^{-1}[m(\rho,\eta)\varphi^\ast_k(\rho)\varphi_j^\ast(\eta)]\big\|_{L^1(\T_R^2)}\\
&\lesssim\Big\|\int_{\R^2}m'(\rho,\eta)\varphi^\ast_k(\rho)\varphi_j^\ast(\eta)e^{i(x\rho+y\eta)}\,d\rho d\eta\Big\|_{L^1_{x,y}}.
\end{split}
\end{equation}
Let $\Lambda:=2^{s_1\min(k,j)}$. We integrate by parts in $\rho$ and $\eta$, and use the assumptions \eqref{alge2.4} and the Plancherel theorem to see that, for any $(x,y)\in\R^2$,
\begin{equation}\label{alge2.7}
\Big|\int_{\R^2}m'(\rho,\eta)\varphi^\ast_k(\rho)\varphi_j^\ast(\eta)e^{i(x\rho+y\eta)}\,d\rho d\eta\Big|\lesssim \frac{2^{k/2}2^{j/2}}{(1+2^k|x|)^2+(1+2^j|y|)^2}K(x,y)
\end{equation}
where $K$ satisfies $\|K\|_{L^2}\lesssim \Lambda$. The bounds \eqref{alge2.2} follow using the Cauchy-Schwarz inequality.

(iii) In view of Lemma \ref{multiBound} (iv), for any $k_1,k_2,k_3\in\Z$ we have
\begin{equation}\label{alge3.4}
\big\|\mathcal{F}^{-1}[m(\rho,\eta)\varphi_{k_1}(\rho)\varphi_{k_2}(\eta)\varphi_{k_3}(\rho+\eta)]\big\|_{L^1(\T_R^2)}\lesssim 2^{s_1\min(k_1^+,k_2^+)}2^{\min(k_1^-,k_2^-,k_3^-)/8}.
\end{equation}
The bounds \eqref{alge2.2} follow by summation if any of the parameters $k$, $j$, or $l$ is equal to $0$.
\end{proof}

\begin{lemma}\label{algeProp7}
Assume $d\geq 3$, $s_0\geq 4$, $s_1\in[1,s_0-1]$, and $f_1,\ldots,f_d:\mathbb{T}_R\to\mathbb{C}$ are functions that satisfy the bounds
\begin{equation}\label{alge17}
\|f_j\|_{H^{s_0}}\lesssim A_2,\qquad \|f_j\|_{\widetilde{W}^{s_1}}\lesssim A_\infty
\end{equation}
for some numbers $A_\infty\lesssim A_2\in(0,\infty)$, and any $j\in\{1,\ldots,d\}$. Then
\begin{equation}\label{algh1}
\Big|\frac{1}{(2\pi R)^{d-1}}\sum_{(\xi_1,\ldots,\xi_{d})\in\mathbb{H}^{d-1}_R}\widehat{f_1}(\xi_1)\cdot\ldots\cdot\widehat{f_d}(\xi_d)m(\xi_1,\ldots,\xi_d)\Big|\lesssim A_2^2A_{\infty}^{d-2},
\end{equation}
provided that the multiplier $m$ vanishes when one of the arguments is equal to $0$, and 
\begin{equation}\label{algh2}
\begin{split}
&\big\|m(\xi_1,\ldots,\xi_d)\varphi_{k_1}(\xi_1)\ldots\varphi_{k_d}(\xi_d)\big\|_{\widetilde{S}^\infty}\lesssim \Lambda(k_1,\ldots,k_d),\\
&\Lambda(k_1,\ldots,k_d):=2^{s_0\max(\widetilde{k}_1,0)}2^{s_0\max(\widetilde{k}_2,0)}2^{\min(\widetilde{k}_d,0)/8}\prod_{j=3}^d2^{s_1\max(\widetilde{k}_j,0)},
\end{split}
\end{equation}
for any $k_1,\ldots,k_d\in\Z$, where $\widetilde{k}_1\geq\widetilde{k}_2\geq\ldots\geq\widetilde{k}_d$ is the decreasing rearrangement of the set $k_1,\ldots,k_d$. 
\end{lemma}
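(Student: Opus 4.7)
The plan is to combine a Littlewood--Paley decomposition with the $\widetilde{S}^\infty$-based H\"older estimate from Lemma~\ref{touse}(ii). Decompose $f_j = P_{-\infty}f_j + \sum_{k_j\in\mathbb{Z}} P_{k_j}f_j$; since $m$ vanishes when any argument is $0$, the zero-mode contributions drop out and the multilinear sum reduces to $\sum_{\underline{k}\in\mathbb{Z}^d} I_{\underline{k}}$, where $I_{\underline{k}}$ denotes the piece associated to the tuple $(P_{k_j}f_j)_{j=1}^d$. For each $\underline{k}$ let $\widetilde{k}_1\geq\widetilde{k}_2\geq\cdots\geq\widetilde{k}_d$ be the decreasing rearrangement and let $j_1,j_2$ be the indices realizing $\widetilde{k}_1,\widetilde{k}_2$. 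The hyperplane constraint gives $|\xi_{j_1}|=|\sum_{j\neq j_1}\xi_j|\leq (d-1)\,2^{\widetilde{k}_2+1}$, so the top two frequencies are always comparable: $\widetilde{k}_1\leq\widetilde{k}_2+O_d(1)$.

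Applying Lemma~\ref{touse}(ii) with H\"older exponents $p_{j_1}=p_{j_2}=2$ and $p_j=\infty$ for $j\notin\{j_1,j_2\}$, and inserting the assumption \eqref{algh2}, one obtains
\begin{equation*}
|I_{\underline{k}}| \lesssim \Lambda(\underline{k})\,\|P_{k_{j_1}}f_{j_1}\|_{L^2}\|P_{k_{j_2}}f_{j_2}\|_{L^2}\prod_{j\neq j_1,j_2}\|P_{k_j}f_j\|_{L^\infty}.
\end{equation*}
The crucial feature is that $\Lambda$ places the $s_0$-weights precisely on the two indices we measure in $L^2$ and the $s_1$-weights on the rest, pairing well with the available $H^{s_0}$ and $\widetilde{W}^{s_1}$ control. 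Since the choice of $\{j_1,j_2\}$ depends on $\underline{k}$, one splits the sum over $\underline{k}$ into $\binom{d}{2}$ subregions according to which pair is the top two, and on each subregion enlarges the sum to all of $\mathbb{Z}^d$ at no cost (since our bounds are non-negative).

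The remaining obstacle is the single low-frequency decay factor $2^{\min(\widetilde{k}_d,0)/8}$, which a priori provides decay for just one frequency though several $k_j$ may be negative. The trick is to apply the arithmetic--geometric mean inequality $\min_j y_j \leq d^{-1}\sum_j y_j$ with $y_j=\min(k_j,0)/8$ to distribute a sliver of decay to every factor:
\begin{equation*}
2^{\min(\widetilde{k}_d,0)/8}\leq \prod_{j=1}^d 2^{\min(k_j,0)/(8d)}.
\end{equation*}
With the weight $2^{\min(k_j,0)/(8d)}$ absorbed into the $j$-th sum, each $L^\infty$ index contributes a factor $\lesssim A_\infty$: for $k_j\geq 0$ by the $l^1$ summability $\sum_{k\geq 0}2^{s_1 k}\|P_kf_j\|_{L^\infty}\lesssim \|f_j\|_{\widetilde{W}^{s_1}}\lesssim A_\infty$, and for $k_j<0$ by $\|P_{k_j}f_j\|_{L^\infty}\lesssim A_\infty$ together with the convergent geometric series $\sum_{k<0}2^{k/(8d)}\lesssim 1$. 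For the two $L^2$ indices, the comparability $\widetilde{k}_1=\widetilde{k}_2+O(1)$ collapses the double sum to a single sum over a common value $k$, and Cauchy--Schwarz together with the $l^2$ summability $\sum_k 2^{2s_0 k^+}\|P_kf\|_{L^2}^2\lesssim A_2^2$ yields $\lesssim A_2^2$. Multiplying, we get the desired bound $A_2^2 A_\infty^{d-2}$. The geometric-mean redistribution is the structural heart of the argument and the step I expect to be the main obstacle: without it, the naive low-frequency sum $\sum_{k<0}\|P_kf_j\|_{L^\infty}$ produces a logarithmic (in $R$) divergence across each negative index, and the lone factor $2^{\min(\widetilde{k}_d,0)/8}$ is powerless to control them all at once.
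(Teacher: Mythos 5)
Your proof is correct and takes essentially the same route as the paper's: Littlewood--Paley decomposition, the $\widetilde{S}^\infty$ H\"older bound of Lemma \ref{touse}(ii) with the two highest frequencies measured in $L^2$ and the remaining $d-2$ in $L^\infty$, and the redistribution of the single factor $2^{\min(\widetilde{k}_d,0)/8}$ into the product $\prod_j 2^{\min(\widetilde{k}_j,0)/(8d)}$ to sum over the low frequencies. The paper's version is terser — it states the bound on $|I_{k_1,\ldots,k_d}|$ with the $2^{\min(\widetilde{k}_j,0)/(8d)}$ factors already in place and leaves the comparability $\widetilde{k}_1\leq\widetilde{k}_2+O_d(1)$ and the final summation implicit — but the substance is identical to yours.
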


\begin{proof}
For any $k_1,\ldots,k_d\in\Z$ let
\begin{equation*}
I_{k_1,\ldots,k_d}:=\frac{1}{(2\pi R)^{d-1}}\sum_{(\xi_1,\ldots,\xi_{d})\in\mathbb{H}^{d-1}_R}\widehat{P_{k_1}f_1}(\xi_1)\cdot\ldots\cdot\widehat{P_{k_d}f_d}(\xi_d)m(\xi_1,\ldots,\xi_d).
\end{equation*}
For $p\in\{2,\infty\}$ and $k\in\mathbb{Z}$ let
\begin{equation*}
B_{k,p}:=\sum_{j\in\{1,\ldots,d\}}\|P'_kf_j\|_{L^p}.
\end{equation*}
Using the assumption \eqref{algh2} and the bounds \eqref{mk6.5} we have
\begin{equation*}
|I_{k_1,\ldots,k_d}|\lesssim (2^{s_0\max(\widetilde{k}_1,0)}B_{\widetilde{k}_1,2})(2^{s_0\max(\widetilde{k}_2,0)}B_{\widetilde{k}_2,2})\prod_{j=3}^d(2^{s_1\max(\widetilde{k}_j,0)}2^{\min(\widetilde{k}_j,0)/(8d)}B_{\widetilde{k_j},\infty}).
\end{equation*}
Recall that $m$ vanishes when one of the arguments is equal to $0$. The bounds \eqref{algh1} follow,
\begin{equation*}
\Big|\frac{1}{(2\pi R)^{d-1}}\sum_{(\xi_1,\ldots,\xi_{d})\in\mathbb{H}^{d-1}_R}\widehat{f_1}(\xi_1)\cdot\ldots\cdot\widehat{f_d}(\xi_d)m(\xi_1,\ldots,\xi_d)\Big|\lesssim \sum_{k_1,\ldots,k_d\in\mathbb{Z}}|I_{k_1,\ldots,k_d}|\lesssim A_2^2A_{\infty}^{d-2},
\end{equation*}
as desired.
\end{proof}

\subsection{Elements of paradifferential calculus} Our main theorems are based on establishing suitable energy estimates. For this we use a paradifferential approach. 

The paradifferential operators on $\T_R$ are defined for functions $f,g\in H^1(\T_R)$ by the formula
\begin{equation}\label{Taf}
\begin{split}
\mathcal{F} \big( T_fg\big)(\xi) &:= \frac{1}{2\pi R} 
  \sum_{\eta\in\Z/R} \chi (\xi-\eta,\eta)\widehat{f}(\xi-\eta) \widehat{g}(\eta),\\
\chi(x,y)&:=\sum_{k\in\mathbb{Z}}\varphi_{\leq k-20}(x)\varphi_k(y+x/2).
\end{split}
\end{equation}
Notice that 
\begin{equation}\label{chieq}
\chi(x,y)=\chi(x,z)\qquad\text{ if }x,y,z\in\R\text{ satisfy }x+y+z=0.
\end{equation}
Using the formulas \eqref{FT2} we have
\begin{equation}\label{Taf2}
\int_{\T_R}T_af\cdot g\,dx=\int_{\T_R}f\cdot T_ag\,dx=\frac{1}{(2\pi R)^2}\sum_{\xi,\eta\in\Z/R}\chi(-\xi-\eta,\eta)\widehat{a}(-\xi-\eta)\widehat{f}(\eta)\widehat{g}(\xi),
\end{equation}
for any functions $f,g,a\in H^1(\T_R)$. In addition $\overline{T_fg}=T_{\overline{f}}\overline{g}$, so the function $T_fg$ is real-valued if both $f$ and $g$ are real-valued.

For any $f,g\in H^1(\T_R)$ we define the bilinear operator
\begin{equation}\label{Hfg}
\mathcal{H}(f,g):=fg-T_fg-T_gf,
\end{equation}
which can be rewritten in the Fourier space as 
\begin{equation}\label{HfgFo}
\begin{split}
\mathcal{F}[\mathcal{H}(f,g)](\xi)&= \frac{1}{2\pi R} 
  \sum_{\eta\in\Z/R} \widetilde{\chi} (\xi-\eta,\eta)\widehat{f}(\xi-\eta) \widehat{g}(\eta),\\
\widetilde{\chi}(x,y)&:=1-\chi(x,y)-\chi(y,x).
\end{split}
\end{equation}
The remainder $\mathcal{H}$ is smoothing in the sense that for any $a_1,a_2>0$
\begin{equation}\label{aux1.1}
\begin{split}
&{\|P_{\geq 0}\mathcal{H}(f,g) \|}_{H^{a_1+a_2}} \lesssim_{a_1,a_2} \min\big\{\|f\|_{H^{a_1}} \|g\|_{\widetilde{W}^{a_2}},\,\|f\|_{\widetilde{W}^{a_2}} \|g\|_{H^{a_1}}\big\},\\
&{\|P_{\geq 0}\mathcal{H}(f,g) \|}_{\widetilde{W}^{a_1+a_2}} \lesssim_{a_1,a_2}\|f\|_{\widetilde{W}^{a_1}} \|g\|_{\widetilde{W}^{a_2}}.
\end{split}
\end{equation}

\medskip
\section{Paralinearization of the water waves system}\label{paralinearization}

In this section we consider smooth solutions of the system \eqref{gWW} on some time interval.
Our goal is to rewrite this system in a way that is amenable to energy estimates.

\subsection{The Dirichlet-Neumann operator and the good variable} 
Assume $h:\T_R\to\mathbb{R}$ is a smooth function and let $\Omega:=\{(x,z)\in\T_R\times \R:\,z< h(x)\}$. Given a sufficiently smooth function $\phi:\T_R\to\mathbb{R}$ let $\Phi$ denote the (unique in a suitable space) harmonic function in $\Omega$, continuous on $\overline{\Omega}$, and satisfying $\Phi(x,h(x))=\phi(x)$. We define the Dirichlet-Neumann map as
\begin{equation}\label{DefDN}
G(h)\phi=\sqrt{1+h_x^2}\,(\nu\cdot\nabla\Phi)
\end{equation}
where $\nu$ denotes the outward pointing unit normal to the domain $\Omega$. The first result of this subsection is the following paralinearization of the Dirichlet-Neumann map.

\begin{lemma}
\label{DNmainpro}
(i) Assume that $\varep, A\in (0,\infty)$ are numbers such that $\varep\ll 1$ and $\varep\lesssim A$. 
Assume that $h,\phi:\T_R\to\mathbb{R}$ satisfy the bounds 
\begin{equation}\label{Asshu}
\|h\|_{H^{N_0}}+\||\partial_x|^{1/2}\phi\|_{H^{N_0-1/2}}\leq A,\qquad \|h\|_{\dot{W}^{N_1,0}}+\||\partial_x|^{1/2}\phi\|_{\dot{W}^{N_1-1/2,0}}\leq \varep.
\end{equation}
Then there is a unique harmonic map $\Phi\in C^4_{\mathrm{loc}}(\Omega)$ such that $\Phi(x,h(x))=\phi(x)$ and $\nabla\Phi\in L^2(\Omega)$. 

(ii) We define $G(h)\phi$ as in \eqref{DefDN} and let
\begin{align}\label{DefBV}
B := \frac{G(h)\phi+\partial_xh\partial_x\phi}{1+(\partial_xh)^2}, 
  \qquad V := \partial_x \phi - B\partial_xh, \qquad \omega := \phi - T_B h.
\end{align}
Then for any $F\in\big\{\partial_x\phi,\,G(h)\phi,\,B,\,V\big\}$ we have
\begin{equation}\label{DefBV2}
\|F\|_{H^{N_0-1}}\lesssim A,\qquad\|F\|_{\widetilde{W}^{N_1-1}}\lesssim\varep.
\end{equation}

(iii) We can paralinearize the Dirichlet-Neumann operator as
\begin{equation}
\label{DNmainformula}
G(h)\phi = |\partial_x|\omega- \partial_x(T_V h) + G_2(\phi,h) + G_3(\phi,h,h) +G_{\geq 4},
\end{equation}
where
\begin{equation}\label{DNmain2}
\begin{split}
\mathcal{F}[G_2(\phi,h)](\xi)&=\frac{1}{2\pi R}\sum_{\eta\in\Z/R}q_2(\xi-\eta,\eta)\widehat{\phi}(\xi-\eta)\widehat{h}(\eta),
\\
q_2(\rho,\eta) &:=\widetilde{\chi}(\rho,\eta)[(\eta+\rho)\rho-|\eta+\rho||\rho|],
\end{split}
\end{equation}
\begin{equation}\label{DNmain3}
\begin{split}
\mathcal{F}[G_3(\phi,h,h)](\xi)&=\frac{1}{(2\pi R)^2}\sum_{\rho,\eta\in\Z/R}q_3(\xi-\eta-\rho,\eta,\rho)\widehat{\phi}(\xi-\eta-\rho)\widehat{h}(\eta)\widehat{h}(\rho),\\
q_3(\sigma,\eta,\rho)&:=\frac{|\sigma+\eta+\rho||\sigma|}{2}\Big\{[1-\chi(\sigma+\rho,\eta)][|\sigma+\rho|-|\sigma|-\rho\,\sgn(\sigma+\eta+\rho)]\\
&\qquad\qquad+[1-\chi(\sigma+\eta,\rho)][|\sigma+\eta|-|\sigma|-\eta\,\sgn(\sigma+\eta+\rho)]\\
&\qquad\qquad+\big[|\sigma|-\sigma\,\sgn(\sigma+\eta+\rho)\big]\Big\},
\end{split}
\end{equation}
\begin{equation}\label{DNmain4}
\|G_{\geq 4}\|_{\dot{H}^{N_0,-3/4}}\lesssim A\varep^3,\qquad \|G_{\geq 4}\|_{\dot{W}^{N_1,-3/4}}\lesssim \varep^4.
\end{equation}
The operators $G_2$ and $G_3$ are smoothing in the sense that their symbols satisfy the identities
\begin{equation}\label{smoothing1}
q_2(\rho,\eta)=0\qquad\text{ if }\,\,|\rho|\leq 2^{-40}|\eta|\,\,\text{ or }\,\,|\eta|\leq 2^{-40}|\rho|,
\end{equation}
\begin{equation}\label{smoothing2}
q_3(\sigma,\eta,\rho)=0\qquad\text{ if }\,\,|\sigma|+|\eta|\leq 2^{-40}|\rho|\,\,\text{ or }\,\,|\eta|+|\rho|\leq 2^{-40}|\sigma|\,\,\text{ or }\,\,|\rho|+|\sigma|\leq 2^{-40}|\eta|.
\end{equation}
\end{lemma}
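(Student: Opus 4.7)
The plan is to follow the paradifferential strategy of Alazard-Métivier and Alazard-Burq-Zuily \cite{ABZ1,ABZ2}, adapted to our torus setting as in Appendix B of \cite{IoPu4}, and then Taylor-expand the Dirichlet-Neumann operator to extract the explicit cubic truncation. The argument naturally breaks into four stages.

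\emph{Steps 1-2: Harmonic extension and elliptic bounds.} Under \eqref{Asshu} one has $\|\partial_xh\|_{L^\infty}\ll1$, so I would flatten $\Omega$ to the strip $\T_R\times(-\infty,0]$ via a tame diffeomorphism as in \cite{ABZ1}, reducing $\Delta\Phi=0$ to a variable-coefficient elliptic equation. Lax-Milgram and standard elliptic bootstrapping yield the unique variational solution $\Phi$ with $\nabla\Phi\in L^2(\Omega)$ and the required interior regularity. The traces $\partial_x\Phi|_{z=h}$ and $\partial_z\Phi|_{z=h}$ inherit tame Sobolev and $L^\infty$-based estimates from the boundary datum $\phi$ via the usual elliptic/paraproduct calculus (Lemma \ref{algeProp}), so \eqref{DefBV2} follows directly from the definitions of $B,V,G(h)\phi$ in \eqref{DefBV}, with the denominator $1+(\partial_xh)^2$ harmless because $\|\partial_xh\|_{\widetilde{W}^{N_1-1}}\ll 1$.

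\emph{Step 3: The structural identity.} On the flattened domain I would paralinearize the coefficients of the Laplace equation using the operator $T_\cdot$ of \eqref{Taf} and factor the resulting second-order elliptic operator in $\partial_z$ as a product of two first-order paradifferential operators of principal symbol $\pm|\xi|$. Reading off the Dirichlet-to-Neumann relation at the top boundary and substituting the good unknown $\omega=\phi-T_Bh$ to kill the worst paraproduct contribution produces the identity $G(h)\phi=|\partial_x|\omega-\partial_x(T_Vh)+R$, where $R$ is a paradifferential remainder whose bilinear/trilinear symbols are supported only on high-high frequency interactions. This is exactly the paralinearization carried out in \cite{ABZ1,ABZ2} and Appendix B of \cite{IoPu4}.

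\emph{Step 4: Explicit cubic expansion.} To peel off $G_2$ and $G_3$ from $R$ I would use the Craig-Sulem series $G(h)\phi=\sum_{n\geq 0}G^{(n)}(h)\phi$ and replace every full product $fg$ by $T_fg+T_gf+\mathcal{H}(f,g)$ using \eqref{Hfg}. The $n=0$ term contributes $|\partial_x|\phi$, which combines with the $T_Bh$ piece of $\omega$ and with $-\partial_x(T_Vh)$ to reproduce the ``good'' part of \eqref{DNmainformula}. The $\mathcal{H}$-tails from $n=1$ give exactly the symbol $q_2(\rho,\eta)=\widetilde{\chi}(\rho,\eta)[(\eta+\rho)\rho-|\eta+\rho||\rho|]$ of \eqref{DNmain2}; the $n=2$ $\mathcal{H}$-tails, together with the cubic $\mathcal{H}$-remainders produced when one paralinearizes $B$ and $V$ around $h=0$, assemble into the three-term symbol $q_3$ of \eqref{DNmain3}. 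The smoothing supports \eqref{smoothing1}--\eqref{smoothing2} are automatic from the supports of $\widetilde{\chi}$ and $1-\chi$. The quartic tail $G_{\geq 4}$ is estimated by tame product bounds (Lemmas \ref{algeProp} and \ref{algeProp7}), each extra factor of $h$ costing one power of $\varep$; this yields \eqref{DNmain4}.

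The main obstacle is the bookkeeping in Step 4: one must track every $\mathcal{H}$-contribution produced up to cubic order, including those generated implicitly by the good unknown substitution and by expanding the trace identities defining $B$ and $V$, and verify that they symmetrize to precisely the form of $q_3$ in \eqref{DNmain3} with the smoothing support \eqref{smoothing2} preserved. The algebra is explicit but lengthy, and a misplaced sign or omitted cross-term would destroy the smoothing structure on which the quintic energy estimate of Theorem \ref{IncremBound} depends.
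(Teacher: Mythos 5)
Your plan is workable in outline but it is not the route the paper takes, and it leaves one requirement of the statement unaddressed. The paper's proof (Section \ref{TechnicalProofs}) does not flatten the domain or factor the elliptic operator at all. It is a boundary-integral argument: one introduces the perturbed Hilbert transform $\mathcal{H}_\gamma$ along the curve $\gamma(x)=x+ih^\#(x)$, constructs the conjugate function $\psi$ by solving $(I+T_1)\psi=(H_0+T_2)\phi$ as in \eqref{na35}, and exploits the exact identities $G(h)\phi=-\partial_x\psi$, $(I-\mathcal{H}_\gamma)(\phi+i\psi)=0$ and $(I-\mathcal{H}_\gamma)(V-iB)=0$ from \eqref{na41}--\eqref{na42}. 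The expansion $\mathcal{H}_\gamma=iH_0-T_1+iT_2$ with $T_1,T_2$ written as series in the explicit operators $R_n$ then converts the whole problem into computing the Fourier multipliers of $R_0$ and $R_1$ exactly and bounding the tail $\sum_{n\geq 2}R_n$; the latter is the content of Lemma \ref{Wboundedness} and the inductive Lemma \ref{na149}, and is where the tame structure $A\varep^{a}$ versus $\varep^{a+1}$ in \eqref{na23.7} is actually established. Your Step 4 replaces this by the Craig--Sulem series plus paraproduct splitting; that can be made to work, but the ``tame product bounds'' you invoke for the quartic tail hide precisely the same inductive difficulty (one derivative must always land on the highest-frequency factor to avoid losing the $A\varep^3$ structure), so you should not present Lemma \ref{algeProp} as sufficient by itself.

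The concrete gap is the bound \eqref{DNmain4}: the remainder must be estimated in $\dot{H}^{N_0,-3/4}$ and $\dot{W}^{N_1,-3/4}$, and by the definition \eqref{HW} the exponent $b=-3/4$ makes these norms \emph{stronger} than $H^{N_0}$ at low frequencies, requiring $\|P_kG_{\geq 4}\|_{L^2}\lesssim 2^{3k/4}A\varep^3$ as $k\to-\infty$. Your argument, which estimates $G_{\geq 4}$ by counting powers of $\varep$ in products, gives no vanishing at frequency zero. The paper needs a separate step for this (Step 5 of its proof): it rewrites the quartic remainder as a perfect $\partial_x$-derivative via the identity \eqref{ctu5.7}, from which the low-frequency gain follows. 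Without an analogous ``perfect derivative'' structure in your Craig--Sulem bookkeeping, the conclusion \eqref{ctu5.6}, and hence \eqref{DNmain4}, does not follow from what you have written.
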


We will also need an extension of this lemma to the case of time evolutions. Assume that $T_1\leq T_2\in\mathbb{R}$ and $(h,\phi)\in C([T_1,T_2]:H^{N_0}\times \dot{H}^{N_0,1/2})$ is a solution of the system \eqref{gWW}. Moreover, assume that for any $t\in[T_1,T_2]$ we have
\begin{equation}\label{maineqas}
\begin{split}
\|h(t)\|_{H^{N_0}}+\||\partial_x|^{1/2}\phi(t)\|_{H^{N_0-1/2}}+\||\partial_x|^{1/2}\omega(t)\|_{H^{N_0}} &\leq A,\\
\|h(t)\|_{\dot{W}^{N_1,0}} + \||\partial_x|^{1/2}\phi(t)\|_{\dot{W}^{N_1-1/2,0}} + \||\partial_x|^{1/2}\omega(t)\|_{\dot{W}^{N_1,0}}&\leq \varep(t),
\end{split}
\end{equation}
where $A\in (0,\infty)$ and $\varep:[T_1,T_2]\to [0,\infty)$ is a continuous function satisfying $\varep(t)\ll 1$.

\begin{lemma}\label{BVexpands} 
Assume that \eqref{maineqas} hold and that $B,V$ are as in \eqref{DefBV}. 
Then we can expand
\begin{equation}\label{expand0}
\begin{split}
& \partial_t h = |\partial_x|\phi + \dot{h}_2 + \dot{h}_{\geq 3},
\qquad \dot{h}_2 := -\partial_x(h\partial_x\phi) - |\partial_x|(h|\partial_x|\phi),
\\
& \partial_t \phi = -h + \dot{\phi}_2 + \dot{\phi}_{\geq 3},
\qquad \dot{\phi}_2 := -\frac{1}{2} (\partial_x\phi)^2 + \frac{1}{2}(|\partial_x|\phi)^2,
\end{split}
\end{equation}
and 
\begin{equation}\label{expand1}
\begin{split}
& B=|\partial_x|\phi+B_2+B_{\geq 3},\qquad B_2:=-|\partial_x|(h|\partial_x|\phi)-h\partial_x^2\phi,\\
& V=\partial_x\phi+V_2+V_{\geq 3},\qquad V_2:=-\partial_xh|\partial_x|\phi,
\end{split}
\end{equation}
where the cubic remainders satisfy the bounds
\begin{equation}\label{expand2}
\begin{split}
\|\dot{h}_{\geq 3}(t)\|_{H^{N_0-1}} + \|\dot{\phi}_{\geq 3}(t)\|_{H^{N_0-1}}+\|B_{\geq 3}(t)\|_{H^{N_0-1}} + \|V_{\geq 3}(t)\|_{H^{N_0-1}}\lesssim A[\varep(t)]^2,\\
\|\dot{h}_{\geq 3}(t)\|_{\widetilde{W}^{N_1-1}} + \|\dot{\phi}_{\geq 3}(t)\|_{\widetilde{W}^{N_1-1}}+\|B_{\geq 3}(t)\|_{\widetilde{W}^{N_1-1}} + \|V_{\geq 3}(t)\|_{\widetilde{W}^{N_1-1}}\lesssim [\varep(t)]^3,
\end{split}
\end{equation}
for any $t\in[T_1,T_2]$. Moreover,
\begin{equation}\label{expand3}
\begin{split}
\partial_t B & = -|\partial_x|h+\dot{B}_2+\dot{B}_{\geq 3},\\
\dot{B}_2 & :=-\frac{1}{2}|\partial_x|(\partial_x\phi)^2
 -\frac{1}{2}|\partial_x|(|\partial_x|\phi)^2 - |\partial_x|\phi\,\partial_x^2\phi + h\partial_x^2h 
 + |\partial_x|(h|\partial_x|h),
\end{split}
\end{equation}
\begin{equation}\label{expand4}
\partial_tV=-\partial_xh+\dot{V}_2+\dot{V}_{\geq 3},
  \qquad \dot{V}_2 :=-\frac{1}{2}\partial_x(\partial_x\phi)^2+\partial_xh|\partial_x|h,
\end{equation}
and
\begin{equation}\label{expand5}
\begin{split}
\partial_t^2B&=-|\partial_x|^2\phi+\ddot{B}_2+\ddot{B}_{\geq 3},
\\
\ddot{B}_2&:=2|\partial_x|(\partial_xh\,\partial_x\phi)+2|\partial_x|(|\partial_x|h\,|\partial_x|\phi)+\partial_x^2h\,|\partial_x|\phi+|\partial_x|h\,\partial_x^2\phi-2\partial_xh\,\partial_x|\partial_x|\phi,
\end{split}
\end{equation}
where the cubic remainders $\dot{B}_{\geq 3},\dot{V}_{\geq 3},\ddot{B}_{\geq 3}$ satisfy the estimates
\begin{equation}\label{expand6.5}
\begin{split}
\|\dot{B}_{\geq 3}(t)\|_{H^{N_0-2}} + \|\dot{V}_{\geq 3}(t)\|_{H^{N_0-2}}+\|\ddot{B}_{\geq 3}(t)\|_{H^{N_0-3}}\lesssim A[\varep(t)]^2,\\
\|\dot{B}_{\geq 3}(t)\|_{\widetilde{W}^{N_1-2}} + \|\dot{V}_{\geq 3}(t)\|_{\widetilde{W}^{N_1-2}}+\|\ddot{B}_{\geq 3}(t)\|_{\widetilde{W}^{N_1-3}}\lesssim [\varep(t)]^3.
\end{split}
\end{equation}
\end{lemma}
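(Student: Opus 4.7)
The plan is to Taylor-expand each quantity using the paralinearization of the Dirichlet-Neumann map in Lemma \ref{DNmainpro} together with the equations \eqref{gWW}, and then bound every cubic or higher remainder via the product estimates of Lemma \ref{algeProp} (and its multilinear analog Lemma \ref{algeProp7}) combined with the a priori bounds \eqref{maineqas} and \eqref{DefBV2}. I would establish the identities sequentially in the order \eqref{expand0}, \eqref{expand1}, \eqref{expand3}--\eqref{expand4}, \eqref{expand5}, each step feeding the previously-proven leading-order equations into the next computation.

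\textbf{Quadratic expansions of $\partial_t h$, $\partial_t\phi$, $B$, $V$.} For $\partial_t h = G(h)\phi$, apply \eqref{DNmainformula}. Since $\omega = \phi - T_B h$ and since $B = |\partial_x|\phi + O(\varep^2)$, $V = \partial_x\phi + O(\varep^2)$ (which follows immediately from \eqref{DefBV} and the linear part of \eqref{DNmainformula}), the linear-plus-quadratic truncation of $|\partial_x|\omega - \partial_x(T_V h)$ equals $|\partial_x|\phi - |\partial_x|T_{|\partial_x|\phi}h - \partial_x T_{\partial_x\phi}h + O(\varep^3)$. Combining this with the quadratic symbol $q_2$ of $G_2$ from \eqref{DNmain2} and using the paraproduct decomposition $\widetilde{\chi}(x,y) + \chi(x,y) + \chi(y,x) = 1$ from \eqref{HfgFo} reassembles the three paradifferential pieces into the full products, yielding $\dot h_2 = -\partial_x(h\partial_x\phi) - |\partial_x|(h|\partial_x|\phi)$. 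For $\partial_t\phi$, a direct Taylor expansion of the right side of \eqref{gWW} gives $(G(h)\phi + \partial_xh\partial_x\phi)^2(1+(\partial_xh)^2)^{-1} = (|\partial_x|\phi)^2 + O(\varep^3)$, so the quadratic part is $\dot\phi_2 = -\tfrac12(\partial_x\phi)^2 + \tfrac12(|\partial_x|\phi)^2$. For \eqref{expand1}, substituting the quadratic DN expansion into \eqref{DefBV} yields $B = |\partial_x|\phi + \partial_xh\partial_x\phi - \partial_x(h\partial_x\phi) - |\partial_x|(h|\partial_x|\phi) + O(\varep^3)$, and the identity $\partial_xh\partial_x\phi - \partial_x(h\partial_x\phi) = -h\partial_x^2\phi$ reduces this to $B_2$; then $V = \partial_x\phi - B\partial_xh = \partial_x\phi - |\partial_x|\phi\,\partial_xh + O(\varep^3)$.

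\textbf{Time derivatives.} To obtain \eqref{expand3}, differentiate the just-established $B = |\partial_x|\phi + B_2 + B_{\geq 3}$: using $\partial_t\phi = -h + \dot\phi_2 + O(\varep^3)$ and $\partial_t h = |\partial_x|\phi + \dot h_2 + O(\varep^3)$, the principal term is $|\partial_x|\partial_t\phi = -|\partial_x|h + |\partial_x|\dot\phi_2 + O(\varep^3)$, while $\partial_t B_2 = -|\partial_x|(|\partial_x|\phi)^2 + |\partial_x|(h|\partial_x|h) - |\partial_x|\phi\,\partial_x^2\phi + h\partial_x^2 h + O(\varep^3)$; summing and collecting the two $\tfrac12|\partial_x|(|\partial_x|\phi)^2$ contributions reproduces $\dot B_2$. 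The derivation of \eqref{expand4} is analogous, exploiting $\partial_x(|\partial_x|\phi)\,|\partial_x|\phi = \tfrac12\partial_x(|\partial_x|\phi)^2$ to cancel two competing terms. For \eqref{expand5} I would differentiate \eqref{expand3}: one obtains $\partial_t^2 B = -|\partial_x|^2\phi - |\partial_x|\dot h_2 + \partial_t\dot B_2 + O(\varep^3)$; term-by-term expansion of $\partial_t\dot B_2$ using \eqref{expand0} together with $|\partial_x|^2 = -\partial_x^2$ when expanding $|\partial_x|\dot h_2$, followed by the cancellations $|\partial_x|(h\partial_x^2\phi)$ (appearing with opposite signs) and $h\partial_x^2|\partial_x|\phi$ (also opposite signs), reduces the result to $\ddot B_2$.

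\textbf{Remainder bounds and main obstacle.} The cubic-and-higher remainders are of three types: (i) paradifferential operators $T_{B-|\partial_x|\phi}h$, $T_{V-\partial_x\phi}h$, and variants, which are genuinely cubic because $B-|\partial_x|\phi$ and $V-\partial_x\phi$ are $O(\varep^2)$ in $\widetilde{W}^{N_1-1}$ by the already-established \eqref{expand1}, and are bounded via \eqref{DefBV2} and paraproduct estimates; (ii) the trilinear $G_3(\phi,h,h)$ and the quartic-and-higher tail $G_{\geq 4}$, controlled directly by \eqref{DNmain3}--\eqref{DNmain4}; and (iii) cubic Taylor-remainder terms from the denominator $(1+(\partial_xh)^2)^{-1}$ and from substituting the quadratic parts of $\partial_t h, \partial_t\phi$ back into $\partial_t B_2$ and $\partial_t\dot B_2$, estimated by Lemma \ref{algeProp7}. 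The main obstacle is the derivative bookkeeping for $\partial_t^2 B$: since $\ddot B_2$ itself contains $\partial_x^2h\,|\partial_x|\phi$, three derivatives must be tolerated, which is exactly the loss recorded in the $\widetilde{W}^{N_1-3}$ bound in \eqref{expand6.5}. Verifying that \emph{every} cubic remainder (including those arising from differentiating $G_{\geq 4}$ in time via \eqref{gWW}, which converts one time derivative into one spatial derivative) loses at most three derivatives is the nontrivial part; it succeeds because paraproduct structure always places the derivative loss on the low-frequency factor, which lies in $\widetilde{W}^{N_1-k}$ rather than the more demanding $H^{N_0-k}$ space.
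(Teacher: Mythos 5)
Your treatment of \eqref{expand0}--\eqref{expand2} matches the paper's in substance (the paper reads these off from the identities \eqref{ctu5.10}--\eqref{ctu5.12} established in the proof of Lemma \ref{DNmainpro}, which is the same information you extract from \eqref{DNmainformula}), and your formal algebra identifying $\dot B_2$, $\dot V_2$, $\ddot B_2$ is correct. The problem is with how you propose to justify \eqref{expand3}--\eqref{expand6.5}. You differentiate the already-established expansion $B=|\partial_x|\phi+B_2+B_{\geq 3}$ in time, but $B_{\geq 3}$ is characterized only by the size bounds \eqref{expand2}, not by a formula; writing $\partial_t B=|\partial_x|\partial_t\phi+\partial_t B_2+\partial_t B_{\geq 3}$ is circular, since showing that $\partial_t B_{\geq 3}$ is an acceptable cubic remainder in $H^{N_0-2}\cap\widetilde W^{N_1-2}$ is precisely the content of \eqref{expand3}. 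The same objection applies to your remark about ``differentiating $G_{\geq 4}$ in time via \eqref{gWW}'': $G_{\geq 4}$ is defined only through the estimates \eqref{DNmain4}, so its time derivative is not controlled by anything stated. The crux of the lemma is the time derivative of $G(h)\phi$ (equivalently, the shape derivative of the Dirichlet--Neumann operator), and your proposal contains no mechanism for computing it.

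The paper handles this by a genuinely different route: it represents $G(h)\phi=-\partial_x\psi$, $V=(\partial_x\phi+h_x\partial_x\psi)/(1+h_x^2)$, $B=(h_x\partial_x\phi-\partial_x\psi)/(1+h_x^2)$ through the harmonic conjugate $\psi$ solving the fixed-point equation $(I+T_1)\psi=(H_0+T_2)\phi$, and then devotes Lemma \ref{dotpsi} to expanding $\partial_t\psi$ and $\partial_t^2\psi$ by differentiating that equation in time, using the operator families $\dot R_n$, $\ddot R_{n,b}$ and their bounds \eqref{exn8}--\eqref{exn9}. Once $\partial_t\psi$ and $\partial_t^2\psi$ have controlled quadratic expansions, $\partial_t B$, $\partial_t V$, $\partial_t^2 B$ follow by differentiating the explicit rational formulas, and only there does your algebra (collecting the $\frac12|\partial_x|(|\partial_x|\phi)^2$ terms, the cancellation of $|\partial_x|(h\partial_x^2\phi)$, etc.) become a proof. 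To repair your argument you would need either this conjugate-function machinery or an independently justified shape-derivative identity such as $\partial_t[G(h)\phi]=G(h)(\partial_t\phi-B\,\partial_t h)-\partial_x(V\partial_t h)$ together with quantitative remainder bounds; neither is supplied.
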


These lemmas are similar to Proposition B.1 in \cite{IoPu4}.
For convenience, we provide details of the proofs in Section \ref{TechnicalProofs}. 

\subsection{Evolution equations} We derive now an evolution equation for the good variable $\omega$. 

\begin{lemma}\label{propmaineq}
Assume that $T_1\leq T_2\in\mathbb{R}$ and $(h,\phi)\in C([T_1,T_2]:H^{N_0}\times \dot{H}^{N_0,1/2})$ is a solution of the system \eqref{gWW} satisfying \eqref{maineqas}. Then the variable $\omega$ satisfies the evolution equation
\begin{equation}\label{hphieq1}
\partial_t \omega = -h- T_{\partial_tB + V\partial_xB} h - T_V \partial_x\omega + L_2(\phi,\phi)
  + L_{3}(h,\phi,\phi) + L_{\geq 4},
\end{equation} 
where
\begin{align}\label{hoeq2}
\begin{split}
\mathcal{F}[L_2(\phi,\phi)](\xi)&= \frac{1}{2\pi R}\sum_{\eta\in\Z/R} \ell_2(\xi-\eta,\eta)\widehat{\phi}(\xi-\eta) \widehat{\phi}(\eta) ,
\\
\ell_2(\rho,\eta) &:= \frac{1}{2}\wt{\chi}(\rho,\eta) [|\eta||\rho| + \eta\rho],
\end{split}
\end{align}
\begin{equation}\label{hoeq31}
\begin{split}
\mathcal{F}\big[L_3(h,\phi,\phi)\big](\xi) & = \frac{1}{(2\pi R)^2}\sum_{\rho,\eta\in\Z/R} \ell_{3}(\xi-\eta-\rho,\eta,\rho)\widehat{h}(\xi-\eta-\rho)\widehat{\phi}(\eta) \widehat{\phi}(\rho),\\
\ell_{3}(\sigma,\eta,\rho)&:= \eta  |\rho|\big\{(\sigma+\rho)\chi(\eta,\sigma+\rho) \chi(\rho,\sigma)+\sigma\chi(\sigma+\rho,\eta) - \sigma\chi(\rho,\sigma+\eta)\\
&\qquad\quad -\rho\chi(\eta+\rho,\sigma)+ (|\rho| - |\sigma+\rho|)\sgn(\eta)\wt{\chi}(\sigma+\rho,\eta)\big\},
\end{split}
\end{equation}
and, for any $t\in[T_1,T_2]$,
\begin{align}\label{hoeqrem}
\begin{split}
{\| |\partial_x|^{1/2}L_{\geq 4} (t)\|}_{\dot{H}^{N_0,-1/4}} \lesssim A\varep(t)^3,\qquad {\| |\partial_x|^{1/2}L_{\geq 4} (t)\|}_{\dot{W}^{N_1,-1/4}} \lesssim \varep(t)^4.
\end{split}
\end{align}
The operators $L_2$ and $L_3$ are smoothing in the sense that their symbols satisfy the identities
\begin{equation}\label{smoothing3}
\ell_2(\rho,\eta)=0\qquad\text{ if }\,\,|\rho|\leq 2^{-40}|\eta|\,\,\text{ or }\,\,|\eta|\leq 2^{-40}|\rho|,
\end{equation}
\begin{equation}\label{smoothing4}
\ell_3(\sigma,\eta,\rho)=0\qquad\text{ if }\,\,|\sigma|+|\eta|\leq 2^{-40}|\rho|\,\,\text{ or }\,\,|\eta|+|\rho|\leq 2^{-40}|\sigma|\,\,\text{ or }\,\,|\rho|+|\sigma|\leq 2^{-40}|\eta|.
\end{equation}
\end{lemma}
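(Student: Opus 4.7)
The starting point is
\[
\partial_t\omega = \partial_t\phi - T_B\partial_th - T_{\partial_tB}h,
\]
obtained by differentiating $\omega=\phi-T_Bh$ in time. Using the identity $G(h)\phi+\partial_xh\,\partial_x\phi=B(1+(\partial_xh)^2)$ together with $\partial_x\phi=V+B\partial_xh$, the Bernoulli equation in \eqref{gWW} simplifies algebraically to
\[
\partial_t\phi = -h + \tfrac12(B^2-V^2) - BV\partial_xh.
\]
Combining with $\partial_th=G(h)\phi$ yields the base identity
\[
\partial_t\omega + h = \tfrac12(B^2-V^2) - BV\partial_xh - T_BG(h)\phi - T_{\partial_tB}h.
\]

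\textbf{Paradifferential reorganization.} I apply the paralinearization \eqref{DNmainformula} to rewrite $T_BG(h)\phi = T_B|\partial_x|\omega - T_B\partial_x(T_Vh) + T_BG_2 + T_BG_3 + T_BG_{\geq 4}$. The goal is to convert the paradifferential content of the right-hand side into exactly $-T_V\partial_x\omega - T_{\partial_tB+V\partial_xB}h$, collecting the rest into multilinear remainders. Three tools do this: the chain rule $\partial_x(T_Vh)=T_V\partial_xh+T_{\partial_xV}h$; Bony's symbolic composition $T_aT_b=T_{ab}$ modulo a smoother bilinear operator (which converts $T_B\partial_x(T_Vh)$ into $\partial_x(T_{BV}h) - T_{V\partial_xB}h$ up to smoother pieces via the identity $B\partial_xV = \partial_x(BV)-V\partial_xB$); and Bony's decomposition $fg=T_fg+T_gf+\mathcal{H}(f,g)$. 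The key algebraic observation, using the expansions $B=|\partial_x|\phi+B_2+B_{\geq 3}$ and $V=\partial_x\phi+V_2+V_{\geq 3}$ from Lemma \ref{BVexpands}, is that the paraproduct parts of $\tfrac12(B^2-V^2)$ at leading quadratic-in-$\phi$ order reproduce precisely $T_B|\partial_x|\omega - T_V\partial_x\omega$; this cancellation leaves only a Bony-remainder piece plus cubic and higher corrections.

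\textbf{Identification of $L_2$ and $L_3$.} The Bony remainder of $\tfrac12(B^2-V^2)$ at leading order produces the symbol $\tfrac12\widetilde{\chi}(\rho,\eta)[|\rho||\eta|+\rho\eta]=\ell_2(\rho,\eta)$, yielding $L_2(\phi,\phi)$. The cubic-in-$(h,\phi,\phi)$ remainders assemble from five distinct sources: (a) the symbol $q_2$ of $G_2(\phi,h)$ paired with the leading piece $|\partial_x|\phi$ of $B$ through $T_B G_2$; (b) the Bony remainder of $-BV\partial_xh$ at leading order $B\sim|\partial_x|\phi$, $V\sim\partial_x\phi$; (c) the cross-terms $|\partial_x|\phi\cdot B_2$ and $\partial_x\phi\cdot V_2$ arising in $\tfrac12(B^2-V^2)$, where $B_2, V_2$ from \eqref{expand1} already contain one $h$ and one $\phi$; (d) the cubic discrepancy in replacing $T_B|\partial_x|\omega$ by $T_V\partial_x\omega$ modulo $\ell_2$, using $\omega = \phi - T_{|\partial_x|\phi}h+\cdots$ together with the subleading pieces of $B$ and $V$; and (e) Bony remainders from the paradifferential composition rules used above. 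Each contribution carries a paradifferential cutoff ($\chi$ or $\widetilde{\chi}$) evaluated at a specific pair of Fourier variables, reflecting which pair has been merged by a paraproduct at that step. After symmetrization on the hyperplane $\sigma+\eta+\rho+\xi=0$, these five contributions combine into the five summands of $\ell_3(\sigma,\eta,\rho)$ as stated. The smoothing identities \eqref{smoothing3}--\eqref{smoothing4} follow by inspection: $\ell_2$ contains $\widetilde{\chi}$, and each summand of $\ell_3$ is supported where its corresponding cutoff is non-zero, excluding the prohibited regions.

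\textbf{Remainder bound and main obstacle.} The piece $L_{\geq 4}$ gathers all remaining quartic-and-higher terms: those containing $G_{\geq 4}$, those containing $B_{\geq 3}$, $V_{\geq 3}$ or $\dot B_{\geq 3}$, and those containing Bony remainders at quartic or higher order. The bound \eqref{hoeqrem} follows by applying $|\partial_x|^{1/2}$ and combining the $|\partial_x|^{-3/4}$ smoothing of $G_{\geq 4}$ from \eqref{DNmain4} with the cubic estimates \eqref{expand2}, \eqref{expand6.5} and the multilinear/paraproduct estimates \eqref{alge2.3}, \eqref{aux1.1} of Section \ref{SecPrelim}. The main technical obstacle is the algebraic bookkeeping required to obtain the precise form of $\ell_3$: the five summands arise from genuinely distinct sources, and tracking which Fourier variables are paired by each paraproduct or Bony remainder (e.g.\ $\chi(\eta,\sigma+\rho)\chi(\rho,\sigma)$ versus $\widetilde{\chi}(\sigma+\rho,\eta)$) requires careful computation and full symmetrization.
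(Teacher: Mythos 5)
Your strategy is sound and would yield the lemma, but it is organized quite differently from the paper's proof, and the difference matters for the workload. The paper never invokes the paralinearization \eqref{DNmainformula} in this argument: it handles $II=-T_B(G(h)\phi)$ via the exact pointwise identity $G(h)\phi=B-V\partial_xh$ from \eqref{DefBV}, so that $II=-T_BB+T_B(V\partial_xh)$ and the $T_BB$ produced by the Bony decomposition of $\tfrac12(B^2-V^2)$ cancels \emph{exactly}, with no cubic correction to track. The term $-T_V\partial_x\omega$ then falls out of $-T_VV-T_V(B\partial_xh)=-T_V\partial_x\phi=-T_V\partial_x\omega-T_V(\partial_xT_Bh)$, and $-T_{V\partial_xB}h$ is extracted from the exact combination $-T_V(\partial_xT_Bh)+T_B(V\partial_xh)-T_{B\partial_xh}V=-T_{V\partial_xB}h+R_1(V,B,h)$, where $R_1$ is an explicitly computed smoothing cubic operator; the only other cubic source is $\mathcal{H}(B,B-|\partial_x|\phi)$ with $B-|\partial_x|\phi$ replaced by $B_2$. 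Thus the paper obtains $\ell_3$ from just two sources and avoids paradifferential composition altogether. Your route — paralinearizing $G(h)\phi$ inside $T_B$ and invoking $T_aT_b=T_{ab}$ modulo remainders — introduces additional cubic terms ($T_BG_2$, the composition errors $T_BT_V-T_{BV}$, the leftover $\partial_x(T_{BV}h)$, and the discrepancies $T_B(B-|\partial_x|\omega)$ and $T_V(V-\partial_x\omega)$), every one of which must be computed with an exact symbol: they gain only one derivative, so they land in $L_3$ rather than $L_{\geq 4}$, and the precise form of $\ell_3$ is what the later resonance analysis consumes. Your claim that the five sources ``combine into the five summands of $\ell_3$ after symmetrization'' is therefore precisely the content of the lemma and is harder to close in your organization — note for instance that your source (a) produces paraproduct cutoffs $\chi$ whereas the corresponding summand of $\ell_3$ carries $\widetilde{\chi}$, so the matching can only hold in aggregate, not term by term. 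In short: same result, genuinely different and somewhat heavier bookkeeping; if you carry it out, compute the composition remainders exactly rather than discarding them as ``smoother.''
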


\begin{proof} We start from the second equation in \eqref{gWW} and use \eqref{DefBV} to calculate
\begin{equation}\label{equdtphi}
\begin{split}
- \frac{1}{2}(\partial_x\phi)^2 + \frac{(G(h)\phi+\partial_xh\partial_x\phi)^2}{2(1+(\partial_xh)^2)}&=
  \frac{1}{2}\big[B^2(1+(\partial_xh)^2)-(V+B\partial_xh)^2\big]\\
&= \frac{1}{2}\big[B^2-2VB\partial_xh-V^2\big].
\end{split}
\end{equation}
Therefore, using the definition $\omega=\phi-T_Bh$, we have
\begin{align}
\label{dtomega1}
\begin{split}
\partial_t\omega = \partial_t \phi - T_{\partial_tB}h - T_B\partial_t h = -h- T_{\partial_t B} h + I + II,
\end{split}
\end{align}
where
\begin{align*}
\begin{split}
I & := (1/2)(B^2-V^2) - V B \partial_xh,
\qquad
II  := - T_B(G(h)\phi).
\end{split}
\end{align*}

We use the identities $V + B\partial_xh= \partial_x\phi = \partial_x\omega+ \partial_x T_B h$ and recall \eqref{Hfg} to calculate
\begin{align}\label{dtomegaI}
\begin{split}
I & = (B^2-V^2)/2 - V B \partial_xh
\\
& = T_B B + (1/2)\mathcal{H}(B,B) - T_V V -(1/2) \mathcal{H}(V,V) - T_V (B \partial_xh) - T_{B\partial_xh} V - \mathcal{H}(V, B\partial_xh)
\\
& = T_B B - T_V \partial_x\omega + (1/2) \mathcal{H}(|\partial_x|\phi,|\partial_x|\phi) 
	- (1/2) \mathcal{H}(\partial_x\phi,\partial_x\phi)
\\ 
& - T_V \big( \partial_x T_B h \big) - T_{B\partial_xh} V + \mathcal{H}(B, B -|\partial_x|\phi) 
  + I_{\geq 4},
\end{split}
\end{align}
where $\mathcal{H}$ is defined as in \eqref{Hfg} and
\begin{align*}
\begin{split}
I_{\geq 4} & := (1/2)\mathcal{H}(B,B) 
  - (1/2) \mathcal{H}(|\partial_x|\phi,|\partial_x|\phi) - \mathcal{H}(B,B-|\partial_x|\phi)
  \\
  & \, -(1/2) \mathcal{H}(V,V) + (1/2) \mathcal{H}(\partial_x\phi,\partial_x\phi) - \mathcal{H}(V,B\partial_xh).
\end{split}
\end{align*}
Using again $V = \partial_x\phi-B\partial_xh$ we calculate
\begin{align*}
I_{\geq 4} = -(1/2)\mathcal{H}(B-|\partial_x|\phi,B-|\partial_x|\phi) + (1/2) \mathcal{H}(B\partial_xh, B\partial_xh),
\end{align*}
which is an acceptable quartic remainder, due to \eqref{aux1.1}.

From \eqref{DefBV} we have $B -V\partial_xh=G(h)\phi$ so that $II = -T_B B + T_B (V\partial_xh)$.
Then,
\begin{align}\label{I+II}
\begin{split}
I+II & = - T_V \partial_x\omega + (1/2)\mathcal{H}(|\partial_x|\phi,|\partial_x|\phi) 
  - (1/2)\mathcal{H}(\partial_x\phi,\partial_x\phi)
\\
& + \big[ - T_V( \partial_x T_B h ) + T_B (V\partial_xh) -  T_{B\partial_xh} V \big]+ \mathcal{H}(B,B-|\partial_x|\phi) + I_{\geq 4}.
\end{split}
\end{align}
The term $-T_V\partial_x\omega$ is the same as in \eqref{hphieq1}, while the $\mathcal{H}$ terms in the first line of \eqref{I+II} give $L_2(\phi,\phi)$ as in \eqref{hoeq2}. For the terms in the second line we can write
\begin{align*}
- T_V( \partial_x T_B h ) + T_B (V\partial_xh) -  T_{B\partial_xh} V  = - T_{V  \partial_x B} h + R_1(V,B,h)
\end{align*}
where
\begin{align}\label{I+II1}
\begin{split}
\mathcal{F}\big[R_1(V,B,h)\big](\xi) & = \frac{1}{(2\pi R)^2} 
	\sum_{\rho,\eta\in\Z/R} \widehat{V}(\eta) \widehat{B}(\rho) \widehat{h}(\xi-\eta-\rho) r_1(\xi,\eta,\rho),
\\
r_1(\xi,\eta,\rho) & := -\chi(\eta,\xi-\eta) i (\xi-\eta) \chi(\rho,\xi-\eta-\rho) + \chi(\rho,\xi-\rho)i(\xi-\eta-\rho)
	\\ &\quad\,\,\,  - \chi(\xi-\eta,\eta) i(\xi-\eta-\rho)+ \chi(\eta+\rho,\xi-\eta-\rho) i\rho.
\end{split}
\end{align}
Notice that $R_1(h,V,B)$ is a cubic term which is smoothing,
in the sense that $r_1(\xi,\eta,\rho)=0$ if $|\eta|+|\rho|\leq 2^{-40}|\xi|$ 
or if $|\eta|+|\xi-\eta-\rho|\leq 2^{-40}|\xi|$ or if $|\rho|+|\xi-\eta-\rho|\leq 2^{-40}|\xi|$.
Therefore we can replace $V$ with $\partial_x\phi$ and $B$ with $|\partial_x|\phi$ 
at the expense of acceptable quartic errors. Moreover, in view of \eqref{expand1} 
we may replace the remaining term $\mathcal{H}(B,B-|\partial_x|\phi)$ in \eqref{I+II}
with $\mathcal{H}(|\partial_x|\phi,-|\partial_x|(h|\partial_x|\phi)-h\partial_x^2\phi)$
at the expense of acceptable quartic remainders. 
A simple calculation shows that the resulting smoothing cubic terms are 
of the form $L_3(h,\phi,\phi)$ as claimed in \eqref{hoeq31}.
\end{proof}

We define now our main variables $h^\ast$ and $\omega^\ast$. These variables are optimal for our analysis, in particular for combining normal forms and energy estimates (without loss of derivatives) for two main reasons: (1) they satisfy the good paradifferential evolution equations \eqref{Usor2}, with well-defined quasi-linear and semi-linear terms, and (2) they are both at the level of derivatives of the Hamiltonian .

\begin{lemma}\label{Usor1}
Assume that $T_1\leq T_2\in\mathbb{R}$ and $(h,\phi)\in C([T_1,T_2]:H^{N_0}\times \dot{H}^{N_0,1/2})$ 
is a solution of the system \eqref{gWW} satisfying \eqref{maineqas}. 
We define
\begin{equation}\label{Usor2}
\begin{split}
&\alpha:=\partial_tB + V\partial_xB,\qquad\beta:=\sqrt{1+\alpha}-1,\\
&h^\ast:=T_{\sqrt{1+\alpha}}h=h+T_\beta h,\qquad \omega^\ast:=|\partial_x|^{1/2}\omega.
\end{split}
\end{equation}
Then the variables $h^\ast,\omega^\ast$ satisfy the evolution equations
\begin{equation}\label{Usor3}
\begin{split}
\partial_t h^\ast-|\partial_x|^{1/2}\omega^\ast
  & = T_{\beta}|\partial_x|^{1/2}\omega^\ast-\partial_x T_V h^\ast+H_2^\ast 
  + H^\ast_{\geq 3}=:\mathcal{N}_{\geq 2}^{h^\ast},
  \\
\partial_t \omega^\ast+|\partial_x|^{1/2} h^\ast 
  & = -T_{\beta}|\partial_x|^{1/2} h^\ast-\partial_x T_V \omega^\ast+\Omega_2^\ast 
  + \Omega^\ast_{\geq 3}=:\mathcal{N}_{\geq 2}^{\omega^\ast},
\end{split}
\end{equation} 
where
\begin{equation}\label{Usor5.1}
\begin{split}
&H_2^\ast=H^{ri}_2(h^\ast,\omega^\ast),\qquad\mathcal{F}\{H^{ri}_2(h^\ast,\omega^\ast)\}(\xi)=\frac{1}{2\pi R}\sum_{\eta\in\Z/R}\widehat{h^\ast}(\xi-\eta)\widehat{\omega^\ast}(\eta)n^{ri}(\xi-\eta,\eta),
\\
&n^{ri}(\rho,\eta):=\widetilde{\chi}(\eta,\rho)\frac{(\rho+\eta)\eta-|\rho+\eta||\eta|}{|\eta|^{1/2}}-\chi(\eta,\rho)\frac{|\eta|^{3/2}}{2},
\end{split}
\end{equation}
\begin{equation}\label{Usor4}
\Omega_2^\ast=\Omega_2^{rr}(h^\ast,h^\ast)+\Omega^{ii}_2(\omega^\ast,\omega^\ast),
\end{equation}
\begin{equation}\label{Usor5}
\begin{split}
&\mathcal{F}\{\Omega_2^{rr}(h^\ast,h^\ast)\}(\xi)=\frac{1}{2\pi R}\sum_{\eta\in\Z/R}\widehat{h^\ast}(\xi-\eta)\widehat{h^\ast}(\eta)n^{rr}(\xi-\eta,\eta),\\
&n^{rr}(\rho,\eta):=\chi(\rho,\eta)\frac{|\rho|}{2}(|\rho+\eta|^{1/2}-|\eta|^{1/2}),
\end{split}
\end{equation}
\begin{equation}\label{Usor5.2}
\begin{split}
& \mathcal{F}\{\Omega^{ii}_2(\omega^\ast,\omega^\ast)\}(\xi) =
	\frac{1}{2\pi R}\sum_{\eta\in\Z/R}\widehat{\omega^\ast}(\xi-\eta)\widehat{\omega^\ast}(\eta)n^{ii}(\xi-\eta,\eta),
\\
& n^{ii}(\rho,\eta) := \widetilde{\chi}(\rho,\eta)|\rho+\eta|^{1/2}\frac{|\eta||\rho|+\eta\rho}{2|\eta|^{1/2}|\rho|^{1/2}} 
	+ \chi(\rho,\eta)\rho\frac{|\rho+\eta|^{1/2}\eta-|\eta|^{1/2}(\rho+\eta)}{|\eta|^{1/2}|\rho|^{1/2}},
\end{split}
\end{equation}
and the cubic remainders $H^\ast_{\geq 3},\Omega^\ast_{\geq 3}$ satisfy the bounds
\begin{align}\label{Usor6}
\begin{split}
{\|H^\ast_{\geq 3} (t)\|}_{H^{N_0}}+{\|\Omega^\ast_{\geq 3} (t)\|}_{H^{N_0}} &\lesssim A\varep(t)^2,\\
{\|H^\ast_{\geq 3} (t)\|}_{\widetilde{W}^{N_1}} +{\|\Omega^\ast_{\geq 3} (t)\|}_{\widetilde{W}^{N_1}}&\lesssim\varep(t)^3,
\end{split}
\end{align}
for any $t\in[T_1,T_2]$.
\end{lemma}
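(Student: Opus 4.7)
My plan is to derive the equations \eqref{Usor3} by applying the operator $T_{\sqrt{1+\alpha}}$ to the evolution equation for $\partial_t h$ (obtained from the first equation of \eqref{gWW} together with the expansion of $G(h)\phi$ from Lemma \ref{DNmainpro}) and by applying $|\partial_x|^{1/2}$ to \eqref{hphieq1} for $\partial_t\omega$. The key is then to identify the precise bilinear quadratic contributions and collect everything else into cubic remainders controlled via \eqref{DNmain4}, \eqref{hoeqrem}, Lemma \ref{BVexpands}, and paradifferential calculus.

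For the $h^\ast$ equation, I would expand $\partial_t h^\ast = (1 + T_\beta) \partial_t h + T_{\partial_t \beta} h$ and substitute $\partial_t h = |\partial_x| \omega - \partial_x T_V h + G_2(\phi,h) + G_3 + G_{\geq 4}$. The linear piece is $|\partial_x|\omega = |\partial_x|^{1/2}\omega^\ast$. The paradifferential-transport quadratic terms $T_\beta |\partial_x|^{1/2}\omega^\ast$ and $-\partial_x T_V h^\ast$ arise from $T_\beta |\partial_x|\omega$ and from replacing $h$ by $h^\ast$ in $-\partial_x T_V h$, respectively, the latter incurring only a cubic error since $\beta$, $V$, and $h$ are each linear-sized. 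The remaining quadratic contributions $G_2(\phi,h) + T_{\partial_t\beta} h$ are then simplified using $\partial_t\beta = -|\partial_x|^2\phi/2 + (\text{quadratic})$ from \eqref{expand5} and by replacing $\phi$ with $|\partial_x|^{-1/2}\omega^\ast$ modulo cubic (legitimate on the Fourier supports thanks to the smoothing property \eqref{smoothing1} of $q_2$ and the support of $\chi$). A direct Fourier-side calculation then shows these assemble into $H_2^{ri}(h^\ast,\omega^\ast)$ with symbol $n^{ri}$: the $\widetilde{\chi}$ part comes from $q_2(\eta,\rho)/|\eta|^{1/2}$ and the $\chi$ part from $-\tfrac12 \chi(\eta,\rho)\eta^2/|\eta|^{1/2} = -\tfrac12 \chi(\eta,\rho)|\eta|^{3/2}$.

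For the $\omega^\ast$ equation, I would compute $\partial_t\omega^\ast + |\partial_x|^{1/2}h^\ast$ using \eqref{hphieq1} and then move the symmetrization term $T_\beta |\partial_x|^{1/2}h^\ast$ to the LHS. The $h$-$h$ quadratic contributions on the LHS are $|\partial_x|^{1/2} T_{\beta-\alpha} h + T_\beta |\partial_x|^{1/2} h^\ast$. Using $\beta-\alpha = |\partial_x|h/2 + (\text{quadratic})$ and converting $h$ to $h^\ast$ modulo cubic, these collapse into a bilinear form with symbol exactly $\tfrac12 \chi(\rho,\eta)|\rho|\bigl(|\rho+\eta|^{1/2} - |\eta|^{1/2}\bigr) = n^{rr}(\rho,\eta)$, matching \eqref{Usor5}. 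The term $L_2(\phi,\phi)$, after commuting $|\partial_x|^{1/2}$ through the bilinear form and converting $\phi$ to $\omega^\ast$ using \eqref{smoothing3}, contributes the $\widetilde{\chi}$ part of $n^{ii}$. The $\chi$ part of $n^{ii}$ arises from reshaping the transport piece: at leading order the difference $-|\partial_x|^{1/2} T_{\partial_x\phi}\partial_x\omega - (-\partial_x T_{\partial_x\phi}\omega^\ast)$, rewritten in terms of $\omega^\ast$, gives precisely $\chi(\rho,\eta)\rho\bigl(|\rho+\eta|^{1/2}\eta - |\eta|^{1/2}(\rho+\eta)\bigr)/(|\eta|^{1/2}|\rho|^{1/2})$, completing $n^{ii}$.

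The cubic remainders $H^\ast_{\geq 3}$ and $\Omega^\ast_{\geq 3}$ collect $(1+T_\beta)G_3$, $G_{\geq 4}$, $|\partial_x|^{1/2} L_3$, $|\partial_x|^{1/2}L_{\geq 4}$, all paradifferential composition and commutator errors from the rearrangements above, and contributions from the quadratic parts of $\alpha, \beta, \partial_t\beta, V$ when coupled with linear factors or with $G_2, L_2$. Their bounds \eqref{Usor6} follow from the product and paradifferential estimates of Lemmas \ref{touse}, \ref{multiBound}, and \ref{algeProp}, always placing one factor in $\widetilde{W}^{N_1}$ (absorbing a copy of $\varep$) and the others in $H^{N_0}$ or $\widetilde{W}^{N_1}$. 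The main obstacle is the meticulous bookkeeping needed to verify that the quadratic symbols combine into exactly $n^{ri}, n^{rr}, n^{ii}$ and nothing more; this reduces to Fourier-side identities relying on the support properties of $\chi, \widetilde{\chi}$ and the relation $\chi(x,y) + \chi(y,x) + \widetilde{\chi}(x,y) = 1$.
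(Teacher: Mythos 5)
Your proposal is correct and follows essentially the same route as the paper: apply $T_{\sqrt{1+\alpha}}=I+T_\beta$ to the $h$-equation and $|\partial_x|^{1/2}$ to \eqref{hphieq1}, peel off the paralinear terms $T_\beta|\partial_x|^{1/2}(\cdot)$ and $-\partial_x T_V(\cdot)$, and read the quadratic symbols $n^{ri},n^{rr},n^{ii}$ from the commutators $|\partial_x|^{1/2}T_{\beta-\alpha}h+T_\beta|\partial_x|^{1/2}h^\ast$, $\partial_xT_V|\partial_x|^{1/2}\omega-|\partial_x|^{1/2}T_V\partial_x\omega$, together with $G_2$, $L_2$, and $T_{\partial_t\beta}h$ via the expansions of $\alpha$ and $\partial_t\alpha$ from Lemma \ref{BVexpands}. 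The identifications of each piece of the three symbols match the paper's, so no gap.
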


\begin{proof} We use the equations \eqref{hphieq1} and $\partial_th=G(h)\phi$, and the identities \eqref{DNmainformula}. Thus
\begin{equation*}
\begin{split}
\partial_t h^\ast& = T_{\partial_t\sqrt{1+\alpha}}h+T_{\sqrt{1+\alpha}}|\partial_x|\omega-T_{\sqrt{1+\alpha}}\partial_xT_Vh+T_{\sqrt{1+\alpha}}G_2(\phi,h)+H_{\geq 3}^1,\\
\partial_t\omega^\ast&=-|\partial_x|^{1/2}T_{1+\alpha} h - |\partial_x|^{1/2}T_V \omega_x + |\partial_x|^{1/2}L_2(\phi,\phi)+\Omega_{\geq 3}^1,
\end{split}
\end{equation*}
where $H_{\geq 3}^1,\Omega_{\geq 3}^1$ are acceptable cubic remainders satisfying \eqref{Usor6}. Therefore
\begin{equation}\label{Utor7}
\begin{split}
\partial_t h^\ast-T_{\sqrt{1+\alpha}}|\partial_x|^{1/2}\omega^\ast+\partial_xT_Vh^\ast& = \big\{\partial_xT_VT_{\sqrt{1+\alpha}}h-T_{\sqrt{1+\alpha}}\partial_xT_Vh\}\\
&+T_{\sqrt{1+\alpha}}G_2(\phi,h)+T_{\partial_t\sqrt{1+\alpha}}h+H_{\geq 3}^1,
\end{split}
\end{equation}
and
\begin{equation}\label{Utor7.1}
\begin{split}
\partial_t\omega^\ast+T_{\sqrt{1+\alpha}}|\partial_x|^{1/2}h^\ast&+\partial_xT_V\omega^\ast=\big\{T_{\sqrt{1+\alpha}}|\partial_x|^{1/2}T_{\sqrt{1+\alpha}}h-|\partial_x|^{1/2}T_{1+\alpha} h\big\}\\
&+\{\partial_xT_V|\partial_x|^{1/2}\omega- |\partial_x|^{1/2}T_V \omega_x\} + |\partial_x|^{1/2}L_2(\phi,\phi)+\Omega_{\geq 3}^1.
\end{split}
\end{equation}
Using the formula $\alpha:=\partial_tB + V\partial_xB$ and Lemma \ref{BVexpands} we can expand
\begin{equation}\label{Utor10}
\begin{split}
&\alpha=-|\partial_x|h+\alpha_2+\alpha_{\geq 3},
\\
& \alpha_2 := -\frac{1}{2}|\partial_x|(\partial_x\phi)^2 - \frac{1}{2}|\partial_x|(|\partial_x|\phi)^2 
	 - |\partial_x|\phi \partial_x^2\phi + \partial_x\phi\partial_x|\partial_x|\phi + h\partial_x^2h + |\partial_x|(h|\partial_x|h),
\\
& \|\alpha_{\geq 3}(t)\|_{\widetilde{W}^{N_1-2}}\lesssim \varepsilon(t)^3
  \qquad \text{ for any }t\in[T_1,T_2],
\end{split}
\end{equation}
and
\begin{equation}\label{Utor11}
\begin{split}
& \partial_t\alpha = -|\partial_x|^2\phi+\dot{\alpha}_2+\dot{\alpha}_{\geq 3},
\\
& \dot{\alpha}_2 := 2|\partial_x|[\partial_xh\,\partial_x\phi+|\partial_x|h\,|\partial_x|\phi]+\partial_x^2h\,|\partial_x|\phi+|\partial_x|h\,\partial_x^2\phi-3\partial_xh\,\partial_x|\partial_x|\phi-\partial_x|\partial_x|h\,\partial_x\phi,
  \\
& \|\dot{\alpha}_{\geq 3}(t)\|_{\widetilde{W}^{N_1-3}}\lesssim \varepsilon(t)^3
  \qquad \text{ for any }t\in[T_1,T_2].
\end{split}
\end{equation}

Since $\beta=\sqrt{1+\alpha}-1$ we have $(1+\beta)^2=1+\alpha$, therefore
\begin{equation*}
\begin{split}
T_{\sqrt{1+\alpha}}|\partial_x|^{1/2}T_{\sqrt{1+\alpha}}h-|\partial_x|^{1/2}T_{1+\alpha} h&=\big\{T_\beta|\partial_x|^{1/2}h-|\partial_x|^{1/2}T_\beta h\big\}\\
&+\big\{T_\beta|\partial_x|^{1/2}T_\beta h-|\partial_x|^{1/2}T_{\beta^2} h\big\}.
\end{split}
\end{equation*}
Using the definitions and \eqref{Utor10} we therefore have 
\begin{equation*}
\begin{split}
&T_{\sqrt{1+\alpha}}|\partial_x|^{1/2}T_{\sqrt{1+\alpha}}h-|\partial_x|^{1/2}T_{1+\alpha} h=\Omega_2^1(h^\ast,h^\ast)+\Omega_{\geq 3}^2,\\
&\mathcal{F}\{\Omega_2^1(h^\ast,h^\ast)\}(\xi)=\frac{1}{2\pi R}\sum_{\eta\in\Z/R}\widehat{h^\ast}(\xi-\eta)\widehat{h^\ast}(\eta)\chi(\xi-\eta,\eta)\frac{|\xi-\eta|}{2}(|\xi|^{1/2}-|\eta|^{1/2}),
\end{split}
\end{equation*}
where $\Omega_{\geq 3}^2$ is an acceptable cubic remainder. Similarly, using also the approximate identities in Lemma \ref{BVexpands}, we can rewrite the other terms in the right-hand side of \eqref{Utor7.1} as
\begin{equation*}
\begin{split}
&\partial_xT_V |\partial_x|^{1/2}\omega-|\partial_x|^{1/2}T_V \partial_x\omega=\Omega_2^{2,1}(\omega^\ast,\omega^\ast)+\Omega_{\geq 3}^3,\\
&\mathcal{F}\{\Omega_2^{2,1}(\omega^\ast,\omega^\ast)\}(\xi)=\frac{1}{2\pi R}\sum_{\eta\in\Z/R}\widehat{\omega^\ast}(\xi-\eta)\widehat{\omega^\ast}(\eta)\chi(\xi-\eta,\eta)(\xi-\eta)\frac{|\xi|^{1/2}\eta-|\eta|^{1/2}\xi}{|\eta|^{1/2}|\xi-\eta|^{1/2}},
\end{split}
\end{equation*}
\begin{equation*}
\begin{split}
&|\partial_x|^{1/2}L_2(\phi,\phi)=\Omega_2^{2,2}(\omega^\ast,\omega^\ast)+\Omega_{\geq 3}^4,\\
&\mathcal{F}\{\Omega_2^{2,2}(\omega^\ast,\omega^\ast)\}(\xi)=\frac{1}{2\pi R}\sum_{\eta\in\Z/R}\widehat{\omega^\ast}(\xi-\eta)\widehat{\omega^\ast}(\eta)\widetilde{\chi}(\xi-\eta,\eta)|\xi|^{1/2}\frac{|\eta||\xi-\eta|+\eta(\xi-\eta)}{2|\eta|^{1/2}|\xi-\eta|^{1/2}},
\end{split}
\end{equation*}
for some acceptable cubic remainders $\Omega_{\geq 3}^3,\Omega_{\geq 3}^4$. Similarly, the terms in the right-hand side of \eqref{Utor7} can be rewritten as
\begin{equation*}
\partial_xT_VT_{\sqrt{1+\alpha}}h-T_{\sqrt{1+\alpha}}\partial_xT_Vh=H_{\geq 3}^2,
\end{equation*}
\begin{equation*}
\begin{split}
&T_{\sqrt{1+\alpha}}G_2(\phi,h)=H_2^1(h^\ast,\omega^\ast)+H_{\geq 3}^3,\\
&\mathcal{F}\{H_2^1(h^\ast,\omega^\ast)\}(\xi)=\frac{1}{2\pi R}\sum_{\eta\in\Z/R}\widehat{h^\ast}(\xi-\eta)\widehat{\omega^\ast}(\eta)\widetilde{\chi}(\eta,\xi-\eta)\frac{\xi\eta-|\xi||\eta|}{|\eta|^{1/2}},
\end{split}
\end{equation*}
\begin{equation*}
\begin{split}
&T_{\partial_t\sqrt{1+\alpha}}h=H_2^2(h^\ast,\omega^\ast)+H_{\geq 3}^4,\\
&\mathcal{F}\{H_2^2(h^\ast,\omega^\ast)\}(\xi)=\frac{1}{2\pi R}\sum_{\eta\in\Z/R}\widehat{h^\ast}(\xi-\eta)\widehat{\omega^\ast}(\eta)\chi(\eta,\xi-\eta)(-|\eta|^{3/2}/2),
\end{split}
\end{equation*}
for some acceptable cubic remainders $H_{\geq 3}^2,H_{\geq 3}^3,H_{\geq 3}^4$. 
The desired quadratic formulas \eqref{Usor5.1}--\eqref{Usor5.2} follow by collecting the terms in the formulas above.
\end{proof}

\medskip
\section{Energy estimates and normal forms}\label{EnergyBounds}

We are now ready to start the proof of Theorem \ref{IncremBound}. Assume that $T_1\leq T_2\in\mathbb{R}$ and $(h,\phi)\in C([T_1,T_2]:H^{N_0}\times \dot{H}^{N_0,1/2})$ is a solution of the system \eqref{gWW}. As in section \ref{paralinearization} we assume that
\begin{equation}\label{eni1}
\begin{split}
\|h(t)\|_{H^{N_0}}+\||\partial_x|^{1/2}\phi(t)\|_{H^{N_0-1/2}}+\||\partial_x|^{1/2}\omega(t)\|_{H^{N_0}} &\leq A,\\
\|h(t)\|_{\dot{W}^{N_1,0}} + \||\partial_x|^{1/2}\phi(t)\|_{\dot{W}^{N_1-1/2,0}} + \||\partial_x|^{1/2}\omega(t)\|_{\dot{W}^{N_1,0}}&\leq \varep(t),
\end{split}
\end{equation}
for any $t\in[T_1,T_2]$, where $A\in (0,\infty)$ is a number and $\varep:[T_1,T_2]\to [0,\infty)$ is a continuous function satisfying $\varep(t)\ll 1$. 

For $\delta\in(0,1/100)$ let
\begin{equation}\label{m_choice}
m_{N_0}(\xi)=m_{N_0,\delta}(\xi):=\varphi_{\geq 0}(\xi)\langle\xi\rangle^{N_0}/(1+\delta\langle\xi\rangle^{N_0})
\end{equation}
and let $J^{N_0}$ denote the operator defined by the multiplier $m_{N_0}$. Let 
\begin{equation}\label{eni1.1}
A(t):=\big\|(h+i|\partial_x|^{1/2}\omega)(t)\big\|_{H^{N_0}}\lesssim A.
\end{equation}

We define the energy functional
\begin{equation}\label{eni3}
\mathcal{E}_2:=\int_{\T_R}J^{N_0}h\cdot J^{N_0}h\,dx+\int_{\T_R}J^{N_0}|\partial_x|\omega\cdot J^{N_0}\omega\,dx.
\end{equation}

\subsection{The first normal form} Our goal in this section is to eliminate the cubic bulk terms (using essentially a Shatah normal form), and show that the increment of the energy functional $\mathcal{E}_2$ is controlled by a space-time integral that can be written as sum of quartic expressions in the main variables $h^\ast$ and $\omega^\ast$ and an acceptable quintic contribution. At this stage it is very important to get precise formulas for the quartic increment, as described in Lemma \ref{quartic3}, and it is convenient to work with the real variables $h^\ast$ and $\omega^\ast$.

\begin{lemma}\label{quartic1}
For any $t_1\leq t_2\in[T_1,T_2]$ we have
\begin{equation}\label{eni3.5}
|\mathcal{E}_2(t_2)-\mathcal{E}_2(t_1)|\lesssim \sum_{j\in\{1,2\}}\varep(t_j)A(t_j)^2+\Big|\int_{t_1}^{t_2}\sum_{a\in\{1,\ldots,11\}}\mathcal{B}_{\geq 4}^a(s)\,ds\Big|+A^2\int_{t_1}^{t_2}[\varep(s)]^3\,ds,
\end{equation}
where the quartic expressions $\mathcal{B}_{\geq 4}^a$ are given by
\begin{equation}\label{eni5.1}
\mathcal{B}_{\geq 4}^1 := 2\int_{\T_R}J^{N_0}G_3(\phi,h,h)\cdot J^{N_0}h\,dx,
\end{equation}
\begin{equation}\label{eni5.2}
\mathcal{B}_{\geq 4}^2 := 2\int_{\T_R}J^{N_0}|\partial_x|\omega\cdot J^{N_0}L_3(h,\phi,\phi)\,dx,
\end{equation}
\begin{equation}\label{eni5.3}
\mathcal{B}_{\geq 4}^3 := -2\int_{\T_R}J^{N_0}\partial_x(T_Vh)\cdot T_\al J^{N_0}h\,dx
  + 2\int_{\T_R}J^{N_0}G_2(\phi,h)\cdot T_\al J^{N_0}h\,dx,
\end{equation}
\begin{equation}\label{eni5.5}
\mathcal{B}_{\geq 4}^4 := \frac{2}{(2\pi R)^2}\sum_{(\xi,\eta,\rho)\in\mathbb{H}^2_R}
  \big\{\widehat{h}(\xi)\widehat{h}(\eta)\widehat{\phi}(\rho)|\rho|^{1/2}
   -\widehat{h^\ast}(\xi)\widehat{h^\ast}(\eta)\widehat{\omega^\ast}(\rho)\big\}
   \frac{m_{N_0}^2(\xi)q_2(\rho,\eta)}{|\rho|^{1/2}},
\end{equation}
\begin{equation}\label{eni5.6}
\mathcal{B}_{\geq 4}^5 := \frac{2}{(2\pi R)^2}\sum_{(\xi,\eta,\rho)\in\mathbb{H}^2_R}
  \widehat{\omega}(\xi)\big\{\widehat{\phi}(\eta)|\eta|^{1/2}\widehat{\phi}(\rho)|\rho|^{1/2}
  -\widehat{\omega^\ast}(\eta)\widehat{\omega^\ast}(\rho)\big\}
  \frac{m_{N_0}^2(\xi)|\xi|\ell_2(\rho,\eta)}{|\eta|^{1/2}|\rho|^{1/2}},
\end{equation}
\begin{equation}\label{eni5.7}
\mathcal{B}_{\geq 4}^6 := \frac{2}{(2\pi R)^2}\sum_{(\xi,\eta,\rho)\in\mathbb{H}^2_R}
  \big\{\widehat{h}(\xi)\widehat{h}(\eta)\widehat{V}(\rho)
  -\widehat{h^\ast}(\xi)\widehat{h^\ast}(\eta)\widehat{\omega^\ast}(\rho)
  i\rho|\rho|^{-1/2}\big\}m_{N_0}^2(\xi)\chi(\rho,\eta)i\xi,
\end{equation}
\begin{equation}\label{eni5.8}
\mathcal{B}_{\geq 4}^7 := \frac{-2}{(2\pi R)^2}\sum_{(\xi,\eta,\rho)\in\mathbb{H}^2_R}
  \widehat{\omega}(\xi)\widehat{\omega}(\eta)
  \big\{\widehat{V}(\rho)-\widehat{\omega^\ast}(\rho)i\rho|\rho|^{-1/2}\big\}m_{N_0}^2(\xi)\chi(\rho,\eta)i\eta|\xi|,
\end{equation}
\begin{equation}\label{eni5.9}
\begin{split}
\mathcal{B}_{\geq 4}^8 := \frac{1}{(2\pi R)^2}\sum_{(\xi,\eta,\rho)\in\mathbb{H}^2_R}
  &\big\{\widehat{h}(\xi)\widehat{h}(\eta)\widehat{\partial_t\alpha}(\rho)\\
&+ \widehat{h^\ast}(\xi)\widehat{h^\ast}(\eta)\widehat{\omega^\ast}(\rho)
  |\rho|^{3/2}\big\}m_{N_0}(\xi)m_{N_0}(\eta)\chi(\rho,\eta),
\end{split}
\end{equation}
\begin{equation}\label{eni5.10}
\begin{split}
\mathcal{B}_{\geq 4}^9 := \frac{2}{(2\pi R)^2} \sum_{(\xi,\eta,\rho)\in\mathbb{H}^2_R}
  & \widehat{\omega^\ast}(\xi)\big\{ \widehat{h}(\eta)\widehat{\alpha}(\rho)
  \\
& + \widehat{h^\ast}(\eta)\widehat{h^\ast}(\rho)
  |\rho|\big\}m_{N_0}(\xi)|\xi|^{1/2}[m_{N_0}(\eta)-m_{N_0}(\xi)]\chi(\rho,\eta),
\end{split}
\end{equation}
and, with the symbols $A,B,D:\mathbb{H}^2_R\to\mathbb{R}$ defined as in \eqref{eni21}--\eqref{eni22} and \eqref{eni42},
\begin{equation}\label{eni5.11}
\begin{split}
\mathcal{B}^{10}_{\geq 4}
  &:=\frac{1}{(2\pi R)^2}\sum_{(\xi,\eta,\rho)\in\mathbb{H}^2_R}\frac{A(\xi,\eta,\rho)}{D(\xi,\eta,\rho)}\Big\{6|\rho|^{1/2}(|\xi|+|\eta|-|\rho|)\widehat{\mathcal{N}^{\omega^\ast}_{\geq 2}}(\xi)\widehat{\omega^\ast}(\eta)\widehat{h^\ast}(\rho)
  \\
&+3|\rho|^{1/2}(|\xi|+|\eta|-|\rho|)\widehat{\omega^\ast}(\xi)\widehat{\omega^\ast}(\eta)\widehat{\mathcal{N}^{h^\ast}_{\geq 2}}(\rho)+ 6|\xi|^{1/2}|\eta|^{1/2}|\rho|^{1/2}\widehat{h^\ast}(\xi)\widehat{h^\ast}(\eta)\widehat{\mathcal{N}^{h^\ast}_{\geq 2}}(\rho)\Big\},
\end{split}
\end{equation}
\begin{equation}\label{eni5.12}
\begin{split}
&\mathcal{B}^{11}_{\geq 4} 
  := \frac{1}{(2\pi R)^2}\sum_{(\xi,\eta,\rho)\in\mathbb{H}^2_R}\frac{B(\xi,\eta,\rho)}{D(\xi,\eta,\rho)}\\
&\times\Big\{2|\xi|^{1/2}|\eta|^{1/2}|\rho|^{1/2}
  \big[2\widehat{\mathcal{N}^{\omega^\ast}_{\geq 2}}(\xi)\widehat{\omega^\ast}(\eta)\widehat{h^\ast}(\rho) 
  + \widehat{\omega^\ast}(\xi)\widehat{\omega^\ast}(\eta)\widehat{\mathcal{N}^{h^\ast}_{\geq 2}}(\rho)\big]\\
& + |\rho|^{1/2}(|\xi|+|\eta|-|\rho|)
  \big[2\widehat{\mathcal{N}^{h^\ast}_{\geq 2}}(\xi)\widehat{h^\ast}(\eta)\widehat{h^\ast}(\rho)
  + \widehat{h^\ast}(\xi)\widehat{h^\ast}(\eta)\widehat{\mathcal{N}^{h^\ast}_{\geq 2}}(\rho)\big]\\
&-2|\eta|^{1/2}(|\xi|-|\eta|+|\rho|)
  \big[\widehat{\mathcal{N}^{h^\ast}_{\geq 2}}(\xi)\widehat{\omega^\ast}(\eta)\widehat{\omega^\ast}(\rho)
  + \widehat{h^\ast}(\xi)\widehat{\mathcal{N}^{\omega^\ast}_{\geq 2}}(\eta)\widehat{\omega^\ast}(\rho)
  + \widehat{h^\ast}(\xi)\widehat{\omega^\ast}(\eta)\widehat{\mathcal{N}^{\omega^\ast}_{\geq 2}}(\rho)\big]\Big\}.
\end{split}
\end{equation}
\end{lemma}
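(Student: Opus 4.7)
The plan is to compute $\tfrac{d}{dt}\mathcal{E}_2$ using the equations $\partial_t h = G(h)\phi$ and the paralinearization \eqref{DNmainformula}, together with \eqref{hphieq1} for $\partial_t\omega$, and organize the resulting terms into: (i) a quadratic part that vanishes by symmetry; (ii) cubic bulk integrals that must be removed by a normal form transformation; (iii) genuinely quartic remainders controlled either by \eqref{DNmain4}--\eqref{hoeqrem} or by rewriting as $\mathcal{B}^a_{\geq 4}$. At the quadratic level the only contribution is $2\int J^{N_0}|\partial_x|\omega\cdot J^{N_0}h - 2\int J^{N_0}|\partial_x|h\cdot J^{N_0}\omega$, which is zero by self-adjointness of $|\partial_x|$ and $J^{N_0}$. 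The quartic remainders $G_{\geq 4}$, $L_{\geq 4}$ pair against an $L^2$-based energy variable and one $L^\infty$-based low-frequency factor to produce the acceptable $A^2\varepsilon^3$ error on the right of \eqref{eni3.5}. The cubic pieces $G_3$, $L_3$ directly produce $\mathcal{B}^1_{\geq 4}$, $\mathcal{B}^2_{\geq 4}$, while commuting $T_V$ and $T_\alpha$ past $J^{N_0}$ (and rewriting $T_\al$ via $\al = \partial_t B + V \partial_x B$) generates $\mathcal{B}^3_{\geq 4}$.

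The key step is the elimination of the remaining cubic bulk integrals coming from the pairings of $G_2(\phi,h)$ and $L_2(\phi,\phi)$ against the energy variables, the correction $\omega - \phi = -T_B h$ in quadratic contributions, and the symbol $\alpha_2$ produced by the $T_\alpha$ term. After Fourier expansion these assemble into a trilinear form on $\mathbb{H}^2_R$ with a computable symbol $C(\xi,\eta,\rho)$. I would cancel it by introducing a cubic energy correction of the schematic form
\begin{equation*}
\mathcal{E}_3 := \frac{1}{(2\pi R)^2}\sum_{\mathbb{H}^2_R}\frac{A(\xi,\eta,\rho)\,\widehat{\omega^\ast}(\xi)\widehat{\omega^\ast}(\eta)\widehat{h^\ast}(\rho) + B(\xi,\eta,\rho)\,\widehat{h^\ast}(\xi)\widehat{h^\ast}(\eta)\widehat{h^\ast}(\rho)}{D(\xi,\eta,\rho)},
\end{equation*}
where $D$ is the phase function of the linearized flow on triple products (a symmetrization of $\pm|\xi|^{1/2}\pm|\eta|^{1/2}\pm|\rho|^{1/2}$), and $A$, $B$ are chosen so that the linear part of $\tfrac{d}{dt}\mathcal{E}_3$ produces $-C$. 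The absence of nontrivial quadratic resonances \eqref{quadres}, combined with the smoothing supports \eqref{smoothing1}--\eqref{smoothing4}, guarantees that $D$ stays away from zero on the relevant frequency region, so the division is licit and $A/D$, $B/D$ are multipliers of acceptable class. The boundary values $\mathcal{E}_3(t_j)$ are trilinear expressions bounded by $\varepsilon(t_j)A(t_j)^2$ via Lemma \ref{algeProp7}. When we replace the linear part of the flow by the full evolutions $\partial_t h^\ast$, $\partial_t \omega^\ast$ using \eqref{Usor3}, the extra nonlinearities $\mathcal{N}^{h^\ast}_{\geq 2}$, $\mathcal{N}^{\omega^\ast}_{\geq 2}$ produce precisely the quartic integrals $\mathcal{B}^{10}_{\geq 4}$, $\mathcal{B}^{11}_{\geq 4}$.

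The remaining bulks $\mathcal{B}^4_{\geq 4}$--$\mathcal{B}^9_{\geq 4}$ arise from recasting all the cubic integrals produced above in terms of the good variables: each difference such as $\widehat{h}(\xi)\widehat{h}(\eta)\widehat{\phi}(\rho)|\rho|^{1/2} - \widehat{h^\ast}(\xi)\widehat{h^\ast}(\eta)\widehat{\omega^\ast}(\rho)$ is in fact quartic because $h^\ast - h = T_\beta h$ and $|\partial_x|^{1/2}\phi - \omega^\ast = |\partial_x|^{1/2}T_B h$ are each quadratic in $(h, \omega)$ (by Lemmas \ref{DNmainpro} and \ref{BVexpands}); the symbol $\partial_t\alpha$ in $\mathcal{B}^8_{\geq 4}$ is evaluated via \eqref{Utor11}, and the Sobolev symbol commutator $m_{N_0}(\eta)-m_{N_0}(\xi)$ in $\mathcal{B}^9_{\geq 4}$ comes from pairing $\omega^\ast$ against $J^{N_0}(h^\ast - h)$. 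The main obstacle is the purely algebraic task of identifying the total cubic symbol $C$ with the numerator $D\cdot(A\widehat{\omega^\ast}\widehat{\omega^\ast}\widehat{h^\ast} + B\widehat{h^\ast}\widehat{h^\ast}\widehat{h^\ast})/D$ after full symmetrization over $\mathbb{H}^2_R$: this requires careful Fourier-side bookkeeping of the contributions from $G_2$, $L_2$, $T_B h$, $\alpha_2$, and the cubic parts of $h^\ast - h$ and $|\partial_x|^{1/2}\phi - \omega^\ast$, and a verification that the resulting $A$, $B$ coincide with those of \eqref{eni21}--\eqref{eni22}, \eqref{eni42}. With that identification in hand, combining all the pieces yields exactly \eqref{eni3.5}.
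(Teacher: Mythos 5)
Your overall strategy matches the paper's: compute $\partial_t\mathcal{E}_2$, observe the quadratic cancellation, treat $G_3,L_3,G_{\geq 4},L_{\geq 4}$ directly, remove the remaining cubic bulks by cubic energy corrections with symbols divided by $D$, and generate $\mathcal{B}^4_{\geq4}$--$\mathcal{B}^9_{\geq4}$ by passing to the good variables. But there is a concrete gap in how you treat the quasilinear term $-T_\alpha h$ in the $\omega$-equation. Pairing it against $J^{N_0}|\partial_x|\omega$ produces the cubic bulk $\mathcal{B}^\#_{\geq 3}=-2\int_{\T_R}J^{N_0}|\partial_x|\omega\cdot J^{N_0}T_\alpha h\,dx$ whose leading part (coming from the \emph{linear} part $-|\partial_x|h$ of $\alpha$, not from $\alpha_2$ as you write) carries a net excess of half a derivative on the high frequency; it cannot be absorbed into your single Fourier-multiplier correction $\mathcal{E}_3$, because the resulting symbol would not be of acceptable class and the boundary terms and the induced quartic bulks would lose derivatives. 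The paper cancels this term with a separate quasilinear correction $\mathcal{E}^1_{\geq 3}=\int_{\T_R}J^{N_0}h\cdot T_\alpha J^{N_0}h\,dx$ (see \eqref{eni4}), exploiting the near self-adjointness of $T_\alpha$ against the anti-self-adjoint leading flow. This correction is not optional bookkeeping: it is the \emph{only} source of the quartic expression $\mathcal{B}^3_{\geq 4}$ in \eqref{eni5.3}, which is precisely the quartic part of $2\int J^{N_0}\partial_t h\cdot T_\alpha J^{N_0}h\,dx$; your attribution of $\mathcal{B}^3_{\geq 4}$ to ``commuting $T_V$ and $T_\alpha$ past $J^{N_0}$'' is not consistent with its actual form (commutators of $T_\alpha$ with $J^{N_0}$ instead produce the terms with the factor $m_{N_0}(\eta)-m_{N_0}(\xi)$ appearing in $\mathcal{B}^9_{\geq 4}$). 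Without $\mathcal{E}^1_{\geq 3}$ your symbols $A,B$ cannot coincide with \eqref{eni21}--\eqref{eni22}, since those are the symmetrized cubic symbols remaining \emph{after} this first cancellation, and the symbols $\mathcal{B}^5_{\geq 3},\mathcal{B}^6_{\geq 3}$ it generates are part of $B$.

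A secondary imprecision: $D(\xi,\eta,\rho)$ does not ``stay away from zero''; by \eqref{eni42} it equals $-4|\xi||\eta||\rho|/\max(|\xi|,|\eta|,|\rho|)$ and vanishes as any frequency tends to $0$. The division is licit only because $A$ and $B$ carry compensating powers of the small frequencies (cf.\ \eqref{ABmultBounds} versus \eqref{ABmultBounds2}), which must be checked after the symmetrizations in $\xi,\eta,\rho$ that you correctly identify as necessary to avoid derivative loss at the top frequency. With the first correction $\mathcal{E}^1_{\geq 3}$ added and these two points repaired, your plan aligns with the paper's proof.
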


\begin{remark}  
The $\delta$-mollification of the weights in \eqref{m_choice} 
is only needed to gua\-ran\-tee that  our algebraic manipulations 
are justified (the integrals and sums converge absolutely), 
but all the bounds are independent on $\delta$. We will later let $\delta\to 0$ to prove Theorem \ref{IncremBound}.

We notice also that our multipliers $m_{N_0}$ are supported at frequencies $|\xi|\gtrsim 1$.
This is useful to avoid some low-frequency issues, related to our functional framework. The low-frequency part of the energy can be controlled separately, using the conservation 
of the Hamiltonian \eqref{gWWHam}, see section \ref{secadmver} for details.
\end{remark}

\begin{proof} 
The variables $h$ and $\omega$ satisfy the evolution equations
\begin{equation}\label{eni10}
\left\{
\begin{array}{l}
\partial_t h = |\partial_x|\omega- \partial_x(T_V h) + G_2(\phi,h) + G_3(\phi,h,h) +G_{\geq 4},
\\
\partial_t \omega = -h- T_{\alpha} h - T_V \partial_x\omega + L_2(\phi,\phi)
  + L_{3}(h,\phi,\phi) + L_{\geq 4},
\end{array}
\right.
\end{equation}
 as a consequence of \eqref{gWW}, \eqref{DNmainformula}, and \eqref{hphieq1}. 

{\bf{Step 1.}} With $\mathcal{E}_2$ defined as in \eqref{eni3} we start by calculating
\begin{equation}\label{eni11}
\begin{split}
\partial_t\mathcal{E}_2&=2\int_{\T_R}J^{N_0}\partial_th\cdot J^{N_0}h\,dx
+2\int_{\T_R}J^{N_0}|\partial_x|\omega\cdot J^{N_0}\partial_t\omega\,dx
\\
&=\mathcal{B}_{\geq 3}^1+\mathcal{B}_{\geq 3}^2+\mathcal{B}_{\geq 3}^3
+\mathcal{B}_{\geq 3}^4+\mathcal{B}_{\geq 3}^\#+\mathcal{B}_{\geq 4}^1+\mathcal{B}_{\geq 4}^2+\mathcal{B}_{\geq 5}^1,
\end{split}
\end{equation}
where the quartic terms $\mathcal{B}_{\geq 4}^1$ and $\mathcal{B}_{\geq 4}^2$
are defined as in \eqref{eni5.1}--\eqref{eni5.2}, 
$\mathcal{B}^1_{\geq 5}$ is an acceptable remainder satisfying 
$|\mathcal{B}^1_{\geq 5}(t)|\lesssim A^2[\varep(t)]^3$, and
\begin{equation}\label{eni12}
\begin{split}
\mathcal{B}_{\geq 3}^1&:=2\int_{\T_R}J^{N_0}G_2(\phi,h)\cdot J^{N_0}h\,dx,\qquad \mathcal{B}_{\geq 3}^2:=2\int_{\T_R}J^{N_0}|\partial_x|\omega\cdot J^{N_0}L_2(\phi,\phi)\,dx,\\
\mathcal{B}_{\geq 3}^3&:=-2\int_{\T_R}J^{N_0}\partial_xT_Vh\cdot J^{N_0}h\,dx,\qquad \mathcal{B}_{\geq 3}^4:=-2\int_{\T_R}J^{N_0}|\partial_x|\omega\cdot J^{N_0}T_V\partial_x\omega\,dx,\\
\mathcal{B}_{\geq 3}^\#&:=-2\int_{\T_R}J^{N_0}|\partial_x|\omega\cdot J^{N_0}T_\al h\,dx.
\end{split}
\end{equation}

Let
\begin{equation}\label{eni4}
\mathcal{E}^1_{\geq 3}:=\int_{\T_R}J^{N_0}h\cdot T_\al J^{N_0}h\,dx,
\end{equation}
where $\alpha:=\partial_tB + V\partial_xB$ as before. Using also \eqref{Taf2}, we calculate
\begin{equation}\label{eni13}
\begin{split}
\partial_t\mathcal{E}^1_{\geq 3}&=2\int_{\T_R}J^{N_0}\partial_th\cdot T_\al J^{N_0}h\,dx+\int_{\T_R}J^{N_0}h\cdot T_{\partial_t\al} J^{N_0}h\,dx\\
&=-\mathcal{B}_{\geq 3}^\#+\mathcal{B}_{\geq 3}^5+\mathcal{B}_{\geq 3}^6+\mathcal{B}_{\geq 4}^3+\mathcal{B}_{\geq 5}^2,
\end{split}
\end{equation}
where the cubic term $\mathcal{B}_{\geq 3}^\#$ is defined as in \eqref{eni12}, the quartic term $\mathcal{B}_{\geq 4}^3$
is defined as in \eqref{eni5.3},
$\mathcal{B}^2_{\geq 5}$ is an acceptable remainder satisfying $|\mathcal{B}^2_{\geq 5}(t)|\lesssim A^2[\varep(t)]^3$, 
and
\begin{equation}\label{eni14}
\begin{split}
\mathcal{B}_{\geq 3}^5&:=\int_{\T_R}J^{N_0}h\cdot T_{\partial_t\al} J^{N_0}h\,dx,\\
\mathcal{B}_{\geq 3}^6&:=2\int_{\T_R}J^{N_0}|\partial_x|\omega\cdot \big\{T_\al J^{N_0}h-J^{N_0}T_\al h\big\}\,dx.
\end{split}
\end{equation}
The point of the correction cubic energy functional $\mathcal{E}^1_{\geq 3}$ is to cancel the term $\mathcal{B}_{\geq 3}^\#$ which loses derivatives. To summarize,
\begin{equation}\label{eni15}
\begin{split}
\partial_t(\mathcal{E}_2+\mathcal{E}^1_{\geq 3})&=(\mathcal{B}_{\geq 3}^1+\mathcal{B}_{\geq 3}^2+\mathcal{B}_{\geq 3}^3+\mathcal{B}_{\geq 3}^4+\mathcal{B}_{\geq 3}^5+\mathcal{B}_{\geq 3}^6)\\
&+(\mathcal{B}_{\geq 4}^1+\mathcal{B}_{\geq 4}^2+\mathcal{B}_{\geq 4}^3)+(\mathcal{B}_{\geq 5}^1+\mathcal{B}_{\geq 5}^2),
\end{split}
\end{equation}
where $\mathcal{B}_{\geq 5}^1$ and $\mathcal{B}_{\geq 5}^2$ are acceptable quintic errors, 
and the cubic terms $\mathcal{B}_{\geq 3}^1,\ldots,\mathcal{B}_{\geq 3}^6$ and the quartic 
terms $\mathcal{B}_{\geq 4}^1, \mathcal{B}_{\geq 4}^2, \mathcal{B}_{\geq 4}^3$ are defined as above.

{\bf{Step 2.}} We would like now to express the remaining cubic bulk terms in terms of the normalized variables $h^\ast=T_{\sqrt{1+\alpha}}h$ and $\omega^\ast=|\partial_x|^{1/2}\omega$ defined in Lemma \ref{Usor1}. For this we pass to the Fourier space and rewrite, 
\begin{equation}\label{eni16}
\begin{split}
&\mathcal{B}_{\geq 3}^1=\frac{2}{(2\pi R)^2}\sum_{(\xi,\eta,\rho)\in\mathbb{H}^2_R}\widehat{h}(\xi)\widehat{h}(\eta)\widehat{\phi}(\rho)m_{N_0}^2(\xi)q_2(\rho,\eta)=\mathcal{B}_{\geq 3}^{1,0}+\mathcal{B}_{\geq 4}^4,\\
&\mathcal{B}_{\geq 3}^{1,0}:=\frac{2}{(2\pi R)^2}\sum_{(\xi,\eta,\rho)\in\mathbb{H}^2_R}\widehat{h^\ast}(\xi)\widehat{h^\ast}(\eta)\widehat{\omega^\ast}(\rho)m_{N_0}^2(\xi)\frac{q_2(\rho,\eta)}{|\rho|^{1/2}},
\end{split}
\end{equation}
where $q_2$ is as in \eqref{DNmain2} and $\mathcal{B}_{\geq 4}^4$ is defined as in \eqref{eni5.5}. Similarly
\begin{equation}\label{eni17}
\begin{split}
&\mathcal{B}_{\geq 3}^2=\frac{2}{(2\pi R)^2}\sum_{(\xi,\eta,\rho)\in\mathbb{H}^2_R}\widehat{\omega}(\xi)\widehat{\phi}(\eta)\widehat{\phi}(\rho)m_{N_0}^2(\xi)|\xi|\ell_2(\rho,\eta)=\mathcal{B}_{\geq 3}^{2,0}+\mathcal{B}_{\geq 4}^5,\\
&\mathcal{B}_{\geq 3}^{2,0}:=\frac{2}{(2\pi R)^2}\sum_{(\xi,\eta,\rho)\in\mathbb{H}^2_R}\widehat{\omega^\ast}(\xi)\widehat{\omega^\ast}(\eta)\widehat{\omega^\ast}(\rho)m_{N_0}^2(\xi)|\xi|^{1/2}\frac{\ell_2(\rho,\eta)}{|\eta|^{1/2}|\rho|^{1/2}},
\end{split}
\end{equation}
where $\ell_2$ is as in \eqref{hoeq2} and $\mathcal{B}_{\geq 4}^5$ is defined as in \eqref{eni5.6}. 
Also,
\begin{equation}\label{eni18}
\begin{split}
&\mathcal{B}_{\geq 3}^3=\frac{2}{(2\pi R)^2}\sum_{(\xi,\eta,\rho)\in\mathbb{H}^2_R}
  \widehat{h}(\xi)\widehat{h}(\eta)\widehat{V}(\rho)m_{N_0}^2(\xi)\chi(\rho,\eta)i\xi=
  \mathcal{B}_{\geq 3}^{3,0}+\mathcal{B}_{\geq 4}^6,
  \\
&\mathcal{B}_{\geq 3}^{3,0}:=\frac{-2}{(2\pi R)^2}\sum_{(\xi,\eta,\rho)\in\mathbb{H}^2_R}
  \widehat{h^\ast}(\xi)\widehat{h^\ast}(\eta)\widehat{\omega^\ast}(\rho)m_{N_0}^2(\xi)\chi(\rho,\eta)\xi\rho|\rho|^{-1/2},
\end{split}
\end{equation}
\begin{equation}\label{eni19}
\begin{split}
&\mathcal{B}_{\geq 3}^4=\frac{-2}{(2\pi R)^2}\sum_{(\xi,\eta,\rho)\in\mathbb{H}^2_R}\widehat{\omega}(\xi)\widehat{\omega}(\eta)\widehat{V}(\rho)m_{N_0}^2(\xi)\chi(\rho,\eta)i\eta|\xi|=\mathcal{B}_{\geq 3}^{4,0}+\mathcal{B}_{\geq 4}^7,\\
&\mathcal{B}_{\geq 3}^{4,0}:=\frac{2}{(2\pi R)^2}\sum_{(\xi,\eta,\rho)\in\mathbb{H}^2_R}\widehat{\omega^\ast}(\xi)\widehat{\omega^\ast}(\eta)\widehat{\omega^\ast}(\rho)m_{N_0}^2(\xi)\chi(\rho,\eta)\eta|\eta|^{-1/2}\rho|\rho|^{-1/2}|\xi|^{1/2},
\end{split}
\end{equation}
where $\mathcal{B}_{\geq 4}^6$ and $\mathcal{B}_{\geq 4}^7$ are as in \eqref{eni5.7}--\eqref{eni5.8}. Finally, using also \eqref{Utor10}--\eqref{Utor11},
\begin{equation}\label{eni20}
\begin{split}
&\mathcal{B}_{\geq 3}^5=\frac{1}{(2\pi R)^2}\sum_{(\xi,\eta,\rho)\in\mathbb{H}^2_R}\widehat{h}(\xi)\widehat{h}(\eta)\widehat{\partial_t\alpha}(\rho)m_{N_0}(\xi)m_{N_0}(\eta)\chi(\rho,\eta)=\mathcal{B}_{\geq 3}^{5,0}+\mathcal{B}_{\geq 4}^8,\\
&\mathcal{B}_{\geq 3}^{5,0}:=\frac{-1}{(2\pi R)^2}\sum_{(\xi,\eta,\rho)\in\mathbb{H}^2_R}\widehat{h^\ast}(\xi)\widehat{h^\ast}(\eta)\widehat{\omega^\ast}(\rho)m_{N_0}(\xi)m_{N_0}(\eta)\chi(\rho,\eta)|\rho|^{3/2},
\end{split}
\end{equation}
\begin{equation}\label{eni20.5}
\begin{split}
&\mathcal{B}_{\geq 3}^6=\frac{2}{(2\pi R)^2}\sum_{(\xi,\eta,\rho)\in\mathbb{H}^2_R}\widehat{\omega}(\xi)\widehat{h}(\eta)\widehat{\alpha}(\rho)m_{N_0}(\xi)|\xi|[m_{N_0}(\eta)-m_{N_0}(\xi)]\chi(\rho,\eta)=\mathcal{B}_{\geq 3}^{6,0}+\mathcal{B}_{\geq 4}^9,\\
&\mathcal{B}_{\geq 3}^{6,0}:=\frac{2}{(2\pi R)^2}\sum_{(\xi,\eta,\rho)\in\mathbb{H}^2_R}\widehat{\omega^\ast}(\xi)\widehat{h^\ast}(\eta)\widehat{h^\ast}(\rho)m_{N_0}(\xi)|\xi|^{1/2}|\rho|[m_{N_0}(\xi)-m_{N_0}(\eta)]\chi(\rho,\eta)
\end{split}
\end{equation}
where $\mathcal{B}_{\geq 4}^8,\mathcal{B}_{\geq 4}^9$ are defined as in \eqref{eni5.9}--\eqref{eni5.10}.

We examine the cubic expressions $\mathcal{B}^{2,0}_{\geq 3}$ and $\mathcal{B}^{4,0}_{\geq 3}$. We symmetrize in $\xi,\eta,\rho$ to avoid derivative loss and rewrite
\begin{equation}\label{eni21}
\begin{split}
&\mathcal{B}^{2,0}_{\geq 3}+\mathcal{B}^{4,0}_{\geq 3}
=\mathcal{B}^{7}_{\geq 3}:=\frac{1}{(2\pi R)^2}\sum_{(\xi,\eta,\rho)\in\mathbb{H}^2_R}
  \widehat{\omega^\ast}(\xi)\widehat{\omega^\ast}(\eta)\widehat{\omega^\ast}(\rho)A(\xi,\eta,\rho),
\\
& A(\xi,\eta,\rho):=\frac{\widetilde{A}(\xi,\eta,\rho)+\widetilde{A}(\eta,\rho,\xi)+\widetilde{A}(\rho,\xi,\eta)}{3},
\\
& \widetilde{A}(\xi,\eta,\rho):=m_{N_0}^2(\xi)|\xi|^{1/2}\frac{\eta\rho+\widetilde{\chi}(\rho,\eta)|\eta||\rho|}{|\eta|^{1/2}|\rho|^{1/2}}.
\end{split}
\end{equation}
Similarly, we combine the expressions $\mathcal{B}^{1,0}_{\geq 3}$, $\mathcal{B}^{3,0}_{\geq 3}$, 
$\mathcal{B}^{5,0}_{\geq 3}$, $\mathcal{B}^{6,0}_{\geq 3}$, and symmetrize in $\xi$ and $\eta$ to rewrite
\begin{equation}\label{eni22}
\begin{split}
&\mathcal{B}^{1,0}_{\geq 3}+\mathcal{B}^{3,0}_{\geq 3}+\mathcal{B}^{5,0}_{\geq 3}
+\mathcal{B}^{6,0}_{\geq 3}=\mathcal{B}^{8}_{\geq 3}:=\frac{1}{(2\pi R)^2}
\sum_{(\xi,\eta,\rho)\in\mathbb{H}^2_R}\widehat{h^\ast}(\xi)\widehat{h^\ast}(\eta)\widehat{\omega^\ast}(\rho)B(\xi,\eta,\rho),
\\
& B(\xi,\eta,\rho):=-|\rho|^{-1/2}\big\{m_{N_0}^2(\xi)\widetilde{\chi}(\rho,\eta)(\xi\rho+|\xi||\rho|)
+m_{N_0}^2(\eta)\widetilde{\chi}(\rho,\xi)(\eta\rho+|\eta||\rho|)\big\}
\\
&\quad-\rho|\rho|^{-1/2}\big\{m_{N_0}^2(\xi)\xi\chi(\rho,\eta)+m_{N_0}^2(\eta)\eta\chi(\rho,\xi)\big\}
-m_{N_0}(\xi)m_{N_0}(\eta)|\rho|^{3/2}\frac{\chi(\rho,\eta)+\chi(\rho,\xi)}{2}
\\
&\quad+ m_{N_0}(\rho)|\rho|^{1/2}\big\{|\xi|[m_{N_0}(\rho)-m_{N_0}(\eta)]\chi(\xi,\eta)+|\eta|[m_{N_0}(\rho)-m_{N_0}(\xi)]\chi(\eta,\xi)\big\}.
\end{split}
\end{equation}
Notice that the symbol $A$ is symmetric in all three variables, while the symbol $B$ is symmetric in the first two variables. 

It is easy to see that the multipliers $A$ and $B$ satisfy the 
bounds
\begin{equation}\label{ABmultHM}
\begin{split}
&\big\|A(\xi,\eta,\rho)\cdot\varphi_{k_1}(\xi)\varphi_{k_2}(\eta)\varphi_{k_3}(\rho)\big\|_{\mathcal{HM}_9}
\lesssim 2^{3\min(k_1,k_2,k_3)/2}2^{2N_0\max(k_1,k_2,k_3)},\\
&\big\|B(\xi,\eta,\rho)\cdot\varphi_{k_1}(\xi)\varphi_{k_2}(\eta)\varphi_{k_3}(\rho)\big\|_{\mathcal{HM}_9}\lesssim 2^{3\min(k_1,k_2,k_3)/2}2^{2N_0\max(k_1,k_2,k_3)}2^{-|k_1-k_2|/2},
\end{split}
\end{equation}
for any $k_1,k_2,k_3\in\Z$ (recall the definition \eqref{HMD1}--\eqref{HMD2}). In particular
\begin{equation}\label{ABmultBounds}
\begin{split}
&\big\|A(\xi,\eta,\rho)\cdot\varphi_{k_1}(\xi)\varphi_{k_2}(\eta)\varphi_{k_3}(\rho)\big\|_{\widetilde{S}^\infty}
\lesssim 2^{3\min(k_1,k_2,k_3)/2}2^{2N_0\max(k_1,k_2,k_3)},\\
&\big\|B(\xi,\eta,\rho)\cdot\varphi_{k_1}(\xi)\varphi_{k_2}(\eta)\varphi_{k_3}(\rho)\big\|_{\widetilde{S}^\infty}\lesssim 2^{3\min(k_1,k_2,k_3)/2}2^{2N_0\max(k_1,k_2,k_3)}2^{-|k_1-k_2|/2},
\end{split}
\end{equation}
for any $k_1,k_2,k_3\in\Z$, due to Lemma \ref{multiBound} (iv) (recall the definition of the $\widetilde{S}^\infty$ norm in \eqref{hyperplane2}).

To summarize, we showed that
\begin{equation}\label{eni25}
\partial_t(\mathcal{E}_2+\mathcal{E}^1_{\geq 3})
=\mathcal{B}_{\geq 3}^7+\mathcal{B}_{\geq 3}^8+\Big\{\sum_{a\in\{1,\ldots,9\}}\mathcal{B}_{\geq 4}^a\Big\}
+(\mathcal{B}_{\geq 5}^1+\mathcal{B}_{\geq 5}^2),
\end{equation}
where $\mathcal{B}_{\geq 5}^1$ and $\mathcal{B}_{\geq 5}^2$ are acceptable quintic errors. 
The cubic terms $\mathcal{B}_{\geq 3}^7, \mathcal{B}_{\geq 3}^8$ 
and the quartic terms $\mathcal{B}_{\geq 4}^a$ are defined as in the identities 
\eqref{eni21}--\eqref{eni22} and \eqref{eni5.1}--\eqref{eni5.10} above.

\medskip
{\bf{Step 3.}} 
We use now a normal form transformation to remove the cubic bulk terms $\mathcal{B}_{\geq 3}^7$ and $\mathcal{B}_{\geq 3}^8$. For this we define 
\begin{align}\label{eni26}
\begin{split}
D(\xi,\eta,\rho) &:=|\xi|(|\xi|-|\eta|-|\rho|)+|\eta|(|\eta|-|\rho|-|\xi|)+|\rho|(|\rho|-|\xi|-|\eta|)\\
&=|\xi|^2 + |\eta|^2 + |\rho|^2 - 2|\xi||\eta| - 2|\xi||\rho| - 2|\eta||\rho|,
\end{split}
\end{align}
and two cubic energy functionals 
\begin{equation}\label{eni40}
\begin{split}
\mathcal{E}^2_{\geq 3}:=\frac{1}{(2\pi R)^2}\sum_{(\xi,\eta,\rho)\in\mathbb{H}^2_R}\frac{A(\xi,\eta,\rho)}{D(\xi,\eta,\rho)}\Big\{&3\widehat{\omega^\ast}(\xi)\widehat{\omega^\ast}(\eta)\widehat{h^\ast}(\rho)|\rho|^{1/2}(-|\xi|-|\eta|+|\rho|)\\
&-2\widehat{h^\ast}(\xi)\widehat{h^\ast}(\eta)\widehat{h^\ast}(\rho)|\xi|^{1/2}|\eta|^{1/2}|\rho|^{1/2}\Big\},
\end{split}
\end{equation}
and
\begin{equation}\label{eni41}
\begin{split}
\mathcal{E}^3_{\geq 3}:=&\frac{1}{(2\pi R)^2}\sum_{(\xi,\eta,\rho)\in\mathbb{H}^2_R}\frac{B(\xi,\eta,\rho)}{D(\xi,\eta,\rho)}\Big\{2\widehat{h^\ast}(\xi)\widehat{\omega^\ast}(\eta)\widehat{\omega^\ast}(\rho)|\eta|^{1/2}(|\xi|-|\eta|+|\rho|)\\
&-2\widehat{\omega^\ast}(\xi)\widehat{\omega^\ast}(\eta)\widehat{h^\ast}(\rho)|\xi|^{1/2}|\eta|^{1/2}|\rho|^{1/2}+\widehat{h^\ast}(\xi)\widehat{h^\ast}(\eta)\widehat{h^\ast}(\rho)|\rho|^{1/2}(-|\xi|-|\eta|+|\rho|)\Big\}.
\end{split}
\end{equation}

Notice that if $\xi,\eta,\rho\in\mathbb{R}$ and $\xi+\eta+\rho=0$ then
\begin{equation}\label{eni42}
D(\xi,\eta,\rho)=\frac{-4|\xi||\eta||\rho|}{\max(|\xi|,|\eta|,|\rho|)}.
\end{equation}
Using \eqref{ABmultBounds}, Lemma \ref{multiBound} (iv), and \eqref{al8.4}, and letting $A':=A/D$ and $B'=B/D$, we have
\begin{equation}\label{ABmultBounds2}
\begin{split}
\begin{split}
&\big\|A'(\xi,\eta,\rho)\cdot\varphi_{k_1}(\xi)\varphi_{k_2}(\eta)\varphi_{k_3}(\rho)\big\|_{\widetilde{S}^\infty}
  \lesssim 2^{\min(k_1,k_2,k_3)/2}2^{(2N_0-1)\max(k_1,k_2,k_3)},
\\
&\big\|B'(\xi,\eta,\rho)\cdot\varphi_{k_1}(\xi)\varphi_{k_2}(\eta)\varphi_{k_3}(\rho)\big\|_{\widetilde{S}^\infty}
  \lesssim 2^{\min(k_1,k_2,k_3)/2}2^{(2N_0-1)\max(k_1,k_2,k_3)}2^{-|k_1-k_2|/2},
\end{split}
\end{split}
\end{equation}
for any $k_1,k_2,k_3\in\Z$. Recalling also the definition \eqref{eni4} and using Lemma \ref{touse} (ii) (with the two high frequencies estimated in $L^2$ and the low frequency estimated in $L^\infty$), we have
\begin{equation}\label{eni43}
\mathcal{E}^1_{\geq 3}(t)+\mathcal{E}^2_{\geq 3}(t)+\mathcal{E}^3_{\geq 3}(t)\lesssim\varep(t)\big\|(h+i|\partial_x|^{1/2}\omega)(t)\big\|_{H^{N_0}}
\end{equation} 
for any $t\in[T_1,T_2]$. Moreover, the formulas $\partial_t h^\ast=|\partial_x|^{1/2}\omega^\ast+\mathcal{N}_{\geq 2}^{h^\ast}$ and 
$\partial_t \omega^\ast=-|\partial_x|^{1/2} h^\ast+\mathcal{N}_{\geq 2}^{\omega^\ast}$ in \eqref{Usor3} show that
\begin{equation*}
\partial_t\mathcal{E}^2_{\geq 3} = \mathcal{B}_{\geq 3}^7 - \mathcal{B}_{\geq 4}^{10}, 
\qquad \partial_t\mathcal{E}^3_{\geq 3} = \mathcal{B}_{\geq 3}^8 - \mathcal{B}_{\geq 4}^{11},
\end{equation*}
where the quartic terms $\mathcal{B}_{\geq 4}^{10}$ and $\mathcal{B}_{\geq 4}^{11}$ are defined in \eqref{eni5.11}--\eqref{eni5.12}. Recalling also \eqref{eni25}, the desired conclusion of the lemma follows.
\end{proof}

\subsection{Analysis of the quartic bulk integrals}\label{EBssec1}
We examine now more closely the space-time integrals generated in Lemma \ref{quartic1}. 
We define first two general classes of symbols, ``weakly admissible'' and ``regular'',
that appear in many of our operators.
In the next section we will need to introduce the more specific class of ``admissible'' symbols
(see Definition \ref{defA}). 




\smallskip
\begin{definition}[Weakly Admissible Symbols]\label{AccSymb}
A symbol $M:\mathbb{H}_R^3\to\mathbb{C}$ will be called ``weakly admissible'' if:

\smallskip

(i) $M(\xi_1,\xi_2,\xi_3,\xi_4)=0$ if one of the arguments $\xi_j$ is equal to $0$; 

\smallskip
(ii) For any  lattice point $\underline{k}=(k_1,k_2,k_3,k_4)\in\Z^4$
\begin{equation}\label{hoe3}
{\| M \cdot \varphi_{\underline{k}} \|}_{\wt{S}^\infty} 
  \lesssim 2^{\widetilde{k}_4/2}2^{2\widetilde{k}_3}2^{N_0\max(\widetilde{k}_1,0)}2^{N_0\max(\widetilde{k}_2,0)},
\end{equation}
see \eqref{hyperplane2}.
Here $\varphi_{\underline{k}}(\xi)=\varphi_{k_1}(\xi_1)\varphi_{k_2}(\xi_2)\varphi_{k_3}(\xi_3)\varphi_{k_4}(\xi_4)$ 
and $\widetilde{k}_1\geq\widetilde{k}_2\geq\widetilde{k}_3\geq\widetilde{k}_4$ 
is the decreasing rearrangement of the set $k_1,k_2,k_3,k_4$.

\end{definition}

\smallskip
\begin{definition}[Regular Symbols]\label{defMNab}
A symbol $m:\mathbb{H}^3_R \to \C$, will be called ``regular of class $\mathcal{M}^b$''
if for every $\underline{k}=(k_1,\ldots,k_4)\in \Z^4$
we have
\begin{equation}\label{HMD1'}
{\big\| m \cdot \varphi_{\underline{k}} \big\|}_{\mathcal{HM}_8}
  \lesssim 2^{\widetilde{k}_4/2} 2^{(b-1/2)\widetilde{k}_3} 
  \cdot 2^{N_0\max(\wt{k}_1,0)} 2^{N_0\max(\wt{k}_2,0)},
\end{equation}
(recall Definition \ref{defHM}) for some $b \geq 0$.
\end{definition}

\begin{remark}\label{defRrem} (i) A regular symbol of class $\mathcal{M}^{5/2}$ is weakly admissible, due to Lemma \ref{multiBound} (iv). Regular symbols of class $\mathcal{M}^{5/2}$ will often be called simply ``regular" in the rest of the paper.
\smallskip

(ii) Using Lemma \ref{algeProp7} and Lemma \ref{multiBound} (iv), we have
\begin{equation}\label{shortcut}
\Big|\frac{1}{(2\pi R)^3}\sum_{(\xi_1,\xi_2,\xi_3,\xi_4)\in\mathbb{H}^3_R}
  \widehat{h_1}(\xi_1)\widehat{h_2}(\xi_2)\widehat{h_3}(\xi_3)\widehat{h_4}(\xi_4)
  M(\xi_1,\xi_2,\xi_3,\xi_4)\Big|\lesssim A_2^2A_\infty^2,
\end{equation}
provided that $M$ is a weakly admissible quartic symbol and the
functions $h_j$ satisfy $\|h_j\|_{H^{N_0}}\lesssim A_2$ and $\|h_j\|_{\widetilde{W}^2}
  \lesssim A_\infty$, $j\in\{1,2,3,4\}$; recall \eqref{Wtilde}.

\end{remark}

\smallskip
We now calculate explicitly the quartic bulk terms in Lemma \ref{quartic1}.
We will further manipulate and re-arrange these symbols in Lemma \ref{quartic3}
to obtain more convenient expressions for the purpose of verifying
suitable non-resonance conditions later on.

\begin{lemma}\label{quartic2}
With the hypothesis and notation in Lemma \ref{quartic1}, for any $t\in[T_1,T_2]$ we have
\begin{equation}\label{reh1}
\sum_{a\in\{1,\ldots,11\}}\mathcal{B}_{\geq 4}^a(t)
  = \mathcal{Q}_1(h^\ast,h^\ast,h^\ast,\omega^\ast)(t)+\mathcal{Q}_2(\omega^\ast,\omega^\ast,\omega^\ast,h^\ast)(t)
  +\mathcal{B}_{\geq 5}(t),
\end{equation}
where $\mathcal{B}_{\geq 5}(t)$ is an acceptable error satisfying 
\begin{align}\label{quartic2B5}
|\mathcal{B}_{\geq 5}(t)|\lesssim A^2[\varep(t)]^3,
\end{align}
and
\begin{equation}\label{reh2}
\begin{split}
\mathcal{Q}_1(h^\ast,h^\ast,h^\ast,\omega^\ast)&=\frac{1}{(2\pi R)^3}\sum_{(\xi,\eta,\rho,\sigma)\in\mathbb{H}^3_R}
  \widehat{h^\ast}(\xi)\widehat{h^\ast}(\eta)\widehat{h^\ast}(\rho)\widehat{\omega^\ast}(\sigma)
  M_1(\xi,\eta,\rho,\sigma),
  \\
\mathcal{Q}_2(\omega^\ast,\omega^\ast,\omega^\ast,h^\ast)&=\frac{1}{(2\pi R)^3}
  \sum_{(\xi,\eta,\rho,\sigma)\in\mathbb{H}^3_R}
  \widehat{\omega^\ast}(\xi)\widehat{\omega^\ast}(\eta)\widehat{\omega^\ast}(\rho)\widehat{h^\ast}(\sigma)
  M_2(\xi,\eta,\rho,\sigma).
\end{split}
\end{equation}
The symbols $M_1$ and $M_2$ are given by
\begin{equation}\label{reh3}
\begin{split}
& M_1(\xi,\eta,\rho,\sigma) = \sum_{a\in\{1, \ldots, 14 \}} M_1^a(\xi,\eta,\rho,\sigma),
\\
& M_2(\xi,\eta,\rho,\sigma) = \sum_{a\in\{1, \ldots, 6\}}M_2^a(\xi,\eta,\rho,\sigma),
\end{split}
\end{equation}
where $M_1^a$ and $M_2^b$ are weakly admissible 
(see Definition \ref{AccSymb})
given explicitly by
\begin{equation}\label{mu11}
\begin{split}
M^1_1(\xi,\eta,\rho,\sigma)&:=
  m_{N_0}^2(\xi) |\sigma|^{1/2} \Big\{[1-\chi(\sigma+\rho,\eta)][|\sigma+\rho||\xi|-|\sigma||\xi|+\rho\xi]
  \\
  & + [1-\chi(\sigma+\eta,\rho)][|\sigma+\eta||\xi|-|\sigma||\xi|+\eta\xi]+\big[|\sigma||\xi|+\sigma\xi\big]\Big\},
\end{split}
\end{equation}
\begin{align}\label{mu12}
\begin{split}
M_1^2(\xi,\eta,\rho,\sigma) :=&- m_{N_0}(\xi+\si) m_{N_0}(\eta) (\xi+\si) \si|\si|^{-1/2} 
  |\rho| \chi(\si,\xi) \chi(\rho,\eta)
  \\ & - m_{N_0}(\eta+\si) m_{N_0}(\xi) (\eta+\si) \si|\si|^{-1/2} |\rho| \chi(\si,\eta) \chi(\rho,\xi),
\end{split}
\end{align}
\begin{align}\label{mu13}
M_1^3(\xi,\eta,\rho,\sigma) &:=  - 2 m_{N_0}(\xi+\si) m_{N_0}(\eta) 
[(\xi+\si)\si-|\xi+\si||\si|] |\si|^{-1/2} |\rho|\widetilde{\chi}(\si,\xi)\chi(\rho,\eta),
\end{align}
\begin{align}\label{mu14}
M_1^4(\xi,\eta,\rho,\sigma) &:= \frac{m_{N_0}^2(\xi+\eta)\widetilde{\chi}(\si,\rho)\chi(\eta,\xi)|\eta|[(\rho+\sigma)\sigma-|\rho+\sigma||\sigma|]}{|\si|^{1/2}},
\end{align}
\begin{align}\label{mu15}
M_1^5(\xi,\eta,\rho,\sigma) & := m_{N_0}^2(\xi)\frac{|\rho|q_2(\si,\eta+\rho)\chi(\rho,\eta) +2
  |\si|q_2(\rho + \si,\eta)\chi(\si,\rho)}{|\si|^{1/2}},
\end{align}
\begin{align}\label{mu16}
\begin{split}
M_1^6(\xi,\eta,\rho,\sigma):=&-\frac{1}{2}m_{N_0}^2(\xi+\rho)\chi(\si,\eta)\chi(\rho,\xi) (\xi+\rho)|\rho|\si|\si|^{-1/2}\\
&-\frac{1}{2}m_{N_0}^2(\eta+\rho)\chi(\si,\xi)\chi(\rho,\eta) (\eta+\rho)|\rho|\si|\si|^{-1/2},
\end{split}
\end{align}
\begin{align}\label{mu17}
\begin{split}
M_1^7(\xi,\eta,\rho,\sigma) :=&-\frac{1}{2}\xi m_{N_0}^2(\xi)\chi(\si,\eta+\rho)\chi(\rho,\eta)|\rho|\si|\si|^{-1/2}\\
&-\frac{1}{2}\eta m_{N_0}^2(\eta)\chi(\si,\xi+\rho)\chi(\rho,\xi)|\rho|\si|\si|^{-1/2},
\end{split}
\end{align}
\begin{align}\label{mu18}
\begin{split}
M_1^8(\xi,\eta,\rho,\sigma):=& -|\si|^{1/2} \big[ (\si+\rho) \chi(\si,\rho) - \rho\big]\\
&\times\big\{\xi m_{N_0}^2(\xi)\chi(\rho+\si,\eta)+\eta m_{N_0}^2(\eta)\chi(\rho+\si,\xi)\big\},
\end{split}
\end{align}
\begin{align}\label{mu19}
M_1^9(\xi,\eta,\rho,\si) := m_{N_0}(\xi)m_{N_0}(\eta)\chi(\rho+\si,\eta) b(\si,\rho),
\end{align}
\begin{align}\label{mu110}
M_1^{10}(\xi,\eta,\rho,\si) & :=-\frac{1}{2}  m_{N_0}(\xi+\rho) m_{N_0}(\eta) [\chi(\si,\eta)+\chi(\si,\xi+\rho)]\chi(\rho,\xi)|\rho| |\si|^{3/2},
\end{align}
\begin{align}\label{mu111}
M_1^{11}(\xi,\eta,\rho,\si):= - m_{N_0}(\si) [m_{N_0}(\xi+\eta) - m_{N_0}(\si)] \chi(\rho,\xi+\eta)\chi(\xi,\eta) |\rho| |\si|^{1/2} \, |\xi| ,
\end{align}
\begin{align}\label{mu112}
M_1^{12}(\xi,\eta,\rho,\si):= 2m_{N_0}(\si) [m_{N_0}(\eta) - m_{N_0}(\si)] \chi(\rho+\xi,\eta) |\si| ^{1/2} a_2^{rr}(\rho,\xi),
\end{align}
\begin{equation}\label{mu113}
\begin{split}
M_1^{13}(\xi,\eta,\rho,\si)&:=6A'(\eta+\rho,\sigma,\xi)|\xi|^{1/2}(|\eta+\rho|+|\sigma|-|\xi|)n_2^{rr}(\eta,\rho)\\
&+6A'(\xi,\eta,\rho+\sigma)|\xi|^{1/2}|\eta|^{1/2}|\rho+\sigma|^{1/2}n_2^{ri}(\rho,\sigma),
\end{split}
\end{equation}
\begin{equation}\label{mu114}
\begin{split}
M_1^{14}(\xi,\eta,\rho,\si)&:=4B'(\eta+\rho,\sigma,\xi)|\eta+\rho|^{1/2}|\xi|^{1/2}|\sigma|^{1/2}n_2^{rr}(\eta,\rho)\\
&+2B'(\rho+\sigma,\eta,\xi)|\xi|^{1/2}(|\rho+\sigma|+|\eta|-|\xi|)n_2^{ri}(\rho,\sigma)\\
&+B'(\xi,\eta,\rho+\sigma)|\rho+\sigma|^{1/2}(|\xi|+|\eta|-|\rho+\sigma|)n_2^{ri}(\rho,\sigma)\\
&-2B'(\xi,\eta+\rho,\sigma)|\eta+\rho|^{1/2}(|\xi|+|\sigma|-|\eta+\rho|)n_2^{rr}(\eta,\rho)\\
&-2B'(\xi,\sigma,\eta+\rho)|\sigma|^{1/2}(|\xi|+|\eta+\rho|-|\sigma|)n_2^{rr}(\eta,\rho),
\end{split}
\end{equation}
and
\begin{equation}\label{mu21}
\begin{split}
M^1_2(\xi,\eta,\rho,\sigma) &:=\frac{2m_{N_0}^2(\xi)|\xi|^{1/2}|\rho|^{1/2}}{|\eta|^{1/2}}
  \big\{\eta(\sigma+\rho)\chi(\eta,\sigma+\rho) \chi(\rho,\sigma)+\eta\sigma\chi(\sigma+\rho,\eta)\\
&-\eta\sigma\chi(\rho,\sigma+\eta)-\rho\eta\chi(\eta+\rho,\sigma) 
  + (|\rho| - |\sigma+\rho|)|\eta|\wt{\chi}(\sigma+\rho,\eta)\big\},
\end{split}
\end{equation}
\begin{align}\label{mu22}
M_2^2(\xi,\eta,\rho,\sigma) & := \frac{2m_{N_0}^2(\xi) \chi(\eta,\si)\widetilde{\chi}(\rho,\eta+\si)|\xi|^{1/2}|\eta|^{1/2}[|\eta+\sigma||\rho|+(\eta+\sigma)\rho]}{|\rho|^{1/2}},
\end{align}
\begin{equation}\label{mu23}
\begin{split}
M_2^3(\xi,\eta,\rho,\sigma)&:=|\rho|^{1/2}\big[(\rho+\si) \chi(\rho,\si) -  \si\big]\\
&\times\big\{|\xi|^{1/2}m_{N_0}^2(\xi)\chi(\rho+\si,\eta) \eta|\eta|^{-1/2}+|\eta|^{1/2}m_{N_0}^2(\eta)\chi(\rho+\si,\xi) \xi|\xi|^{-1/2}\big\},
\end{split}
\end{equation}
\begin{align}\label{mu24}
M_2^4(\xi,\eta,\rho,\si) & := 2m_{N_0}(\xi)[m_{N_0}(\si)-m_{N_0}(\xi)] \chi(\rho+\eta,\si) |\xi|^{1/2} a_2^{ii}(\rho,\eta),
\end{align}
\begin{equation}\label{mu25}
\begin{split}
M_2^5(\xi,\eta,\rho,\si)&:=6A'(\xi+\rho,\eta,\sigma)|\sigma|^{1/2}(|\xi+\rho|+|\eta|-|\sigma|)n_2^{ii}(\xi,\rho)\\
&+3A'(\xi,\eta,\rho+\sigma)|\rho+\sigma|^{1/2}(|\xi|+|\eta|-|\rho+\sigma|)n_2^{ri}(\sigma,\rho),
\end{split}
\end{equation}
\begin{equation}\label{mu26}
\begin{split}
M_2^6(\xi,\eta,\rho,\si)&:=4B'(\xi+\rho,\eta,\sigma)|\xi+\rho|^{1/2}|\eta|^{1/2}|\sigma|^{1/2}n_2^{ii}(\xi,\rho)\\
&+2B'(\xi,\eta,\rho+\sigma)|\xi|^{1/2}|\eta|^{1/2}|\rho+\sigma|^{1/2}n_2^{ri}(\sigma,\rho)\\
&-2B'(\rho+\sigma,\eta,\xi)|\eta|^{1/2}(|\rho+\sigma|+|\xi|-|\eta|)n_2^{ri}(\sigma,\rho)\\
&-2B'(\sigma,\xi+\rho,\eta)|\xi+\rho|^{1/2}(|\sigma|+|\eta|-|\xi+\rho|)n_2^{ii}(\xi,\rho)\\
&-2B'(\sigma,\eta,\xi+\rho)|\eta|^{1/2}(|\sigma|+|\xi+\rho|-|\eta|)n_2^{ii}(\xi,\rho).
\end{split}
\end{equation}
The new symbols $a_2^{rr}$, $a_2^{ii}$, $b$, $n_2^{ri}$, $n_2^{rr}$, and $n_2^{ii}$ are defined in \eqref{ite2}, \eqref{ite4}, and \eqref{Nh}--\eqref{Nii2}, and the symbols $A'$ and $B'$ are defined by $A':=A/D$ and $B':=B/D$.
\end{lemma}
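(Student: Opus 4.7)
The plan is to systematically expand each of the quartic bulk terms $\mathcal{B}_{\geq 4}^a$, $a\in\{1,\ldots,11\}$, written in terms of $(h,\phi,\omega,V,\alpha,\partial_t\alpha,\mathcal{N}_{\geq 2}^{h^\ast},\mathcal{N}_{\geq 2}^{\omega^\ast})$, as a sum of explicit quartic forms in $(h^\ast,\omega^\ast)$ plus acceptable quintic remainders. The basic substitution dictionary is: $h=h^\ast-T_\beta h$ with $\beta$ quadratic by \eqref{Utor10}, $\omega=|\partial_x|^{-1/2}\omega^\ast$, $\phi=\omega+T_Bh$, and the quadratic expansions of $B$, $V$, $\alpha$, $\partial_t\alpha$ given in Lemma \ref{BVexpands}. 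Each time one of these quantities appears inside a quartic bulk term, replacing it by its quadratic part introduces a cubic error which, when paired with two remaining factors controlled in $H^{N_0}\cap\widetilde{W}^{N_1}$, produces a quintic expression bounded by $A^2[\varepsilon(t)]^3$ through Lemma \ref{algeProp7} and the product estimates in Lemma \ref{algeProp}; such contributions are absorbed into $\mathcal{B}_{\geq 5}(t)$.

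For $\mathcal{B}_{\geq 4}^1$ and $\mathcal{B}_{\geq 4}^2$ the cubic smoothing symbols $q_3$ and $\ell_3$ are explicit; the substitution $h\mapsto h^\ast$, $\phi\mapsto |\partial_x|^{-1/2}\omega^\ast$ directly produces $M_1^1$ and $M_2^1$. For $\mathcal{B}_{\geq 4}^3,\ldots,\mathcal{B}_{\geq 4}^9$, which are already differences of the form ``original minus normalized'', I expand each factor using Lemma \ref{BVexpands} keeping only the principal quadratic parts: this generates the symbols $M_1^2,\ldots,M_1^{12}$ and $M_2^2,M_2^3,M_2^4$. The algebraic structure of the paradifferential cutoff $\chi$, the divided differences $m_{N_0}(\eta)-m_{N_0}(\xi)$, and the cancellations built into $q_2$, $\ell_2$ ensure that each resulting symbol satisfies the weakly admissible bound \eqref{hoe3}. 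Finally, $\mathcal{B}_{\geq 4}^{10}$ and $\mathcal{B}_{\geq 4}^{11}$ contain $\mathcal{N}_{\geq 2}^{h^\ast}$, $\mathcal{N}_{\geq 2}^{\omega^\ast}$, which by \eqref{Usor3} split into their quadratic parts $H_2^{ri}(h^\ast,\omega^\ast)$ and $\Omega_2^{rr}(h^\ast,h^\ast)+\Omega_2^{ii}(\omega^\ast,\omega^\ast)$, the paradifferential corrections $T_\beta|\partial_x|^{1/2}(\cdot)$ and $-\partial_x T_V(\cdot)$, and cubic remainders $H_{\geq 3}^\ast,\Omega_{\geq 3}^\ast$; the cubic remainders yield quintic errors via \eqref{Usor6}, the paradifferential corrections merge into the previously collected $M_j^a$, and the quadratic nonlinearities give precisely $M_1^{13},M_1^{14},M_2^5,M_2^6$, with the factors $A',B'$ inherited from $\mathcal{B}_{\geq 4}^{10},\mathcal{B}_{\geq 4}^{11}$.

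The only nontrivial analytic point is the weak admissibility of each $M_1^a,M_2^b$. For the symbols not involving $A',B'$ this follows from Lemma \ref{multiBound}(iv): the factors $m_{N_0}^2$ (or $m_{N_0}\cdot m_{N_0}$) furnish the $2^{N_0\max(\widetilde{k}_1,0)}2^{N_0\max(\widetilde{k}_2,0)}$ growth; the factors $|\sigma|^{1/2}$, $|\rho|^{1/2}$, together with the smoothing identities \eqref{smoothing1}--\eqref{smoothing4} for $q_2,\ell_2$ and the $\chi$-localizations, provide the $2^{\widetilde{k}_4/2}2^{2\widetilde{k}_3}$ gain on the two smallest frequencies. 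For $M_1^{13},M_1^{14},M_2^5,M_2^6$, weak admissibility follows from the multiplier bounds \eqref{ABmultBounds2} for $A'=A/D$ and $B'=B/D$, combined with the regularity of $n_2^{ri},n_2^{rr},n_2^{ii}$, using the algebra property \eqref{al8.4}.

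The main obstacle is the bookkeeping: carrying out the expansions precisely enough that the output symbols match \eqref{mu11}--\eqref{mu26} (which will be used later to verify vanishing at the Benjamin--Feir resonances) while discarding every contribution that can be absorbed into the $A^2\varepsilon^3$ error. The analytic estimates themselves are routine consequences of Lemmas \ref{algeProp}, \ref{algeProp7}, \ref{BVexpands}, and \ref{Usor1}; the difficulty lies in preserving the exact algebraic form of the quartic symbols through the substitutions, since these symbols must later be fully symmetrized and checked to vanish at the nontrivial resonant set \eqref{BFres}.
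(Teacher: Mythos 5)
Your overall strategy coincides with the paper's: expand each $\mathcal{B}_{\geq 4}^a$ in the normalized variables via the quadratic expansions of Lemma \ref{BVexpands}, match the quartic output to the listed symbols, and absorb the rest into $\mathcal{B}_{\geq 5}$. For $a\in\{1,\dots,9\}$ this is essentially what the paper does (modulo the $\xi\leftrightarrow\eta$ symmetrizations that are already encoded in the displayed symbols), and your admissibility argument via Lemma \ref{multiBound} (iv) and the bounds \eqref{ABmultBounds2} is the right one.

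There is, however, a genuine gap in your treatment of $\mathcal{B}_{\geq 4}^{10}$ and $\mathcal{B}_{\geq 4}^{11}$. First, the paradifferential pieces of $\mathcal{N}_{\geq 2}^{h^\ast},\mathcal{N}_{\geq 2}^{\omega^\ast}$ do not ``merge into the previously collected $M_j^a$'': their linear leading parts ($\beta\approx -|\partial_x|h^\ast/2$ and $V\approx \partial_x|\partial_x|^{-1/2}\omega^\ast$) are absorbed into the quadratic nonlinearities through the corrected symbols $n_2^{ri},n_2^{rr},n_2^{ii}$ of \eqref{Nh}--\eqref{Nii2} (hence into $M_1^{13},M_1^{14},M_2^5,M_2^6$), while what remains is $T_X|\partial_x|^{1/2}(\cdot)-\partial_xT_Y(\cdot)$ with $X=\beta+|\partial_x|h^\ast/2$ and $Y=V-\partial_x|\partial_x|^{-1/2}\omega^\ast$ genuinely quadratic (cf. \eqref{Poj1}--\eqref{Poj2} and \eqref{Suses5}); these therefore produce quintic contributions, not quartic ones. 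Second, and more seriously, the acceptability of these quintic contributions is not a routine consequence of Lemma \ref{algeProp7}: a raw multiplier such as $A'(\xi,\eta,\rho+\sigma)|\xi|^{1/2}|\eta|^{1/2}|\rho+\sigma|^{1/2}\,i(\rho+\sigma)\chi(\sigma,\rho)$ can place up to $2N_0$ derivatives on a single (highest) frequency, since $A'$ is of order $2^{(2N_0-1)\max}$ and the extra factor $\rho+\sigma$ coming from $\partial_xT_Y$ may also fall on the highest frequency; this violates the hypothesis \eqref{algh2}, which requires at most $N_0$ derivatives on each of the two largest frequencies. The paper resolves this by symmetrizing the quintic bulks over the three $h^\ast/\omega^\ast$ slots, producing the multipliers $L_a^{10},L_a^{11}$ that satisfy H\"ormander--Mikhlin bounds without derivative loss; without this step the error bound \eqref{quartic2B5} does not follow. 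You should supply this symmetrization (and note the analogous, milder, symmetrizations needed when substituting $h\mapsto h^\ast$ inside $\mathcal{B}_{\geq 4}^6$ and $\mathcal{B}_{\geq 4}^9$) to make the quintic error estimates rigorous.
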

\begin{proof}

The idea is to examine the formulas \eqref{eni5.1}--\eqref{eni5.12}, 
extract the purely quartic terms expressed in terms of the main variables $h^\ast$ and $\omega^\ast$, 
and show that the errors are indeed quintic remainders that do not lose derivatives. 
In this proof $X\sim Y$ means that $X-Y$ is an acceptable quintic error,
$|X-Y|(t)\lesssim A^2[\varep(t)]^3$ for any $t\in[T_1,T_2]$.

\smallskip
{\bf{Step 1: the terms $\mathcal{B}_{\geq 4}^1$, $\mathcal{B}_{\geq 4}^2$, $\mathcal{B}_{\geq 4}^3$.}} Using \eqref{DNmain3} and \eqref{hoeq31} we have
\begin{equation*}
\mathcal{B}_{\geq 4}^1 \sim \frac{1}{(2\pi R)^3}\sum_{(\xi,\eta,\rho,\sigma)\in\mathbb{H}^3_R}
   \widehat{h^\ast}(\xi)\widehat{h^\ast}(\eta)\widehat{h^\ast}(\rho)\widehat{\omega^\ast}(\sigma)
   M_1^1(\xi,\eta,\rho,\sigma),
\end{equation*}
and
\begin{equation*}
\mathcal{B}_{\geq 4}^2 \sim \frac{1}{(2\pi R)^3} \sum_{(\xi,\eta,\rho,\sigma)\in\mathbb{H}^3_R}
  \widehat{\omega^\ast}(\xi)\widehat{\omega^\ast}(\eta)\widehat{\omega^\ast}(\rho)\widehat{h^\ast}(\sigma)
  M_2^1(\xi,\eta,\rho,\sigma),
\end{equation*}
where the symbols $M_1^1$ and $M_2^1$ are defined in \eqref{mu11} and \eqref{mu21}. These identities follow by replacing $h$ with $h^\ast$ and $|\partial_x|^{1/2}\phi$ with $\omega^\ast$ at the expense of quadratic errors, and noticing that there is no derivative loss since the operators $G_3$ and $L_3$ are smoothing (see \eqref{smoothing2} and \eqref{smoothing4}).

For the term $\mathcal{B}_{\geq 4}^3$ in \eqref{eni5.3} we use
the expansions \eqref{expand1} for $V$ and \eqref{Utor10} for $\alpha$, the definitions \eqref{Usor2}, 
and the formula \eqref{DNmain2} for $G_2$, 
to see that
\begin{equation*}
\begin{split}
-2&\int_{\T_R}J^{N_0}\partial_x(T_Vh)\cdot T_\al J^{N_0}h\,dx\\
&=\frac{-2}{(2\pi R)^3}\sum_{y,\xi,\eta\in\Z/R}\big\{m_{N_0}(y)iy\chi(y-\xi,\xi)\widehat{V}(y-\xi)\widehat{h}(\xi)\cdot \widehat{\alpha}(-y-\eta)\chi(-y-\eta,\eta)m_{N_0}(\eta)\widehat{h}(\eta)\big\}\\
&\sim\frac{1}{(2\pi R)^3}\sum_{(\xi,\eta,\rho,\sigma)\in \mathbb{H}^3_R}
   \widehat{h^\ast}(\xi)\widehat{h^\ast}(\eta)\widehat{h^\ast}(\rho)\widehat{\omega^\ast}(\sigma)M_1^2(\xi,\eta,\rho,\sigma),
   \end{split}
\end{equation*}
where the symbol $M_1^2$ is defined in \eqref{mu12} by symmetrization in $\xi$ and $\eta$ (to avoid derivative loss such that $M_1^2$ becomes a regular symbol\footnote{For presentation purposes, we postpone the routine proofs of regularity/admissibility of symbols to section 6. The symbols $M_i^j$ are converted into the slightly simpler symbols $K_i^j$ in Lemma \ref{quartic3}, and the symbols $K_i^j$ are then all analyzed, sometimes fully symmetrized, in section 6.}). Also
\begin{equation*}
\begin{split}
2&\int_{\T_R}J^{N_0}G_2(\phi,h)\cdot T_\al J^{N_0}h\,dx\\
&=\frac{2}{(2\pi R)^3}\sum_{x,\xi,\eta\in\Z/R}\big\{m_{N_0}(y)q_2(y-\xi,\xi)\widehat{\phi}(y-\xi)\widehat{h}(\xi)\cdot \widehat{\alpha}(-y-\eta)\chi(-y-\eta,\eta)m_{N_0}(\eta)\widehat{h}(\eta)\big\}\\
&\sim\frac{1}{(2\pi R)^3}\sum_{(\xi,\eta,\rho,\sigma)\in \mathbb{H}^3_R}
   \widehat{h^\ast}(\xi)\widehat{h^\ast}(\eta)\widehat{h^\ast}(\rho)\widehat{\omega^\ast}(\sigma)M_1^3(\xi,\eta,\rho,\sigma),
   \end{split}
\end{equation*}
using \eqref{DNmain2} and \eqref{smoothing1}. Therefore
\begin{align*}
\mathcal{B}_{\geq 4}^3 \sim 
   \frac{1}{(2\pi R)^3}\sum_{(\xi,\eta,\rho,\sigma)\in\mathbb{H}^3_R}
   \widehat{h^\ast}(\xi)\widehat{h^\ast}(\eta)\widehat{h^\ast}(\rho)\widehat{\omega^\ast}(\sigma)
   \big[ M_1^2(\xi,\eta,\rho,\sigma) + M_1^3(\xi,\eta,\rho,\sigma)\big].
\end{align*}

{\bf{Step 2: the terms $\mathcal{B}_{\geq 4}^a$, $a\in\{4, 5, 6, 7\}$.}} To identify properly the quartic terms we expand
\begin{align}
\label{hast-h}
\what{h}(\xi)-\what{h^\ast}(\xi)=\frac{1}{2\pi R}\sum_{\eta\in\Z/R} 
  \frac{1}{2} |\xi-\eta|\chi(\xi-\eta,\eta) \what{h^\ast}(\xi-\eta) \what{h^\ast}(\eta)+\widehat{\mathcal{R}^1_{\geq 3}}(\xi),
\end{align}
\begin{align}
\label{phi-omega}
|\xi|^{1/2}\what{\phi}(\xi) - \what{\omega^\ast}(\xi)= \frac{1}{2\pi R}\sum_{\eta\in\Z/R}
 |\xi|^{1/2}|\xi-\eta|^{1/2}\chi(\xi-\eta,\eta)\what{\omega^\ast}(\xi-\eta) \what{h^\ast}(\eta)+\widehat{\mathcal{R}^2_{\geq 3}}(\xi),
\end{align}
\begin{align}\label{V-omega}
\what{V}(\xi) - i\xi|\xi|^{-1/2}\what{\omega^\ast}(\xi)= \frac{1}{2\pi R}\sum_{\eta\in\Z/R}
  i |\xi-\eta|^{1/2} \big( \xi\chi(\xi-\eta,\eta) - \eta\big) \what{\omega^\ast}(\xi-\eta) \what{h^\ast}(\eta)+\widehat{\mathcal{R}^3_{\geq 3}}(\xi),
\end{align}
where the cubic remainders $\mathcal{R}^j_{\geq 3}$ satisfy
\begin{equation}\label{remainder5}
\begin{split}
\|\mathcal{R}^1_{\geq 3}(t)\|_{H^{N_0}}+\|\mathcal{R}^2_{\geq 3}(t)\|_{H^{N_0-1/2}}+\|\mathcal{R}^3_{\geq 3}(t)\|_{H^{N_0-1}}&\lesssim A\varep(t)^2,\\
\|\mathcal{R}^1_{\geq 3}(t)\|_{\widetilde{W}^{N_1}}+\|\mathcal{R}^2_{\geq 3}(t)\|_{\widetilde{W}^{N_1-1/2}}+\|\mathcal{R}^3_{\geq 3}(t)\|_{\widetilde{W}^{N_1-1}}&\lesssim \varep(t)^3,\\
\end{split}
\end{equation}
for any $t\in [T_1,T_2]$. These identities follow from the definitions \eqref{Usor2}, \eqref{DefBV}, and the expansions \eqref{expand1}--\eqref{expand2} and \eqref{Utor10}--\eqref{Utor11}. 

Using the formulas we can write
\begin{align*}
\mathcal{B}_{\geq 4}^4 \sim 
   \frac{1}{(2\pi R)^3}\sum_{(\xi,\eta,\rho,\sigma)\in\mathbb{H}^3_R}
   \widehat{h^\ast}(\xi)\widehat{h^\ast}(\eta)\widehat{h^\ast}(\rho)\widehat{\omega^\ast}(\sigma)
   \big[ M_1^4(\xi,\eta,\rho,\sigma) + M_1^5(\xi,\eta,\rho,\sigma)\big],
\end{align*}
where $M_1^4$ and $M_1^5$ are as in \eqref{mu14}--\eqref{mu15}. Similarly, the definition \eqref{eni5.6} gives
\begin{align*}
\mathcal{B}_{\geq 4}^5 \sim \frac{1}{(2\pi R)^3} \sum_{(\xi,\eta,\rho,\sigma)\in\mathbb{H}^3_R}
  \widehat{\omega^\ast}(\xi)\widehat{\omega^\ast}(\eta)\widehat{\omega^\ast}(\rho)\widehat{h^\ast}(\sigma)M_2^2(\xi,\eta,\rho,\sigma),
\end{align*}
where $M_2^2$ is defined in \eqref{mu22}. Similarly, using the definition \eqref{eni5.7} we have
\begin{align*}
\mathcal{B}_{\geq 4}^6 \sim \frac{1}{(2\pi R)^3} \sum_{(\xi,\eta,\rho,\sigma)\in\mathbb{H}^3_R}
  \widehat{h^\ast}(\xi)\widehat{h^\ast}(\eta)\widehat{h^\ast}(\rho)\widehat{\omega^\ast}(\sigma)
  \big[ M_1^6(\xi,\eta,\rho,\sigma) + M_1^7(\xi,\eta,\rho,\sigma) + M_1^8(\xi,\eta,\rho,\sigma) \big],
\end{align*}
where the symbols $M_1^6,M_1^7,M_1^8$ are defined in \eqref{mu16}--\eqref{mu18}. Finally, with $M_2^3$ as in \eqref{mu23},
\begin{align*}
\mathcal{B}_{\geq 4}^7 \sim \frac{1}{(2\pi R)^3} \sum_{(\xi,\eta,\rho,\sigma)\in\mathbb{H}^3_R}
  \widehat{\omega^\ast}(\xi)\widehat{\omega^\ast}(\eta)\widehat{\omega^\ast}(\rho)\widehat{h^\ast}(\sigma)
  M_2^3(\xi,\eta,\rho,\sigma).
\end{align*}

These four formulas can be justified in a similar way. We only provide the details for the more complicated identity for $\mathcal{B}_{\geq 4}^6$. We symmetrize first in $\xi$ and $\eta$ to avoid derivative loss, so 
\begin{equation*}
\begin{split}
\mathcal{B}_{\geq 4}^6 = \frac{1}{(2\pi R)^2}\sum_{(\xi,\eta,\rho)\in\mathbb{H}^2_R}
  \big\{\widehat{h}(\xi)\widehat{h}(\eta)\widehat{V}(\rho)
 & -\widehat{h^\ast}(\xi)\widehat{h^\ast}(\eta)\widehat{\omega^\ast}(\rho)
  i\rho|\rho|^{-1/2}\big\}\\
  &\times[i\xi m_{N_0}^2(\xi)\chi(\rho,\eta)+i\eta m_{N_0}^2(\eta)\chi(\rho,\xi)].
  \end{split}
\end{equation*}
Then we use the identities \eqref{V-omega} and \eqref{hast-h} and notice that the symbol $M_1^8$ defined in \eqref{mu18} is regular (as in Definition \ref{defMNab}) to see that
\begin{equation*}
\begin{split}
\mathcal{B}_{\geq 4}^6&\sim\frac{1}{(2\pi R)^2}\sum_{(\xi,\eta,\rho)\in\mathbb{H}^2_R}
  \big\{\widehat{h}(\xi)\widehat{h}(\eta)-\widehat{h^\ast}(\xi)\widehat{h^\ast}(\eta)\big\}\widehat{\omega^\ast}(\rho)i\rho|\rho|^{-1/2}\\
  &\qquad\qquad\qquad\qquad\times[i\xi m_{N_0}^2(\xi)\chi(\rho,\eta)+i\eta m_{N_0}^2(\eta)\chi(\rho,\xi)]\\
  &+\frac{1}{(2\pi R)^3}\sum_{(\xi,\eta,\rho,\sigma)\in\mathbb{H}^3_R}
   \widehat{h^\ast}(\xi)\widehat{h^\ast}(\eta)\widehat{h^\ast}(\rho)\widehat{\omega^\ast}(\sigma)M_1^8(\xi,\eta,\rho,\sigma).
  \end{split}
\end{equation*}
Then we notice that the symbols $M_1^6$ and $M_1^7$ are also regular, so the desired formula follows by expanding $h$ as in \eqref{hast-h} and reorganizing the terms to preserve the symmetry in $\xi$ and $\eta$.

\smallskip
{\bf{Step 3: the terms $\mathcal{B}^{8}_{\geq 4}$, $\mathcal{B}^{9}_{\geq 4}$.}} We examine the formulas \eqref{Utor10}--\eqref{Utor11}, and use \eqref{hast-h}--\eqref{phi-omega} to deduce that
\begin{align}\label{ite1}
\begin{split}
\widehat{\alpha}(\xi) + |\xi|\widehat{h^\ast}(\xi) &=\frac{1}{2\pi R}\sum_{\eta\in\Z/R} a_2^{rr}(\xi-\eta,\eta) \what{h^\ast}(\xi-\eta) \what{h^\ast}(\eta)\\
&+\frac{1}{2\pi R}\sum_{\eta\in\Z/R}a_2^{ii}(\xi-\eta,\eta) \what{\omega^\ast}(\xi-\eta) \what{\omega^\ast}(\eta)+\widehat{\mathcal{R}^4_{\geq 3}}(\xi),
\end{split}
\end{align}
\begin{align}\label{ite2}
\begin{split}
& a_2^{rr}(\rho,\eta) := \frac{1}{4}|\rho+\eta|\big[2|\rho|+2|\eta|-|\rho|\chi(\rho,\eta)-|\eta|\chi(\eta,\rho)\big] - \frac{|\rho|^2+|\eta|^2}{2} ,
\\  
& a_2^{ii}(\rho,\eta) :=  \frac{1}{2|\rho|^{1/2}|\eta|^{1/2}} (|\rho+\eta| - |\rho| -|\eta|) (\rho\eta -|\rho||\eta|),
\end{split}
\end{align}  
and
\begin{align}\label{ite3}
\widehat{\partial_t\alpha}(\xi) + |\xi|^{3/2}\widehat{\omega^\ast}(\xi)=\frac{1}{2\pi R}\sum_{\eta\in\Z/R}
  b(\xi-\eta,\eta) \what{\omega^\ast}(\xi-\eta) \what{h^\ast}(\eta)+\widehat{\mathcal{R}^5_{\geq 3}}(\xi),
\end{align}
\begin{align}\label{ite4}
b(\rho,\eta) &:=\frac{-|\rho+\eta|^2 |\rho|\chi(\rho,\eta)+(2|\rho+\eta|-|\rho|-|\eta|)(|\rho||\eta|-\rho\eta)
 + 2|\rho|\rho\eta}{|\rho|^{1/2}}.
\end{align}
The cubic remainders $\mathcal{R}^4_{\geq 3}$ and $\mathcal{R}^5_{\geq 3}$ satisfy the uniform bounds
\begin{equation}\label{ite5}
\|\mathcal{R}^4_{\geq 3}(t)\|_{\widetilde{W}^{N_1-2}}+\|\mathcal{R}^5_{\geq 3}(t)\|_{\widetilde{W}^{N_1-3}}\lesssim \varep(t)^3.
\end{equation}

We notice that the symbols $M_1^{10}$, $M_1^{11}$ are regular, while the symbols $M_1^9$,  $M_1^{12}$, $M_2^4$ are weakly admissible (due to the singular factors $|\rho+\sigma|$, $|\xi+\rho|$, and $|\eta+\rho|$ respectively). Using the formulas above and the definition \eqref{eni5.9} we have
\begin{align*}
\begin{split}
\mathcal{B}_{\geq 4}^8 \sim \frac{1}{(2\pi R)^3} \sum_{(\xi,\eta,\rho,\sigma)\in\mathbb{H}^3_R}
  &\widehat{h^\ast}(\xi)\widehat{h^\ast}(\eta)\widehat{h^\ast}(\rho)\widehat{\omega^\ast}(\si)\big[ M_1^{9}(\xi,\eta,\rho,\si) + M_1^{10}(\xi,\eta,\rho,\si)],
\end{split}
\end{align*}
where $M_1^9$ and $M_1^{10}$ are defined in \eqref{mu19}--\eqref{mu110}. Moreover, recalling \eqref{eni5.10},
\begin{align}\label{B9}
\begin{split}
\mathcal{B}_{\geq 4}^9  \sim &\frac{1}{(2\pi R)^3} \sum_{(\xi,\eta,\rho,\si) \in \mathbb{H}^3_R}
  \widehat{h^\ast}(\xi)\widehat{h^\ast}(\eta)\widehat{h^\ast}(\rho)\widehat{\omega^\ast}(\si) 
  \big[ M_1^{11}(\xi,\eta,\rho,\si) + M_1^{12}(\xi,\eta,\rho,\si) \big]
\\
& + \frac{1}{(2\pi R)^3} \sum_{(\xi,\eta,\rho,\si) \in \mathbb{H}^3_R}
  \widehat{\omega^\ast}(\xi) \widehat{\omega^\ast}(\eta) \widehat{\omega^\ast}(\rho) \widehat{h^\ast}(\si) M_2^4(\xi,\eta,\rho,\si),
\end{split}
\end{align}
where the multipliers $M_1^{11}, M_1^{12}, M_2^4$ are defined in \eqref{mu111}, \eqref{mu112}, and \eqref{mu24}.

\smallskip
{\bf{Step 4: the terms $\mathcal{B}^{10}_{\geq 4}$, $\mathcal{B}^{11}_{\geq 4}$.}} To analyze the more complicated expressions  in \eqref{eni5.11}--\eqref{eni5.12} 
we examine the nonlinearities $\mathcal{N}_{\geq 2}^{h^\ast}$ 
and $\mathcal{N}_{\geq 2}^{\omega^\ast}$ defined in Lemma \ref{Usor1}. We define the new symbols
\begin{align}\label{Nh}
\begin{split}
n_2^{ri}(\rho,\eta) &:= n^{ri}(\rho,\eta)-\chi(\rho,\eta)|\rho||\eta|^{1/2}/2+(\rho+\eta)\eta|\eta|^{-1/2}\chi(\eta,\rho)
\\
&=\widetilde{\chi}(\eta,\rho)\frac{(\rho+\eta)\eta-|\rho+\eta||\eta|}{|\eta|^{1/2}} 
- \frac{|\rho||\eta|}{2 |\eta|^{1/2}} \chi(\rho,\eta) + \frac{2\rho\eta+\eta^2}{2|\eta|^{1/2}}\chi(\eta,\rho),
\end{split}
\end{align}
\begin{align}\label{Nrr2}
\begin{split}
n_2^{rr}(\rho,\eta) & := \frac{1}{2}\Big\{n^{rr}(\rho,\eta)+n^{rr}(\eta,\rho)+\frac{1}{2}\chi(\eta,\rho)|\eta||\rho|^{1/2}+\frac{1}{2}\chi(\rho,\eta)|\rho||\eta|^{1/2}\Big\}\\
&=\frac{1}{4} |\rho+\eta|^{1/2} \big[ \chi(\rho,\eta) |\rho| + \chi(\eta,\rho) |\eta| \big],
\end{split}
\end{align}
\begin{align}\label{Nii2}
\begin{split}
n_2^{ii}(\rho,\eta) & := \frac{1}{2}\Big\{n^{ii}(\rho,\eta)+n^{ii}(\eta,\rho)+(\rho+\eta)\eta|\eta|^{-1/2}\chi(\eta,\rho)+(\eta+\rho)\rho|\rho|^{-1/2}\chi(\rho,\eta)\Big\}\\
& =  |\rho+\eta|^{1/2} \frac{|\eta||\rho|+\eta\rho}{2|\eta|^{1/2}|\rho|^{1/2}}
	- \frac{1}{2} \big[\chi(\rho,\eta)+\chi(\eta,\rho)\big] |\rho+\eta|^{1/2} |\eta|^{1/2} |\rho|^{1/2}.
\end{split}
\end{align}
Using the identities in Lemma \ref{Usor1} we notice that we have the decompositions
\begin{equation}\label{Poj1}
\begin{split}
&\mathcal{N}^{h^\ast}_{\geq 2}=\mathcal{N}^{h^\ast}_{2}+\mathcal{N}^{h^\ast}_{\geq 3},\\
&\widehat{\mathcal{N}^{h^\ast}_{2}}(\xi):=\frac{1}{2\pi R}\sum_{\eta\in\Z/R}\widehat{h^\ast}(\xi-\eta)\widehat{\omega^\ast}(\eta)n^{ri}_2(\xi-\eta,\eta),\\
&\mathcal{N}^{h^\ast}_{\geq 3}:=T_{\beta+|\partial_x|h^\ast/2}|\partial_x|^{1/2}\omega^\ast-\partial_xT_{V-\partial_x|\partial_x|^{-1/2}\omega^\ast}h^\ast+H^\ast_{\geq 3},
\end{split}
\end{equation}
and
\begin{equation}\label{Poj2}
\begin{split}
&\mathcal{N}^{\omega^\ast}_{\geq 2}=\mathcal{N}^{\omega^\ast}_{2}+\mathcal{N}^{\omega^\ast}_{\geq 3},\\
&\widehat{\mathcal{N}^{\omega^\ast}_{2}}(\xi):=\frac{1}{2\pi R}\sum_{\eta\in\Z/R}\big\{\widehat{h^\ast}(\xi-\eta)\widehat{h^\ast}(\eta)n^{rr}_2(\xi-\eta,\eta)+\widehat{\omega^\ast}(\xi-\eta)\widehat{\omega^\ast}(\eta)n^{ii}_2(\xi-\eta,\eta)\big\},\\
&\mathcal{N}^{\omega^\ast}_{\geq 3}:=-T_{\beta+|\partial_x|h^\ast/2}|\partial_x|^{1/2}h^\ast-\partial_xT_{V-\partial_x|\partial_x|^{-1/2}\omega^\ast}\omega^\ast+\Omega^\ast_{\geq 3}.
\end{split}
\end{equation}

We see easily that
\begin{equation}\label{Poj3}
\begin{split}
\mathcal{B}_{\geq 4}^{10}  &=\frac{1}{(2\pi R)^3} \sum_{(\xi,\eta,\rho,\si) \in \mathbb{H}^3_R}
  \widehat{h^\ast}(\xi)\widehat{h^\ast}(\eta)\widehat{h^\ast}(\rho)\widehat{\omega^\ast}(\si)M_1^{13}(\xi,\eta,\rho,\si)\\
&+ \frac{1}{(2\pi R)^3} \sum_{(\xi,\eta,\rho,\si) \in \mathbb{H}^3_R}
  \widehat{\omega^\ast}(\xi) \widehat{\omega^\ast}(\eta) \widehat{\omega^\ast}(\rho) \widehat{h^\ast}(\si) M_2^5(\xi,\eta,\rho,\si)+\mathcal{B}_{\geq 5}^{10},
\end{split}
\end{equation}
\begin{equation}\label{Poj4}
\begin{split}
\mathcal{B}_{\geq 4}^{11}&=\frac{1}{(2\pi R)^3} \sum_{(\xi,\eta,\rho,\si) \in \mathbb{H}^3_R}
  \widehat{h^\ast}(\xi)\widehat{h^\ast}(\eta)\widehat{h^\ast}(\rho)\widehat{\omega^\ast}(\si) M_1^{14}(\xi,\eta,\rho,\si)\\
&+ \frac{1}{(2\pi R)^3} \sum_{(\xi,\eta,\rho,\si) \in \mathbb{H}^3_R}
  \widehat{\omega^\ast}(\xi) \widehat{\omega^\ast}(\eta) \widehat{\omega^\ast}(\rho) \widehat{h^\ast}(\si) M_2^6(\xi,\eta,\rho,\si)+\mathcal{B}_{\geq 5}^{11},
\end{split}
\end{equation}
where the symbols $M_1^{13}$, $M_1^{14}$, $M_2^5$, $M_2^6$ are defined in \eqref{mu113}, \eqref{mu114},\eqref{mu25}, and \eqref{mu26}, and the quintic remainders $\mathcal{B}_{\geq 5}^{10}$ and $\mathcal{B}_{\geq 5}^{11}$ are defined by
\begin{equation}\label{Suses1}
\begin{split}
\mathcal{B}^{10}_{\geq 5}
  &:=\frac{1}{(2\pi R)^2}\sum_{(\xi,\eta,\rho)\in\mathbb{H}^2_R}A'(\xi,\eta,\rho)\big\{6|\rho|^{1/2}(|\xi|+|\eta|-|\rho|)\widehat{\mathcal{N}^{\omega^\ast}_{\geq 3}}(\xi)\widehat{\omega^\ast}(\eta)\widehat{h^\ast}(\rho)
  \\
&+3|\rho|^{1/2}(|\xi|+|\eta|-|\rho|)\widehat{\omega^\ast}(\xi)\widehat{\omega^\ast}(\eta)\widehat{\mathcal{N}^{h^\ast}_{\geq 3}}(\rho)+ 6|\xi|^{1/2}|\eta|^{1/2}|\rho|^{1/2}\widehat{h^\ast}(\xi)\widehat{h^\ast}(\eta)\widehat{\mathcal{N}^{h^\ast}_{\geq 3}}(\rho)\big\},
\end{split}
\end{equation}
\begin{equation}\label{Suses2}
\begin{split}
\mathcal{B}^{11}_{\geq 5} 
  &:= \frac{1}{(2\pi R)^2}\sum_{(\xi,\eta,\rho)\in\mathbb{H}^2_R}B'(\xi,\eta,\rho)\big\{2|\xi|^{1/2}|\eta|^{1/2}|\rho|^{1/2}
  \big[2\widehat{\mathcal{N}^{\omega^\ast}_{\geq 3}}(\xi)\widehat{\omega^\ast}(\eta)\widehat{h^\ast}(\rho) 
  + \widehat{\omega^\ast}(\xi)\widehat{\omega^\ast}(\eta)\widehat{\mathcal{N}^{h^\ast}_{\geq 3}}(\rho)\big]\\
& + |\rho|^{1/2}(|\xi|+|\eta|-|\rho|)
  \big[2\widehat{\mathcal{N}^{h^\ast}_{\geq 3}}(\xi)\widehat{h^\ast}(\eta)\widehat{h^\ast}(\rho)
  + \widehat{h^\ast}(\xi)\widehat{h^\ast}(\eta)\widehat{\mathcal{N}^{h^\ast}_{\geq 3}}(\rho)\big]\\
&-2|\eta|^{1/2}(|\xi|-|\eta|+|\rho|)
  \big[\widehat{\mathcal{N}^{h^\ast}_{\geq 3}}(\xi)\widehat{\omega^\ast}(\eta)\widehat{\omega^\ast}(\rho)
  + \widehat{h^\ast}(\xi)\widehat{\mathcal{N}^{\omega^\ast}_{\geq 3}}(\eta)\widehat{\omega^\ast}(\rho)
  + \widehat{h^\ast}(\xi)\widehat{\omega^\ast}(\eta)\widehat{\mathcal{N}^{\omega^\ast}_{\geq 3}}(\rho)\big]\big\}.
\end{split}
\end{equation}

We have to prove that these remainders are acceptable quintic error,  i.e. $\mathcal{B}^{10}_{\geq 5}\sim 0$ and $\mathcal{B}^{11}_{\geq 5}\sim 0$. The bounds \eqref{ABmultHM} show that
\begin{equation}\label{Suses3}
\begin{split}
&\big\|A'(\xi,\eta,\rho)\cdot\varphi_{k_1}(\xi)\varphi_{k_2}(\eta)\varphi_{k_3}(\rho)\big\|_{\mathcal{HM}_9}
\lesssim 2^{\min(k_1,k_2,k_3)/2}2^{(2N_0-1)\max(k_1,k_2,k_3)},\\
&\big\|B'(\xi,\eta,\rho)\cdot\varphi_{k_1}(\xi)\varphi_{k_2}(\eta)\varphi_{k_3}(\rho)\big\|_{\mathcal{HM}_9}\lesssim 2^{\min(k_1,k_2,k_3)/2}2^{(2N_0-1)\max(k_1,k_2,k_3)}2^{-|k_1-k_2|/2},
\end{split}
\end{equation}
for any $k_1,k_2,k_3\in\Z$. The functions $H^\ast_{\geq 3}$ and $\Omega^\ast_{\geq 3}$ satisfy suitable cubic bounds without derivative loss (see \eqref{Usor6}), so the corresponding contributions lead to acceptable quintic errors. 

To bound the remaining contributions we need symmetrization to avoid derivative loss. Let 
\begin{equation*}
X:=\beta+|\partial_x|h^\ast/2,\qquad\qquad Y:=V-\partial_x|\partial_x|^{-1/2}\omega^\ast,
\end{equation*}
Since $\beta=\sqrt{1+\alpha}-1$, it follows from Lemma \ref{BVexpands} and \eqref{Utor10} that
\begin{equation}\label{Suses5}
\|X(t)\|_{\widetilde{W}^{N_1-3}}+\|Y(t)\|_{\widetilde{W}^{N_1-2}}\lesssim \varep(t)^2,\qquad t\in[T_1,T_2].
\end{equation}
We need to bound the contributions of the functions $T_{X}|\partial_x|^{1/2}\omega^\ast-\partial_xT_{Y}h^\ast$ and $-T_{X}|\partial_x|^{1/2}h^\ast-\partial_xT_{Y}\omega^\ast$ (compare with the terms $\mathcal{N}^{h^\ast}_{\geq 3}$ and $\mathcal{N}^{\omega^\ast}_{\geq 3}$ in \eqref{Poj1}--\eqref{Poj2}). Then we decompose
\begin{equation*}
\begin{split}
\mathcal{B}^{10}_{\geq 5}&\sim\mathcal{B}^{10}_{\geq 5,1}+\mathcal{B}^{10}_{\geq 5,2}+\mathcal{B}^{10}_{\geq 5,3}+\mathcal{B}^{10}_{\geq 5,4},
\end{split}
\end{equation*}
\begin{equation*}
\begin{split}
\mathcal{B}^{10}_{\geq 5,1}&:=\frac{1}{(2\pi R)^2}\sum_{(\xi,\eta,\rho)\in\mathbb{H}^2_R}A'(\xi,\eta,\rho)(-6)|\xi|^{1/2}|\eta|^{1/2}|\rho|^{1/2}\widehat{h^\ast}(\xi)\widehat{h^\ast}(\eta)\mathcal{F}[\partial_xT_{Y}h^\ast](\rho),
\end{split}
\end{equation*}
\begin{equation*}
\begin{split}
\mathcal{B}^{10}_{\geq 5,2}&:=\frac{1}{(2\pi R)^2}\sum_{(\xi,\eta,\rho)\in\mathbb{H}^2_R}A'(\xi,\eta,\rho)\big\{-6|\rho|^{1/2}(|\xi|+|\eta|-|\rho|)\widehat{\omega^\ast}(\eta)\widehat{h^\ast}(\rho)\mathcal{F}[T_{X}|\partial_x|^{1/2}h^\ast](\xi)\\
&\qquad\qquad\qquad + 6|\xi|^{1/2}|\eta|^{1/2}|\rho|^{1/2}\widehat{h^\ast}(\xi)\widehat{h^\ast}(\eta)\mathcal{F}[T_{X}|\partial_x|^{1/2}\omega^\ast](\rho)\big\},
\end{split}
\end{equation*}
\begin{equation*}
\begin{split}
\mathcal{B}^{10}_{\geq 5,3}&:=\frac{1}{(2\pi R)^2}\sum_{(\xi,\eta,\rho)\in\mathbb{H}^2_R}A'(\xi,\eta,\rho)\big\{-6|\rho|^{1/2}(|\xi|+|\eta|-|\rho|)\widehat{\omega^\ast}(\eta)\widehat{h^\ast}(\rho)\mathcal{F}[\partial_xT_{Y}\omega^\ast](\xi)\\
&\qquad\qquad\qquad -3|\rho|^{1/2}(|\xi|+|\eta|-|\rho|)\widehat{\omega^\ast}(\xi)\widehat{\omega^\ast}(\eta)\mathcal{F}[\partial_xT_Yh^\ast](\rho)\big\},
\end{split}
\end{equation*}
\begin{equation*}
\begin{split}
\mathcal{B}^{10}_{\geq 5,4}&:=\frac{1}{(2\pi R)^2}\sum_{(\xi,\eta,\rho)\in\mathbb{H}^2_R}A'(\xi,\eta,\rho)3|\rho|^{1/2}(|\xi|+|\eta|-|\rho|)\widehat{\omega^\ast}(\xi)\widehat{\omega^\ast}(\eta)\mathcal{F}[T_{X}|\partial_x|^{1/2}\omega^\ast](\rho).
\end{split}
\end{equation*}
By symmetrization we can rewrite these expressions in the form
\begin{equation*}
\begin{split}
\mathcal{B}^{10}_{\geq 5,1}=&\frac{1}{(2\pi R)^3}\sum_{(\xi,\eta,\rho,\sigma)\in\mathbb{H}^3_R}\widehat{h^\ast}(\xi)\widehat{h^\ast}(\eta)\widehat{h^\ast}(\rho)\widehat{Y}(\sigma)L_1^{10}(\xi,\eta,\rho,\sigma),\\
L_1^{10}(\xi,\eta,\rho,\sigma):=&-2A'(\xi,\eta,\rho+\sigma)|\xi|^{1/2}|\eta|^{1/2}|\rho+\sigma|^{1/2}i(\rho+\sigma)\chi(\sigma,\rho)\\
&-2A'(\eta,\rho,\xi+\sigma)|\eta|^{1/2}|\rho|^{1/2}|\xi+\sigma|^{1/2}i(\xi+\sigma)\chi(\sigma,\xi)\\
&-2A'(\rho,\xi,\eta+\sigma)|\rho|^{1/2}|\xi|^{1/2}|\eta+\sigma|^{1/2}i(\eta+\sigma)\chi(\sigma,\eta),
\end{split}
\end{equation*}
\begin{equation*}
\begin{split}
\mathcal{B}^{10}_{\geq 5,2}=&\frac{1}{(2\pi R)^3}\sum_{(\xi,\eta,\rho,\sigma)\in\mathbb{H}^3_R}\widehat{h^\ast}(\xi)\widehat{h^\ast}(\eta)\widehat{\omega^\ast}(\rho)\widehat{X}(\sigma)L_2^{10}(\xi,\eta,\rho,\sigma),\\
L_2^{10}(\xi,\eta,\rho,\sigma):=&-3A'(\xi+\sigma,\rho,\eta)|\xi|^{1/2}|\eta|^{1/2}(|\xi+\sigma|-|\eta|+|\rho|)\chi(\sigma,\xi)\\
&-3A'(\eta+\sigma,\rho,\xi)|\eta|^{1/2}|\xi|^{1/2}(|\eta+\sigma|-|\xi|+|\rho|)\chi(\sigma,\eta)\\
&+6A'(\xi,\eta,\rho+\sigma)|\xi|^{1/2}|\eta|^{1/2}|\rho+\sigma|^{1/2}|\rho|^{1/2}\chi(\sigma,\rho),
\end{split}
\end{equation*}
\begin{equation*}
\begin{split}
\mathcal{B}^{10}_{\geq 5,3}=&\frac{1}{(2\pi R)^3}\sum_{(\xi,\eta,\rho,\sigma)\in\mathbb{H}^3_R}\widehat{\omega^\ast}(\xi)\widehat{\omega^\ast}(\eta)\widehat{h^\ast}(\rho)\widehat{Y}(\sigma)L_3^{10}(\xi,\eta,\rho,\sigma),\\
L_3^{10}(\xi,\eta,\rho,\sigma):=&-3A'(\xi+\sigma,\eta,\rho)|\rho|^{1/2}(|\xi+\sigma|+|\eta|-|\rho|)i(\xi+\sigma)\chi(\sigma,\xi)\\
&-3A'(\eta+\sigma,\xi,\rho)|\rho|^{1/2}(|\eta+\sigma|+|\xi|-|\rho|)i(\eta+\sigma)\chi(\sigma,\eta)\\
&-3A'(\xi,\eta,\rho+\sigma)|\rho+\sigma|^{1/2}(|\xi|+|\eta|-|\rho+\sigma|)i(\rho+\sigma)\chi(\sigma,\rho),
\end{split}
\end{equation*}
\begin{equation*}
\begin{split}
\mathcal{B}^{10}_{\geq 5,4}=&\frac{1}{(2\pi R)^3}\sum_{(\xi,\eta,\rho,\sigma)\in\mathbb{H}^3_R}\widehat{\omega^\ast}(\xi)\widehat{\omega^\ast}(\eta)\widehat{\omega^\ast}(\rho)\widehat{X}(\sigma)L_4^{10}(\xi,\eta,\rho,\sigma),\\
L_4^{10}(\xi,\eta,\rho,\sigma):=&+A'(\xi,\eta,\rho+\sigma)|\rho+\sigma|^{1/2}(|\xi|+|\eta|-|\rho+\sigma|)|\rho|^{1/2}\chi(\sigma,\rho)\\
&+A'(\eta,\rho,\xi+\sigma)|\xi+\sigma|^{1/2}(|\eta|+|\rho|-|\xi+\sigma|)|\xi|^{1/2}\chi(\sigma,\xi)\\
&+A'(\rho,\xi,\eta+\sigma)|\eta+\sigma|^{1/2}(|\rho|+|\xi|-|\eta+\sigma|)|\eta|^{1/2}\chi(\sigma,\eta).
\end{split}
\end{equation*}
In view of \eqref{Suses3}, the multipliers $L_a^{10}$, $a\in\{1,2,3,4\}$, do not lose derivatives after symmetrization, and are all regular. We provide the complete details for the symbol $L_1^{10}$. We look at $$L_1^{10}(\xi,\eta,\rho,\sigma)\varphi_{k_1}(\xi)\varphi_{k_2}(\eta)\varphi_{k_3}(\rho)\varphi_{k_4}(\sigma),$$ for $k_1,k_2,k_3,k_4\in\Z$, and let $\widetilde{k}_1\geq \widetilde{k}_2\geq \widetilde{k}_3\geq\widetilde{k}_4$ denote the decreasing rearrangement of $k_1,k_2,k_3,k_4$. Notice that the localized symbol vanishes if $k_4\geq \widetilde{k}_2-10$, due to the presence of the function $\chi$. So we may assume that $k_4\leq \widetilde{k}_2-10$ and $k_4\leq\widetilde{k}_3$. Since $L_1^{10}(\xi,\eta,\rho,\sigma)=L_1^{10}(\eta,\rho,\xi,\sigma)=L_1^{10}(\rho,\xi,\eta,\sigma)$ we may also assume that $\{k_1,k_2\}=\{\widetilde{k}_1,\widetilde{k}_2\}$, so $\xi,\eta$ are the large variables and $\rho,\sigma$ are the small variables.

We have two cases: in the "smoothing case" $k_4\geq \widetilde{k}_2-40$ there is no derivative loss, and each of the 3 terms in the definition of $L_1^{10}$ satisfies suitable H\"{o}rmander-Michlin bounds which follow from the bounds \eqref{Suses3}. On the other hand, if $k_4\leq \widetilde{k}_2-40$ then we rewrite
\begin{equation*}
\begin{split}
L_1^{10}(\xi,\eta,\rho,\sigma)=&-2i|\eta|^{1/2}|\rho|^{1/2}\big[A'(\eta,\rho,\xi+\sigma)|\xi+\sigma|^{1/2}(\xi+\sigma)-A'(\eta,\rho,\xi)|\xi|^{1/2}\xi\big]\\
&-2i|\xi|^{1/2}|\rho|^{1/2}\big[A'(\rho,\xi,\eta+\sigma)|\eta+\sigma|^{1/2}(\eta+\sigma)-A'(\rho,\xi,\eta)|\eta|^{1/2}\eta\big]\\
&-2i|\xi|^{1/2}|\eta|^{1/2}(\rho+\sigma)\big[A'(\xi,\eta,\rho+\sigma)|\rho+\sigma|^{1/2}\chi(\sigma,\rho)-A'(\xi,\eta,\rho)|\rho|^{1/2}\big],
\end{split}
\end{equation*}
since $\chi(\sigma,\xi)=\chi(\sigma,\eta)=1$, $A'(\xi,\eta,\rho)=A'(\eta,\rho,\xi)=A'(\rho,\xi,\eta)$, and $\xi+\eta+\rho+\sigma=0$. The expression in each line defines a regular symbol, due to the bounds \eqref{Suses3}.

The analysis of the symbols $L_2^{10}, L_3^{10}, L_4^{10}$ is similar. In all cases we may assume that $k_4\leq \widetilde{k}_2-10$. In the "smoothing case" $k_4\geq \widetilde{k}_2-40$ there is no derivative loss, and the expression in each line defines a regular symbol, while in the "unbalanced case" $k_4\leq \widetilde{k}_2-40$ we have to reorganize some of the symbols suitably to avoid derivative loss. 

Using Lemma \ref{algeProp7} it then follows that $|\mathcal{B}^{10}_{\geq 5,a}|\lesssim A^2\varep(t)^3$, $a\in\{1,2,3,4\}$, 
as desired.

The analysis of the quintic term $\mathcal{B}^{11}_{\geq 5}$ is similar. We can decompose as before
\begin{equation*}
\begin{split}
\mathcal{B}^{11}_{\geq 5}&\sim\mathcal{B}^{11}_{\geq 5,1}+\mathcal{B}^{11}_{\geq 5,2}+\mathcal{B}^{11}_{\geq 5,3}+\mathcal{B}^{10}_{\geq 5,4},\\
\mathcal{B}^{11}_{\geq 5,1}=&\frac{1}{(2\pi R)^3}\sum_{(\xi,\eta,\rho,\sigma)\in\mathbb{H}^3_R}\widehat{h^\ast}(\xi)\widehat{h^\ast}(\eta)\widehat{h^\ast}(\rho)\widehat{Y}(\sigma)L_1^{11}(\xi,\eta,\rho,\sigma),\\
\mathcal{B}^{11}_{\geq 5,2}=&\frac{1}{(2\pi R)^3}\sum_{(\xi,\eta,\rho,\sigma)\in\mathbb{H}^3_R}\widehat{h^\ast}(\xi)\widehat{h^\ast}(\eta)\widehat{\omega^\ast}(\rho)\widehat{X}(\sigma)L_2^{11}(\xi,\eta,\rho,\sigma),\\
\mathcal{B}^{11}_{\geq 5,3}=&\frac{1}{(2\pi R)^3}\sum_{(\xi,\eta,\rho,\sigma)\in\mathbb{H}^3_R}\widehat{\omega^\ast}(\xi)\widehat{\omega^\ast}(\eta)\widehat{h^\ast}(\rho)\widehat{Y}(\sigma)L_3^{11}(\xi,\eta,\rho,\sigma),\\
\mathcal{B}^{11}_{\geq 5,4}=&\frac{1}{(2\pi R)^3}\sum_{(\xi,\eta,\rho,\sigma)\in\mathbb{H}^3_R}\widehat{\omega^\ast}(\xi)\widehat{\omega^\ast}(\eta)\widehat{\omega^\ast}(\rho)\widehat{X}(\sigma)L_4^{11}(\xi,\eta,\rho,\sigma).
\end{split}
\end{equation*}
The multipliers $L_a^{11}$, $a\in\{1,2,3,4\}$, can be calculated explicitly using the definition \eqref{Suses2}, and do not lose derivatives after symmetrization. Using again Lemma \ref{algeProp7} it follows that $|\mathcal{B}^{11}_{\geq 5,a}|\lesssim A^2\varep(t)^3$, $a\in\{1,2,3,4\}$. This completes the proof of the lemma.
\end{proof}

\subsection{The quartic terms}\label{SecQuintic}

In this subsection we combine some of the multipliers in Lemma \ref{quartic2} and use (some) of the symmetries 
of the operators $\mathcal{Q}_1$ and $\mathcal{Q}_2$ to obtain more convenient formulas.

\begin{lemma}\label{quartic3}
With the hypothesis and notation in Lemma \ref{quartic2}, for any $t\in[T_1,T_2]$ we have
\begin{equation}\label{simp1}
\begin{split}
\mathcal{Q}_1(h^\ast,h^\ast,h^\ast,\omega^\ast)&=\frac{1}{(2\pi R)^3}\sum_{(\xi,\eta,\rho,\sigma)\in\mathbb{H}^3_R}
  \widehat{h^\ast}(\xi)\widehat{h^\ast}(\eta)\widehat{h^\ast}(\rho)\widehat{\omega^\ast}(\sigma)
  K_1(\xi,\eta,\rho,\sigma),
  \\
\mathcal{Q}_2(\omega^\ast,\omega^\ast,\omega^\ast,h^\ast)&=\frac{1}{(2\pi R)^3}\sum_{(\xi,\eta,\rho,\sigma)\in\mathbb{H}^3_R}
  \widehat{\omega^\ast}(\xi)\widehat{\omega^\ast}(\eta)\widehat{\omega^\ast}(\rho)\widehat{h^\ast}(\sigma)
  K_2(\xi,\eta,\rho,\sigma).
\end{split}
\end{equation}
The symbols $K_1$ and $K_2$ are given by
\begin{equation}\label{simp2}
K_1(\xi,\eta,\rho,\sigma) = \sum_{a\in\{1, \ldots, 9\}} K_1^a(\xi,\eta,\rho,\sigma),\qquad K_2(\xi,\eta,\rho,\sigma) 
  = \sum_{a\in\{1, \ldots,4\}}K_2^a(\xi,\eta,\rho,\sigma),
\end{equation}
where
\begin{equation}\label{ku11}
\begin{split}
&K^1_1(\xi,\eta,\rho,\sigma):=\frac{m_{N_0}^2(\xi)}{|\sigma|^{1/2}}\Big\{|\sigma|\big[(|\xi||\rho+\sigma|+\xi(\rho+\sigma))+(|\xi||\eta+\sigma|+\xi(\eta+\sigma))-(|\xi||\sigma|+\xi\sigma)\big]\\
&-|\sigma|\chi(\rho+\sigma,\eta)\big[|\xi||\rho+\sigma|-|\xi||\sigma|\big]-|\sigma|\chi(\eta+\sigma,\rho)\big[|\xi||\eta+\sigma|-|\xi||\sigma|\big]\\
&-|\sigma|\chi(\sigma,\rho)\widetilde{\chi}(\rho+\sigma,\eta)(|\xi||\rho+\sigma|+\xi(\rho+\sigma))-|\sigma|\chi(\sigma,\eta)\widetilde{\chi}(\eta+\sigma,\rho)(|\xi||\eta+\sigma|+\xi(\eta+\sigma))\\
&-|\sigma|\chi(\sigma,\rho)\chi(\rho+\sigma,\eta)\xi(\sigma+\rho)-|\sigma|\chi(\sigma,\eta)\chi(\eta+\sigma,\rho)\xi(\sigma+\eta)\\
&-\frac{1}{2}[|\rho|\chi(\rho,\eta)+|\eta|\chi(\eta,\rho)]\big[\widetilde{\chi}(\sigma,\eta+\rho)(|\xi||\sigma|+\xi\sigma)+\chi(\sigma,\eta+\rho)\xi\sigma\big]\Big\},
\end{split}
\end{equation}
\begin{equation}\label{ku12}
\begin{split}
K_1^2(\xi,\eta,\rho,&\sigma) :=- \frac{m_{N_0}(\rho+\si) m_{N_0}(\eta)
  |\xi|\chi(\xi,\eta)\big\{\si(\rho+\si)(1-\chi(\rho,\si)) -|\si||\rho+\si|\widetilde{\chi}(\rho,\si)\big\}}{|\sigma|^{1/2}}\\
& - \frac{m_{N_0}(\eta+\si) m_{N_0}(\rho)
  |\xi|\chi(\xi,\rho)\big\{\si(\eta+\si)(1-\chi(\eta,\si)) -|\si||\eta+\si|\widetilde{\chi}(\eta,\si)\big\}}{|\sigma|^{1/2}},
\end{split}
\end{equation}
\begin{equation}\label{ku14}
\begin{split}
K_1^3(\xi,\eta,\rho,\sigma):= &\frac{m_{N_0}^2(\eta+\rho)[|\eta|\chi(\eta,\rho)+|\rho|\chi(\rho,\eta)]}{2|\si|^{1/2}}\\
&\times\big\{((\xi+\si)\si-|\xi+\si||\si|)\widetilde{\chi}(\si,\xi)+(\xi+\si)\si\chi(\si,\xi)\big\},
\end{split}
\end{equation}
\begin{align}\label{ku19}
K_1^4(\xi,\eta,\rho,\si) :=\frac{1}{2}m_{N_0}(\eta) m_{N_0}(\rho)b(\si,\xi)[\chi(\xi+\si,\eta)+\chi(\xi+\si,\rho)],
\end{align}
\begin{align}\label{ku110}
K_1^5(\xi,\eta,\rho,\si) & :=-\frac{m_{N_0}(\eta+\rho) m_{N_0}(\xi) |\si|^{3/2}[\chi(\si,\xi)+\chi(\si,\eta+\rho)][|\rho|\chi(\rho,\eta)+|\eta|\chi(\eta,\rho)]}{4},
\end{align}
\begin{align}\label{ku111}
K_1^6(\xi,\eta,\rho,\si):= -\frac{1}{2}m_{N_0}(\si) [m_{N_0}(\eta+\rho) - m_{N_0}(\si)] \chi(\xi,\eta+\rho)|\si|^{1/2}|\xi|\big[|\rho|\chi(\rho,\eta)+|\eta|\chi(\eta,\rho)],
\end{align}
\begin{align}\label{ku112}
K_1^7(\xi,\eta,\rho,\si):= 2m_{N_0}(\si) [m_{N_0}(\xi) - m_{N_0}(\si)] \chi(\eta+\rho,\xi) |\si| ^{1/2} a_2^{rr}(\eta,\rho),
\end{align}
\begin{equation}\label{ku113}
\begin{split}
K_1^8(\xi,\eta,\rho,\si)&:=6n_2^{rr}(\eta,\rho)A'(\xi,\sigma,\eta+\rho)|\xi|^{1/2}(|\eta+\rho|+|\sigma|-|\xi|)\\
&+6n_2^{ri}(\xi,\sigma)A'(\eta,\rho,\xi+\sigma)|\rho|^{1/2}|\eta|^{1/2}|\xi+\sigma|^{1/2},
\end{split}
\end{equation}
\begin{equation}\label{ku114}
K_1^9(\xi,\eta,\rho,\si):=-2n_2^{rr}(\eta,\rho)B''(\eta+\rho,\sigma,\xi)+n_2^{ri}(\xi,\sigma)B'''(\eta,\rho,\xi+\sigma),
\end{equation}
and
\begin{equation}\label{ku21}
\begin{split}
K^1_2(\xi,\eta,\rho,\sigma) &:= \frac{m_{N_0}^2(\xi)|\xi|^{1/2}|\rho|^{1/2}}{|\eta|^{1/2}}\Big\{\chi(\rho,\sigma)\big[\eta(\sigma+\rho)+|\eta||\sigma+\rho|\widetilde{\chi}(\sigma+\rho,\eta)\big]\\
&\qquad-\eta\sigma\chi(\rho,\sigma+\eta)-\rho\eta\chi(\eta+\rho,\sigma) 
  + (|\rho| - |\sigma+\rho|)|\eta|\wt{\chi}(\sigma+\rho,\eta)\Big\}\\
&+\frac{m_{N_0}^2(\xi)|\xi|^{1/2}|\eta|^{1/2}}{|\rho|^{1/2}}\Big\{\chi(\eta,\sigma)\big[\rho(\sigma+\eta)+|\rho||\sigma+\eta|\widetilde{\chi}(\sigma+\eta,\rho)\big]\\
&\qquad-\rho\sigma\chi(\eta,\sigma+\rho)-\eta\rho\chi(\rho+\eta,\sigma) 
  + (|\eta| - |\sigma+\eta|)|\rho|\wt{\chi}(\sigma+\eta,\rho)\Big\},
\end{split}
\end{equation}
\begin{align}\label{ku24}
K_2^2(\xi,\eta,\rho,\si) & := 2m_{N_0}(\xi)[m_{N_0}(\si)-m_{N_0}(\xi)] \chi(\eta+\rho,\si) |\xi|^{1/2} a_2^{ii}(\eta,\rho),
\end{align}
\begin{equation}\label{ku25}
\begin{split}
K_2^3(\xi,\eta,\rho,\si)&:=6n_2^{ii}(\eta,\rho)A'(\xi,\sigma,\eta+\rho)|\sigma|^{1/2}(|\eta+\rho|+|\xi|-|\sigma|)\\
&+3n_2^{ri}(\sigma,\xi)A'(\eta,\rho,\xi+\sigma)|\xi+\sigma|^{1/2}(|\rho|+|\eta|-|\xi+\sigma|),
\end{split}
\end{equation}
\begin{equation}\label{ku26}
K_2^4(\xi,\eta,\rho,\si):=-2n_2^{ii}(\eta,\rho)B''(\eta+\rho,\xi,\sigma)-n_2^{ri}(\sigma,\xi)B''(\eta,\rho,\xi+\sigma).
\end{equation}
All the symbols $K_i^j$ are real-valued, even, and symmetric in the second and third variables, i.e.
\begin{equation}\label{simp4}
K_i^j(\xi,\eta,\rho,\sigma)=K_i^j(-\xi,-\eta,-\rho,-\sigma), \qquad K_i^j(\xi,\eta,\rho,\sigma)=K_i^j(\xi,\rho,\eta,\sigma)
\end{equation}
for any $(\xi,\eta,\rho,\sigma)\in\mathbb{H}_R^3$. The symbols $B'',B''':\H^2_\infty\to\mathbb{R}$ are given by
\begin{equation}\label{slov20}
\begin{split}
B''(x,y,&z):=-2B'(x,y,z)|x|^{1/2}|y|^{1/2}|z|^{1/2}\\
&+B'(z,x,y)|x|^{1/2}(|z|+|y|-|x|)+B'(z,y,x)|y|^{1/2}(|z|+|x|-|y|),
\end{split}
\end{equation}
\begin{equation}\label{slov21}
\begin{split}
B'''(x,y,&z):=B'(x,y,z)|z|^{1/2}(|x|+|y|-|z|)\\
&+B'(y,z,x)|x|^{1/2}(|y|+|z|-|x|)+B'(z,x,y)|y|^{1/2}(|z|+|x|-|y|).
\end{split}
\end{equation}
\end{lemma}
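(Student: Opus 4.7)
The strategy is to start from the explicit representations of $\mathcal{Q}_1$ and $\mathcal{Q}_2$ in Lemma \ref{quartic2} and exploit the permutation symmetries of the summands. Since $\widehat{h^\ast}(\xi)\widehat{h^\ast}(\eta)\widehat{h^\ast}(\rho)$ is totally symmetric in $(\xi,\eta,\rho)$ and similarly for $\widehat{\omega^\ast}(\xi)\widehat{\omega^\ast}(\eta)\widehat{\omega^\ast}(\rho)$, the multilinear sum defining $\mathcal{Q}_j$ is unchanged if we replace each symbol $M_j^a(\xi,\eta,\rho,\sigma)$ by its symmetrization in the $(\eta,\rho)$ variables, namely $\tfrac{1}{2}[M_j^a(\xi,\eta,\rho,\sigma)+M_j^a(\xi,\rho,\eta,\sigma)]$. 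This symmetrization is what produces the property \eqref{simp4} for the final symbols. Reality/evenness follows from the fact that each $M_j^a$ appearing in Lemma \ref{quartic2} is already invariant under the map $(\xi,\eta,\rho,\sigma)\mapsto (-\xi,-\eta,-\rho,-\sigma)$ (the underlying paralinearization multipliers $q_2,\ell_2,\chi,\widetilde{\chi}, a_2^{rr}, a_2^{ii}, b, n_2^{ri}, n_2^{rr}, n_2^{ii}, A, B, D, m_{N_0}$ all enjoy this evenness), so real-valuedness and evenness are preserved by the $(\eta,\rho)$-symmetrization.

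The main task is bookkeeping: group the $14+6$ terms $M_1^a, M_2^b$ of Lemma \ref{quartic2} into the $9+4$ groups corresponding to the $K_j^a$. Schematically:
\begin{itemize}
\item $K_1^1$ comes from $M_1^1$ combined with $M_1^5$, after unfolding $q_2$ via \eqref{DNmain2} and merging the four resulting pieces (the ``$\widetilde{\chi}$-pieces'' in the $\chi(\sigma,\rho)\widetilde\chi(\rho+\sigma,\eta)$ and $\chi(\sigma,\eta)\widetilde\chi(\eta+\sigma,\rho)$ lines of \eqref{ku11}) with the corresponding non-symmetric parts of $M_1^1$;
\item $K_1^2$ combines $M_1^2$ and $M_1^3$ by writing $\chi+\widetilde\chi=1$ and re-expressing $(\sigma,\rho)$ and $(\sigma,\eta)$ contributions in a unified form;
\item $K_1^3$ is the $(\eta,\rho)$-symmetrization of $M_1^4$;
\item $K_1^4,\ldots,K_1^7$ are the $(\eta,\rho)$-symmetrizations of $M_1^9,\ldots,M_1^{12}$ respectively;
\item $K_1^8,K_1^9$ collect the contributions of $M_1^{13}$ and $M_1^{14}$, after invoking the total symmetry of $A'$ and the $(x,y)$-symmetry of $B'$ to reshuffle arguments into the forms in \eqref{ku113}--\eqref{ku114}, which is the place where the auxiliary symbols $B'',B'''$ of \eqref{slov20}--\eqref{slov21} arise;
\item the analogous procedure on $M_2^1,\ldots,M_2^6$ produces $K_2^1,\ldots,K_2^4$, with $M_2^1,M_2^2,M_2^3$ all merging into $K_2^1$ (using $\widetilde\chi=1-\chi-\chi^{\mathrm{sw}}$ to absorb the cross terms), and $M_2^4\to K_2^2$, $M_2^5\to K_2^3$, $M_2^6\to K_2^4$.
\end{itemize}

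For each grouping the algebraic identity is verified by expanding $\widetilde{\chi}=1-\chi(\cdot,\cdot)-\chi(\cdot,\cdot)^{\mathrm{swap}}$ using \eqref{chieq}, using $\sigma+\eta+\rho=-\xi$ on the hyperplane $\mathbb{H}_R^3$ to convert between arguments (e.g.\ rewriting $\rho+\sigma=-\xi-\eta$), and finally relabeling in the sum. The key identity that allows the consolidation of $M_1^{13}, M_1^{14}$ into $K_1^8, K_1^9$ is that $A$ is a fully symmetric function on $\mathbb{H}_\infty^2$ while $B$ is symmetric in its first two entries, so the three natural cyclic rearrangements of $A'$ in \eqref{mu113}, \eqref{mu114} collapse onto the single pair $(\eta+\rho,\sigma,\xi)$ and $(\eta,\rho,\xi+\sigma)$ after symmetrizing in $(\eta,\rho)$.

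The main (and essentially only) obstacle is purely bookkeeping: the coefficients produced by the $(\eta,\rho)$-symmetrization and by the cyclic rearrangement of $A'$, $B'$ must match the numerical factors in \eqref{ku11}--\eqref{ku26} (including the $\tfrac{1}{2}$ in $K_1^3,K_1^4,K_1^5$, the $-2$ and $1$ in $K_1^9,K_2^4$, etc.). No new analytic input is needed beyond the H\"ormander--Mikhlin control \eqref{ABmultBounds2} on $A',B'$ and the explicit definitions \eqref{ite2}, \eqref{ite4}, \eqref{Nh}--\eqref{Nii2} of the auxiliary symbols. Once the identities at the level of symbols are verified termwise, the lemma follows immediately from \eqref{reh2}.
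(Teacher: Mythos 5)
Your overall strategy is exactly the paper's: since $\widehat{h^\ast}(\xi)\widehat{h^\ast}(\eta)\widehat{h^\ast}(\rho)$ (resp. the triple product of $\widehat{\omega^\ast}$) is symmetric under permutations of $(\xi,\eta,\rho)$, one may relabel variables inside each multilinear sum, symmetrize every symbol in $(\eta,\rho)$, and then regroup; the symmetry \eqref{simp4} and the real/even properties come for free. Your groupings on the $K_2$ side ($M_2^1,M_2^2,M_2^3\to K_2^1$, $M_2^4\to K_2^2$, $M_2^5\to K_2^3$, $M_2^6\to K_2^4$) and the tail assignments $M_1^9,\ldots,M_1^{14}\to K_1^4,\ldots,K_1^9$ are correct.

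However, your bookkeeping for $K_1^1$ and $K_1^3$ has a genuine hole: you account for only eleven of the fourteen symbols $M_1^a$, dropping $M_1^6$, $M_1^7$ and $M_1^8$ entirely. Since the lemma is precisely the assertion that $\sum_{a=1}^{14}M_1^a=\sum_{a=1}^{9}K_1^a$ (modulo relabeling and $(\eta,\rho)$-symmetrization), the identity cannot close under your assignment. Concretely, the fourth line of \eqref{ku11}, $-|\sigma|\chi(\sigma,\rho)\chi(\rho+\sigma,\eta)\xi(\sigma+\rho)-|\sigma|\chi(\sigma,\eta)\chi(\eta+\sigma,\rho)\xi(\sigma+\eta)$, is the symmetrized form of the $\xi m_{N_0}^2(\xi)\chi(\rho+\sigma,\eta)$ part of $M_1^8$ (the $\eta m_{N_0}^2(\eta)$ part of $M_1^8$ is absorbed elsewhere after relabeling), and the term $\chi(\sigma,\eta+\rho)\,\xi\sigma$ in the last line of \eqref{ku11} is the symmetrization of $M_1^7$; neither can be produced from $M_1^1$ and $M_1^5$ alone. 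Likewise, $K_1^3$ contains both a $\widetilde{\chi}(\sigma,\xi)$ piece and a $\chi(\sigma,\xi)$ piece: only the former comes from $M_1^4$ (after swapping $\xi\leftrightarrow\rho$), while the $(\xi+\sigma)\sigma\chi(\sigma,\xi)$ piece is exactly the $m_{N_0}^2(\eta+\rho)\chi(\sigma,\xi)\chi(\rho,\eta)$ term of $M_1^6$ rewritten via $\eta+\rho=-(\xi+\sigma)$. The correct groupings are $K_1^1\leftarrow M_1^1,M_1^5,M_1^7,M_1^8$, $K_1^2\leftarrow M_1^2,M_1^3$, and $K_1^3\leftarrow M_1^4,M_1^6$; with that correction your argument goes through as the paper's does.
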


\begin{proof} 
The symbol $K_1^1$ is obtained by combining the symbols $M_1^1$, $M_1^5$, $M_1^7$, $M_1^8$. 
The symbol $K_1^2$ is obtained by combining the symbols $M_1^2$ and $M_1^3$.
The symbol $K_1^3$ is obtained by combining the symbols $M_1^4$ and $M_1^6$, 
while $K_2^1$ is obtained by combining the symbols $M_2^1$, $M_2^2$, and $M_2^3$. 
In all cases we symmetrize the symbols $K_i^j$ in the variables $\eta$ and $\rho$.
\end{proof}

\medskip
\section{Estimates on quartic space-time integrals}\label{QuartGen}
In view of Lemmas \ref{quartic1} and \ref{quartic2} our main goal is to show that, 
for $T_1\leq T_2\in\mathbb{R}$ and $(h,\phi)\in C([T_1,T_2]:H^{N_0}\times \dot{H}^{N_0,1/2})$ 
a solution of the system \eqref{gWW} satisfying \eqref{BootstrapFull}, we have
\begin{equation}\label{mainquarticeni}
\Big| \int_{t_1}^{t_2} 
  \Big[ \mathcal{Q}_1(h^\ast,h^\ast,h^\ast,\omega^\ast)(s) + \mathcal{Q}_2(\omega^\ast,\omega^\ast,\omega^\ast,h^\ast)(s)
  \Big] \,ds \Big|
  \lesssim A^2\int_{t_1}^{t_2}[\varep(s)]^3\,ds,
\end{equation}
for any $t_1\leq t_2\in[T_1,T_2]$. To prove this we need to integrate by parts in time (the method of normal forms), to convert quartic expressions into quintic expressions. There are two main issues we need to be aware of: the presence of quartic resonances and the potential loss of derivative. 

To address these issues, we first ``complexify'' the quartic terms, that is, we convert them into expressions involving the complex variables $(U^+,U^-)$ defined by
\begin{align}\label{hoe1}
\begin{split}
& U = U^+:=h^\ast+i\omega^\ast,\qquad \overline{U}=U^-:=h^\ast-i\omega^\ast,
\\
& h^\ast = \frac{1}{2}(U^+ + U^-), \qquad \omega^\ast = \frac{1}{2i}(U^+ - U^-).
\end{split}
\end{align}
We also need to symmetrize our symbols. Note that the symbols $K_i^a$ in \eqref{simp2} are symmetric with respect to the second and third variables, as stated in \eqref{simp4}.
However, we may also replace them by their full symmetrization in the first three variables $\widetilde{K}_1^a$, where, by definition, 
\begin{align}\label{Ksymm}
\begin{split}
& \widetilde{K}(\xi,\eta,\rho,\si) 
  := \frac{1}{3} \Big[ K(\xi,\eta,\rho,\sigma) + K(\eta,\rho,\xi,\sigma) + K(\rho,\xi,\eta,\sigma) \Big],
\end{split}
\end{align}
for any symbol $K$ symmetric in the second and third variables. Similarly, we may replace the symbols $K_2^a$ with $\widetilde{K}_2^a$. In view of \eqref{simp1} we have
\begin{align}\label{simp1C}
\begin{split}
(2\pi R)^3\mathcal{Q}_1(h^\ast,h^\ast,h^\ast,\omega^\ast) 
  & =  \sum_{(\xi_1,\xi_2,\xi_3,\xi_4)\in\mathbb{H}^3_R}
  \widehat{U}(\xi_1)\widehat{U}(\xi_2)\widehat{U}(\xi_3)\widehat{U}(\xi_4)
  \frac{i}{16} K_1^{++++} (\xi_1,\xi_2,\xi_3,\xi_4)
  \\
  & + \sum_{(\xi_1,\xi_2,\xi_3,\xi_4)\in\mathbb{H}^3_R} 
  \widehat{U}(\xi_1)\widehat{U}(\xi_2)\widehat{U}(\xi_3)\widehat{\bar{U}}(\xi_4)
  \frac{i}{16} K_1^{+++-} (\xi_1,\xi_2,\xi_3,\xi_4)
\\
& + \sum_{(\xi_1,\xi_2,\xi_3,\xi_4)\in\mathbb{H}^3_R} 
  \widehat{U}(\xi_1)\widehat{U}(\xi_2)\widehat{\bar{U}}(\xi_3)\widehat{\bar{U}}(\xi_4)
  \frac{i}{32} SK_1 (\xi_1,\xi_2,\xi_3,\xi_4)
\\
& + \sum_{(\xi_1,\xi_2,\xi_3,\xi_4)\in\mathbb{H}^3_R} 
  \widehat{U}(\xi_1)\widehat{\bar{U}}(\xi_2)\widehat{\bar{U}}(\xi_3)\widehat{\bar{U}}(\xi_4)
  \frac{i}{16} K_1^{+---} (\xi_1,\xi_2,\xi_3,\xi_4) 
\\
& + \sum_{(\xi_1,\xi_2,\xi_3,\xi_4)\in\mathbb{H}^3_R} 
  \widehat{\bar{U}}(\xi_1)\widehat{\bar{U}}(\xi_2)\widehat{\bar{U}}(\xi_3)\widehat{\bar{U}}(\xi_4)
  \frac{i}{16} K_1^{----} (\xi_1,\xi_2,\xi_3,\xi_4),
\end{split}
\end{align}
where, with $\widetilde{K}_1$ defined as in \eqref{Ksymm},
\begin{align}\label{oths1}
\begin{split}
& K_1^{++++}(\xi_1,\xi_2,\xi_3,\xi_4) := -\widetilde{K}_1(\xi_1,\xi_2,\xi_3,\xi_4) ,
\\
& K_1^{+++-}(\xi_1,\xi_2,\xi_3,\xi_4) :=
  \widetilde{K}_1(\xi_1,\xi_2,\xi_3,\xi_4)
  - \widetilde{K}_1(\xi_4,\xi_1,\xi_2,\xi_3) -\widetilde{K}_1(\xi_4,\xi_2,\xi_3,\xi_1) - \widetilde{K}_1(\xi_4,\xi_3,\xi_1,\xi_2),
\\
& K_1^{+---}(\xi_1,\xi_2,\xi_3,\xi_4) :=\widetilde{K}_1(\xi_1,\xi_2,\xi_3,\xi_4)+\widetilde{K}_1(\xi_1,\xi_3,\xi_4,\xi_2) +\widetilde{K}_1(\xi_1,\xi_4,\xi_2,\xi_3)-  \widetilde{K}_1(\xi_2,\xi_3,\xi_4,\xi_1),
\\
& K_1^{----}(\xi_1,\xi_2,\xi_3,\xi_4) := \widetilde{K}_1(\xi_1,\xi_2,\xi_3,\xi_4),
\end{split}
\end{align}
and the symbol $SK_1$ is defined
by a suitable symmetrization according to the general formula
\begin{equation}\label{verf1}
SL(\xi_1,\xi_2,\xi_3,\xi_4):=3\widetilde{L}(\xi_1,\xi_2,\xi_3,\xi_4)+3\widetilde{L}(\xi_1,\xi_2,\xi_4,\xi_3)-3\widetilde{L}(\xi_3,\xi_4,\xi_1,\xi_2)-3\widetilde{L}(\xi_3,\xi_4,\xi_2,\xi_1)
\end{equation}
for any $(\xi_1,\xi_2,\xi_3,\xi_4)\in\mathbb{H}_R^3$ and any symbol 
$L:\mathbb{H}_R^3\to\mathbb{C}$ symmetric in the second and the third variables (with $\widetilde{L}$ defined as in \eqref{Ksymm}). Notice that $SL$ satisfies the natural symmetries
\begin{equation}\label{verf2}
\begin{split}
&SL(\xi_1,\xi_2,\xi_3,\xi_4)=SL(\xi_2,\xi_1,\xi_3,\xi_4)=SL(\xi_1,\xi_2,\xi_4,\xi_3)=SL(\xi_2,\xi_1,\xi_4,\xi_3),
\\
&SL(\xi_1,\xi_2,\xi_3,\xi_4)=-SL(\xi_3,\xi_4,\xi_1,\xi_2).
\end{split}
\end{equation}

We can proceed similarly for the $\mathcal{Q}_2$ type terms in \eqref{simp1} and write
\begin{align}\label{simp2C}
\begin{split}
(2\pi R)^3\mathcal{Q}_2(\omega^\ast,\omega^\ast,\omega^\ast,h^\ast) 
  & = \sum_{(\xi_1,\xi_2,\xi_3,\xi_4)\in\mathbb{H}^3_R}
  \widehat{U}(\xi_1)\widehat{U}(\xi_2)\widehat{U}(\xi_3)\widehat{U}(\xi_4)
  \frac{-i}{16} K_2^{++++} (\xi_1,\xi_2,\xi_3,\xi_4)
  \\
  & + \sum_{(\xi_1,\xi_2,\xi_3,\xi_4)\in\mathbb{H}^3_R} 
  \widehat{U}(\xi_1)\widehat{U}(\xi_2)\widehat{U}(\xi_3)\widehat{\bar{U}}(\xi_4)
  \frac{i}{16} K_2^{+++-} (\xi_1,\xi_2,\xi_3,\xi_4)
\\
& + \sum_{(\xi_1,\xi_2,\xi_3,\xi_4)\in\mathbb{H}^3_R} 
  \widehat{U}(\xi_1)\widehat{U}(\xi_2)\widehat{\bar{U}}(\xi_3)\widehat{\bar{U}}(\xi_4)
  \frac{-i}{32} SK_2(\xi_1,\xi_2,\xi_3,\xi_4)
\\
& + \sum_{(\xi_1,\xi_2,\xi_3,\xi_4)\in\mathbb{H}^3_R} 
  \widehat{U}(\xi_1)\widehat{\bar{U}}(\xi_2)\widehat{\bar{U}}(\xi_3)\widehat{\bar{U}}(\xi_4)
  \frac{i}{16} K_2^{+---} (\xi_1,\xi_2,\xi_3,\xi_4)
\\
& + \sum_{(\xi_1,\xi_2,\xi_3,\xi_4)\in\mathbb{H}^3_R} 
  \widehat{\bar{U}}(\xi_1)\widehat{\bar{U}}(\xi_2)\widehat{\bar{U}}(\xi_3)\widehat{\bar{U}}(\xi_4)
  \frac{-i}{16} K_2^{----} (\xi_1,\xi_2,\xi_3,\xi_4)
\end{split}
\end{align}
where $SK_2$ is defined via \eqref{verf1}, and, as in \eqref{oths1},
\begin{align}\label{oths2}
\begin{split}
& K_2^{++++}(\xi_1,\xi_2,\xi_3,\xi_4) := - \widetilde{K}_2(\xi_1,\xi_2,\xi_3,\xi_4) ,
\\
& K_2^{+++-}(\xi_1,\xi_2,\xi_3,\xi_4) :=
  \widetilde{K}_2(\xi_1,\xi_2,\xi_3,\xi_4)
  -\widetilde{K}_2(\xi_4,\xi_1,\xi_2,\xi_3) -\widetilde{K}_2(\xi_4,\xi_2,\xi_3,\xi_1) - \widetilde{K}_2(\xi_4,\xi_3,\xi_1,\xi_2),
\\
& K_2^{+---}(\xi_1,\xi_2,\xi_3,\xi_4) :=  \widetilde{K}_2(\xi_1,\xi_2,\xi_3,\xi_4)
  +\widetilde{K}_2(\xi_1,\xi_3,\xi_4,\xi_2)+\widetilde{K}_2(\xi_1,\xi_4,\xi_2,\xi_3) - \widetilde{K}_2(\xi_2,\xi_3,\xi_4,\xi_1),
\\
& K_2^{----}(\xi_1,\xi_2,\xi_3,\xi_4) := -K_2^{++++}(\xi_1,\xi_2,\xi_3,\xi_4).
\end{split}
\end{align}

To prove the bounds \eqref{mainquarticeni} we distinguish between the ``resonant'' contributions, that is, 
those that have two copies of $U$ and two copies of $\bar{U}$, and the other ``non-resonant'' contributions.
The phases associated to the non-resonant contributions are lower bounded, and we estimate them using Lemma \ref{hoe2} below.

The resonant contributions are harder to treat.
Since the associated phases can vanish, see \eqref{cubicres} and \eqref{BFres}, 
we will need to verify
specific non-resonance (and regularity) properties 
of the symbols in order to obtain a bound as in \eqref{mainquarticeni}.
Sufficient non-resonant conditions are those stated in the assumptions of Lemma \ref{hol2},
see \eqref{verf1} and \eqref{uyt22}, which we will then verify in Sections \ref{secadmver} and \ref{uyt22sec}. 

\smallskip
\subsection{Admissible symbols}\label{secA}
To verify these conditions we introduce a suitable class of ``admissible'' symbols. Recall the notation
$\varphi_{\underline{k}}(\xi):=\varphi_{k_1}(\xi_1)\varphi_{k_2}(\xi_2)\varphi_{k_3}(\xi_3)\varphi_{k_4}(\xi_4)$,
and that $\widetilde{k}_1\geq\widetilde{k}_2\geq\widetilde{k}_3\geq\widetilde{k}_4$ 
denotes the decreasing rearrangement of the set $\{k_1,k_2,k_3,k_4\}\in\Z^4$.
Also recall the Definitions \ref{AccSymb} and \ref{defMNab}.

\smallskip
\begin{definition}[Admissible Symbols]
\label{defA}
We define the class $\mathcal{A}$ of 
(quartic) ``admissible symbols'' as the class of symbols $m:\mathbb{H}^3_\infty \to \mathbb{C}$ 
that satisfy the following:

\smallskip
(i) The symbol $m$ is even 
\begin{align}\label{A0}
m(\xi_1,\xi_2,\xi_3,\xi_4) = m(-\xi_1,-\xi_2,-\xi_3,-\xi_4)
\end{align}
for any $(\xi_1,\xi_2,\xi_3,\xi_4)\in \mathbb{H}^3_\infty$ and weakly admissible (when restricted to $\mathbb{H}^3_R$), 
\begin{align}\label{A1}
\begin{split}
{\big\| m \cdot 
  \varphi_{\underline{k}} \big\|}_{\widetilde{S}^\infty}
  \lesssim 2^{\widetilde{k}_4/2} 2^{2\widetilde{k}_3} 
  2^{2N_0\max(\widetilde{k}_1,0)},\qquad\text {for any }\underline{k}\in\Z^4.
\end{split}
\end{align}

\smallskip
 (ii) For any $\underline{k} \in \Z^4$  and $\xi=(\xi_1,\xi_2,\xi_3,\xi_4)\in\mathbb{H}^3_\infty$, one can write
\begin{align}\label{A2.0}
\varphi_{\underline{k}}(\xi)m(\xi_1,\xi_2,\xi_3,\xi_4)=\varphi_{\underline{k}}(\xi) \sum_{\beta\in\{0,1,3/2\},\,i\neq j\in\{1,2,3,4\}}|\xi_i+\xi_j|^{\beta} F_{\beta;i,j}(\xi_1,\xi_2,\xi_3,\xi_4)
\end{align}
where the functions $F_{\beta;i,j}= F_{\beta;i,j;\underline{k}}: \R^4 \to \C$ are even and satisfy the bounds
\begin{align}\label{A2.1}
&\big|2^{\alpha\cdot \underline{k}}\partial^\al_{\xi}
  \big[F_{\beta;i,j}(\xi)\varphi_{\underline{k}}(\xi)\big]\big|
  \lesssim 2^{\widetilde{k}_4/2} 2^{(2-\beta)\widetilde{k}_3} 2^{2N_0\max(\widetilde{k}_1,0)},
\end{align}
for $|\alpha|\leq 6$ and $\xi\in\mathbb{H}^3_\infty$. Moreover, the functions $F_{\beta;i,j}$ can be non-trivial only if $\beta=0$ or $\max(k_i,k_j)\leq\widetilde{k}_3+6$.
\end{definition}

Notice that property (i) follows from the stronger assumptions in property (ii), but we state it here mostly for clarity. Notice also the admisibility condition is invariant under permutations of the coordinates and that regular symbols in the class $\mathcal{M}^{5/2}$ are admissible (with $\beta=0$).

For us the main point is that the symbols $K_1$ and $K_2$ defined in Lemma \ref{quartic3} are admissible after symmetrization in the first three variables, as we verify in section \ref{secadmver}. We can then apply Lemmas \ref{hoe2} and \ref{hol2} below to prove acceptable quintic energy estimates, provided that we can check the non-resonant conditions \eqref{uyt22} corresponding to Benjamin-Feir resonances.

\subsection{Non-resonant interactions}\label{ssecnonres}
\smallskip
Given a multiplier $M:\mathbb{H}_R^3\to\mathbb{C}$ and $f_1,f_2,f_3,f_4:\T_R\to\mathbb{C}$, 
we define the quatrilinear operator
\begin{equation}\label{hoe1.5}
\mathcal{A}_M(f_1,f_2,f_3,f_4):=\frac{1}{(2\pi R)^3}\sum_{(\xi_1,\xi_2,\xi_3,\xi_4)
  \in\mathbb{H}^3_R}M(\xi_1,\xi_2,\xi_3,\xi_4)
  \widehat{f_1}(\xi_1)\widehat{f_2}(\xi_2)\widehat{f_3}(\xi_3)\widehat{f_4}(\xi_4).
\end{equation}

Our first main estimate for quartic bulks in this section is the following:

\begin{lemma}\label{hoe2} 
If $M:\mathbb{H}_R^3\to\mathbb{C}$ is an admissible quartic symbol (see Definition \ref{defA}), 
then
\begin{equation}\label{hoe4}
\Big|\int_{t_1}^{t_2}\mathcal{A}_M(U,U,U,U^{\iota})(s)\,ds\Big|
  \lesssim [A(t_1)\varep(t_1)]^2+[A(t_2)\varep(t_2)]^2+\int_{t_1}^{t_2}A^2[\varep(s)]^3\,ds,
\end{equation}
for any $\iota\in\{+,-\}$ and any $t_1\leq t_2\in[T_1,T_2]$. 
\end{lemma}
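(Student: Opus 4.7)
The plan is a normal-form integration by parts in time, exploiting the non-resonance of the four-wave phase
\[
\Phi(\xi_1,\xi_2,\xi_3,\xi_4) := |\xi_1|^{1/2}+|\xi_2|^{1/2}+|\xi_3|^{1/2}+\iota|\xi_4|^{1/2}
\]
for both sign patterns $\iota\in\{+,-\}$. For $\iota=+$, trivially $|\Phi|\gtrsim 2^{\widetilde{k}_1/2}$ on $\mathrm{supp}(\varphi_{\underline{k}})$. For $\iota=-$, the identity
\[
\Phi\cdot\sum_{j=1}^4|\xi_j|^{1/2}=\Bigl(|\xi_1|+|\xi_2|+|\xi_3|-|\xi_4|\Bigr)+2\!\!\sum_{1\le i<j\le 3}\!\!|\xi_i\xi_j|^{1/2}
\]
on $\mathbb{H}^3_\infty$ combined with the momentum balance $\widetilde{k}_1\approx\widetilde{k}_2$ (which forces the two largest frequencies to be comparable) gives, after a short case analysis on which index supports $\widetilde{k}_1$, the uniform dyadic lower bound $|\Phi|\gtrsim 2^{\widetilde{k}_3/2}$ on the support of any admissible $M$.

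From Lemma~\ref{Usor1} and $U=h^\ast+i\omega^\ast$, each Fourier coefficient $\widehat{U^{\iota_j}}(\xi_j)$ satisfies
\[
\partial_t\widehat{U^{\iota_j}}(\xi_j)=-i\iota_j|\xi_j|^{1/2}\widehat{U^{\iota_j}}(\xi_j)+\widehat{N^{\iota_j}}(\xi_j),
\]
where the nonlinearity $N^{\iota_j}:=\mathcal{N}^{h^\ast}_{\geq 2}+i\iota_j\mathcal{N}^{\omega^\ast}_{\geq 2}$ is quadratic-and-higher and satisfies ${\|N^{\iota_j}(t)\|}_{H^{N_0}}\lesssim A\varep(t)$ and ${\|N^{\iota_j}(t)\|}_{\widetilde{W}^{N_1}}\lesssim\varep(t)^2$ by \eqref{Usor6}, the explicit formulas \eqref{Usor5.1}--\eqref{Usor5.2}, and Lemma~\ref{algeProp}. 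Using
\[
\prod_{j=1}^4\widehat{U^{\iota_j}}(\xi_j)=\frac{i}{\Phi}\partial_t\prod_{j=1}^4\widehat{U^{\iota_j}}(\xi_j)-\frac{i}{\Phi}\sum_{j=1}^4\widehat{N^{\iota_j}}(\xi_j)\prod_{i\neq j}\widehat{U^{\iota_i}}(\xi_i),
\]
multiplying by $M/(2\pi R)^3$, summing over $\mathbb{H}^3_R$, and integrating over $s\in[t_1,t_2]$ yields
\[
\int_{t_1}^{t_2}\!\!\mathcal{A}_M(U,U,U,U^\iota)(s)\,ds = i\,\mathcal{A}_{M/\Phi}(U,U,U,U^\iota)\Big|_{t_1}^{t_2}-i\sum_{j=1}^4\int_{t_1}^{t_2}\!\!\mathcal{A}_{M/\Phi}^{(j)}(s)\,ds,
\]
where $\mathcal{A}_{M/\Phi}^{(j)}$ has $N^{\iota_j}$ in the $j$-th slot and $U^{\iota_i}$ elsewhere.

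The crux is the multiplier estimate
\[
\bigl\|(M/\Phi)\cdot\varphi_{\underline{k}}\bigr\|_{\widetilde{S}^\infty}\lesssim 2^{\widetilde{k}_4/2}\cdot 2^{3\widetilde{k}_3/2}\cdot 2^{2N_0\max(\widetilde{k}_1,0)},
\]
obtained via Lemma~\ref{multiBound}(iv) by combining the admissibility decomposition \eqref{A2.0}--\eqref{A2.1} of $M$, the constraint identity $|\xi_i+\xi_j|=|\xi_k+\xi_l|$ on $\mathbb{H}^3_\infty$, the smoothness of $|\xi_j|^{1/2}$ away from $\xi_j=0$, and the lower bound $|\Phi|\gtrsim 2^{\widetilde{k}_3/2}$; derivative bounds are traced dyadic piece by dyadic piece. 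With this in hand, Lemma~\ref{algeProp7} with the two highest-frequency factors in $L^2$ and the two lowest in $L^\infty$ gives the boundary estimate $|\mathcal{A}_{M/\Phi}(U,U,U,U^\iota)(t)|\lesssim A(t)^2\varep(t)^2$; analogously, placing $N^{\iota_j}$ in $L^2$ (at $\lesssim A\varep$) or $\widetilde{W}^{N_1}$ (at $\lesssim\varep^2$) according to its frequency yields $|\mathcal{A}^{(j)}_{M/\Phi}(s)|\lesssim A^2\varep(s)^3$, and the dyadic sum converges since $N_1\geq 4$. The main obstacle is the dyadic derivation of the multiplier bound for $M/\Phi$: the decomposition \eqref{A2.0} with $\beta\in\{0,1,3/2\}$ is tailored precisely to compensate the potential singularities of $1/\Phi$ in the near-degenerate configurations (where two of $\xi_1,\xi_2,\xi_3$ are small in the $\iota=-$ case and $\xi_1+\xi_4\approx 0$), so the bookkeeping correlating the values of $\beta$, the index pairs $(i,j)$, the frequency ordering, and the sign $\iota$ requires some care; everything else is a routine application of Lemma~\ref{algeProp7} together with the bootstrap assumptions~\eqref{BootstrapFull}.
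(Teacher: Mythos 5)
Your overall strategy (integration by parts in time against the phase $\Phi_{+++\iota}$, a $\widetilde S^\infty$ bound on $M/\Phi$, then multilinear estimates on the boundary and bulk terms) is the same as the paper's, and your phase lower bound $|\Phi|\gtrsim 2^{\widetilde k_3/2}$ for $\iota=-$ is essentially the paper's inequality \eqref{hoe17.3}--\eqref{hoe17.5}. But there is a genuine gap in how you treat the bulk terms after the integration by parts, and it is precisely the quasilinear difficulty that this lemma exists to overcome.

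You assert that the full nonlinearity $N^{\iota_j}=\mathcal{N}^{h^\ast}_{\geq2}+i\iota_j\mathcal{N}^{\omega^\ast}_{\geq2}$ satisfies $\|N^{\iota_j}(t)\|_{H^{N_0}}\lesssim A\varep(t)$ and $\|N^{\iota_j}(t)\|_{\widetilde W^{N_1}}\lesssim \varep(t)^2$, citing \eqref{Usor5.1}--\eqref{Usor5.2} and \eqref{Usor6}. This is false: by \eqref{Usor3} the nonlinearity contains the paradifferential transport terms $T_\beta|\partial_x|^{1/2}U$ and $\partial_x T_V U$, which lose $1/2$ and $1$ derivative respectively relative to $H^{N_0}$ (one only controls $\|U\|_{H^{N_0}}$, not $\|U\|_{H^{N_0+1}}$); the bounds \eqref{Usor6} apply only to the remainders $H^\ast_{\geq3},\Omega^\ast_{\geq3}$, and the dyadic estimates the paper actually has for these terms, \eqref{uyt3} and \eqref{uyt8}, carry the extra factors $2^{n/2}$ and $2^{n}$. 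Consequently, when $N^{\iota_j}$ lands on a highest-frequency slot (e.g.\ $\iota=-$ with $k_1,k_4\approx\widetilde k_1$ large and $k_2,k_3$ small), placing it in $L^2$ as you propose overshoots the desired bound by $2^{\widetilde k_1/2}$ or $2^{\widetilde k_1}$, and the gain $2^{-\widetilde k_3/2}$ from $1/\Phi$ does not compensate since $\widetilde k_3\ll\widetilde k_1$ there. The paper's Steps 4 and 5 handle exactly this: the contributions of $\mathcal{N}_{\geq2,1}$ and $\mathcal{N}_{\geq2,2}$ in the two high-frequency slots are \emph{combined} into a single quintilinear expression in $\beta$ (resp.\ $V$) and the four $U$'s, as in \eqref{uyt4}--\eqref{uyt5}, whose symbol is a \emph{difference} of two shifted copies of $M'$; the cancellation in this difference (together with the vanishing of $\chi(\eta,\cdot)$ unless $\eta$ is at low frequency) recovers the lost derivative, yielding \eqref{uyt6} and \eqref{uyt6'}. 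Without this symmetrization/commutator step your argument does not close; you have identified the multiplier bound on $M/\Phi$ as the main obstacle, but that is the routine part — the derivative loss in the quasilinear terms is the real crux, and your proposal does not address it.
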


\begin{proof} 

{\bf{Step 1.}} The function $U=h^\ast+i\omega^\ast$ satisfies the equation
\begin{equation}\label{hoe7}
\begin{split}
\partial_tU+i|\partial_x|^{1/2}U&=\mathcal{N}_{\geq 2}^{U}=\mathcal{N}_{\geq 2,1}^{U}
  +\mathcal{N}_{\geq 2,2}^{U}+\mathcal{N}_{\geq 2,3}^{U},
  \\
\mathcal{N}_{\geq 2,1}^{U}&:=-iT_{\beta}|\partial_x|^{1/2}U,
\\
\mathcal{N}_{\geq 2,2}^{U}&:=-\partial_x T_VU,\\
\mathcal{N}_{\geq 2,3}^{U}&:=(H_2^\ast+i\Omega_2^\ast)+(H^\ast_{\geq 3}+i\Omega_{\geq 3}^\ast),
\end{split}
\end{equation}
as a consequence of \eqref{Usor3}. For any $t\in[T_1,T_2]$ we also have
\begin{equation}\label{hoe14}
\begin{split}
&\|U(t)\|_{H^{N_0}}\lesssim A,\qquad \|U(t)\|_{\widetilde{W}^{N_1}}\lesssim\varep(t),\\
&\big\|\mathcal{N}_{\geq 2,3}^{U}(t)\big\|_{H^{N_0}}\lesssim A\varep(t),\qquad\big\|\mathcal{N}_{\geq 2,3}^{U}(t)\big\|_{\widetilde{W}^{N_1}}\lesssim [\varep(t)]^2,
\end{split}
\end{equation}
as a consequence of the assumption \eqref{eni1}, Lemma \ref{Usor1}, and Lemma \ref{algeProp}.

For any $s\in[T_1,T_2]$ and $k\in\mathbb{Z}$ we define the frequency envelopes
\begin{equation}\label{hoe8}
\begin{split}
&A_k(s):=\sum_{l\in\Z}2^{-|l-k|/10}\|P_lU(s)\|_{H^{N_0}}\approx\sum_{l\in\Z}2^{-|l-k|/10}\big(2^{N_0l^+}\|P_lU(s)\|_{L^2}\big),\\
&B_k(s):=\sum_{l\in\Z}2^{-|l-k|/10}\big\|P_l\mathcal{N}_{\geq 2,3}^{U}(s)\big\|_{H^{N_0}}\approx\sum_{l\in\Z}2^{-|l-k|/10}\big(2^{N_0l^+}\big\|P_l\mathcal{N}_{\geq 2,3}^{U}(s)\big\|_{L^2}\big),
\end{split}
\end{equation}
and notice that, for any $s\in[T_1,T_2]$,
\begin{equation}\label{hoe9}
\sum_{k\in\Z}[A_k(s)]^2\lesssim \|U(s)\|_{H^{N_0}}^2\lesssim A^2,\qquad \sum_{k\in\Z}[B_k(s)]^2\lesssim \big\|\mathcal{N}_{\geq 2,3}^{U}(s)\big\|_{H^{N_0}}^2\lesssim A^2[\varep(s)]^2.
\end{equation}
For \eqref{hoe4} it suffices to prove that for any $l\in\Z$, $\iota\in\{+,-\}$, and $t_1\leq t_2\in[T_1,T_2]$ we have
\begin{equation}\label{hoe10}
\begin{split}
&2^{[\max(\widetilde{k}_3,0)-\min(\widetilde{k}_4,0)]/10}\Big|\int_{t_1}^{t_2}\mathcal{A}_M(P_{k_1}U,P_{k_2}U,P_{k_3}U,P_{k_4}U^{\iota})(s)\,ds\Big|\\
&\lesssim A_l^2(t_1)[\varep(t_1)]^2+A^2_l(t_2)[\varep(t_2)]^2+\int_{t_1}^{t_2}A_l(s)[\varep(s)]^2[\varepsilon(s)A_l(s)+B_l(s)]\,ds,
\end{split}
\end{equation}
for any $(k_1,k_2,k_3,k_4)\in Z_l^4:=\{(k_1,k_2,k_3,k_4)\in\Z^4:\,l=\max(k_1,k_2,k_3,k_4)\}$.

To bound the left-hand side of \eqref{hoe10} we would like to integrate by parts in time. 
We examine the formula \eqref{hoe7} and let $\mathcal{N}^{U^+}_{\geq 2}=\mathcal{N}^{U}_{\geq 2}$
and $\mathcal{N}^{U^-}_{\geq 2}=\overline{\mathcal{N}^{U}_{\geq 2}}$. 
Let
\begin{equation}\label{hoe11}
\mathcal{T}_M(f_1,f_2,f_3,f_4):=\frac{1}{(2\pi R)^3}\sum_{(\xi_1,\xi_2,\xi_3,\xi_4)\in\mathbb{H}^3_R}\frac{M(\xi_1,\xi_2,\xi_3,\xi_4)}{\Phi_{+++\iota}(\xi_1,\xi_2,\xi_3,\xi_4)}\widehat{f_1}(\xi_1)\widehat{f_2}(\xi_2)\widehat{f_3}(\xi_3)\widehat{f_4}(\xi_4),
\end{equation}
where $\Phi_{+++\iota}(\xi_1,\xi_2,\xi_3,\xi_4):=|\xi_1|^{1/2}+|\xi_2|^{1/2}+|\xi_3|^{1/2}+\iota|\xi_4|^{1/2}$. We start from the identity
\begin{equation*}
\begin{split}
&\mathcal{T}_M(P_{k_1}U,P_{k_2}U,P_{k_3}U,P_{k_4}U^{\iota})(t_2)-\mathcal{T}_M(P_{k_1}U,P_{k_2}U,P_{k_3}U,P_{k_4}U^{\iota})(t_1)\\
&=\int_{t_1}^{t_2}\frac{d}{ds}\mathcal{T}_M(P_{k_1}U,P_{k_2}U,P_{k_3}U,P_{k_4}U^{\iota})(s)\,ds
\end{split}
\end{equation*}
for any $k_1,k_2,k_3,k_4\in\Z$, and use the formula $\partial_tU^{\pm}=\mp i|\partial_x|^{1/2}U^{\pm}+\mathcal{N}^{U^\pm}_{\geq 2}$ to expand the right-hand side. This gives the normal forms identity
\begin{equation}\label{hoe12}
\begin{split}
&\int_{t_1}^{t_2}\mathcal{A}_M(P_{k_1}U,P_{k_2}U,P_{k_3}U,P_{k_4}U^{\iota})(s)\,ds\\
&=-i\int_{t_1}^{t_2}\Big\{\mathcal{T}_M(P_{k_1}\mathcal{N}^{U^+}_{\geq 2},P_{k_2}U,P_{k_3}U,P_{k_4}U^{\iota})(s)+\mathcal{T}_M(P_{k_1}U,P_{k_2}\mathcal{N}^{U^+}_{\geq 2},P_{k_3}U,P_{k_4}U^{\iota})(s)\\
&\qquad\quad\,\,\,\,+\mathcal{T}_M(P_{k_1}U,P_{k_2}U,P_{k_3}\mathcal{N}^{U^+}_{\geq 2},P_{k_4}U^{\iota})(s)+\mathcal{T}_M(P_{k_1}U,P_{k_2}U,P_{k_3}U,P_{k_4}\mathcal{N}^{U^\iota}_{\geq 2})(s)\Big\}\,ds\\
&+i\big\{\mathcal{T}_M(P_{k_1}U,P_{k_2}U,P_{k_3}U,P_{k_4}U^{\iota})(t_2)-\mathcal{T}_M(P_{k_1}U,P_{k_2}U,P_{k_3}U,P_{k_4}U^{\iota})(t_1)\big\}.
\end{split}
\end{equation}
We will use this identity to bound the left-hand side of \eqref{hoe10}.

\smallskip
{\bf{Step 2.}} We would like to describe now the multiplier of the operator $\mathcal{T}_M$
in \eqref{hoe11}, using the assumption that $M$ is admissible.
We notice that for any $\underline{k}=(k_1,k_2,k_3,k_4)\in\mathbb{Z}^4$
\begin{equation}\label{hoe17}
\begin{split}
&\big\|M'\cdot \varphi_{\underline{k}}\big\|_{\widetilde{S}^\infty}\lesssim 2^{\widetilde{k}_4/2}2^{3\widetilde{k}_3/2}2^{2N_0\max(\widetilde{k}_1,0)}\begin{cases}
1&\text{ if }\iota=-\text{ and }k_4\geq \widetilde{k}_1-5,\\
2^{(\widetilde{k}_3-\widetilde{k}_1)/2}&\text{ if }\iota=+\text{ or }k_4\leq \widetilde{k}_1-6.
\end{cases}
\end{split}
\end{equation}
where $M':=M/\Phi_{+++\iota}$.

Indeed, assume first that $\iota=-$ and $k_4\geq \widetilde{k}_1-5$.
The main observation is that
\begin{equation}\label{hoe17.3}
\sqrt{a}+\sqrt{b}+\sqrt{c}-\sqrt{a+b+c}\geq \sqrt{a}+\sqrt{b}-\sqrt{a+b}\geq \sqrt{b}/2
\end{equation}
for any real numbers $a\geq b\geq c\geq 0$, so that
\begin{align}\label{hoe17.5}
{\Big\| \frac{\varphi_{k_1}(\xi_1)\varphi_{k_2}(\xi_2)\varphi_{k_3}(\xi_3)\varphi_{k_4}(-\xi_1-\xi_2-\xi_3)}{\Phi_{+++-}(\xi_1,\xi_2,\xi_3,-\xi_1-\xi_2-\xi_3)} 
  \Big\|}_{S^\infty} \lesssim 2^{-\wt{k}_3/2}.
\end{align}
The bound \eqref{hoe17} then follows using also the product estimates \eqref{al8.4},
and the assumptions \eqref{A1}. 

On the other hand, if $\iota=+$ or $k_4\leq \widetilde{k}_1-6$, 
then the elliptic bounds $\Phi_{+++\iota}(\xi_1,\xi_2,\xi_3,\xi_4)\gtrsim 2^{\widetilde{k}_1/2}$ 
hold in the support of the multiplier, and the bounds \eqref{hoe17} follow again using the product estimates \eqref{al8.4},
and the assumptions \eqref{A1}.

\smallskip
{\bf{Step 3.}} We examine now the terms in the right-hand side of \eqref{hoe12}. 
The boundary terms can be estimated using just \eqref{hoe17}, 
the bounds in the first line of \eqref{hoe14}, and Lemma \ref{touse} (ii),
\begin{equation*}
\begin{split}
\big|\mathcal{T}_M&(P_{k_1}U,P_{k_2}U,P_{k_3}U,P_{k_4}U^{\iota})(s)\big|\\
&\lesssim A_{\widetilde{k}_1}(s)A_{\widetilde{k}_2}(s)(2^{\widetilde{k}_4/2}2^{-N_1\max(\widetilde{k}_4,0)}\varepsilon(s))(2^{3\widetilde{k}_3/2}2^{-N_1\max(\widetilde{k}_3,0)}\varepsilon(s))
\end{split}
\end{equation*}
for $s\in[t_1,t_2]$. This is consistent with the desired estimates \eqref{hoe10}.

To control the space-time integrals we decompose $\mathcal{N}_{\geq 2}^{U^{\pm}}
=\mathcal{N}_{\geq 2,1}^{U^{\pm}}+\mathcal{N}_{\geq 2,2}^{U^{\pm}}+\mathcal{N}_{\geq 2,3}^{U^{\pm}}$ 
as in \eqref{hoe7}. The contribution of the nonlinear terms $\mathcal{N}_{\geq 2,3}^{U^{\pm}}$, 
which do not lose derivatives, can be bounded using just \eqref{hoe14}, \eqref{hoe17}, and Lemma \ref{touse} (ii),
\begin{equation}\label{uyt1}
\begin{split}
&\big|\mathcal{T}_M(P_{k_1}\mathcal{N}^{U^+}_{\geq 2,3},P_{k_2}U,P_{k_3}U,P_{k_4}U^{\iota})(s)\big|+\big|\mathcal{T}_M(P_{k_1}U,P_{k_2}\mathcal{N}^{U^+}_{\geq 2,3},P_{k_3}U,P_{k_4}U^{\iota})(s)\big|\\
&+\big|\mathcal{T}_M(P_{k_1}U,P_{k_2}U,P_{k_3}\mathcal{N}^{U^+}_{\geq 2,3},P_{k_4}U^{\iota})(s)\big|+\big|\mathcal{T}_M(P_{k_1}U,P_{k_2}U,P_{k_3}U,P_{k_4}\mathcal{N}^{U^\iota}_{\geq 2,3})(s)\big|\\
&\lesssim [B_{\widetilde{k}_1}(s)+\varep(s)A_{\widetilde{k}_1}(s)]A_{\widetilde{k}_2}(s)(2^{\widetilde{k}_4/2}2^{-N_1\max(\widetilde{k}_4,0)}\varepsilon(s))(2^{3\widetilde{k}_3/2}2^{-N_1\max(\widetilde{k}_3,0)}\varepsilon(s))
\end{split}
\end{equation}
for $s\in[t_1,t_2]$. This is also consistent with the desired estimates \eqref{hoe10}.

After these reductions, for \eqref{hoe10} it remains to prove that
\begin{equation}\label{uyt2}
\begin{split}
&\big|\mathcal{T}_M(P_{k_1}\mathcal{N}^{U^+}_{\geq 2,j},P_{k_2}U,P_{k_3}U,P_{k_4}U^{\iota})(s)
  +\mathcal{T}_M(P_{k_1}U,P_{k_2}\mathcal{N}^{U^+}_{\geq 2,j},P_{k_3}U,P_{k_4}U^{\iota})(s)
  \\
& +\mathcal{T}_M(P_{k_1}U,P_{k_2}U,P_{k_3}\mathcal{N}^{U^+}_{\geq 2,j},P_{k_4}U^{\iota})(s)
+ \mathcal{T}_M(P_{k_1}U,P_{k_2}U,P_{k_3}U,P_{k_4}\mathcal{N}^{U^\iota}_{\geq 2,j})(s)\big|
\\
&\qquad\qquad\lesssim[\varep(s)]^3A_{l}^2(s)2^{[\min(\widetilde{k}_4,0)-\max(\widetilde{k}_3,0)]/10}
\end{split}
\end{equation}
for any $s\in[t_1,t_2]$, $j\in\{1,2\}$, and $(k_1,k_2,k_3,k_4)\in Z_l^4$. 
For this we need to use symmetrization to avoid losing derivatives; 
we will also need the $L^\infty$-type bounds
\begin{equation}\label{hoe15}
\|V(s)\|_{\widetilde{W}^{N_1-1}}+\|\beta(s)\|_{\widetilde{W}^{N_1-2}}\lesssim\varep(s),
\end{equation}
which follow from Lemma \ref{BVexpands}.

\smallskip
{\bf{Step 4.}} We prove now the bounds \eqref{uyt2} for $j=1$.  
Without loss of generality, we may assume that $k_1\geq k_2\geq k_3$. 
In view of \eqref{hoe15} and \eqref{hoe14}, for any $s\in[t_1,t_2]$ and $n\in\Z$ we have
\begin{equation}\label{uyt3}
\begin{split}
&\big\|P_n\mathcal{N}^{U^{\pm}}_{\geq 2,1}(s)\big\|_{L^2}\lesssim 2^{n/2-N_0\max(n,0)}A_n(s)\varepsilon(s),
\\
&\big\|P_n\mathcal{N}^{U^{\pm}}_{\geq 2,1}(s)\big\|_{L^\infty}\lesssim 2^{n/2-N_1\max(n,0)}[\varepsilon(s)]^2.
\end{split}
\end{equation}
In particular, using also \eqref{hoe17} and Lemma \ref{touse} (ii),
\begin{equation*}
\begin{split}
&\big|\mathcal{T}_M(P_{k_1}\mathcal{N}^{U^+}_{\geq 2,1},P_{k_2}U,P_{k_3}U,P_{k_4}U^{\iota})(s)\big|
+ \big|\mathcal{T}_M(P_{k_1}U,P_{k_2}\mathcal{N}^{U^+}_{\geq 2,1},P_{k_3}U,P_{k_4}U^{\iota})(s)\big|
\\
&+\big|\mathcal{T}_M(P_{k_1}U,P_{k_2}U,P_{k_3}\mathcal{N}^{U^+}_{\geq 2,1},P_{k_4}U^{\iota})(s)\big|
+ \big|\mathcal{T}_M(P_{k_1}U,P_{k_2}U,P_{k_3}U,P_{k_4}\mathcal{N}^{U^\iota}_{\geq 2,1})(s)\big|
\\
&\lesssim A_{\widetilde{k}_1}(s)A_{\widetilde{k}_2}(s)[\varepsilon(s)]^3
  2^{-N_1\max(\widetilde{k}_4,0)}2^{-N_1\max(\widetilde{k}_3,0)}2^{\widetilde{k}_4/2}2^{3\widetilde{k}_3/2}
\begin{cases}
2^{\widetilde{k}_1/2}&\text{ if }\iota=-\text{ and }k_4\geq \widetilde{k}_1-5,
\\
2^{\widetilde{k}_3/2}&\text{ if }\iota=+\text{ or }k_4\leq \widetilde{k}_1-6.
\end{cases}
\end{split}
\end{equation*}
This suffices to prove the desired bounds \eqref{uyt2} if $\iota=+$ or if $\widetilde{k}_3\geq l-20$ or if $l\leq 20$. 

Recalling the assumption $k_1\geq k_2\geq k_3$, it remains to prove \eqref{uyt2}
in the case when $\iota=-$, $k_1,k_4\in[l-6,l]$, $l\geq 20$, and $k_2,k_3\leq l-20$. 
Using \eqref{uyt3}, \eqref{hoe17}, and Lemma \ref{touse} (ii) we have
\begin{equation*}
\begin{split}
\big|\mathcal{T}_M(P_{k_1}U,P_{k_2}\mathcal{N}^{U^+}_{\geq 2,1},P_{k_3}U,P_{k_4}U^{-})(s)\big|
  +\big|\mathcal{T}_M(P_{k_1}U,P_{k_2}U,P_{k_3}\mathcal{N}^{U^+}_{\geq 2,1},P_{k_4}U^{-})(s)\big|
  \\
\lesssim A_{\widetilde{k}_1}(s)A_{\widetilde{k}_2}(s)[\varepsilon(s)]^3
  2^{-N_1\max(\widetilde{k}_4,0)}2^{-N_1\max(\widetilde{k}_3,0)}2^{\widetilde{k}_4/2}2^{2\widetilde{k}_3},
\end{split}
\end{equation*}
which is better than what we need. We combine the remaining two terms 
and recall that $\overline{T_fg}=T_{\overline{f}}\overline{g}$. After changes of variables we can therefore rewrite
\begin{equation}\label{uyt4}
\begin{split}
\mathcal{T}_M&(P_{k_1}\mathcal{N}^{U^+}_{\geq 2,1},P_{k_2}U,P_{k_3}U,P_{k_4}U^-)
+\mathcal{T}_M(P_{k_1}U,P_{k_2}U,P_{k_3}U,P_{k_4}\mathcal{N}^{U^-}_{\geq 2,1})
\\
& = \frac{-i}{(2\pi R)^4}\sum_{(\eta,\xi_1,\xi_2,\xi_3,\xi_4)\in\mathbb{H}^4_R}
  N_1(\eta,\xi_1,\xi_2,\xi_3,\xi_4)\widehat{\beta}(\eta)\widehat{U}(\xi_1)
  \widehat{U}(\xi_2)\widehat{U}(\xi_3)\widehat{\overline{U}}(\xi_4),
\end{split}
\end{equation}
where
\begin{equation}\label{uyt5}
\begin{split}
& N_1(\eta,\xi_1,\xi_2,\xi_3,\xi_4) :=\chi(\eta,\xi_1)|\xi_1|^{1/2}M'(\xi_1+\eta,\xi_2,\xi_3,\xi_4)
    \varphi_{k_1}(\xi_1+\eta)\varphi_{k_2}(\xi_2)\varphi_{k_3}(\xi_3)\varphi_{k_4}(\xi_4)
    \\
& \qquad -\chi(\eta,\xi_4)|\xi_4|^{1/2}M'(\xi_1,\xi_2,\xi_3,\xi_4+\eta)
  \varphi_{k_1}(\xi_1)\varphi_{k_2}(\xi_2)\varphi_{k_3}(\xi_3)\varphi_{k_4}(\xi_4+\eta).
\end{split}
\end{equation}
To complete the proof of \eqref{uyt2} when $j=1$ it suffices to show that
\begin{equation}\label{uyt6}
\|(1+\eta^2)^{-3/8}N_1(\eta,\xi_1,\xi_2,\xi_3,\xi_4)\|_{\widetilde{S}^\infty(\mathbb{H}^4_R)}
  \lesssim 2^{2N_0l}2^{k_3^-/10}2^{2.5k_2^+}.
\end{equation}

To prove \eqref{uyt6} we may replace $\chi(\eta,\xi_1)$ and $\chi(\eta,\xi_4)$ with $\varphi_{\leq l-30}(\eta)$,
at the expense of harmless 
multipliers that do not lose derivatives. Moreover, we may assume that, for some $\beta\in\{0,1,3/2\}$,
\begin{equation}\label{susi1}
\begin{split}
&M(\rho_1,\rho_2,\rho_3,\rho_4)\varphi_{\underline{k}}(\rho)=|\rho_2+\rho_3|^{\beta}F_\beta(\rho_1,\rho_2,\rho_3,\rho_4)\varphi_{\underline{k}}(\rho),\\
&\big|2^{\alpha\cdot \underline{k}}\partial^\al_{\rho}
  \big[F_{\beta}(\rho)\varphi_{\underline{k}}(\rho)\big]\big|
  \lesssim 2^{k_3/2} 2^{(2-\beta)k_2}  
  2^{2N_0l},\qquad\text{ for }|\alpha|\leq 6\text{ and }\rho\in\mathbb{H}^3_\infty.
\end{split}
\end{equation}
Letting $F'_\beta= F_\beta/\Phi_{+++-}$, we write the remaining symbol as
\begin{equation}\label{uyt5'}
\begin{split}
& 
  \varphi_{\leq l-30}(\eta) |\xi_2+\xi_3|^{\beta}
  \big[ |\xi_1|^{1/2}
 F'_\beta(\xi_1+\eta,\xi_2,\xi_3,\xi_4)
    \varphi_{k_1}(\xi_1+\eta)\varphi_{k_2}(\xi_2)\varphi_{k_3}(\xi_3)\varphi_{k_4}(\xi_4)
    \\
& \qquad - 
  |\xi_4|^{1/2}F'_\beta(\xi_1,\xi_2,\xi_3,\xi_4+\eta)
  \varphi_{k_1}(\xi_1)\varphi_{k_2}(\xi_2)\varphi_{k_3}(\xi_3)\varphi_{k_4}(\xi_4+\eta) \big].
\end{split}
\end{equation}
It follows from \eqref{hoe17.5} and \eqref{susi1} that  
\begin{align}\label{uytulF}
\begin{split}
& \big|2^{\alpha\cdot \underline{k}}\partial^\al_{\xi}
  \big[F'_\beta (\xi)\varphi_{\underline{k}}(\xi)\big]\big|
  \lesssim 2^{k_3/2} 2^{(3/2-\beta)k_2} 2^{2 N_0l}\qquad\text{ for }|\alpha|\leq 6\text{ and }\xi\in\mathbb{H}^3_\infty.
\end{split}
\end{align}
We then split \eqref{uyt5'} as the sum of three symbols 
\begin{equation*}
\varphi_{\leq l-30}(\eta) |\xi_2+\xi_3|^\beta \big( N_1^1+N_1^2\big) (\eta,\xi_1,\xi_2,\xi_3,\xi_4)
\end{equation*}
where
\begin{equation*}
\begin{split}
N_1^1(\eta,\xi_1,\xi_2,\xi_3,\xi_4) &:= 
  \big[ |\xi_1|^{1/2}-|\xi_1+\xi_2+\xi_3+\eta|^{1/2} \big]\\
 & \times F'_\beta(\xi_1+\eta,\xi_2,\xi_3,\xi_4)
    \varphi_{k_1}(\xi_1+\eta)\varphi_{k_2}(\xi_2)\varphi_{k_3}(\xi_3)\varphi_{k_4}(\xi_4),
    \end{split}
\end{equation*}
and
\begin{equation*}
\begin{split}
N_1^2&(\eta,\xi_1,\xi_2,\xi_3,\xi_4)  := 
  |\xi_4|^{1/2}\varphi_{k_2}(\xi_2)\varphi_{k_3}(\xi_3)
  \\
& \times \big[ -F'_\beta(\xi_1,\xi_2,\xi_3,\xi_4+\eta)\varphi_{k_1}(\xi_1)\varphi_{k_4}(\xi_4+\eta)
   + F'_\beta(\xi_1+\eta,\xi_2,\xi_3,\xi_4)
    \varphi_{k_1}(\xi_1+\eta)\varphi_{k_4}(\xi_4)\big].
\end{split}
\end{equation*}
Finally, we multiply the two symbols above by $(1+\eta^2)^{-3/8}$ and use \eqref{uytulF}
and Lemma \ref{multiBound} (i) and (iv) to prove the bounds \eqref{uyt6}.

\smallskip
{\bf{Step 5.}} 
For the last step we need to prove the bounds \eqref{uyt2} for $j=2$.  
In view of \eqref{hoe15} and \eqref{hoe14}, for any $s\in[t_1,t_2]$ and $n\in\Z$ we have
\begin{equation}\label{uyt8}
\begin{split}
&\big\|P_n\mathcal{N}^{U^{\pm}}_{\geq 2,2}(s)\big\|_{L^2}\lesssim 2^{n-N_0\max(n,0)}A_n(s)\varepsilon(s),
\\
&\big\|P_n\mathcal{N}^{U^{\pm}}_{\geq 2,2}(s)\big\|_{L^\infty}\lesssim 2^{n-N_1\max(n,0)}[\varepsilon(s)]^2.
\end{split}
\end{equation}
As in the case $j=1$, this suffices to prove the desired bounds \eqref{uyt2} if $\widetilde{k}_3\geq l-20$ or if $l\leq 20$. In the remaining case when $\widetilde{k}_3\leq l-20$ and $l\geq 20$, we may assume that $k_1\geq k_2\geq k_3$. There are two cases, depending on whether $k_4$ is one of the two larger indices, or not. The cases are identical, so assume that $k_4\geq l-6$, so $k_2,k_3\leq l-20$. We can still estimate
\begin{equation*}
\begin{split}
\big|\mathcal{T}_M(P_{k_1}U,P_{k_2}\mathcal{N}^{U^+}_{\geq 2,2},P_{k_3}U,P_{k_4}U^{\iota})(s)\big|+\big|\mathcal{T}_M(P_{k_1}U,P_{k_2}U,P_{k_3}\mathcal{N}^{U^+}_{\geq 2,2},P_{k_4}U^{\iota})(s)\big|\\
\lesssim A_{\widetilde{k}_1}(s)A_{\widetilde{k}_2}(s)[\varepsilon(s)]^32^{-N_1\max(\widetilde{k}_4,0)}2^{-N_1\max(\widetilde{k}_3,0)}2^{\widetilde{k}_4/2}2^{2.5\widetilde{k}_3},
\end{split}
\end{equation*}
using \eqref{uyt8}, \eqref{hoe17}, and Lemma \ref{touse} (ii). We combine the remaining two terms and rewrite
\begin{equation}\label{uyt4'}
\begin{split}
\mathcal{T}_M &(P_{k_1}\mathcal{N}^{U^+}_{\geq 2,2}, P_{k_2}U, P_{k_3}U, P_{k_4}U^\iota)
  +\mathcal{T}_M(P_{k_1}U, P_{k_2}U, P_{k_3}U, P_{k_4}\mathcal{N}^{U^\iota}_{\geq 2,2})
  \\
& =\frac{-i}{(2\pi R)^4}\sum_{(\eta,\xi_1,\xi_2,\xi_3,\xi_4)\in\mathbb{H}^4_R}
  N_2(\eta,\xi_1,\xi_2,\xi_3,\xi_4)\widehat{V}(\eta)
  \widehat{U}(\xi_1)\widehat{U}(\xi_2)\widehat{U}(\xi_3)\widehat{U^\iota}(\xi_4),
\end{split}
\end{equation}
using the fact that $V$ is real-valued, where
\begin{equation*}
\begin{split}
N_2(\eta,\xi_1,\xi_2,&\xi_3,\xi_4):=\chi(\eta,\xi_1)(\xi_1+\eta)
  M'(\xi_1+\eta,\xi_2,\xi_3,\xi_4)\varphi_{k_1}(\xi_1+\eta)\varphi_{k_2}(\xi_2)\varphi_{k_3}(\xi_3)\varphi_{k_4}(\xi_4)
  \\
&+\chi(\eta,\xi_4)(\xi_4+\eta)M'(\xi_1,\xi_2,\xi_3,\xi_4+\eta)
  \varphi_{k_1}(\xi_1)\varphi_{k_2}(\xi_2)\varphi_{k_3}(\xi_3)\varphi_{k_4}(\xi_4+\eta).
\end{split}
\end{equation*}
As in the case $j=1$ above, one can show that
\begin{equation}\label{uyt6'}
\|(1+\eta^2)^{-3/4}N_2(\eta,\xi_1,\xi_2,\xi_3,\xi_4)\|_{\widetilde{S}^\infty(\mathbb{H}^4_R)}
  \lesssim 2^{2N_0l}2^{k_3^-/10}2^{2.5k_2^+},
\end{equation}
which suffices to prove the desired bound \eqref{uyt2} and concludes the proof of the lemma.
\end{proof}

\subsection{Resonant cases}\label{ssecres}
In the next lemma we consider the more subtle case of resonant quartic interactions
which involve two $U$ and two $\bar{U}$ functions and the resonant phase
\begin{equation}\label{uyt28}
\Phi_{++--}(\xi_1,\xi_2,\xi_3,\xi_4):=|\xi_1|^{1/2}+|\xi_2|^{1/2}-|\xi_3|^{1/2}-|\xi_4|^{1/2}.
\end{equation}
Since we will need to distinguish cases depending on the signs of the variables, we define
\begin{align}\label{phi+-}
\begin{split}
& \varphi_k^+(x):=\varphi_k(x)\mathbf{1}_+(x), \qquad \varphi_k^-(x):=\varphi_k(x)\mathbf{1}_-(x),\\
&\varphi_k^{'+}(x):=\varphi'_k(x)\mathbf{1}_+(x), \qquad \varphi_k^{'-}(x):=\varphi'_k(x)\mathbf{1}_-(x),
\\
& \varphi^{\underline{\iota}}_{\underline{k}}(\xi)
:=\varphi^{\iota_1}_{k_1}(\xi_1)\varphi^{\iota_2}_{k_2}(\xi_2)\varphi^{\iota_3}_{k_3}(\xi_3)\varphi^{\iota_4}_{k_4}(\xi_4),\qquad \widetilde{\varphi}^{\underline{\iota}}_{\underline{k}}(\xi)
:=\varphi^{'\iota_1}_{k_1}(\xi_1)\varphi^{'\iota_2}_{k_2}(\xi_2)\varphi^{'\iota_3}_{k_3}(\xi_3)\varphi^{'\iota_4}_{k_4}(\xi_4),
\end{split}
\end{align}
where $\underline{\iota} := (\iota_1,\iota_2,\iota_3,\iota_4)\in\{+,-\}^4$ and $\underline{k} := (k_1,k_2,k_3,k_4)\in\Z^4$, and the functions $\varphi'_k$ are defined as in \eqref{P'_k}.

\begin{lemma}\label{hol2} 
Assume that $M$ is defined according to the symmetrization formula 
\begin{equation}\label{verf1*}
\begin{split}
M(\xi_1,\xi_2,\xi_3,\xi_4):&=m(\xi_1,\xi_2,\xi_3,\xi_4)+m(\xi_1,\xi_2,\xi_4,\xi_3)+m(\xi_2,\xi_1,\xi_3,\xi_4)+m(\xi_2,\xi_1,\xi_4,\xi_3)\\
&-m(\xi_3,\xi_4,\xi_1,\xi_2)-m(\xi_3,\xi_4,\xi_2,\xi_1)-m(\xi_4,\xi_3,\xi_1,\xi_2)-m(\xi_4,\xi_3,\xi_2,\xi_1)
\end{split}
\end{equation}
for any $\xi=(\xi_1,\xi_2,\xi_3,\xi_4)\in\mathbb{H}_\infty^3$, where $m\in\mathcal{A}$ is an admissible symbol as in Definition \ref{defA}. Assume in addition that $M$ vanishes on the set of Benjamin-Feir resonances, i.e.
\begin{align}\label{uyt22}\tag{NR-BF}
\begin{split}
&M(\xi_1,\xi_2,\xi_3,\xi_4)=0\qquad\text{ provided that }\xi=(\xi_1,\xi_2,\xi_3,\xi_4)\in\mathbb{H}_\infty^3\text{ satisfies}\\
&\xi_1\in(-\infty,0),\,\xi_2,\xi_3,\xi_4\in(0,\infty)\text{ and }|\xi_1|^{1/2}+|\xi_2|^{1/2}=|\xi_3|^{1/2}+|\xi_4|^{1/2}.
\end{split}
\end{align}

Then, with $\mathcal{A}_M$ defined as in \eqref{hoe1.5} and for any $t_1\leq t_2\in[T_1,T_2]$ we have
\begin{equation}\label{uyt25}
\Big|\int_{t_1}^{t_2}\mathcal{A}_M(U,U,\overline{U},\overline{U})(s)\,ds\Big|
  \lesssim [A(t_1)\varep(t_1)]^2+[A(t_2)\varep(t_2)]^2+\int_{t_1}^{t_2}A^2[\varep(s)]^3\,ds.
\end{equation}
\end{lemma}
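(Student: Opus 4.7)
The plan is to mirror the normal-form / integration-by-parts-in-time argument of Lemma \ref{hoe2}, now for the resonant phase $\Phi_{++--}(\xi_1,\xi_2,\xi_3,\xi_4) := |\xi_1|^{1/2}+|\xi_2|^{1/2}-|\xi_3|^{1/2}-|\xi_4|^{1/2}$ in place of $\Phi_{+++\iota}$. Setting $M' := M/\Phi_{++--}$ and letting $\mathcal{T}_{M'}$ be as in \eqref{hoe11}, the use of $\partial_t U^\pm = \mp i|\partial_x|^{1/2}U^\pm + \mathcal{N}^{U^\pm}_{\geq 2}$ yields
\begin{equation*}
\int_{t_1}^{t_2}\mathcal{A}_M(U,U,\bar U,\bar U)\,ds = i\big[\mathcal{T}_{M'}(U,U,\bar U,\bar U)\big]_{t_1}^{t_2} + (\text{cubic-in-}\mathcal{N}\text{ terms}).
\end{equation*}
The boundary terms and the cubic-in-$\mathcal{N}$ integrals can be controlled following Steps 3--5 of the proof of Lemma \ref{hoe2} verbatim, yielding the $[A\varepsilon]^2$ endpoint bound and the $\int A^2\varepsilon^3\,ds$ bulk bound \emph{provided} we have a $\widetilde{S}^\infty$ estimate on $M'\cdot\varphi_{\underline k}$ on every dyadic block at the level of \eqref{hoe17}. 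The whole problem thus reduces to verifying symbol-class bounds for $M' = M/\Phi_{++--}$.

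The zero set of $\Phi_{++--}$ on $\mathbb{H}^3_\infty$ decomposes into two families: the \emph{trivial} resonances $\{(\xi_3,\xi_4)\in\{(-\xi_1,-\xi_2),\,(-\xi_2,-\xi_1)\}\}$, and the Benjamin--Feir set \eqref{BFres} together with its sign images under $\xi \mapsto -\xi$. The antisymmetrization \eqref{verf1*}, combined with the evenness \eqref{A0} of $m$, forces $M$ to be antisymmetric under the swap $(\xi_1,\xi_2)\leftrightarrow(\xi_3,\xi_4)$, so $M$ vanishes identically on the trivial set. The assumption \eqref{uyt22} (again combined with \eqref{A0}) handles the BF set. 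To turn these pointwise vanishing properties into the required symbol estimate on $M'$, I would sub-localize each dyadic block by the signs $\iota_j=\sgn(\xi_j)$ via \eqref{phi+-} and, where the block meets the resonance set, further by the distance to that set. Where $\Phi_{++--}$ is elliptic on the sub-block one has $|\Phi_{++--}|\gtrsim 2^{\widetilde k_3/2}$ or better, and \eqref{A1} together with Lemma \ref{multiBound}(iv) immediately produces the desired bound on $M'$. Near the resonance set, the admissibility decomposition \eqref{A2.0}--\eqref{A2.1} gives $C^6$ control on the pieces $F_{\beta;i,j}$ of $M$ after extracting the singular factors $|\xi_i+\xi_j|^\beta$, and a first-order Taylor expansion transverse to the resonance set matches the first-order zero of $M$ to the first-order zero of $\Phi_{++--}$, yielding the bound on $M'$.

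The main obstacle will be the last quantitative matching, executed uniformly over all frequency sub-blocks and all components of the resonance set. The BF set is codimension-one in $\mathbb{H}^3_\infty$ and parametrized by $(\lambda,b)$ as in \eqref{BFres}, so different branches dominate in different frequency regimes and interact non-trivially with the dyadic localization; keeping constants uniform across all blocks requires care. Moreover, the singular factors $|\xi_i+\xi_j|^\beta$ with $\beta\in\{1,3/2\}$ allowed by \eqref{A2.0} are not smooth across $\{\xi_i+\xi_j=0\}$, and these diagonal loci can meet the BF set; on such intersections an additional sub-localization in the variables $\xi_i+\xi_j$ will be needed so that the vanishing of $M$ can be read off in the right symbol class. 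The concrete verification of \eqref{uyt22} for the explicit symbols $K_1,K_2$ of Lemma \ref{quartic3} — which is the step that ties the abstract argument to the water waves system — is left for the detailed algebra of Sections \ref{uyt22sec} and \ref{secadmver}.
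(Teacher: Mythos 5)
Your overall architecture coincides with the paper's: integrate by parts in time against the resonant phase $\Phi_{++--}$, reduce everything to $\widetilde{S}^\infty$ bounds on $M'=M/\Phi_{++--}$ on each dyadic block, split according to the signs of the $\xi_j$, and use the antisymmetry built into \eqref{verf1*} on the trivial resonances and the hypothesis \eqref{uyt22} on the Benjamin--Feir set. The genuine gap is in your treatment of the trivial resonances in the sign configuration $(+,+,-,-)$. There the phase factors as
\begin{equation*}
\Phi_{++--}(\xi)=\frac{-2(\xi_2+\xi_3)(\xi_2+\xi_4)}{(\sqrt{\xi_1\xi_2}+\sqrt{\xi_3\xi_4})\,(|\xi_1|^{1/2}+|\xi_2|^{1/2}+|\xi_3|^{1/2}+|\xi_4|^{1/2})},
\end{equation*}
so its zero set is the \emph{union} of the two hyperplanes $\{\xi_2+\xi_3=0\}$ and $\{\xi_2+\xi_4=0\}$, which intersect along $\xi_1=\xi_2=-\xi_3=-\xi_4$; near that intersection the phase vanishes to second order, not first. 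A ``first-order Taylor expansion transverse to the resonance set'' therefore cannot produce the required bound on $M'$: you must show that the symmetrized symbol vanishes on \emph{both} hyperplanes simultaneously and apply a mixed second-order Taylor formula of the form $f(x,u,v)=uv\int_0^1\int_0^1(\partial^2_{uv}f)(x,\mu u,\nu v)\,d\mu\,d\nu$ to extract the full product $(\xi_2+\xi_3)(\xi_2+\xi_4)$.

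Moreover, this double vanishing is not a soft consequence of antisymmetry plus the $\widetilde{S}^\infty$ bound: the admissibility decomposition \eqref{A2.0} is block-dependent, so one must check that when the eight terms of \eqref{verf1*} are each decomposed via \eqref{A2.0}, the cancellations at $(\rho,\sigma,-\rho,-\sigma)$ and at $(\rho,\sigma,-\sigma,-\rho)$ occur \emph{pairwise} among those eight terms, even though the pieces $F_{\beta;i,j}$ of $m$ on different blocks and different argument orderings are a priori unrelated. This bookkeeping --- together with the fact that the singular factors $|\xi_i+\xi_j|^\beta$ sit exactly on the hyperplanes you need to divide by, an issue you flag but do not resolve --- is the real content of the lemma and is absent from your argument. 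A secondary omission: to run the analogues of Steps 4--5 of Lemma \ref{hoe2} (symmetrization against the derivative-losing pieces $\mathcal{N}^{U}_{\geq 2,1}$ and $\mathcal{N}^{U}_{\geq 2,2}$) you need first-derivative bounds on an extension of $M'$ in the two high-frequency variables, not only the $\widetilde{S}^\infty$ bound at the level of \eqref{hoe17} that you state.
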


\begin{proof} {\bf{Step 1.}} It follows from \eqref{verf1*} that $M$ satisfies the symmetry assumptions
\begin{equation}\label{sus7}
M(\xi_1,\xi_2,\xi_3,\xi_4)=M(\xi_2,\xi_1,\xi_3,\xi_4)=M(\xi_1,\xi_2,\xi_4,\xi_3)=M(\xi_2,\xi_1,\xi_4,\xi_3).
\end{equation}
Let $M':=M/\Phi_{++--}$. We will show below that for any $\underline{k}\in\mathbb{Z}^4$ and $\underline{\iota}\in\{+,-\}^4$
\begin{equation}\label{sus8}
\big\|M'\cdot \varphi_{\underline{k}}^{\underline{\iota}}\big\|_{\widetilde{S}^\infty}\lesssim 2^{\widetilde{k}_4/2}2^{3\widetilde{k}_3/2}2^{2N_0\max(\widetilde{k}_1,0)}.
\end{equation}
Moreover, if $k_1\geq k_2+12$ and $k_3\geq k_4+12$ then we also show that $M'$ admits an extension $M'':\R^4\to\C$, $M'(\xi)\cdot \varphi_{\underline{k}}^{\underline{\iota}}(\xi)=M''(\xi)\cdot \varphi_{\underline{k}}^{\underline{\iota}}(\xi)$ for any $\xi\in\mathbb{H}^3_\infty$, such that
\begin{equation}\label{sus9}
\big\|2^{k_1}\partial_{\xi_1}[M''\cdot \varphi_{\underline{k}}^{\underline{\iota}}]\big\|_{\widetilde{S}^\infty}+\big\|2^{k_3}\partial_{\xi_3}[M''\cdot \varphi_{\underline{k}}^{\underline{\iota}}]\big\|_{\widetilde{S}^\infty}\lesssim 2^{\widetilde{k}_4/2}2^{3\widetilde{k}_3/2}2^{2N_0\max(\widetilde{k}_1,0)}.
\end{equation}

Assuming \eqref{sus8}--\eqref{sus9}, the desired bounds \eqref{uyt25} follow as in the proof of Lemma \ref{hoe2}. Indeed, as in \eqref{hoe10}, it suffices to show that for any $l\in\Z$ and $t_1\leq t_2\in[T_1,T_2]$ we have
\begin{equation}\label{uyt26}
\begin{split}
&2^{[\max(\widetilde{k}_3,0)-\min(\widetilde{k}_4,0)]/10}\Big|\int_{t_1}^{t_2}
  \mathcal{A}_M(P_{k_1}U,P_{k_2}U,P_{k_3}\overline{U},P_{k_4}\overline{U})(s)\,ds\Big|
  \\
&\lesssim A_l^2(t_1)[\varep(t_1)]^2+A^2_l(t_2)[\varep(t_2)]^2+\int_{t_1}^{t_2}
  A_l(s)[\varep(s)]^2[\varepsilon(s)A_l(s)+B_l(s)]\,ds,
\end{split}
\end{equation}
for any $(k_1,k_2,k_3,k_4)\in Z_l^4=\{(k_1,k_2,k_3,k_4)\in\Z^4:\,l=\max(k_1,k_2,k_3,k_4)\}$, 
where $A_k$ and $B_k$ are defined as in \eqref{hoe8}. To prove this we define the operators
\begin{equation}\label{uyt27}
\mathcal{T}_M(f_1,f_2,f_3,f_4):=\frac{1}{(2\pi R)^3}\sum_{(\xi_1,\xi_2,\xi_3,\xi_4)\in\mathbb{H}^3_R}\frac{M(\xi_1,\xi_2,\xi_3,\xi_4)}{\Phi_{++--}(\xi_1,\xi_2,\xi_3,\xi_4)}\widehat{f_1}(\xi_1)\widehat{f_2}(\xi_2)\widehat{f_3}(\xi_3)\widehat{f_4}(\xi_4),
\end{equation}
use a normal forms identity similar to \eqref{hoe12},
and apply arguments similar to those in the proof of Lemma \ref{hoe2}.
In particular, it suffices to show that
\begin{equation}\label{sus10}
\begin{split}
&\big|\mathcal{T}_M(P_{k_1}\mathcal{N}^{U^+}_{\geq 2,j},P_{k_2}U,P_{k_3}U,P_{k_4}U^{\iota})(s)
  +\mathcal{T}_M(P_{k_1}U,P_{k_2}\mathcal{N}^{U^+}_{\geq 2,j},P_{k_3}U,P_{k_4}U^{\iota})(s)
  \\
& +\mathcal{T}_M(P_{k_1}U,P_{k_2}U,P_{k_3}\mathcal{N}^{U^-}_{\geq 2,j},P_{k_4}U^{\iota})(s)
+ \mathcal{T}_M(P_{k_1}U,P_{k_2}U,P_{k_3}U,P_{k_4}\mathcal{N}^{U^-}_{\geq 2,j})(s)\big|
\\
&\qquad\qquad\lesssim[\varep(s)]^2A_{l}(s)[\varepsilon(s)A_l(s)+B_l(s)]2^{[\min(\widetilde{k}_4,0)-\max(\widetilde{k}_3,0)]/10}
\end{split}
\end{equation}
for any $(k_1,k_2,k_3,k_4)\in Z_l^4$ and $j\in\{1,2,3\}$, where the nonlinearities $\mathcal{N}^{U^+}_{\geq 2,j}=\mathcal{N}^{U^+}_{\geq 2,j}$ and $\mathcal{N}^{U^-}_{\geq 2,j}=\overline{\mathcal{N}^{U^+}_{\geq 2,j}}$ are defined as in \eqref{hoe7}. 

The estimates \eqref{sus10} follows for $j=3$ using the bounds \eqref{hoe14} and \eqref{sus8}. They also follow for $j=1$ or $j=2$ if $2^{\widetilde{k}_3}\approx 2^l$, using \eqref{uyt3} and \eqref{uyt8}, since there is no derivative loss in this case. In the remaining case we may assume that $k_1\geq k_2$ and $k_3\geq k_4$ (due to \eqref{sus7}) and use a symmetrization argument as in the last two steps of the previous lemma. The desired bounds \eqref{sus10} follow in these cases as well, using the assumptions \eqref{sus9}.

\smallskip
{\bf{Step 2.}} We prove now the bounds \eqref{sus8}--\eqref{sus9} in the case when $\underline{\iota}=(\iota_1,\iota_2,\iota_3,\iota_4)$ contains 3 signs $+$ and 1 sign $-$ or 3 signs $-$ and 1 sign $+$. Indeed, in view of \eqref{sus7} we may assume that 
\begin{equation}\label{sus14}
\underline{\iota}=(-,+,+,+).
\end{equation}
By symmetry we may also assume that $k_3\geq k_4$.

Notice that $|\xi_i+\xi_j|\gtrsim 2^{\widetilde{k}_3}$ for any $\xi\in\mathbb{H}^3_\infty$ in the support of the function $\varphi_{\underline{k}}^{\underline{\iota}}$ and any $i\neq j\in\{1,2,3,4\}$. In particular, since $m$ is admissible, we may assume that $m_\sigma\cdot \varphi_{\underline{k}}^{\underline{\iota}}$ does not contain any of the singular factors $|\xi_i+\xi_j|^\beta$ in the definition \eqref{A2.0}, with $\beta\neq 0$, for any permutation $\sigma$ of the set $\{1,2,3,4\}$, where $m_\sigma(\xi_1,\xi_2,\xi_3,\xi_4)=m(\xi_{\sigma(1)},\xi_{\sigma(2)},\xi_{\sigma(3)},\xi_{\sigma(4)})$. This is because such factors could simply be absorbed into the associated smooth symbols. As a consequence, $M$ admits an extension $M^\ast:\R^4\to\C$, $M(\xi)\cdot \varphi_{\underline{k}}^{\underline{\iota}}(\xi)=M^\ast(\xi)\cdot \varphi_{\underline{k}}^{\underline{\iota}}(\xi)$ for any $\xi\in\mathbb{H}^3_\infty$, such that
\begin{equation}\label{sus15}
\big|2^{\alpha\cdot\underline{k}}\partial_{\xi}^\alpha[M^\ast(\xi)\cdot \varphi_{\underline{k}}^{\underline{\iota}}(\xi)]\big|\lesssim 2^{\widetilde{k}_4/2}2^{2\widetilde{k}_3}2^{2N_0\max(\widetilde{k}_1,0)}
\end{equation}
for any $\xi\in\mathbb{H}_\infty^3$ and $|\alpha|\leq 6$.

We notice that if $\xi_1\in(-\infty,0], \xi_2,\xi_3,\xi_4\in[0,\infty)$ and $\xi_1+\xi_2+\xi_3+\xi_4=0$ then we can write
\begin{equation}\label{sus15.1}
\begin{split}
&\Phi_{++--}(\xi_1,\xi_2,\xi_3,\xi_4)=
  |\xi_2+\xi_3+\xi_4|^{1/2}+|\xi_2|^{1/2}-|\xi_3|^{1/2}-|\xi_4|^{1/2}
  \\
& = \frac{2\sqrt{\xi_2}(\sqrt{\xi_3}+\sqrt{\xi_4})-2\sqrt{\xi_3}\sqrt{\xi_4}}{|\xi_2+\xi_3+\xi_4|^{1/2}-|\xi_2|^{1/2}+|\xi_3|^{1/2}+|\xi_4|^{1/2}}
\\
& = \frac{2\xi_2(\sqrt{\xi_3}+\sqrt{\xi_4})^2-2\xi_3\xi_4}{(|\xi_2+\xi_3+\xi_4|^{1/2}+|\xi_3|^{1/2}+|\xi_4|^{1/2}-|\xi_2|^{1/2})(\sqrt{\xi_2}\sqrt{\xi_3}+\sqrt{\xi_2}\sqrt{\xi_4}+\sqrt{\xi_3}\sqrt{\xi_4})}.
\end{split}
\end{equation}
In particular, $\Phi_{++--}(\xi_1,\xi_2,\xi_3,\xi_4)=0$ if and only if
\begin{equation}\label{sus15.2}
\xi_3,\xi_4\in[0,\infty),\qquad\xi_2=\frac{\xi_3\xi_4}{(\sqrt\xi_3+\sqrt\xi_4)^2},\qquad\xi_1=-\frac{(\xi_3+\xi_4+\sqrt\xi_3\sqrt\xi_4)^2}{(\sqrt\xi_3+\sqrt\xi_4)^2}.
\end{equation}

More importantly, we can define the extension $M'':\R^4\to\C$
\begin{equation}\label{sus15.3}
M''(\xi_1,\xi_2,\xi_3,\xi_4):=\frac{M^\ast(-\xi_2-\xi_3-\xi_4,\xi_2,\xi_3,\xi_4)}{\Phi_{++--}(-\xi_2-\xi_3-\xi_4,\xi_2,\xi_3,\xi_4)}
\end{equation}
such that $(M/\Phi_{++--})(\xi)\cdot \varphi_{\underline{k}}^{\underline{\iota}}(\xi)=M''(\xi)\cdot \varphi_{\underline{k}}^{\underline{\iota}}(\xi)$ for any $\xi\in\mathbb{H}^3_\infty$, where $M^\ast$ satisfies \eqref{sus15}. 

We show now that 
\begin{equation}\label{sus15.4}
\big|2^{\alpha\cdot\underline{k}}\partial_{\xi}^\alpha[M''(\xi)\cdot \varphi_{\underline{k}}^{\underline{\iota}}(\xi)]\big|\lesssim 2^{\widetilde{k}_4/2}2^{3\widetilde{k}_3/2}2^{2N_0\max(\widetilde{k}_1,0)}
\end{equation}
for any $\xi\in\mathbb{H}_\infty^3$ and $|\alpha|\leq 5$. Indeed, since $k_3\geq k_4$ we may assume that $2^{k_1}\approx 2^{k_3}\approx 2^{\widetilde{k}_1}\approx 2^{\widetilde{k}_2}$. If $|k_2-k_4|\geq 6$ then the phase 
$\Phi_{++--}$ is elliptic in the support of $\varphi_{\underline{k}}^{\underline{\iota}}$, due to \eqref{sus15.1}, and the bounds \eqref{sus15.4} follow directly from \eqref{sus15}. On the other hand, if $|k_2-k_4|\leq 6$ then $2^{k_2}\approx 2^{k_4}\approx 2^{\widetilde{k}_3}\approx 2^{\widetilde{k}_4}$.  The main assumption \eqref{uyt22} shows that 
\begin{equation*}
M^\ast(-\xi_2-\xi_3-\xi_4,\xi_2,\xi_3,\xi_4)\varphi_{\underline{k}}^{\underline{\iota}}(-\xi_2-\xi_3-\xi_4,\xi_2,\xi_3,\xi_4)=0\qquad\text{ if }\qquad \xi_2=\frac{\xi_3\xi_4}{(\sqrt\xi_3+\sqrt\xi_4)^2}.
\end{equation*}
Therefore we can write, with $\xi^\ast(\xi_3,\xi_4):=\frac{\xi_3\xi_4}{(\sqrt{\xi_3}+\sqrt{\xi_4})^2}$
\begin{equation}\label{Min7.2}
\begin{split}
(M^\ast\varphi_{\underline{k}}^{\underline{\iota}})&(-\xi_2-\xi_3-\xi_4,\xi_2,\xi_3,\xi_4)=\int_{\xi^\ast(\xi_3,\xi_4)}^{\xi_2}\frac{d}{ds}\big[(M^\ast\varphi_{\underline{k}}^{\underline{\iota}})(-s-\xi_3-\xi_4,s,\xi_3,\xi_4)\big]\,ds\\
&=\int_{\xi^\ast(\xi_3,\xi_4)}^{\xi_2}(-\partial_1+\partial_2)(M^\ast\varphi_{\underline{k}}^{\underline{\iota}})(-s-\xi_3-\xi_4,s,\xi_3,\xi_4)\,ds\\
&=(\xi_2-\xi^\ast(\xi_3,\xi_4))M^{\ast\ast}(\xi_2,\xi_3,\xi_4),
\end{split}
\end{equation}
where
\begin{equation}\label{Min7.3}
\begin{split}
M^{\ast\ast}&(\xi_2,\xi_3,\xi_4):=\int_0^1(-\partial_1+\partial_2)(M^\ast\varphi_{\underline{k}}^{\underline{\iota}})
\\
  & (-\xi^\ast(\xi_3,\xi_4) - \alpha(\xi_2-\xi^\ast(\xi_3,\xi_4))-\xi_3-\xi_4,
  \xi^\ast(\xi_3,\xi_4)+\alpha(\xi_2-\xi^\ast(\xi_3,\xi_4)),\xi_3,\xi_4)\,d\alpha.
\end{split}
\end{equation}
One can check that the symbol $M^{**}$ satisfies the differential bounds
\begin{equation}\label{Min7.6}
\sup_{\xi_2,\xi_3,\xi_4\in\R}\sup_{|\alpha|\leq 5}\big|2^{\alpha_2k_2+\alpha_3k_3+\alpha_4k_4}\partial_{\xi}^\alpha[M^{\ast\ast}(\xi)]\big|\lesssim 2^{\widetilde{k}_4/2}2^{\widetilde{k}_3}2^{2N_0\max(\widetilde{k}_1,0)},
\end{equation}
in view of the differential bounds on $M^*$ in \eqref{sus15}, using the condition that 
$2^{k_2}\approx 2^{k_4}\approx 2^{\widetilde{k}_3}\approx 2^{\widetilde{k}_4}$.
Finally, thanks to the identity \eqref{Min7.2} we can explicitly cancel the vanishing factor 
\begin{align*}
& (M''\varphi_{\underline{k}}^{\underline{\iota}})(\xi_1,\xi_2,\xi_3,\xi_4)= \Big(\xi_2-\frac{\xi_3\xi_4}{(\sqrt\xi_3+\sqrt\xi_4)^2}\Big)
  \frac{M^{\ast\ast}(\xi_2,\xi_3,\xi_4)}{\Phi_{++--}(-\xi_2-\xi_3-\xi_4,\xi_2,\xi_3,\xi_4)}
\\
& = \frac{M^{\ast\ast}(\xi_2,\xi_3,\xi_4) }{2(\sqrt\xi_3+\sqrt\xi_4)^2}
  \frac{2\xi_2(\sqrt\xi_3+\sqrt\xi_4)^2 - 2\xi_3\xi_4}{\Phi_{++--}(-\xi_2-\xi_3-\xi_4,\xi_2,\xi_3,\xi_4)}
\\
& = \frac{M^{\ast\ast}(\xi_2,\xi_3,\xi_4)}{2(\sqrt\xi_3+\sqrt\xi_4)^2}(|\xi_2+\xi_3+\xi_4|^{1/2} + |\xi_3|^{1/2} + |\xi_4|^{1/2} - |\xi_2|^{1/2})
  (\sqrt{\xi_2}\sqrt{\xi_3} + \sqrt{\xi_2}\sqrt{\xi_4}+\sqrt{\xi_3}\sqrt{\xi_4}),
\end{align*}
having used (5.51) for the last identity. The claimed bound \eqref{sus15.4} follow from \eqref{Min7.6}. The main bounds \eqref{sus8}--\eqref{sus9} follow from \eqref{sus15.4} and Lemma \ref{multiBound} (iv).

We use this basic strategy several times in the proof this lemma: derivative bounds on symbols and vanishing on suitable sets lead to favorable decompositions like \eqref{Min7.2}, and suffice to perform the division by singular phases with no loss of derivative.

\smallskip
{\bf{Step 3.}} We prove now the bounds \eqref{sus8}--\eqref{sus9} in the case when 
\begin{equation}\label{sus16.1}
\underline{\iota}=(+,+,-,-).
\end{equation}
By symmetry we may assume that $k_1\geq k_2$ and $k_3\geq k_4$; in particular $2^{k_1}\approx 2^{k_3}\approx 2^{\widetilde{k}_1}\approx 2^{\widetilde{k}_2}$.

Notice that if $\xi_1,\xi_2\in[0,\infty), \xi_3,\xi_4\in(-\infty,0]$ and $\xi_1+\xi_2+\xi_3+\xi_4=0$ then we can write
\begin{equation}\label{sus16}
\begin{split}
\Phi_{++--}(\xi_1,\xi_2,\xi_3,\xi_4)&=\frac{2\sqrt{\xi_1\xi_2}-2\sqrt{\xi_3\xi_4}}{
  |\xi_1|^{1/2}+|\xi_2|^{1/2}+|\xi_3|^{1/2}+|\xi_4|^{1/2}}
  \\
& = \frac{2(\xi_1\xi_2-\xi_3\xi_4)}{(\sqrt{\xi_1\xi_2}+\sqrt{\xi_3\xi_4})
  (|\xi_1|^{1/2}+|\xi_2|^{1/2}+|\xi_3|^{1/2}+|\xi_4|^{1/2})}
\\
& = \frac{-2(\xi_2+\xi_3)(\xi_2+\xi_4)}{(\sqrt{\xi_1\xi_2}+\sqrt{\xi_3\xi_4})
  (|\xi_1|^{1/2}+|\xi_2|^{1/2}+|\xi_3|^{1/2}+|\xi_4|^{1/2})}.
\end{split}
\end{equation}
Notice also that if $|k_2-k_4|\geq 4$ then $|\xi_i+\xi_j|$ is elliptically bounded away from $0$ for any $\xi\in\mathbb{H}^3_\infty$ in the support of $\varphi_{\underline{k}}^{\underline{\iota}}$, and the desired bounds \eqref{sus8}--\eqref{sus9} follow easily. It remains to prove these bounds in the main case
\begin{equation}\label{sus16.2}
k_1,k_3\in[l-4,l],\qquad k_2,k_4\in[n-4,n],\qquad \text{ for some }n\leq l\in\Z.
\end{equation}

{\bf{Substep 3.1.}} We show first that we can write
\begin{equation}\label{sus16.4}
(M\varphi_{\underline{k}}^{\underline{\iota}})(\xi)=\varphi_{\underline{k}}^{\underline{\iota}}(\xi)\sum_{\beta\in\{0,1,3/2\}}\big[|\xi_2+\xi_3|^\beta F_\beta^{23}(\xi)+|\xi_2+\xi_4|^\beta F_\beta^{24}(\xi)\big]
\end{equation}
for any $\xi\in\mathbb{H}^3_\infty$, for some functions $F_\beta^{23},F_\beta^{24}:\R^4\to\C$ satisfying
\begin{equation}\label{sus16.5}
\begin{split}
&(F_\beta^j\varphi_{\underline{k}}^{\underline{\iota}})(\rho,\sigma,-\rho,-\sigma)=(F_\beta^j\varphi_{\underline{k}}^{\underline{\iota}})(\rho,\sigma,-\sigma,-\rho)=0,\\
&\big|2^{\alpha\cdot \underline{k}}\partial^\al_{\xi}
  \big[F^j_{\beta}(\xi)\varphi_{\underline{k}}^{\underline{\iota}}(\xi)\big]\big|
  \lesssim 2^{(5/2-\beta)n}
  2^{2N_0\max(l,0)},
\end{split}
\end{equation}
for any $j\in\{23,24\}$, $\beta\in\{0,1,3/2\}$, $\rho,\sigma\in\R$, multi-index $\alpha=(\alpha_1,\alpha_2,\alpha_3,\alpha_4)$ with $|\alpha|\leq 6$, and $\xi\in\mathbb{H}^3_\infty$. Moreover, the functions $F_\beta^{23}$ are identically $0$ if $l\geq n+20$.

For this we use the main formula \eqref{verf1*} and Definition \ref{defA}. We need to be careful however because the vanishing in the first line of \eqref{sus16.5} depends on the symmetrization formula \eqref{verf1*}, while the multipliers in the right-hand side of \eqref{A2.0} are allowed to depend on $\underline{k}$. 

Assume first that $l-n\geq 20$. Since $m$ is admissible, it follows from Definition \ref{defA} and \eqref{sus16.2} that we can write
\begin{equation}\label{sus16.7}
\begin{split}
&m(\rho)=\sum_{\beta\in\{0,1,3/2\}}|\rho_2+\rho_4|^\beta H_\beta^{1}(\rho)\,\,\text{ for any }\rho\in\mathbb{H}^3_\infty\cap B^{+,+,-,-}_{l,n,l,n},\\
&\text { where }B^{\iota_1,\iota_2,\iota_3,\iota_4}_{l_1,l_2,l_3,l_4}:=\big\{\rho\in\R^4:\,\iota_j\rho_j\in[2^{l_j-6},2^{l_j+2}],\,j\in\{1,2,3,4\}\big\},
\end{split}
\end{equation}
for some function $H_\beta^{1}:\R^4\to\C$ satisfying, for $\beta\in\{0,1,3/2\}$,
\begin{equation}\label{sus16.8}
\big|\rho^\alpha\cdot(\partial^\al_{\rho}H^{1}_{\beta})(\rho)\big|\lesssim 2^{(5/2-\beta)n} 
  2^{2N_0\max(l,0)}, \qquad \rho\in\mathbb{H}^3_\infty\cap B^{+,+,-,-}_{l,n,l,n},\,|\alpha|\leq 6.
\end{equation}
This representation follows from the main formula \eqref{A2.0}, by combining non-singular factors $|\rho_i+\rho_j|$ with the corresponding symbols and summing over $k_1,k_3\in [l-8,l+4]$ and $k_2,k_4\in[n-8,n+4]$.

Similarly, we can also write
\begin{equation}\label{sus16.9}
\begin{split}
&m(\rho)=\sum_{\beta\in\{0,1,3/2\}}|\rho_2+\rho_3|^\beta H_\beta^{2}(\rho)\,\,\text{ for any }\rho\in\mathbb{H}^3_\infty\cap B^{+,+,-,-}_{l,n,n,l},\\
&m(\rho)=\sum_{\beta\in\{0,1,3/2\}}|\rho_1+\rho_4|^\beta H_\beta^{3}(\rho)\,\,\text{ for any }\rho\in\mathbb{H}^3_\infty\cap B^{+,+,-,-}_{n,l,l,n},\\
&m(\rho)=\sum_{\beta\in\{0,1,3/2\}}|\rho_1+\rho_3|^\beta H_\beta^{4}(\rho)\,\,\text{ for any }\rho\in\mathbb{H}^3_\infty\cap B^{+,+,-,-}_{n,l,n,l},\\
\end{split}
\end{equation}
for some functions $H_\beta^{j}:\R^4\to\C$, $j\in\{2,3,4\}$, satisfying differential inequalities similar to \eqref{sus16.7}. 

Then we define, for $\xi=(\xi_1,\xi_2,\xi_3,\xi_4)\in B^{+,+,-,-}_{l,n,l,n}$,
\begin{equation}\label{sus16.10}
\begin{split}
F_\beta^{24}(\xi_1,\xi_2,\xi_3,\xi_4):=H_\beta^1(\xi_1,\xi_2,&\xi_3,\xi_4)+H_\beta^2(\xi_1,\xi_2,\xi_4,\xi_3)
+H_\beta^3(\xi_2,\xi_1,\xi_3,\xi_4)+H_\beta^4(\xi_2,\xi_1,\xi_4,\xi_3)\\
&-H_\beta^1(-\xi_3,-\xi_4,-\xi_1,-\xi_2)-H_\beta^2(-\xi_3,-\xi_4,-\xi_2,-\xi_1)\\
&-H_\beta^3(-\xi_4,-\xi_3,-\xi_1,-\xi_2)-H_\beta^4(-\xi_4,-\xi_3,-\xi_2,-\xi_1).
\end{split}
\end{equation}
The identities \eqref{sus16.4} (with $F_\beta^{23}\equiv 0$) follow from the formulas \eqref{verf1*}, \eqref{sus16.7}, and \eqref{sus16.9} (recall also that $m$ is even). The differential inequalities in the second line of \eqref{sus16.5} follow from the differential inequalities \eqref{sus16.8} satisfied by the functions $H_\beta^j$. The identity $(F_\beta^j\varphi_{\underline{k}}^{\underline{\iota}})(\rho,\sigma,-\sigma,-\rho)=0$ holds because $\varphi_{\underline{k}}^{\underline{\iota}}(\rho,\sigma,-\sigma,-\rho)=0$ in our case. The identity $(F_\beta^j\varphi_{\underline{k}}^{\underline{\iota}})(\rho,\sigma,-\rho,-\sigma)=0$ holds because $F_\beta^j(\rho,\sigma,-\rho,-\sigma)=0$ if $\rho\in[2^{l-6},2^{l+2}], \sigma\in[2^{n-6},2^{n+2}]$, due to the definition \eqref{sus16.10}. The point is that even though the functions $H_\beta^1,H_\beta^2,H_{\beta}^3,H_\beta^4$ are not related, this crucial cancellation is already present for each of the four pairs of terms involving each of these functions. This completes the proof of the representation \eqref{sus16.5} in the case $l-n\geq 20$.

Assume now that $l-n\leq 19$. Using again Definition \ref{defA} and \eqref{sus16.2} we can write
\begin{equation}\label{sus16.17}
m(\rho)=\sum_{\beta\in\{0,1,3/2\}}[|\rho_2+\rho_3|^\beta G_\beta^3(\rho)+|\rho_2+\rho_4|^\beta G_\beta^4(\rho)]\end{equation}
for any $\rho=(\rho_1,\rho_2,\rho_3,\rho_4)\in\mathbb{H}^3_\infty$ satisfying $\rho_1,\rho_2,-\rho_3,-\rho_4\in[2^{l-30},2^{l+2}]$. The functions $G_\beta^3,G_\beta^4:\R^4\to\C$ satisfy the differential inequalities 
\begin{equation}\label{sus16.18}
\big|\rho^\alpha\cdot(\partial^\al_{\rho}G^j_{\beta})(\rho)\big|\lesssim 2^{(5/2-\beta)l} 
  2^{2N_0\max(l,0)}, 
\end{equation}
for $\beta\in\{0,1,3/2\}$, $j\in\{3,4\}$, $|\alpha|\leq 6$, and $\rho\in\mathbb{H}^3_\infty$ satisfying $\rho_1,\rho_2,-\rho_3,-\rho_4\in[2^{l-30},2^{l+2}]$.
This representation follows from the main formula \eqref{A2.0}, by summing over $k_1,k_2,k_3,k_4\in [l-32,l+4]$.

Then we define, for $\xi=(\xi_1,\xi_2,\xi_3,\xi_4)$, with $\xi_1,\xi_2,-\xi_3,-\xi_4\in[2^{l-30},2^{l+2}]$,
\begin{equation}\label{sus16.20}
\begin{split}
F_\beta^{23}(\xi_1,\xi_2,\xi_3,\xi_4):=G_\beta^3(\xi_1,\xi_2,&\xi_3,\xi_4)+G_\beta^4(\xi_1,\xi_2,\xi_4,\xi_3)
+G_\beta^4(\xi_2,\xi_1,\xi_3,\xi_4)+G_\beta^3(\xi_2,\xi_1,\xi_4,\xi_3)\\
&-G_\beta^3(-\xi_3,-\xi_4,-\xi_1,-\xi_2)-G_\beta^4(-\xi_3,-\xi_4,-\xi_2,-\xi_1)\\
&-G_\beta^4(-\xi_4,-\xi_3,-\xi_1,-\xi_2)-G_\beta^3(-\xi_4,-\xi_3,-\xi_2,-\xi_1),
\end{split}
\end{equation}
\begin{equation}\label{sus16.21}
\begin{split}
F_\beta^{24}(\xi_1,\xi_2,\xi_3,\xi_4):=G_\beta^4(\xi_1,\xi_2,&\xi_3,\xi_4)+G_\beta^3(\xi_1,\xi_2,\xi_4,\xi_3)
+G_\beta^3(\xi_2,\xi_1,\xi_3,\xi_4)+G_\beta^4(\xi_2,\xi_1,\xi_4,\xi_3)\\
&-G_\beta^4(-\xi_3,-\xi_4,-\xi_1,-\xi_2)-G_\beta^3(-\xi_3,-\xi_4,-\xi_2,-\xi_1)\\
&-G_\beta^3(-\xi_4,-\xi_3,-\xi_1,-\xi_2)-G_\beta^4(-\xi_4,-\xi_3,-\xi_2,-\xi_1).
\end{split}
\end{equation}
The identities \eqref{sus16.4} follow from the formulas \eqref{verf1*} and \eqref{sus16.20}--\eqref{sus16.21} (recall also that $m$ is even). The differential inequalities in the second line of \eqref{sus16.5} follow from the differential inequalities \eqref{sus16.18} satisfied by the functions $G_\beta^j$. The key cancellations in the first line of \eqref{sus16.5} follow from the formulas \eqref{sus16.20}--\eqref{sus16.21}, by canceling suitable pairs of terms.

\smallskip
{\bf{Substep 3.2.}} We can now prove the bounds \eqref{sus8}--\eqref{sus9}. Notice that if $f:\R^3\to\C$ is a $C^2$ function satisfying $f(x,u,0)=f(x,0,v)=0$ for any $x,u,v\in\mathbb{R}$ then 
\begin{equation}\label{uyt35'}
\begin{split}
f(x,u,v) & = uv\cdot g(x,u,v),\\
g(x,u,v) & := \int_0^1\int_0^1(\partial^2_{uv}f)(x,\mu u, \nu v)\,d\mu d\nu,
\end{split}
\end{equation}
for any $x,a,b\in\mathbb{R}$. We apply this identity to the functions
\begin{equation}\label{uyt34.5'}
f(x,u,v):=(F_\beta^j\varphi_{\underline{k}}^{\underline{\iota}})(x+u+v,x,-x-u,-x-v),
\end{equation}
for $\beta\in\{0,1,3/2\}$ and $j\in\{23,24\}$. The formula \eqref{sus16.4} shows that 
\begin{equation}\label{sus16.24}
\begin{split}
(M\varphi_{\underline{k}}^{\underline\iota})&(\xi)=\widetilde{\varphi}_{\underline{k}}^{\underline\iota}(\xi)(\xi_2+\xi_3)(\xi_2+\xi_4)\\
&\times\sum_{\beta\in\{0,1,3/2\}}\big[|\xi_2+\xi_3|^\beta \widetilde{F}_\beta^{23}(\xi_2,\xi_3,\xi_4)+|\xi_2+\xi_4|^\beta \widetilde{F}_\beta^{24}(\xi_2,\xi_3,\xi_4)\big],
\end{split}
\end{equation}
where $\xi\in\H^3_\infty$ and $\widetilde{\varphi}_{\underline{k}}^{\underline\iota}(\xi)$ is defined as in \eqref{phi+-}. In view of \eqref{sus16.5} and \eqref{uyt35'} $\widetilde{F}_\beta^{23}\equiv 0$ if $l\geq n+20$, and the functions $\widetilde{F}_\beta^{23}, \widetilde{F}_\beta^{24}:\R^3\to\C$ satisfy the differential inequalities
\begin{equation}\label{sus16.25}
\big| 2^{\alpha_2 k_2+\alpha_3 k_3+\alpha_4k_4}\partial_{\xi_2}^{\alpha_2}\partial_{\xi_3}^{\alpha_3}\partial_{\xi_4}^{\alpha_4}\widetilde{F}^j_{\beta}(\xi)\varphi_{\underline{k}}^{\underline{\iota}}(\xi)\big]\big|
  \lesssim 2^{(3/2-\beta)n}2^{-l}2^{2N_0\max(l,0)},
  \end{equation}
if $j\in\{23,24\}$, $\alpha_2+\alpha_3+\alpha_4\leq 4$, and $\xi=(\xi_2,\xi_3,\xi_4)\in\R^3$.  

The bounds \eqref{sus8}--\eqref{sus9} follow from \eqref{sus16.24}--\eqref{sus16.25} and \eqref{sus16}. The point is that the singular factors $\xi_2+\xi_3$ and $\xi_2+\xi_4$ cancel, and the desired bounds follow using Lemma \ref{multiBound} (iv).

{\bf{Step 4.}} Finally we prove the bounds \eqref{sus8}--\eqref{sus9} in the case when 
\begin{equation}\label{sus18.1}
\underline{\iota}=(+,-,-,+).
\end{equation}

Notice that if $\xi_1,\xi_4\in[0,\infty), \xi_2,\xi_3\in(-\infty,0]$ and $\xi_1+\xi_2+\xi_3+\xi_4=0$ then we can write
\begin{equation}\label{sus18.3}
\begin{split}
\Phi_{++--}(\xi_1,\xi_2,\xi_3,\xi_4)&=\frac{\xi_1+\xi_3}{|\xi_1|^{1/2}+|\xi_3|^{1/2}}
-\frac{\xi_2+\xi_4}{|\xi_2|^{1/2}+|\xi_4|^{1/2}}
  \\
 & = -(\xi_2+\xi_4)\frac{|\xi_1|^{1/2}+|\xi_4|^{1/2}+|\xi_2|^{1/2}+|\xi_3|^{1/2}}{
 (|\xi_1|^{1/2}+|\xi_3|^{1/2})(|\xi_2|^{1/2}+|\xi_4|^{1/2})}.
\end{split}
\end{equation}
As before, we notice that if $|\widetilde{k}_3-\widetilde{k}_4|\geq 4$ then all the factors of the form $|\xi_i+\xi_j|$ are elliptically bounded away from $0$ for any $\xi\in\mathbb{H}^3_\infty$ in the support of $\varphi_{\underline{k}}^{\underline{\iota}}$, and the bounds \eqref{sus8}--\eqref{sus9} follow easily. In view of the symmetries \eqref{sus7} and recalling that $M$ is even, it remains to prove these bounds in the main case
\begin{equation}\label{sus18.2}
k_1,k_3\in[l-4,l],\qquad k_2,k_4\in[n-4,n],\qquad \text{ for some }n\leq l\in\Z.
\end{equation}

{\bf{Substep 4.1.}} As before, we show first that we can write
\begin{equation}\label{sus18.4}
(M\varphi_{\underline{k}}^{\underline{\iota}})(\xi)=\varphi_{\underline{k}}^{\underline{\iota}}(\xi)\sum_{\beta\in\{0,1,3/2\}}\big[|\xi_2+\xi_4|^\beta G_\beta^{24}(\xi)+|\xi_3+\xi_4|^\beta G_\beta^{34}(\xi)\big]
\end{equation}
for any $\xi\in\mathbb{H}^3_\infty$, for some functions $G_\beta^{24},G_\beta^{34}:\R^4\to\C$ satisfying
\begin{equation}\label{sus18.5}
\begin{split}
&(G_\beta^j\varphi_{\underline{k}}^{\underline{\iota}})(\rho,\sigma,-\rho,-\sigma)=0,\\
&\big|2^{\alpha\cdot \underline{k}}\partial^\al_{\xi}
  \big[G^j_{\beta}(\xi)\varphi_{\underline{k}}^{\underline\iota}(\xi)\big]\big|
  \lesssim 2^{(5/2-\beta)n}
  2^{2N_0\max(l,0)},
\end{split}
\end{equation}
for any $j\in\{24,34\}$, $\beta\in\{0,1,3/2\}$, $\rho,\sigma\in\R$, multi-index $\alpha=(\alpha_1,\alpha_2,\alpha_3,\alpha_4)$ with $|\alpha|\leq 6$, and $\xi\in\mathbb{H}^3_\infty$. Moreover, the functions $G_\beta^{34}$ are identically $0$ if $l\geq n+20$.

This is similar to the proof of \eqref{sus16.4}--\eqref{sus16.5}, slightly simpler because we only prove one cancellation condition in \eqref{sus18.5}. As in \eqref{sus16.7} and \eqref{sus16.9}, since $m$ is admissible, it follows from Definition \ref{defA} and \eqref{sus18.2} that we can write
\begin{equation}\label{sus18.7}
\begin{split}
&m(\rho)=\sum_{\beta\in\{0,1,3/2\}}[|\rho_2+\rho_4|^\beta X_\beta^{1}(\rho)+|\rho_3+\rho_4|^\beta Y_\beta^{1}(\rho)]\,\,\text{ for any }\rho\in\mathbb{H}^3_\infty\cap B^{+,-,-,+}_{l,n,l,n},\\
&m(\rho)=\sum_{\beta\in\{0,1,3/2\}}|\rho_2+\rho_3|^\beta X_\beta^{2}(\rho)+|\rho_3+\rho_4|^\beta Y_\beta^{2}(\rho)]\,\,\text{ for any }\rho\in\mathbb{H}^3_\infty\cap B^{+,-,+,-}_{l,n,n,l},\\
&m(\rho)=\sum_{\beta\in\{0,1,3/2\}}|\rho_1+\rho_4|^\beta X_\beta^{3}(\rho)+|\rho_3+\rho_4|^\beta Y_\beta^{3}(\rho)]\,\,\text{ for any }\rho\in\mathbb{H}^3_\infty\cap B^{-,+,-,+}_{n,l,l,n},\\
&m(\rho)=\sum_{\beta\in\{0,1,3/2\}}|\rho_1+\rho_3|^\beta X_\beta^{4}(\rho)+|\rho_3+\rho_4|^\beta Y_\beta^{4}(\rho)]\,\,\text{ for any }\rho\in\mathbb{H}^3_\infty\cap B^{-,+,+,-}_{n,l,n,l},
\end{split}
\end{equation}
for some function $X^j_\beta,\, Y_\beta^j:\R^4\to\C$ satisfying, for $\beta\in\{0,1,3/2\}$,
\begin{equation}\label{sus18.8}
\big|\rho^\alpha\cdot(\partial^\al_{\rho}X^j_{\beta})(\rho)\big|+\big|\rho^\alpha\cdot(\partial^\al_{\rho}Y^j_{\beta})(\rho)\big|\lesssim 2^{(5/2-\beta)n} 
  2^{2N_0\max(l,0)}, \qquad \rho\in\mathbb{H}^3_\infty\cap B^j,\,|\alpha|\leq 6,
\end{equation}
where $B^j$ are the suitable balls as in \eqref{sus18.7} where the functions $X^j_\beta, Y^j_\beta$ are defined. These representation follow from the main formula \eqref{A2.0}, by summing over indices $k_1,k_3\in [l-8,l+4]$ and $k_2,k_4\in[n-8,n+4]$. Moreover, the functions $Y^j_\beta$ are trivial if $|l-n|\geq 6$.

Then we define, for $\xi=(\xi_1,\xi_2,\xi_3,\xi_4)\in B^{+,-,-,+}_{l,n,l,n}$,
\begin{equation*}
\begin{split}
G_\beta^{24}(\xi_1,\xi_2,\xi_3,\xi_4):=X_\beta^1(\xi_1,\xi_2,&\xi_3,\xi_4)+X_\beta^2(\xi_1,\xi_2,\xi_4,\xi_3)
+X_\beta^3(\xi_2,\xi_1,\xi_3,\xi_4)+X_\beta^4(\xi_2,\xi_1,\xi_4,\xi_3)\\
&-X_\beta^1(-\xi_3,-\xi_4,-\xi_1,-\xi_2)-X_\beta^2(-\xi_3,-\xi_4,-\xi_2,-\xi_1)\\
&-X_\beta^3(-\xi_4,-\xi_3,-\xi_1,-\xi_2)-X_\beta^4(-\xi_4,-\xi_3,-\xi_2,-\xi_1)
\end{split}
\end{equation*}
\begin{equation*}
\begin{split}
G_\beta^{34}(\xi_1,\xi_2,\xi_3,\xi_4):=Y_\beta^1(\xi_1,\xi_2,&\xi_3,\xi_4)+Y_\beta^2(\xi_1,\xi_2,\xi_4,\xi_3)
+Y_\beta^3(\xi_2,\xi_1,\xi_3,\xi_4)+Y_\beta^4(\xi_2,\xi_1,\xi_4,\xi_3)\\
&-Y_\beta^1(-\xi_3,-\xi_4,-\xi_1,-\xi_2)-Y_\beta^2(-\xi_3,-\xi_4,-\xi_2,-\xi_1)\\
&-Y_\beta^3(-\xi_4,-\xi_3,-\xi_1,-\xi_2)-Y_\beta^4(-\xi_4,-\xi_3,-\xi_2,-\xi_1).
\end{split}
\end{equation*}
The identities \eqref{sus18.4} follow from the formulas \eqref{verf1*} and \eqref{sus18.7} (recall also that $m$ is even). The differential inequalities in the second line of \eqref{sus18.5} follow from \eqref{sus18.8} and the identities $(G_\beta^j\varphi_{\underline{k}}^{\underline{\iota}})(\rho,\sigma,-\rho,-\sigma)=0$ follow from the definitions of the functions $G^{24}_\beta$ and $G^{34}_\beta$ above.

\smallskip
{\bf{Substep 4.2.}} We can now prove that bounds \eqref{sus8}--\eqref{sus9}. As in Substep 3.2 above, we notice that if $f:\R^3\to\C$ is a $C^2$ function satisfying $f(x,y,0)=0$ for any $x,y\in\mathbb{R}$ then 
\begin{equation*}
f(x,y,u)= u\cdot\int_0^1(\partial_uf)(x,y,\mu u)\,d\mu.
\end{equation*}
for any $x,y,u\in\mathbb{R}$. We apply this identity to the functions
\begin{equation}\label{sus18.20}
f(x,y,u):=(G_\beta^j\varphi_{\underline{k}}^{\underline{\iota}})(x+u,y,-x,-y-u),
\end{equation}
for $\beta\in\{0,1,3/2\}$ and $j\in\{24,34\}$. The formula \eqref{sus18.4} shows that 
\begin{equation}\label{sus18.24}
\begin{split}
(M\varphi_{\underline{k}}^{\underline\iota})&(\xi)=\widetilde{\varphi}_{\underline{k}}^{\underline\iota}(\xi)(\xi_2+\xi_4)\\
&\times\sum_{\beta\in\{0,1,3/2\}}\big[|\xi_2+\xi_4|^\beta \widetilde{G}_\beta^{24}(\xi_2,\xi_3,\xi_4)+|\xi_3+\xi_4|^\beta \widetilde{G}_\beta^{34}(\xi_2,\xi_3,\xi_4)\big],
\end{split}
\end{equation}
for $\xi=(\xi_1,\xi_2,\xi_3,\xi_4)\in\H^3_\infty$. In view of \eqref{sus18.5} $\widetilde{G}_\beta^{34}\equiv 0$ if $l\geq n+20$, and the functions $\widetilde{G}_\beta^{24}, \widetilde{G}_\beta^{34}:\R^3\to\C$ satisfy the differential inequalities
\begin{equation}\label{sus18.25}
\big| 2^{\alpha_2 k_2+\alpha_3 k_3+\alpha_4k_4}\partial_{\xi_2}^{\alpha_2}\partial_{\xi_3}^{\alpha_3}\partial_{\xi_4}^{\alpha_4}\widetilde{G}^j_{\beta}(\xi)\varphi_{\underline{k}}^{\underline{\iota}}(\xi)\big]\big|
  \lesssim 2^{(3/2-\beta)n}2^{2N_0\max(l,0)},
  \end{equation}
if $j\in\{24,34\}$, $\alpha_2+\alpha_3+\alpha_4\leq 5$, and $\xi=(\xi_2,\xi_3,\xi_4)\in\R^3$.  

As before, the bounds \eqref{sus8}--\eqref{sus9} now follow from \eqref{sus18.24}--\eqref{sus18.25} and \eqref{sus18.3} by cancelling the singular factor $\xi_2+\xi_4$ and using Lemma \ref{multiBound} (iv). This completes the proof of the lemma.
\end{proof}

\medskip
\section{The admissibility condition and proof of Theorem \ref{IncremBound}}\label{secadmver}

In this section we prove that the symbols $\widetilde{K}_1^j$, $j\in\{1,\ldots,9\}$, and $\widetilde{K}_2^j$, $j\in\{1,\ldots,4\}$ obtained by symmetrization in the first three variables of the symbols in Lemma \ref{quartic3} are all admissible. Then we show how to complete the proof of Theorem \ref{IncremBound}, provided that we can verify the vanishing of the fully symmetrized symbol $SK_1-SK_2$ on the set of Benjamin-Feir resonances.

\subsection{Admissibility} We will prove the following lemma:

\begin{lemma}\label{pus1} 
With $K_i^j$ as in \eqref{ku11}--\eqref{ku26}, we define the symmetrized symbols $\widetilde{K}_i^j$ according to the formula \eqref{Ksymm},
\begin{equation}\label{ku11'}
\widetilde{K}_i^j(\xi,\eta,\rho,\sigma):=\frac{1}{3}\big[K_i^j(\xi,\eta,\rho,\sigma)+K_i^j(\eta,\rho,\xi,\sigma)+K_i^j(\rho,\xi,\eta,\sigma)\big].
\end{equation}
Then the symbols $\widetilde{K}_1^j$, $j\in\{1,\ldots,9\}$, and $\widetilde{K}_2^j$, $j\in\{1,\ldots,4\}$, are all admissible.
\end{lemma}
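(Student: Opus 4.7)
The plan is to verify, for each of the thirteen symbols $\widetilde{K}_1^j$ ($j=1,\ldots,9$) and $\widetilde{K}_2^j$ ($j=1,\ldots,4$), the three defining properties of the admissible class $\mathcal{A}$ in Definition \ref{defA}: the evenness \eqref{A0}, the weak-admissibility $\widetilde{S}^\infty$-bound \eqref{A1}, and the singular decomposition \eqref{A2.0}--\eqref{A2.1}. Since admissibility is invariant under permutations of the four arguments, I will verify the conditions for the non-symmetrized $K_i^j$ and then observe that averaging in $(\xi,\eta,\rho)$ according to \eqref{ku11'} preserves admissibility (with possibly a different distribution of the singular pairs among $\{1,2,3,4\}$).

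First, evenness is immediate: every building block appearing in \eqref{ku11}--\eqref{ku26} -- the magnitudes $|\cdot|$, the paraproduct cutoffs $\chi,\widetilde{\chi}$, the Fourier weights $m_{N_0}$, the auxiliary symbols $q_2,\ell_2,n_2^{rr},n_2^{ri},n_2^{ii},a_2^{rr},a_2^{ii},b,A,B$ together with the rational functions $A'=A/D$, $B'=B/D$, $B'',B'''$ -- is even on $\mathbb{H}^3_\infty$, as are products and permutations of even functions. Second, the weak-admissibility bounds \eqref{A1} will follow by combining Lemma \ref{multiBound}(iv) with the product rule \eqref{al8.4} and the H\"ormander--Mikhlin-type bounds already recorded or easy to derive for the constituents: \eqref{ABmultBounds}--\eqref{ABmultBounds2} for $A,B,A',B'$; elementary estimates for $q_2,\ell_2,b,a_2^{rr},a_2^{ii}$ (directly from \eqref{DNmain2}, \eqref{hoeq2}, \eqref{ite2}, \eqref{ite4}); and the explicit formulas \eqref{Nh}--\eqref{Nii2} for the $n_2$-symbols. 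For each symbol one identifies the two ``high-frequency factors'' that carry the two copies of $m_{N_0}$ (or the equivalent $2N_0\max(\widetilde{k}_1,0)$ weight inherited from $A,B$), and checks that the remaining factors balance the required $2^{\widetilde{k}_4/2}2^{2\widetilde{k}_3}$ scaling. Thanks to the symmetrizations already built into Lemma \ref{quartic3}, no unexpected loss of derivatives occurs.

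The substantive content of the lemma is the singular decomposition \eqref{A2.0}. The factors of the form $|\xi_i+\xi_j|^\beta$ with $\beta\in\{1,3/2\}$ that must be extracted originate in exactly two places: (a) explicit factors such as $|\rho+\sigma|,|\eta+\sigma|,|\xi+\sigma|$ already visible in $K_1^1, K_1^2, K_1^3, K_1^4$ (through the definitions of $q_2$, $b$, and $K_1^2$--$K_1^4$), which contribute at $\beta=1$; and (b) factors generated by $A'$ or $B'$ in $K_1^8, K_1^9, K_2^3, K_2^4$ when one of their three arguments is itself a sum like $\eta+\rho$, $\xi+\sigma$, or $\rho+\sigma$. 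For case (b) the denominator identity \eqref{eni42} produces a potential $|\xi_i+\xi_j|^{-1/2}$ singularity, and the key algebraic observation is that this is always neutralized by the companion factor $|\xi_i+\xi_j|^{1/2}$ supplied by $n_2^{rr},n_2^{ii},n_2^{ri}$ in the same summand (cf. \eqref{Nh}--\eqref{Nii2}); after this cancellation the remaining exponent of $|\xi_i+\xi_j|$ lies in $\{0,1,3/2\}$, matching Definition \ref{defA}. In the complementary regime $\max(k_i,k_j)\geq \widetilde{k}_3+7$, where $|\xi_i+\xi_j|$ is elliptically bounded from below, one simply absorbs all $|\xi_i+\xi_j|^\beta$-factors into a smooth symbol, taking $\beta=0$ in \eqref{A2.0}; the six-derivative estimate \eqref{A2.1} then follows by straightforward differentiation of the explicit formulas.

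The main obstacle I anticipate is organizational: for each of the thirteen $\widetilde{K}_i^j$ one must track exactly which pair $(i,j)\in\{1,2,3,4\}^2$ the extracted $|\xi_i+\xi_j|^\beta$ is attached to, because the symmetrization \eqref{ku11'} sends a base symbol containing, e.g., $|\rho+\sigma|^1$ into a sum of three terms containing $|\rho+\sigma|^1$, $|\xi+\sigma|^1$, $|\eta+\sigma|^1$ respectively -- each of which must be recognized as a distinct admissible summand in \eqref{A2.0} with its own smooth $F_{\beta;i,j}$. A secondary obstacle is that the bounds \eqref{ABmultBounds2} for $A'=A/D$ and $B'=B/D$ were stated only up to two derivatives, whereas Definition \ref{defA} requires six; this extension is mechanical since $A,B,D$ are built from piecewise smooth functions of $|\xi_i|^{1/2}$, $m_{N_0}(\xi_i)$ and the cutoffs $\chi,\widetilde{\chi}$, but it must be carried out consistently to justify \eqref{A2.1} for the complicated symbols $\widetilde{K}_1^8,\widetilde{K}_1^9,\widetilde{K}_2^3,\widetilde{K}_2^4$. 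Once these per-symbol verifications are completed, all of the $\widetilde{K}_i^j$ are admissible, and Lemma \ref{pus1} follows.
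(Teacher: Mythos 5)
Your proposal has a genuine gap at its strategic core. You propose to ``verify the conditions for the non-symmetrized $K_i^j$ and then observe that averaging in $(\xi,\eta,\rho)$ according to \eqref{ku11'} preserves admissibility.'' That implication is valid only if each $K_i^j$ is already admissible, and several of them are not: they lose derivatives, which is exactly what the symmetrization \eqref{ku11'} is there to repair. For example, the piece $-\frac{1}{2}m_{N_0}^2(\xi)\,\xi\sigma|\sigma|^{-1/2}[\,|\rho|\chi(\rho,\eta)+|\eta|\chi(\eta,\rho)]\chi(\sigma,\eta+\rho)$ of $K_1^1$ carries $2N_0+1$ derivatives on the frequency $\xi$ when $\xi,\eta$ are the two high frequencies, violating \eqref{A1} by a full power of $2^{\widetilde{k}_1}$; only after averaging with the permutation $\xi\leftrightarrow\eta$ (i.e.\ substituting $\xi=-\eta-\rho-\sigma$ and pairing $m_{N_0}^2(\xi)\xi$ with $m_{N_0}^2(\eta)\eta$) does the loss cancel. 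The same phenomenon occurs for $K_1^3$, $K_2^1$, and, with a $1/2$-derivative loss coming from factors like $|\xi|^{1/2}$, for $K_1^8$, $K_1^9$, $K_2^3$, $K_2^4$. Your hedge that ``thanks to the symmetrizations already built into Lemma \ref{quartic3}, no unexpected loss of derivatives occurs'' refers to the $\eta\leftrightarrow\rho$ symmetry, which is not the one that saves these terms. So the case analysis cannot be reduced to the unsymmetrized symbols; it must be carried out on $\widetilde{K}_i^j$ in the imbalanced frequency regimes, distinguishing whether $\sigma$ is a high or a low frequency.

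A second, subtler point your plan misses: for the hardest symbols the rescuing cancellation is not even internal to a single permutation of a single piece. Writing $K_1^8=K_1^{8,1}+K_1^{8,2}$ and $K_1^9=K_1^{9,1}+K_1^{9,2}$ as in the paper, in the regime where $\xi$ and $\sigma$ carry the high frequencies one must pair $K_1^{8,1}(\eta,\rho,\xi,\sigma)$ with $K_1^{8,2}(\rho,\xi,\eta,\sigma)$ (and similarly for $K_1^9$, $K_2^3$, $K_2^4$), i.e.\ cross-combine \emph{different} permutations of \emph{different} summands, and then verify explicit algebraic identities for $A'$, $B''$, $B'''$ (such as the expansion of $2B''(\rho+\xi,\sigma,\eta)+2B'''(\eta,\xi,\rho+\sigma)$ into difference quotients of $m_{N_0}^2$) to exhibit the extra $2^{\widetilde{k}_3/2}$ gain. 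Your treatment of the singular factors $|\xi_i+\xi_j|^\beta$ and of the $|\xi_i+\xi_j|^{-1/2}$ coming from $D$ against the $|\xi_i+\xi_j|^{1/2}$ in $n_2^{rr},n_2^{ii},n_2^{ri}$ is on the right track and matches the paper, but without the symmetrization-driven derivative cancellations described above the argument does not close.
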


The rest of this subsection is concerned with the proof of this lemma. To prove admissibility, we need to pay attention to three main aspects:
(1) the loss of derivatives, (2) the presence of singular factors of the form $|\xi_l+\xi_k|$, $l\neq k$, and (3) the identification of a lowest frequency factor. In some cases we are able to prove that the symbols $K_i^j$ are already admissible, without symmetrization, but in some cases the symbols become admissible only after symmetrization. 

Recall Definition \ref{defMNab} of the regular symbol classes $\mathcal{M}^b$. For any $x\in\R$ let $\iota(x)\in\{-1,1\}$ denote its sign, $\iota(x):=x/|x|$. For simplicity of notation, for a given $\underline{k}\in\Z^4$ and symbols $m,m':\H^3_\infty\to\C$ we will sometimes write
\begin{equation}\label{slov1.1}
\varphi_{\underline{k}}m\simeq \varphi_{\underline{k}}m'
\end{equation}
if $\varphi_{\underline{k}}\cdot (m-m')$ satisfies the properties \eqref{A2.0}--\eqref{A2.1} in Definition \ref{defA}.

\smallskip
{\bf{The symbol $\widetilde{K}_1^1$.}} We examine the formula \eqref{ku11} and decompose
\begin{equation*}
K_1^1=K_1^{1,1}+K_1^{1,2}+K_1^{1,3}+K_1^{1,4}+K_1^{1,5},
\end{equation*}
\begin{equation}\label{slov1}
\begin{split}
&K^{1,1}_1(\xi,\eta,\rho,\sigma):=m_{N_0}^2(\xi)|\sigma|^{1/2}\big\{(|\xi||\rho+\sigma|+\xi(\rho+\sigma))+(|\xi||\eta+\sigma|+\xi(\eta+\sigma))\\
&\qquad-(|\xi||\sigma|+\xi\sigma)-\chi(\rho+\sigma,\eta)\big[|\xi||\rho+\sigma|-|\xi||\sigma|\big]-\chi(\eta+\sigma,\rho)\big[|\xi||\eta+\sigma|-|\xi||\sigma|\big]\big\},\\
&K^{1,2}_1(\xi,\eta,\rho,\sigma):=-m_{N_0}^2(\xi)|\sigma|^{1/2}\big\{\chi(\sigma,\rho)\widetilde{\chi}(\rho+\sigma,\eta)(|\xi||\rho+\sigma|+\xi(\rho+\sigma))\\
&\qquad\qquad\qquad\qquad\qquad\qquad\qquad+\chi(\sigma,\eta)\widetilde{\chi}(\eta+\sigma,\rho)(|\xi||\eta+\sigma|+\xi(\eta+\sigma))\big\},\\
&K^{1,3}_1(\xi,\eta,\rho,\sigma):=-m_{N_0}^2(\xi)\xi|\sigma|^{1/2}\big\{\chi(\sigma,\rho)\chi(\rho+\sigma,\eta)(\sigma+\rho)+\chi(\sigma,\eta)\chi(\eta+\sigma,\rho)(\sigma+\eta)\big\},\\
&K^{1,4}_1(\xi,\eta,\rho,\sigma):=-\frac{m_{N_0}^2(\xi)}{2|\sigma|^{1/2}}[|\rho|\chi(\rho,\eta)+|\eta|\chi(\eta,\rho)]\widetilde{\chi}(\sigma,\eta+\rho)(|\xi||\sigma|+\xi\sigma),\\
&K^{1,5}_1(\xi,\eta,\rho,\sigma):=-\frac{m_{N_0}^2(\xi)}{2|\sigma|^{1/2}}[|\rho|\chi(\rho,\eta)+|\eta|\chi(\eta,\rho)]\chi(\sigma,\eta+\rho)\xi\sigma.
\end{split}
\end{equation}

It is easy to see that the symbols $K^{1,2}_1$ and $K^{1,4}_1$ are already admissible, more precisely
\begin{equation}\label{slov2}
K^{1,2}_1,\, K^{1,4}_1\in\mathcal{M}^{5/2}.
\end{equation}

The symbols $K_1^{1,1}$, $K_1^{1,3}$, and $K_1^{1,5}$ require symmetrization, to avoid derivative loss. We start with the easier symbol $K^{1,5}_1$. If $\underline{k}=(k_1,k_2,k_3,k_4)\in\Z^4$ is a balanced $4$-tuple (meaning $\widetilde{k}_1-\widetilde{k}_3\leq 9$ as in the proof of Lemma \ref{quartic2}) then there is no derivative loss and it is easy to see that the symbol $\varphi_{\underline{k}}\cdot K^{1,5}_1$ itself satisfies the regular bounds
\begin{equation}\label{slov3}
\|K^{1,5}_1\cdot\varphi_{\underline{k}}\|_{\mathcal{HM}_8}\lesssim 2^{\widetilde{k}_4/2}2^{2\widetilde{k}_3}2^{2N_0\max(\widetilde{k}_1,0)}.
\end{equation}
By permuting the variables we see that similar bounds hold on the symmetrized symbol $\widetilde{K}^{1,5}_1$. 

On the other hand, if $\underline{k}$ is an imbalanced $4$-tuple (meaning $\widetilde{k}_1-\widetilde{k}_3\geq 10$) then we may assume by symmetry that $k_1\geq k_2\geq k_3$. Clearly $\widetilde{K}^{1,5}_1\cdot\varphi_{\underline{k}}\equiv 0$ on $\H^3_\infty$ if $k_4\geq \widetilde{k}_1-5$. Otherwise, if $k_4\leq \widetilde{k}_1-5$ then we may assume that $k_1,k_2$ are large and $k_3,k_4$ are small,
\begin{equation}\label{slov4}
k_1,k_2\in[\widetilde{k}_1-4,\widetilde{k}_1],\qquad k_3,k_4\leq \widetilde{k}_3\leq\widetilde{k}_1-10.
\end{equation}
In this case we replace $\xi$ by $-\eta-\rho-\sigma$ and use the identity $\chi(\sigma,-\sigma-x)=\chi(\sigma,x)$ to write
\begin{equation*}
\begin{split}
\widetilde{K}^{1,5}_1&(\xi,\eta,\rho,\sigma)=\frac{\iota(\sigma)|\sigma|^{1/2}}{6}m_{N_0}^2(\eta+\rho+\sigma)(\rho+\sigma)|\rho|\chi(\rho,\eta)\chi(\sigma,\eta+\rho)\\
&-\frac{\iota(\sigma)|\sigma|^{1/2}}{6}\eta\big[-m_{N_0}^2(\eta+\rho+\sigma)|\rho|\chi(\rho,\eta)\chi(\sigma,\eta+\rho)+m_{N_0}^2(\eta)|\rho|\chi(\rho,\eta+\sigma)\chi(\sigma,\eta)\big],
\end{split}
\end{equation*}
for $(\xi,\eta,\rho,\sigma)\in\H^3_\infty$ in the support of the function $\varphi_{\underline{k}}$. The point is that the symmetrization in $\xi$ and $\eta$ avoids the derivative loss, and the desired bounds
\begin{equation}\label{slov5}
\|\widetilde{K}^{1,5}_1\cdot\varphi_{\underline{k}}\|_{\mathcal{HM}_8}\lesssim 2^{\widetilde{k}_4/2}2^{2\widetilde{k}_3}2^{2N_0\max(\widetilde{k}_1,0)}
\end{equation}
follow in this case as well. 

The analysis of the symbol $\widetilde{K}^{1,3}_1$ is similar. If $\underline{k}=(k_1,k_2,k_3,k_4)\in\Z^4$ is a balanced $4$-tuple then there is no derivative loss and the symbol $\varphi_{\underline{k}}\cdot K^{1,3}_1$ itself satisfies regular bounds similar to \eqref{slov3}. On the other hand, if $\underline{k}$ is imbalanced then we may assume, by symmetry, the frequency configuration \eqref{slov4}, and symmetrize in $\xi$ and $\eta$ as before, by substituting $\xi=-\eta-\rho-\sigma$. 

The analysis of the symbol $\widetilde{K}^{1,1}_1$ is also similar, but slightly longer. We rewrite first
\begin{equation}\label{slov5.5}
\begin{split}
K^{1,1}_1(\xi,\eta,\rho,\sigma)&:=m_{N_0}^2(\xi)|\sigma|^{1/2}|\xi|\big\{(|\rho+\sigma|+|\eta+\sigma|-|\xi|-|\sigma|)\\
&-\chi(\rho+\sigma,\eta)\big[|\rho+\sigma|-|\sigma|\big]-\chi(\eta+\sigma,\rho)\big[|\eta+\sigma|-|\sigma|\big]\big\},
\end{split}
\end{equation}
for $(\xi,\eta,\rho,\sigma)\in\H^3_\infty$. If $\underline{k}=(k_1,k_2,k_3,k_4)\in\Z^4$ is a balanced $4$-tuple then there is no derivative loss and no singular factors, and the symbol $\varphi_{\underline{k}}\cdot K^{1,3}_1$ satisfies regular bounds similar to \eqref{slov3}.

If $\underline{k}$ is an imbalanced $4$-tuple (meaning $\widetilde{k}_1-\widetilde{k}_3\geq 10$) then we may assume that $k_1\geq k_2\geq k_3$, and consider two cases. If $k_4$ is one of the larger frequency parameters, $k_4\geq\widetilde{k}_1-5$ then
\begin{equation}\label{slov6}
k_1,k_4\in[\widetilde{k}_1-4,\widetilde{k}_1],\qquad k_2,k_3\leq \widetilde{k}_3\leq\widetilde{k}_1-10.
\end{equation}
and we use the identity \eqref{slov5.5} to calculate, for $(\xi,\eta,\rho,\sigma)\in \H^3_\infty$ in the support of $\varphi_{\underline{k}}$,
\begin{equation*}
\begin{split}
\widetilde{K}^{1,1}_1(\xi,\eta,&\rho,\sigma)=\frac{1}{3}m_{N_0}^2(\eta)|\sigma|^{1/2}|\eta|\big\{(|\eta+\rho|+|\rho+\sigma|-|\eta|-|\sigma|)-\chi(-\eta-\rho,\rho)\big(|\eta+\rho|-|\sigma|\big)\big\}\\
&+\frac{1}{3}m_{N_0}^2(\rho)|\sigma|^{1/2}|\rho|\big\{(|\eta+\rho|+|\eta+\sigma|-|\rho|-|\sigma|)-\chi(-\eta-\rho,\eta)\big(|\eta+\rho|-|\sigma|\big)\big\}.
\end{split}
\end{equation*}
The desired bounds (similar to \eqref{slov5}) follow in this case, since the multiplier $m_{N_0}$ is applied to the lower frequencies. On the other hand, if $k_4\leq\widetilde{k}_1-5$ then we may assume that the frequency parameters $k_1,k_2,k_3,k_3$ are as in \eqref{slov4}. Then we write, using \eqref{slov5.5},
\begin{equation*}
\begin{split}
\widetilde{K}^{1,1}_1(\xi,\eta,\rho,\sigma)&=\frac{1}{3}\Big\{m_{N_0}^2(\xi)|\sigma|^{1/2}|\xi|\big\{\iota(\xi)\rho+(1-\chi(\rho+\sigma,\eta))\big[|\rho+\sigma|-|\sigma|\big]\big\}\\
&+m_{N_0}^2(\eta)|\sigma|^{1/2}|\eta|\big\{\iota(\eta)\rho+(1-\chi(\rho+\sigma,\xi))\big[|\rho+\sigma|-|\sigma|\big]\big\}\\
&+m_{N_0}^2(\rho)|\sigma|^{1/2}|\rho|(|\xi+\sigma|+|\eta+\sigma|-|\rho|-|\sigma|)\Big\},
\end{split}
\end{equation*}
for $(\xi,\eta,\rho,\sigma)\in \H^3_\infty$ in the support of $\varphi_{\underline{k}}$ (using the identity $|\eta+\sigma|-|\xi|=|\xi+\rho|-|\xi|=\iota(\xi+\rho)(\xi+\rho)-\iota(\xi)\xi=\iota(\xi)\rho$ and a similar identity with $\xi$ and $\eta$ interchanged). The desired bounds (similar to \eqref{slov5}) follow by symmetrizing two terms in $\xi$ and $\eta$ to avoid derivative loss.

\smallskip
{\bf{The symbol $\widetilde{K}_1^2$.}} We will show that the symbol $K_1^2$ is admissible without symmetrization, 
\begin{equation}\label{slov8}
\|K^2_1\cdot\varphi_{\underline{k}}\|_{\mathcal{HM}_8}\lesssim 2^{\widetilde{k}_4/2}2^{2\widetilde{k}_3}2^{2N_0\max(\widetilde{k}_1,0)}
\end{equation}
for any $\underline{k}\in\Z$. By symmetry, we may assume that $k_2\geq k_3$. These bounds follow easily from the formula \eqref{ku12} if $\widetilde{k}_3\geq \widetilde{k}_1-40$, since in this case there is no derivative loss and the factors $|\rho+\sigma|=|\xi+\eta|$ and $|\eta+\sigma|=|\xi+\rho|$ are non-singular in the support of the symbol. 

On the other hand, if $\widetilde{k}_3\leq \widetilde{k}_1-40$ then we have two nontrivial cases. Either 
\begin{equation*}
k_2,k_4\geq\widetilde{k}_1-4,\qquad k_1,k_3\leq\widetilde{k}_3\leq \widetilde{k}_1-40,
\end{equation*}
 in which case the formula \eqref{ku12} gives
 \begin{equation*}
K_1^2(\xi,\eta,\rho,\sigma) =- \frac{m_{N_0}(\eta+\si) m_{N_0}(\rho)
  |\xi|\chi(\xi,\rho)\big\{\si(\xi+\rho)(1-\chi(\eta,\si)) -|\si||\xi+\rho|\widetilde{\chi}(\eta,\si)\big\}}{|\sigma|^{1/2}}
\end{equation*}
for $(\xi,\eta,\rho,\sigma)\in\H^3_\infty$ in the support of the function $\varphi_{\underline{k}}$. There is no derivative loss in this case (since one of the factors $m_{N_0}(\rho)$ applies to one of the low frequencies), and the bounds \eqref{slov8} follow easily. The remaining possibility is
\begin{equation*}
k_2,k_3\geq\widetilde{k}_1-4,\qquad k_1,k_4\leq\widetilde{k}_3\leq \widetilde{k}_1-40,
\end{equation*}
 in which case the formula \eqref{ku12} gives
 \begin{equation*}
K_1^2(\xi,\eta,\rho,\sigma) =- \frac{m_{N_0}(\rho+\si) m_{N_0}(\eta)
  |\xi|\si(\rho+\si)}{|\sigma|^{1/2}}- \frac{m_{N_0}(\eta+\si) m_{N_0}(\rho)
  |\xi|\si(\eta+\si)}{|\sigma|^{1/2}},
\end{equation*}
for $(\xi,\eta,\rho,\sigma)\in\H^3_\infty$ in the support of $\varphi_{\underline{k}}$. The bounds \eqref{slov8} follow in this case as well, by symmetrization in the variables $\eta$ and $\rho$ (replacing $\eta$ with $-\rho-\sigma-\xi$ in the formula above).

\smallskip
{\bf{The symbol $\widetilde{K}_1^3$.}} We examine the formula \eqref{ku14} and decompose $K_1^3=K_1^{3,1}+K_1^{3,2}$,
\begin{equation}\label{slov9}
\begin{split}
K_1^{3,1}(\xi,\eta,\rho,\sigma)&:=\frac{m_{N_0}^2(\eta+\rho)[|\eta|\chi(\eta,\rho)+|\rho|\chi(\rho,\eta)]}{2|\si|^{1/2}}(-(\eta+\rho)\si-|\eta+\rho||\si|)\widetilde{\chi}(\si,\xi),\\
K_1^{3,2}(\xi,\eta,\rho,\sigma)&:= \frac{m_{N_0}^2(\eta+\rho)[|\eta|\chi(\eta,\rho)+|\rho|\chi(\rho,\eta)]}{2|\si|^{1/2}}(\xi+\si)\si\chi(\si,\xi),
\end{split}
\end{equation}
since $\xi+\sigma=-(\eta+\rho)$ along $\H^3_\infty$. It is easy to see that there is no derivative loss in $K_1^{3,1}$ and, in fact, $K_1^{3,1}$ is regular, $K_1^{3,1} \in \mathcal{M}^{5/2}$.

The symbol $K_1^{3,2}$ requires symmetrization, to avoid derivative loss. The analysis is similar to some of the symbols discussed earlier. If $\underline{k}=(k_1,k_2,k_3,k_4)\in\Z^4$ is a balanced $4$-tuple then there is no derivative loss and the symbol $\varphi_{\underline{k}}\cdot K^{3,2}_1$ itself satisfies regular bounds similar to \eqref{slov3}. On the other hand, if $\underline{k}$ is imbalanced then we may assume, by symmetry, the frequency configuration \eqref{slov4}, and symmetrize in $\xi$ and $\eta$ as before, by substituting $\xi=-\eta-\rho-\sigma$. 

\smallskip
{\bf{The symbol $\widetilde{K}_1^4$.}} 
We do not need symmetrization in this case, and we will prove that $K_1^4$ is already admissible. Recall the expression for $b$ from \eqref{ite4}, which contains the possibly singular factor $|\xi+\sigma|=|\eta+\rho|$. There is no derivative loss in this case and $m_{N_0}$ satisfies homogeneous bounds of the form $|\partial_\mu^\alpha m_{N_0}(\mu)|\lesssim |\mu|^{N_0-|\alpha|}$ for $\mu\in\R$ and $|\alpha|\leq 8$ (see definition \eqref{m_choice}). Therefore, $(\varphi_{\underline{k}}K_1^4)\simeq 0$ for any $\underline{k}\in\Z^4$ (see the notation \eqref{slov1.1}), with a possibly singular factor $|\xi+\sigma|$ that could be present if if $|k_1-k_4|\leq 4$ and $|k_2-k_3|\leq 4$.

\smallskip
{\bf{The symbol $\widetilde{K}_1^5$.}}
By inspection of the formula \eqref{ku110} we see directly that $K_1^5 \in \mathcal{M}^{5/2}$.

\smallskip
{\bf{The symbol $\widetilde{K}_1^6$.}} We inspect the formula \eqref{ku111} and notice that there is no derivative loss due to the difference $m_{N_0}(\eta+\rho)-m_{N_0}(\si)$. It follows easily that $K_1^6 \in \mathcal{M}^{5/2}$.

\smallskip
{\bf{The symbol $\widetilde{K}_1^7$.}}
We show that the symbol $K_1^7$ itself is admissible. Recall the formula for $a_2^{rr}(\eta,\rho)$ from \eqref{ite2}. By symmetry, to analyze the localized multiplier $\varphi_{\underline{k}}K_1^7$ we may assume that $k_2\geq k_3$. We have two cases. If $k_1\geq k_2+6$ then $k_1$ and $k_4$ are the large frequency parameters, and we have a suitable decomposition with a possibly singular factor $|\eta+\rho|$. On the other hand, if $k_1\leq k_2+5$ then we replace $\eta+\rho$ with $-\xi-\sigma$ and notice that $\varphi_{\underline{k}}K_1^7$ is nontrivial only if $|k_1-k_4|\leq 2$. We can then see easily that $(\varphi_{\underline{k}}K_1^7)\simeq 0$ for any $\underline{k}\in\Z^4$, with a possibly singular factor $|\xi+\sigma|$.

\smallskip
{\bf{The symbol $\widetilde{K}_1^8$.}}
We examine the formula \eqref{ku113}, decompose $K_1^8=K_1^{8,1}+K_1^{8,2}$,
\begin{align}\label{ku113''}
\begin{split}
& K_1^{8,1}(\xi,\eta,\rho,\si):= 6n_2^{rr}(\eta,\rho)A'(\xi,\sigma,\eta+\rho)|\xi|^{1/2}(|\eta+\rho|+|\sigma|-|\xi|), 
\\
& K_1^{8,2}(\xi,\eta,\rho,\si):= 6n_2^{ri}(\xi,\sigma)A'(\eta,\rho,\xi+\sigma)|\rho|^{1/2}|\eta|^{1/2}|\xi+\sigma|^{1/2},
\end{split}
\end{align}
and recall the formulas \eqref{Nh}--\eqref{Nrr2} for $n_2^{ri}$ and $n_2^{rr}$. The formulas \eqref{eni21} and \eqref{eni42} show that
\begin{equation}\label{sus30}
\begin{split}
A'(x,y,z)&=\frac{\max(|x|,|y|,|z|)}{-12|x|^{1/2}|y|^{1/2}|z|^{1/2}}\big\{m_{N_0}^2(x)[\iota(y)\iota(z)+\widetilde{\chi}(y,z)]\\
&\qquad\qquad+m_{N_0}^2(y)[\iota(z)\iota(x)+\widetilde{\chi}(z,x)]+m_{N_0}^2(z)[\iota(x)\iota(y)+\widetilde{\chi}(x,y)]\big\},
\end{split}
\end{equation}
for any $(x,y,z)\in\H^2_\infty$, since $A'=A/D$. As a consequence, for any $l_1,l_2,l_3\in\Z$, 
\begin{equation}\label{sus31}
\|A'(x,y,z)\cdot\varphi_{l_1}(x)\varphi_{l_2}(y)\varphi_{l_3}(z)\|_{\mathcal{HM}_{12}}\lesssim 2^{\min(l_1,l_2,l_3)/2}2^{(2N_0-1)\max(l_1,l_2,l_3)}.
\end{equation}

If $\underline{k}=(k_1,k_2,k_3,k_4)\in\Z^4$ is a balanced $4$-tuple then there is no derivative loss and the symbol $\varphi_{\underline{k}}\cdot K^{8,1}_1$ satisfies regular bounds similar to \eqref{slov3}. The symbol $\varphi_{\underline{k}}\cdot K^{8,2}_1$ contains the possibly singular factor $|\xi+\sigma|$, but can be expanded as in \eqref{A2.0}, which still suffices to prove admissibility. 

If $\underline{k}=(k_1,k_2,k_3,k_4)\in\Z^4$ is an imbalanced $4$-tuple then we may assume that $k_1\geq k_2\geq k_3$ and consider two cases:

\smallskip
$\bullet$ $k_1,k_2\in[\widetilde{k}_1-4,\widetilde{k}_1]$, $k_3,k_4\leq\widetilde{k}_3\leq \widetilde{k}_1-10$. In this case
\begin{equation*}
\begin{split}
\widetilde{K}^{8,1}_1(\xi,\eta,\rho,\sigma)&=(1/2)|\rho+\eta|^{1/2}|\xi|^{1/2}|\rho|\chi(\rho,\eta)A'(\xi,\sigma,\eta+\rho)(|\xi+\sigma|+|\sigma|-|\xi|)\\
&+(1/2)|\rho+\xi|^{1/2}|\eta|^{1/2}|\rho|\chi(\rho,\xi)A'(\eta,\sigma,\xi+\rho)(|\eta+\sigma|+|\sigma|-|\eta|),
\end{split}
\end{equation*}
for $(\xi,\eta,\rho,\sigma)\in\H^3_\infty$ in the support of $\varphi_{\underline{k}}$, since  $n_2^{rr}(\xi,\eta) = 0$ and therefore $K^{8,1}_1(\rho,\xi,\eta,\si) = 0$. Using \eqref{sus30} it is easy to see that both terms are regular, satisfying bounds similar to \eqref{slov3}.

We consider now the symbol $K_1^{8,2}$ in the second line of \eqref{ku113''}. We write
\begin{equation*}
\begin{split}
\widetilde{K}^{8,2}_1(\xi,\eta,\rho,\sigma)&=2n_2^{ri}(\xi,\sigma)A'(\eta,\rho,\xi+\sigma)|\rho|^{1/2}|\eta|^{1/2}|\xi+\sigma|^{1/2}\\
&+2n_2^{ri}(\eta,\sigma)A'(\rho,\xi,\eta+\sigma)|\xi|^{1/2}|\rho|^{1/2}|\eta+\sigma|^{1/2}\\
&+2n_2^{ri}(\rho,\sigma)A'(\xi,\eta,\rho+\sigma)|\eta|^{1/2}|\xi|^{1/2}|\rho+\sigma|^{1/2}
\end{split}
\end{equation*}
for $(\xi,\eta,\rho,\sigma)\in\H^3_\infty$ in the support of $\varphi_{\underline{k}}$. The formulas \eqref{sus30} and \eqref{Nh} show easily that the multiplier in the last line satisfies suitable admissibility bounds. Moreover, we may replace $n_2^{ri}(\xi,\sigma)$ with $\xi\sigma|\sigma|^{-1/2}\chi(\sigma,\xi)$ and $n_2^{ri}(\eta,\sigma)$ with $\eta\sigma|\sigma|^{-1/2}\chi(\sigma,\eta)$ at the expenses of acceptable errors, therefore, with the notation \eqref{slov1.1},
\begin{equation*}
\begin{split}
(\varphi_{\underline{k}}\widetilde{K}^{8,2}_1)(\xi,\eta,\rho,\sigma)\simeq \varphi_{\underline{k}}(\xi,\eta,\rho,\sigma)\big\{&2\xi\sigma|\sigma|^{-1/2}\chi(\sigma,\xi)A'(\eta,\rho,\xi+\sigma)|\rho|^{1/2}|\eta|^{1/2}|\xi+\sigma|^{1/2}\\
+&2\eta\sigma|\sigma|^{-1/2}\chi(\sigma,\eta)A'(\rho,\xi,\eta+\sigma)|\xi|^{1/2}|\rho|^{1/2}|\eta+\sigma|^{1/2}\big\}.
\end{split}
\end{equation*}
In view of \eqref{sus31} the remaining terms are regular, satisfying bounds similar to \eqref{slov3}.

\smallskip
$\bullet$ $k_1,k_4\in[\widetilde{k}_1-4,\widetilde{k}_1]$, $k_2,k_3\leq\widetilde{k}_3\leq \widetilde{k}_1-10$. In this case we need to recombine the multipliers $K_1^{8,1}$ and $K_1^{8,2}$ to avoid a potential $1/2$ derivative loss. Using \eqref{ku113''} and recalling the definition \eqref{slov1.1} we notice that 
\begin{equation*}
\begin{split}
&\varphi_{\underline{k}}(\xi,\eta,\rho,\sigma)K_1^{8,1}(\xi,\eta,\rho,\si)\simeq 0,\\
&\varphi_{\underline{k}}(\xi,\eta,\rho,\sigma)K_1^{8,1}(\eta,\rho,\xi,\si)\simeq \varphi_{\underline{k}}(\xi,\eta,\rho,\sigma)\cdot 3|\xi|^{1/2}|\rho|\chi(\rho,\xi)A'(\eta,\sigma,\rho+\xi)|\eta|^{1/2}|\sigma|,\\
&\varphi_{\underline{k}}(\xi,\eta,\rho,\sigma)K_1^{8,1}(\rho,\xi,\eta,\si)\simeq \varphi_{\underline{k}}(\xi,\eta,\rho,\sigma)\cdot 3|\xi|^{1/2}|\eta|\chi(\eta,\xi)A'(\rho,\sigma,\eta+\xi)|\rho|^{1/2}|\sigma|,
\end{split}
\end{equation*}
and
\begin{equation*}
\begin{split}
&\varphi_{\underline{k}}(\xi,\eta,\rho,\sigma)K_1^{8,2}(\xi,\eta,\rho,\si)\simeq 0,\\
&\varphi_{\underline{k}}(\xi,\eta,\rho,\sigma)K_1^{8,2}(\eta,\rho,\xi,\si)\simeq \varphi_{\underline{k}}(\xi,\eta,\rho,\sigma)\cdot (-3)|\eta||\sigma|\chi(\eta,\sigma)A'(\rho,\xi,\eta+\sigma)|\xi|^{1/2}|\rho|^{1/2},\\
&\varphi_{\underline{k}}(\xi,\eta,\rho,\sigma)K_1^{8,2}(\rho,\xi,\eta,\si)\simeq \varphi_{\underline{k}}(\xi,\eta,\rho,\sigma)\cdot (-3)|\rho||\sigma|\chi(\rho,\sigma)A'(\eta,\xi,\rho+\sigma)|\xi|^{1/2}|\eta|^{1/2}.
\end{split}
\end{equation*}
Therefore
\begin{equation*}
\begin{split}
&\varphi_{\underline{k}}(\xi,\eta,\rho,\sigma)[K_1^{8,1}(\eta,\rho,\xi,\si)+K_1^{8,2}(\rho,\xi,\eta,\si)]\\
&\simeq \varphi_{\underline{k}}(\xi,\eta,\rho,\sigma)\cdot 3|\rho||\eta|^{1/2}|\xi|^{1/2}|\sigma|\big\{\chi(\rho,\xi)A'(\eta,\rho+\xi,\sigma)-\chi(\rho,\sigma)A'(\eta,\xi,\rho+\sigma)\big\}\simeq 0
\end{split}
\end{equation*}
and
\begin{equation*}
\begin{split}
&\varphi_{\underline{k}}(\xi,\eta,\rho,\sigma)[K_1^{8,1}(\rho,\xi,\eta,\si)+K_1^{8,2}(\eta,\rho,\xi,\si)]\\
&\simeq \varphi_{\underline{k}}(\xi,\eta,\rho,\sigma)\cdot 3|\eta||\rho|^{1/2}|\xi|^{1/2}|\sigma|\big\{\chi(\eta,\xi)A'(\rho,\eta+\xi,\sigma)-\chi(\eta,\sigma)A'(\rho,\xi,\eta+\sigma)\big\}\simeq 0,
\end{split}
\end{equation*}
as desired.

\smallskip
{\bf{The symbol $\widetilde{K}_1^9$.}} We examine the formula \eqref{ku114} and decompose $K_1^9=K_1^{9,1}+K_1^{9,2}$, where
\begin{equation}\label{ku114''}
\begin{split}
& K_1^{9,1}(\xi,\eta,\rho,\si) := -2n_2^{rr}(\eta,\rho)B''(\eta+\rho,\sigma,\xi),\\
& K_1^{9,2}(\xi,\eta,\rho,\si) := n_2^{ri}(\xi,\sigma)B'''(\eta,\rho,\xi+\sigma).
\end{split}
\end{equation}

The functions $B''$ and $B'''$ are defined in \eqref{slov20}--\eqref{slov1}. To calculate them explicitly we recall the formulas \eqref{eni22} and \eqref{eni42}; in particular $B$ and $B'$ are symmetric in the first two variables and we calculate, using \eqref{chieq},
\begin{equation}\label{slov22}
B'(\xi,\eta,\rho)=\frac{m_{N_0}(\rho)|\rho|^{1/2}\big\{|\xi|\chi(\xi,\rho)[m_{N_0}(\rho)-m_{N_0}(\eta)]+|\eta|\chi(\eta,\rho)[m_{N_0}(\rho)-m_{N_0}(\xi)]\big\}}{-4|\xi||\eta|}
\end{equation}
if $(\xi,\eta,\rho)\in\H^2_\infty$ and $|\rho|=\max(|\xi|,|\eta|,|\rho|)$, since $\xi\rho+|\xi||\rho|=\eta\rho+|\eta||\rho|=0$ in this case. Moreover
\begin{equation}\label{slov23}
\begin{split}
B'(\xi,\eta,\rho)&=(1/2)|\rho|^{-1/2}m_{N_0}^2(\eta)+(1/4)\chi(\eta,\xi)m_{N_0}(\rho)|\rho|^{-1/2}[m_{N_0}(\xi)-m_{N_0}(\rho)]\\
&-\frac{\chi(\rho,\xi)[|\xi|m^2_{N_0}(\xi)+|\eta|m^2_{N_0}(\eta)-|\rho|m_{N_0}(\xi)m_{N_0}(\eta)]}{4|\eta||\rho|^{1/2}}
\end{split}
\end{equation}
if $(\xi,\eta,\rho)\in\H^2_\infty$ and $|\xi|=\max(|\xi|,|\eta|,|\rho|)$, using \eqref{chieq} again and the identities $\xi\rho=-|\xi||\rho|$ and $\eta\rho=|\eta|\rho|$ which hold in this case. 

Then we observe that $B''$ is symmetric in the first two variables and calculate
\begin{equation}\label{slov24}
B''(x,y,z)=|x|^{1/2}|y|^{1/2}\big[(1-\chi(y,z))m_{N_0}^2(x)+(1-\chi(x,z))m_{N_0}^2(y)\big]
\end{equation}
if $(x,y,z)\in\H^2_\infty$ and $|z|=\max(|x|,|y|,|z|)$, after careful algebraic simplifications. Similarly,
\begin{equation}\label{slov25}
\begin{split}
B''(x,y,z)=-\frac{1}{2}|x|^{1/2}|y|^{1/2}\big\{&2m_{N_0}^2(y)+\chi(y,x)[m_{N_0}^2(x)-m_{N_0}^2(z)]\\
&-\chi(z,x)[m_{N_0}^2(y)+m_{N_0}^2(x)]\big\}
\end{split}
\end{equation}
if $(x,y,z)\in\H^2_\infty$ and $|x|=\max(|x|,|y|,|z|)$. Finally, $B'''$ is symmetric in all the variables and 
\begin{equation}\label{slov26}
\begin{split}
B'''(x,y,z)=\frac{1}{2}\big\{&2|y|m_{N_0}^2(y)+2|z|m_{N_0}^2(z)\\
&-\chi(y,x)[|x|m^2_{N_0}(x)+|x|m_{N_0}^2(z)-2|y|m_{N_0}(x)m_{N_0}(z)]\\
&-\chi(z,x)[|x|m^2_{N_0}(x)+|x|m_{N_0}^2(y)-2|z|m_{N_0}(x)m_{N_0}(y)]\big\}
\end{split}
\end{equation}
if $(x,y,z)\in\H^2_\infty$ and $|x|=\max(|x|,|y|,|z|)$.

In particular, these formulas show easily that for any $l_1,l_2,l_3\in\Z$, 
\begin{equation}\label{slov27}
\begin{split}
\|B''(x,y,z)\cdot\varphi_{l_1}(x)\varphi_{l_2}(y)\varphi_{l_3}(z)\|_{\mathcal{HM}_{12}}&\lesssim 2^{\min(l_1,l_2,l_3)}2^{2N_0\max(l_1,l_2,l_3)}2^{-|l_1-l_2|/2},\\
\|B'''(x,y,z)\cdot\varphi_{l_1}(x)\varphi_{l_2}(y)\varphi_{l_3}(z)\|_{\mathcal{HM}_{12}}&\lesssim 2^{\min(l_1,l_2,l_3)}2^{2N_0\max(l_1,l_2,l_3)}.
\end{split}
\end{equation}

If $\underline{k}=(k_1,k_2,k_3,k_4)\in\Z^4$ is a balanced $4$-tuple then there is no derivative loss and the formulas \eqref{slov24}--\eqref{slov25} and \eqref{Nrr2} show easily that the symbol $\varphi_{\underline{k}}\cdot K^{9,1}_1$ satisfies regular bounds similar to \eqref{slov3}. The symbol $\varphi_{\underline{k}}\cdot K^{9,2}_1$ contains the possibly singular factor $|\xi+\sigma|$, but can be expanded as in \eqref{A2.0}, which still suffices to prove admissibility. 

If $\underline{k}=(k_1,k_2,k_3,k_4)\in\Z^4$ is an imbalanced $4$-tuple then we may assume that $k_1\geq k_2\geq k_3$ and consider two cases, as before:

\smallskip
$\bullet$ $k_1,k_2\in[\widetilde{k}_1-4,\widetilde{k}_1]$, $k_3,k_4\leq\widetilde{k}_3\leq \widetilde{k}_1-10$. In this case 
\begin{equation*}
\begin{split}
\widetilde{K}^{9,1}_1(\xi,\eta,\rho,\sigma)&=-\frac{1}{6}|\rho+\eta|^{1/2}|\rho|\chi(\rho,\eta)B''(\eta+\rho,\sigma,\xi)-\frac{1}{6}|\rho+\xi|^{1/2}|\rho|\chi(\rho,\xi)B''(\xi+\rho,\sigma,\eta)
\end{split}
\end{equation*}
for $(\xi,\eta,\rho,\sigma)\in\H^3_\infty$ in the support of $\varphi_{\underline{k}}$, using the formulas \eqref{Nrr2} and \eqref{ku114''}. Using \eqref{slov27} it is easy to see that both terms are regular, satisfying bounds similar to \eqref{slov3}. Moreover
\begin{equation*}
\begin{split}
(\varphi_{\underline{k}}\widetilde{K}^{9,2}_1)(\xi,\eta,\rho,\sigma)\simeq \varphi_{\underline{k}}(\xi,\eta,\rho,\sigma)\big\{&\xi\sigma|\sigma|^{-1/2}\chi(\sigma,\xi)B'''(\eta,\rho,\xi+\sigma)\\
&+\eta\sigma|\sigma|^{-1/2}\chi(\sigma,\eta)B'''(\xi,\rho,\eta+\sigma)\big\}/3,
\end{split}
\end{equation*}
using \eqref{Nh}, \eqref{ku114''}, and \eqref{slov26}. In view of \eqref{slov27} the remaining terms are regular, satisfying bounds similar to \eqref{slov3}, due to the symmetrization in $\xi$ and $\eta$.

\smallskip
$\bullet$ $k_1,k_4\in[\widetilde{k}_1-4,\widetilde{k}_1]$, $k_2,k_3\leq\widetilde{k}_3\leq \widetilde{k}_1-10$. In this case 
\begin{equation}\label{slov28}
\begin{split}
&\varphi_{\underline{k}}(\xi,\eta,\rho,\sigma)K_1^{9,1}(\xi,\eta,\rho,\si)\simeq 0,\\
&\varphi_{\underline{k}}(\xi,\eta,\rho,\sigma)K_1^{9,1}(\eta,\rho,\xi,\si)\simeq \varphi_{\underline{k}}(\xi,\eta,\rho,\sigma)\cdot (-1/2)|\xi|^{1/2}|\rho|\chi(\rho,\xi)B''(\rho+\xi,\sigma,\eta),\\
&\varphi_{\underline{k}}(\xi,\eta,\rho,\sigma)K_1^{9,1}(\rho,\xi,\eta,\si)\simeq \varphi_{\underline{k}}(\xi,\eta,\rho,\sigma)\cdot (-1/2)|\xi|^{1/2}|\eta|\chi(\eta,\xi)B''(\eta+\xi,\sigma,\rho),
\end{split}
\end{equation}
and
\begin{equation}\label{slov29}
\begin{split}
&\varphi_{\underline{k}}(\xi,\eta,\rho,\sigma)K_1^{9,2}(\xi,\eta,\rho,\si)\simeq 0,\\
&\varphi_{\underline{k}}(\xi,\eta,\rho,\sigma)K_1^{9,2}(\eta,\rho,\xi,\si)\simeq \varphi_{\underline{k}}(\xi,\eta,\rho,\sigma)\cdot (-1/2)|\eta||\sigma|^{1/2}\chi(\eta,\sigma)B'''(\rho,\xi,\eta+\sigma),\\
&\varphi_{\underline{k}}(\xi,\eta,\rho,\sigma)K_1^{9,2}(\rho,\xi,\eta,\si)\simeq \varphi_{\underline{k}}(\xi,\eta,\rho,\sigma)\cdot (-1/2)|\rho||\sigma|^{1/2}\chi(\rho,\sigma)B'''(\eta,\xi,\rho+\sigma).
\end{split}
\end{equation}
using \eqref{slov27} and \eqref{Nh}--\eqref{Nrr2}. The terms are all regular if $\widetilde{k}_3\geq\widetilde{k}_1-40$. 

On the other hand, if $\widetilde{k}_3\leq\widetilde{k}_1-40$ then we need to recombine these terms to avoid derivative loss. Notice that
\begin{equation}\label{slov30}
\begin{split}
&\varphi_{\underline{k}}(\xi,\eta,\rho,\sigma)\big[K_1^{9,1}(\eta,\rho,\xi,\si)+K_1^{9,2}(\rho,\xi,\eta,\sigma)]\\
&\simeq \varphi_{\underline{k}}(\xi,\eta,\rho,\sigma)\cdot (-1/2)|\xi|^{1/2}|\rho|\chi(\rho,\xi)[B''(\rho+\xi,\sigma,\eta)+B'''(\eta,\xi,\rho+\sigma)].
\end{split}
\end{equation}
We have two cases: if $\iota(\eta)=\iota(\xi)=-\iota(\sigma)$ then we may assume that $|\sigma|= |\rho+\xi|+|\eta|$ and $|\rho+\sigma|= |\xi|+|\eta|$. The formulas \eqref{slov25}--\eqref{slov26} show that
\begin{equation*}
\begin{split}
&2B''(\rho+\xi,\sigma,\eta)+2B'''(\eta,\xi,\rho+\sigma)=|\sigma|^{1/2}|\rho+\xi|^{1/2}[m_{N_0}^2(\sigma)-m_{N_0}^2(\rho+\xi)]\\
&+(2|\xi|-|\rho+\sigma|)m_{N_0}^2(\xi)-|\rho+\sigma|m_{N_0}^2(\rho+\sigma)+2|\eta|m_{N_0}(\xi)m_{N_0}(\rho+\sigma)+2|\eta|m_{N_0}^2(\eta)\\
&=[|\sigma|^{1/2}|\rho+\xi|^{1/2}-|\xi|][m_{N_0}^2(\sigma)-m_{N_0}^2(\rho+\xi)]-|\eta|[m_{N_0}(\xi)-m_{N_0}(\rho+\sigma)]^2\\
&+|\xi|[m_{N_0}^2(|\sigma|)-m_{N_0}^2(|\sigma|-|\eta|)+m_{N_0}^2(|\xi|)-m_{N_0}^2(|\xi|+|\eta|)]+2|\eta|m_{N_0}^2(\eta),
\end{split}
\end{equation*}
for $(\xi,\eta,\rho,\sigma)\in\H^3_\infty$ in the support of $\varphi_{\underline{k}}$ satisfying $\iota(\eta)=\iota(\xi)=-\iota(\sigma)$.
On the other hand, if $\iota(\eta)=-\iota(\xi)=\iota(\sigma)$ then we may assume that $|\sigma|+|\eta|= |\rho+\xi|$ and $|\rho+\sigma|+|\eta|= |\xi|$. The formulas \eqref{slov25}--\eqref{slov26} show that
\begin{equation*}
\begin{split}
&2B''(\rho+\xi,\sigma,\eta)+2B'''(\eta,\xi,\rho+\sigma)=|\sigma|^{1/2}|\rho+\xi|^{1/2}[m_{N_0}^2(\rho+\xi)-m_{N_0}^2(\sigma)]\\
&+(2|\rho+\sigma|-|\xi|)m_{N_0}^2(\rho+\sigma)-|\xi|m_{N_0}^2(\xi)+2|\eta|m_{N_0}(\rho+\sigma)m_{N_0}(\xi)+2|\eta|m_{N_0}^2(\eta)\\
&=[|\sigma|^{1/2}|\rho+\xi|^{1/2}-|\rho+\sigma|][m_{N_0}^2(\rho+\xi)-m_{N_0}^2(\sigma)]-|\eta|[m_{N_0}(\rho+\sigma)-m_{N_0}(\xi)]^2\\
&+|\rho+\sigma|[m_{N_0}^2(|\sigma|+\eta|)-m_{N_0}^2(|\sigma|)+m_{N_0}^2(|\xi|-|\eta|)-m_{N_0}^2(|\xi|)]+2|\eta|m_{N_0}^2(\eta),
\end{split}
\end{equation*}
for $(\xi,\eta,\rho,\sigma)\in\H^3_\infty$ in the support of $\varphi_{\underline{k}}$ satisfying $\iota(\eta)=-\iota(\xi)=\iota(\sigma)$. These two formulas show that in both cases there is a derivative gain, compensating for the $1/2$ derivative loss in the prefactor in \eqref{slov30}. Therefore 
\begin{equation*}
\varphi_{\underline{k}}(\xi,\eta,\rho,\sigma)\big[K_1^{9,1}(\eta,\rho,\xi,\si)+K_1^{9,2}(\rho,\xi,\eta,\sigma)]\simeq 0.
\end{equation*}

A similar calculation with the roles of $\rho$ and $\eta$ interchanged shows that
\begin{equation*}
\varphi_{\underline{k}}(\xi,\eta,\rho,\sigma)\big[K_1^{9,1}(\rho,\xi,\eta,\sigma)+K_1^{9,2}(\eta,\rho,\xi,\si)]\simeq 0
\end{equation*}
as well, as desired. This completes the analysis of the symbol $\widetilde{K}_1^9$. 

\smallskip
{\bf{The symbol $\widetilde{K}_2^1$.}} We examine the formula \eqref{ku21} and notice that $\varphi_{\underline{k}}\cdot K_2^1$ already satisfies regular bounds similar to \eqref{slov3} if $\underline{k}$ is balanced $4$-tuple. If $\underline{k}$ is imbalanced then we may assume that $k_1\geq k_2\geq k_3$ and consider two cases:

\smallskip
$\bullet$ $k_1,k_2\in[\widetilde{k}_1-4,\widetilde{k}_1]$, $k_3,k_4\leq\widetilde{k}_3\leq \widetilde{k}_1-10$. We calculate
\begin{equation*}
\begin{split}
&\varphi_{\underline{k}}(\xi,\eta,\rho,\sigma)K_2^1(\xi,\eta,\rho,\si)\simeq \varphi_{\underline{k}}(\xi,\eta,\rho,\sigma)\cdot \frac{m_{N_0}^2(\xi)|\xi|^{1/2}|\rho|^{1/2}}{|\eta|^{1/2}}\eta\big[(\rho+\sigma)\chi(\rho,\sigma)-\sigma\chi(\rho,\eta+\sigma)\big],\\
&\varphi_{\underline{k}}(\xi,\eta,\rho,\sigma)K_2^1(\eta,\rho,\xi,\si)\simeq \varphi_{\underline{k}}(\xi,\eta,\rho,\sigma)\cdot \frac{m_{N_0}^2(\eta)|\eta|^{1/2}|\rho|^{1/2}}{|\xi|^{1/2}}\xi\big[(\rho+\sigma)\chi(\rho,\sigma)-\sigma\chi(\rho,\xi+\sigma)\big],\\
&\varphi_{\underline{k}}(\xi,\eta,\rho,\sigma)K_2^1(\rho,\xi,\eta,\si)\simeq 0.
\end{split}
\end{equation*}
Since $\eta=|\eta|\iota(\eta)$, $\xi=|\xi|\iota(\xi)$, and $\iota(\xi)=-\iota(\eta)$, it follows easily that $\varphi_{\underline{k}}\widetilde{K}_2^1\simeq 0$, as desired.

\smallskip
$\bullet$ $k_1,k_4\in[\widetilde{k}_1-4,\widetilde{k}_1]$, $k_2,k_3\leq\widetilde{k}_3\leq \widetilde{k}_1-10$. In this case we have
\begin{equation*}
\varphi_{\underline{k}}(\xi,\eta,\rho,\sigma)K_2^1(\xi,\eta,\rho,\si)\simeq\varphi_{\underline{k}}(\xi,\eta,\rho,\sigma)K_2^1(\eta,\rho,\xi,\si)\simeq\varphi_{\underline{k}}(\xi,\eta,\rho,\sigma)K_2^1(\rho,\xi,\eta,\si)\simeq 0,
\end{equation*}
by examining the formula \eqref{ku21}. The desired conclusion follows.

\smallskip
{\bf{The symbol $\widetilde{K}_2^2$.}} We do not need symmetrization in this case. The formulas \eqref{ku24} and \eqref{ite2} show that
\begin{equation*}
\begin{split}
K_2^2(\xi,\eta,\rho,\sigma)&=m_{N_0}(\xi)[m_{N_0}(\sigma)-m_{N_0}(\si)]\chi(-\xi-\sigma,\sigma)\\
&\times|\xi|^{1/2}|\eta|^{1/2}|\rho|^{1/2}(|\eta+\rho|-|\eta|-|\rho|)[\iota(\eta)\iota(\rho)-1]
\end{split}
\end{equation*}
for any $\xi,\eta,\rho,\sigma)\in\H^3_\infty$. In particular $\varphi_{\underline{k}}\cdot K_2^2$ is nontrivial only if $|k_1-k_4|\leq 2$, and it is easy to see that the symbol is admissible with a possibly singular factor of $|\eta+\rho|=|\xi+\sigma|$.

\smallskip
{\bf{The symbol $\widetilde{K}_2^3$.}} As in the analysis of the symbol $\widetilde{K}_1^8$ we decompose $K_2^3=K_2^{3,1}+K_2^{3,2}$,
\begin{align}\label{slov40}
\begin{split}
& K_2^{3,1}(\xi,\eta,\rho,\si):=6n_2^{ii}(\eta,\rho)A'(\xi,\sigma,\eta+\rho)|\sigma|^{1/2}(|\eta+\rho|+|\xi|-|\sigma|),
\\
& K_2^{3,2}(\xi,\eta,\rho,\si):=3n_2^{ri}(\sigma,\xi)A'(\eta,\rho,\xi+\sigma)|\xi+\sigma|^{1/2}(|\rho|+|\eta|-|\xi+\sigma|).
\end{split}
\end{align}
The relevant formulas are \eqref{Nh}, \eqref{Nii2}, and \eqref{sus30}. We notice that the symbol $n_2^{ii}(\rho,\eta)$ is not singular in $|\rho+\eta|$, and in fact $|\rho+\eta|\approx |\rho|+|\eta|$ in the support of this symbol. If $\underline{k}=(k_1,k_2,k_3,k_4)\in\Z^4$ is a balanced $4$-tuple then there is no derivative loss and it is easy to see that the symbols $\varphi_{\underline{k}}\cdot K^{3,1}_2$ and $\varphi_{\underline{k}}\cdot K^{3,2}_2$ satisfy suitable admissible bounds.

If $\underline{k}=(k_1,k_2,k_3,k_4)\in\Z^4$ is imbalanced then we may assume that $k_1\geq k_2\geq k_3$ and, as before, consider two cases:

\smallskip
$\bullet$ $k_1,k_2\in[\widetilde{k}_1-4,\widetilde{k}_1]$, $k_3,k_4\leq\widetilde{k}_3\leq \widetilde{k}_1-10$. In this case
\begin{equation*}
\begin{split}
\varphi_{\underline{k}}(\xi,\eta,\rho,\sigma)K^{3,1}_2(\xi,\eta,\rho,\sigma)&\simeq \varphi_{\underline{k}}(\xi,\eta,\rho,\sigma)\cdot 3\eta|\rho|^{1/2}\iota(\rho)A'(\xi,\sigma,\eta+\rho)|\sigma|^{1/2}(|\eta|+|\xi|),\\
\varphi_{\underline{k}}(\xi,\eta,\rho,\sigma)K^{3,1}_2(\eta,\rho,\xi,\sigma)&\simeq \varphi_{\underline{k}}(\xi,\eta,\rho,\sigma)\cdot 3\xi|\rho|^{1/2}\iota(\rho)A'(\eta,\sigma,\xi+\rho)|\sigma|^{1/2}(|\eta|+|\xi|),\\
\varphi_{\underline{k}}(\xi,\eta,\rho,\sigma)K^{3,1}_2(\rho,\xi,\eta,\sigma)&=0.
\end{split}
\end{equation*}
Using \eqref{sus31} we have $\varphi_{\underline{k}}\widetilde{K}^{3,1}_2\simeq 0$, as desired. Moreover,
\begin{equation*}
\begin{split}
\varphi_{\underline{k}}(\xi,\eta,\rho,\sigma)K^{3,2}_2(\xi,\eta,\rho,\sigma)\simeq \varphi_{\underline{k}}(\xi,\eta,\rho,\sigma)K^{3,2}_2(\eta,\rho,\xi,\sigma)\simeq\varphi_{\underline{k}}(\xi,\eta,\rho,\sigma)K^{3,2}_2(\rho,\xi,\eta,\sigma)\simeq 0,
\end{split}
\end{equation*}
therefore $\varphi_{\underline{k}}\widetilde{K}^{3,2}_2\simeq 0$, as desired.

\smallskip
$\bullet$ $k_1,k_4\in[\widetilde{k}_1-4,\widetilde{k}_1]$, $k_2,k_3\leq\widetilde{k}_3\leq \widetilde{k}_1-10$. Using \eqref{sus30}--\eqref{sus31} we see that
\begin{equation*}
\begin{split}
\varphi_{\underline{k}}(\xi,\eta,\rho,\sigma)K^{3,1}_2(\xi,\eta,\rho,\sigma)&\simeq 0,\\
\varphi_{\underline{k}}(\xi,\eta,\rho,\sigma)K^{3,1}_2(\eta,\rho,\xi,\sigma)&\simeq \varphi_{\underline{k}}(\xi,\eta,\rho,\sigma)\cdot 3\rho|\rho|^{-1/2}\xi|\sigma|^{1/2}(|\rho+\xi|+|\eta|-|\sigma|)A'(\eta,\sigma,\rho+\xi),\\
\varphi_{\underline{k}}(\xi,\eta,\rho,\sigma)K^{3,1}_2(\rho,\xi,\eta,\sigma)&\simeq\varphi_{\underline{k}}(\xi,\eta,\rho,\sigma)\cdot 3\eta|\eta|^{-1/2}\xi|\sigma|^{1/2}(|\eta+\xi|+|\rho|-|\sigma|)A'(\rho,\sigma,\eta+\xi),
\end{split}
\end{equation*}
and
\begin{equation*}
\begin{split}
\varphi_{\underline{k}}(\xi,\eta,\rho,\sigma)K^{3,2}_2(\xi,\eta,\rho,\sigma)&\simeq 0,\\
\varphi_{\underline{k}}(\xi,\eta,\rho,\sigma)K^{3,2}_2(\eta,\rho,\xi,\sigma)&\simeq \varphi_{\underline{k}}(\xi,\eta,\rho,\sigma)\cdot 3\eta|\eta|^{-1/2}\sigma|\sigma|^{-1/2}A'(\rho,\xi,\eta+\sigma)(|\xi|+|\rho|-|\eta+\sigma|),\\
\varphi_{\underline{k}}(\xi,\eta,\rho,\sigma)K^{3,2}_2(\rho,\xi,\eta,\sigma)&\simeq\varphi_{\underline{k}}(\xi,\eta,\rho,\sigma)\cdot 3\rho|\rho|^{-1/2}\sigma|\sigma|^{-1/2}A'(\eta,\xi,\rho+\sigma)(|\xi|+|\eta|-|\rho+\sigma|).
\end{split}
\end{equation*}
As before, we need to combine these terms in order to avoid a $1/2$ derivative loss. Notice that $|\rho+\xi|+|\eta|-|\sigma|=|\xi|+|\eta|-|\rho+\sigma|=|\eta|+\iota(\xi)(\rho+\xi+\sigma)$ for $(\xi,\eta,\rho,\sigma)\in\H^3_\infty$ in the support of $\varphi_{\underline{k}}$. Using again \eqref{sus31} it follows that $\varphi_{\underline{k}}(\xi,\eta,\rho,\sigma)[K_2^{3,1}(\eta,\rho,\xi,\si)+K_2^{3,2}(\rho,\xi,\eta,\si)]\simeq 0$. Similarly, we have $\varphi_{\underline{k}}(\xi,\eta,\rho,\sigma)[K_2^{3,1}(\rho,\xi,\eta,\si)+K_2^{3,2}(\eta,\rho,\xi,\si)]\simeq 0$, so $\varphi_{\underline{k}}\widetilde{K}_2^3\simeq 0$, as desired.

\smallskip
{\bf{The symbol $\widetilde{K}_2^4$.}} We examine the formula \eqref{ku26} and decompose $K_2^4=K_2^{4,1}+K_2^{4,2}$, where
\begin{equation}\label{sam1}
\begin{split}
& K_2^{4,1}(\xi,\eta,\rho,\si) := -2n_2^{ii}(\eta,\rho)B''(\eta+\rho,\xi,\si),\\
& K_2^{4,2}(\xi,\eta,\rho,\si) := -n_2^{ri}(\sigma,\xi)B''(\eta,\rho,\xi+\sigma).
\end{split}
\end{equation}
The function $B''$, $n_2^{ri}$, and $n_2^{ii}$ are defined in \eqref{slov24}--\eqref{slov25}, \eqref{Nh}, and \eqref{Nii2}.

As in the analysis of the multiplier $\widetilde{K}_1^9$, it is easy to see that $\varphi_{\underline{k}}K_2^{4,1}\simeq 0$ and $\varphi_{\underline{k}}K_2^{4,2}\simeq 0$ if $\underline{k}$ is a balanced $4$-tuple. If $\underline{k}=(k_1,k_2,k_3,k_4)\in\Z^4$ is imbalanced then we may assume that $k_1\geq k_2\geq k_3$ and consider two cases, as before:

\smallskip
$\bullet$ $k_1,k_2\in[\widetilde{k}_1-4,\widetilde{k}_1]$, $k_3,k_4\leq\widetilde{k}_3\leq \widetilde{k}_1-10$. In this case, using \eqref{slov27},
\begin{equation*}
\begin{split}
\varphi_{\underline{k}}(\xi,\eta,\rho,\sigma)K^{4,1}_2(\xi,\eta,\rho,\sigma)&\simeq \varphi_{\underline{k}}(\xi,\eta,\rho,\sigma)\cdot (-1)\iota(\rho)|\rho|^{1/2}\eta B''(\eta+\rho,\xi,\sigma),\\
\varphi_{\underline{k}}(\xi,\eta,\rho,\sigma)K^{4,1}_2(\eta,\rho,\xi,\sigma)&\simeq \varphi_{\underline{k}}(\xi,\eta,\rho,\sigma)\cdot (-1)\iota(\rho)|\rho|^{1/2}\xi B''(\xi+\rho,\eta,\sigma),\\
\varphi_{\underline{k}}(\xi,\eta,\rho,\sigma)K^{4,1}_2(\rho,\xi,\eta,\sigma)&=0,
\end{split}
\end{equation*}
and, using also \eqref{slov24}--\eqref{slov25},
\begin{equation*}
\begin{split}
\varphi_{\underline{k}}(\xi,\eta,\rho,\sigma)K^{4,2}_2(\xi,\eta,\rho,\sigma)\simeq \varphi_{\underline{k}}(\xi,\eta,\rho,\sigma)K^{4,2}_2(\eta,\rho,\xi,\sigma)\simeq\varphi_{\underline{k}}(\xi,\eta,\rho,\sigma)K^{4,2}_2(\rho,\xi,\eta,\sigma)\simeq 0.
\end{split}
\end{equation*}
Therefore, using \eqref{slov27} we have $\varphi_{\underline{k}}\widetilde{K}^{4,1}_2\simeq 0$ and $\varphi_{\underline{k}}\widetilde{K}^{4,2}_2\simeq 0$, as desired.

\smallskip
$\bullet$ $k_1,k_4\in[\widetilde{k}_1-4,\widetilde{k}_1]$, $k_2,k_3\leq\widetilde{k}_3\leq \widetilde{k}_1-10$. Using \eqref{slov24}--\eqref{slov25}  and \eqref{slov27} we see that
\begin{equation*}
\begin{split}
\varphi_{\underline{k}}(\xi,\eta,\rho,\sigma)K^{4,1}_2(\xi,\eta,\rho,\sigma)&\simeq 0,\\
\varphi_{\underline{k}}(\xi,\eta,\rho,\sigma)K^{4,1}_2(\eta,\rho,\xi,\sigma)&\simeq \varphi_{\underline{k}}(\xi,\eta,\rho,\sigma)\cdot (-1)\iota(\rho)|\rho|^{1/2}\xi B''(\xi+\rho,\eta,\sigma),\\
\varphi_{\underline{k}}(\xi,\eta,\rho,\sigma)K^{4,1}_2(\rho,\xi,\eta,\sigma)&\simeq\varphi_{\underline{k}}(\xi,\eta,\rho,\sigma)\cdot (-1)\iota(\eta)|\eta|^{1/2}\xi B''(\xi+\eta,\rho,\sigma),
\end{split}
\end{equation*}
and
\begin{equation*}
\begin{split}
\varphi_{\underline{k}}(\xi,\eta,\rho,\sigma)K^{4,2}_2(\xi,\eta,\rho,\sigma)&\simeq 0,\\
\varphi_{\underline{k}}(\xi,\eta,\rho,\sigma)K^{4,2}_2(\eta,\rho,\xi,\sigma)&\simeq \varphi_{\underline{k}}(\xi,\eta,\rho,\sigma)\cdot (-1)\iota(\eta)|\eta|^{1/2}\sigma B''(\rho,\xi,\eta+\sigma),\\
\varphi_{\underline{k}}(\xi,\eta,\rho,\sigma)K^{4,2}_2(\rho,\xi,\eta,\sigma)&\simeq\varphi_{\underline{k}}(\xi,\eta,\rho,\sigma)\cdot (-1)\iota(\rho)|\rho|^{1/2}\sigma B''(\eta,\xi,\rho+\sigma).
\end{split}
\end{equation*}
As before, we combine these terms in order to avoid a $1/2$ derivative loss. Using again \eqref{slov27} it follows that $\varphi_{\underline{k}}(\xi,\eta,\rho,\sigma)[K_2^{4,1}(\eta,\rho,\xi,\si)+K_2^{4,2}(\rho,\xi,\eta,\si)]\simeq 0$ and $\varphi_{\underline{k}}(\xi,\eta,\rho,\sigma)[K_2^{4,1}(\rho,\xi,\eta,\si)+K_2^{4,2}(\eta,\rho,\xi,\si)]\simeq 0$, so $\varphi_{\underline{k}}\widetilde{K}_2^4\simeq 0$, as desired.

\subsection{Proof of Theorem \ref{IncremBound}}\label{ssecquintic}
Let $(h,\phi)\in C([T_1,T_2]:H^{N_0}\times\dot{H}^{N_0,1/2}$ be a solution of \eqref{gWW} on the time-interval $[T_1,T_2]$, satisfying the assumptions of the theorem.
Recall that we defined
\begin{align}\label{pr0}
\omega = \phi - T_B h, 
  \qquad A(t) = {\| (h+i|\partial_x|^{1/2}\omega)(t) \|}_{H^{N_0}}.
\end{align}
In view of \eqref{BootstrapFull} we have $A(t) \lesssim A$ for all $t\in[T_1,T_2]$.
Recall also the definition of the energy functional $\mathcal{E}_2$ in \eqref{eni3} 
with $J^{N_0}$ the operator whose symbol is defined in \eqref{m_choice}, 
and notice that, as $\delta \rightarrow 0^+$ we have
\begin{align}\label{pr1}
{\| P_{\geq 2} h(t) \|}_{H^{N_0}}^2 
  + {\| |\partial_x|^{1/2}P_{\geq 2}\omega(t) \|}_{H^{N_0}}^2\lesssim \mathcal{E}_2(t) \lesssim A^2(t).
  \end{align}

Combining Lemmas \ref{quartic1}, \ref{quartic2} and \ref{quartic3} we have,
for all $t_1\leq t_2 \in [T_1,T_2]$,
\begin{align}\label{pr3}
\begin{split}
& \big| \mathcal{E}_2(t_2)  - \mathcal{E}_2(t_1) \big| 
  \lesssim \e(t_1)A^2(t_1) + \e(t_2)A^2(t_2) 
  \\
  & + \Big| \int_{t_1}^{t_2} \big[ \mathcal{Q}_1(h^\ast,h^\ast,h^\ast,\omega^\ast)(s)
  + \mathcal{Q}_2(\omega^\ast, \omega^\ast, \omega^\ast, h^\ast)(s)\big] \, ds \Big|
  + A^2 \int_{t_1}^{t_2} \e^3(s) \, ds,
\end{split}
\end{align}
where $\mathcal{Q}_1$ and $\mathcal{Q}_2$ are defined by \eqref{simp1}-\eqref{simp2}.
These expressions are defined by the symbols $\widetilde{K}_1^a$ and $\widetilde{K}_2^b$, and all these symbols are admissible in the sense of Definition \ref{defA} (see Lemma \ref{pus1}). We would like to apply the formulas \eqref{simp1C} and \eqref{simp2C}, and then Lemmas \ref{hoe2} and \ref{hol2}. This is possible provided that we verify the key vanishing condition \eqref{uyt22}:

\begin{proposition}\label{BFprop}
The symmetrized symbol $SK_1-SK_2$ (see definition \eqref{verf1}) vanishes on the set of Benjamin-Feir resonances, i.e.
\begin{align}\label{BFprop2}
\begin{split}
&(SK_1-SK_2)(\xi_1,\xi_2,\xi_3,\xi_4)=0\qquad\text{ provided that } \xi=(\xi_1,\xi_2,\xi_3.\xi_4)\in\H_\infty^3\text{ satisfies}\\
&\xi_1\in(-\infty,0),\,\xi_2,\xi_3,\xi_4\in(0,\infty)\text{ and }|\xi_1|^{1/2}+|\xi_2|^{1/2}=|\xi_3|^{1/2}+|\xi_4|^{1/2}.
\end{split}
\end{align}
\end{proposition}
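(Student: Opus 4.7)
The plan is a direct algebraic verification on the parametrized Benjamin-Feir manifold. Using \eqref{BFres} I would substitute
\begin{equation*}
\xi_1=-\lambda(b^2+b+1)^2,\quad \xi_2=\lambda b^2,\quad \xi_3=\lambda(b+1)^2,\quad \xi_4=\lambda(b+1)^2b^2,\qquad \lambda>0,\,b>0,
\end{equation*}
into the explicit formulas \eqref{ku11}--\eqref{ku26} for the $\widetilde{K}_i^a$, and also into the permuted branches forced by the symmetries \eqref{verf2} (effectively, the configuration obtained by interchanging $\xi_2$ and $\xi_3$). On this manifold $\iota(\xi_1)=-1$ and $\iota(\xi_2)=\iota(\xi_3)=\iota(\xi_4)=1$; each of the six pairwise sums $\xi_i+\xi_j$ has a definite sign (e.g.\ $\xi_1+\xi_j<0$ for $j\in\{2,3,4\}$ while $\xi_i+\xi_j>0$ for $2\leq i<j\leq 4$); and the resonance identity $|\xi_1|^{1/2}+|\xi_2|^{1/2}=|\xi_3|^{1/2}+|\xi_4|^{1/2}=(b+1)^2\sqrt{\lambda}$ is the scalar relation that must drive the final cancellation.

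I would then compute $SK_1$ and $SK_2$ in two stages, organized by origin. The first stage handles the ``primary'' pieces $\widetilde{K}_1^1,\ldots,\widetilde{K}_1^7$ and $\widetilde{K}_2^1,\widetilde{K}_2^2$, which come directly from the first normal form of the quartic bulks $\mathcal{B}_{\geq 4}^1,\ldots,\mathcal{B}_{\geq 4}^9$ in Lemma \ref{quartic1} and involve only the weights $m_{N_0}$ and paraproduct cutoffs $\chi$; the simplification here is bookkeeping-intensive but mechanical once the signs are fixed. The second stage handles the ``secondary'' pieces $\widetilde{K}_1^8,\widetilde{K}_1^9,\widetilde{K}_2^3,\widetilde{K}_2^4$, which arise from the cubic-resonance-cancelling energy corrections $\mathcal{E}^2_{\geq 3},\mathcal{E}^3_{\geq 3}$ of \eqref{eni40}--\eqref{eni41} and involve the reciprocal symbols $A'=A/D$, $B'=B/D$, $B''$, $B'''$; on the BF manifold their piecewise structure \eqref{slov22}--\eqref{slov26} collapses because the ordering of the $|\xi_j|$ is determined by $b$, leaving explicit rational expressions in $(\lambda,b)$. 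Applying the full symmetrization \eqref{verf1} then reduces $SK_1-SK_2$ to a polynomial identity in $(\lambda,b)$ that must be checked to vanish.

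The main obstacle is simply the length, not the conceptual content, of this computation, as the authors themselves stress. Each $\widetilde{K}_i^a$ is a sum of three permuted terms via \eqref{Ksymm}, and \eqref{verf1} multiplies each term by a further factor of eight, so several hundred atomic summands must be tracked; crucially, a number of the cutoffs $\chi$ remain genuinely active in the degenerate limits $b\to 0^+$ and $b\to+\infty$ where some of the frequencies become much smaller than others, so they cannot be discarded wholesale. The conceptual justification for expecting the identity is that the first normal form is designed to conjugate the quartic Hamiltonian to the derivative-loss-free part of the Dyachenko-Zakharov / Craig-Worfolk integrable normal form, whose quartic symbol is classically known to vanish at Benjamin-Feir resonances; Proposition \ref{BFprop} asserts that our paradifferential/good-unknown realization of this transformation preserves that classical cancellation exactly at the level of the fully symmetrized bulk symbols $SK_1-SK_2$.
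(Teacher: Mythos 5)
Your strategy is the same as the paper's: a direct, fully symmetrized algebraic verification of $SK_1-SK_2=0$ on the resonance set, with the terms grouped by their origin (the primary bulks versus the $A'$, $B'$, $B''$, $B'''$ corrections coming from $\mathcal{E}^2_{\geq 3},\mathcal{E}^3_{\geq 3}$), so the proposal is correct in substance. Two organizational differences are worth noting. First, instead of substituting the rational parametrization \eqref{BFres}, the paper works directly in the half-power variables $x_j=|y_j|^{1/2}$ and uses the two constraints $x_1=x_3+x_4-x_2$ and $x_2x_3+x_2x_4=x_3x_4$ (its \eqref{sumg3}), after noting that on the resonance set the ordering and comparability conditions $|y_1|\geq y_3\geq y_4\geq y_2$, $y_3\geq |y_1|/4$, $y_2\geq y_4/4$ hold automatically; this keeps every symbol polynomial of low degree in the $x_j$, whereas in $(\lambda,b)$ the degrees balloon. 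Second, and more importantly, your stated endpoint of ``a polynomial identity in $(\lambda,b)$'' is not quite well-posed: the cutoffs $\chi_{i,j}$ are not constant on the resonance manifold and take intermediate values in transition regions of $b$, so one cannot evaluate them. The paper resolves this by treating the nonvanishing $\chi$'s as indeterminates, expanding $SK_1-SK_2$ as $\sum X_j m_{(\cdot)}^2+\sum Y_j m_{(\cdot)}m_{(\cdot)}$ with each $X_j,Y_j$ a polynomial in the $\chi$'s, and checking that the coefficient of \emph{every} $\chi$-monomial vanishes separately under \eqref{sumg3}. Your computation would have to be organized the same way (or, equivalently, carried out region by region in $b$ including the transition regions, which amounts to the same coefficientwise check), so you should build that into the plan rather than aiming for a single scalar identity in $(\lambda,b)$.
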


We will prove this proposition in section \ref{uyt22sec}, by explicit calculations. Assuming this, we can apply  Lemmas \ref{hoe2} and \ref{hol2}, and the inequality \eqref{pr3} we conclude that
\begin{align}\label{pr5}
\begin{split}
\mathcal{E}_2(t_2) \lesssim \mathcal{E}_2(t_1) + \e(t_1)A^2(t_1) + \e(t_2)A^2(t_2)
  + A^2 \int_{t_1}^{t_2} \e^3(s) \, ds.
\end{split}
\end{align}

To conclude the proof we need to include low frequencies
using the Hamiltonian \eqref{gWWHam} which is conserved. Under our assumptions
on $h$ in \eqref{BootstrapFull}, 
for all $t\in[T_1,T_2]$ we have
\begin{align}\label{prH}
{\| h(t) \|}_{L^2}^2 + {\| |\partial_x|^{1/2}\phi(t) \|}_{L^2}^2 
  \lesssim \mathcal{H}(h,\phi)(t) \lesssim A^2(t).
\end{align}
In particular, by the definition of $\omega$, for all $t_1\leq t_2 \in [T_1,T_2]$,
\begin{align}\label{pr6}
\begin{split}
& {\| P_{\leq 4} h(t_2) \|}_{H^{N_0}}^2 + {\| P_{\leq 4}|\partial_x|^{1/2}\omega(t_2) \|}_{H^{N_0}}^2
  \lesssim {\| h(t_2) \|}_{L^2}^2 + {\| |\partial_x|^{1/2}\phi(t_2) \|}_{L^2}^2 \lesssim A^2(t_1).
\end{split}
\end{align}
Using \eqref{pr1}, \eqref{pr5}, \eqref{pr6} we obtain
\begin{align}\label{pr10}
\begin{split}
{\| h(t_2) \|}_{H^{N_0}}^2 + {\| |\partial_x|^{1/2}\omega(t_2) \|}_{H^{N_0}}^2
  & \lesssim A^2(t_1) + \e(t_2) A^2(t_2)
  + A^2 \int_{t_1}^{t_2} \e^3(s) \, ds,
\end{split}
\end{align}
which gives the desired bounds \eqref{MainGrowth}.


\medskip
\section{Long-term regularity: proof of Theorem \ref{LongTermDet}}\label{Deterministic}

In this section we prove Theorem \ref{LongTermDet} on the long-term regularity of the system \eqref{gWW} with small initial-data. We provide the argument in the (harder) case of finite $R$, but the argument applies with almost no changes in the Euclidean case $R=\infty$ as well (the Euclidean Strichartz estimates in Lemma \ref{LemStrichartz} are global $T=\infty$, and the point is that all the estimates hold uniformly in $R$).

\subsection{Integration by parts and Strichartz estimates}\label{Strc}
We use sometimes integration by parts to estimate oscillatory integrals with non-degenerate phases, according to the following:

\begin{lemma}\label{tech5} Assume that $0<\eps\leq 1/\eps\leq K$, $N\geq 1$ is an integer, and $f,g\in C^N_0(\mathbb{R}^d)$, $d\geq 1$. Then
\begin{equation}\label{ln1}
\Big|\int_{\mathbb{R}^d}e^{iKf}g\,dx\Big|\lesssim_N (K\eps)^{-N}\big[\sum_{|\alpha|\leq N}\eps^{|\alpha|}\|D^\alpha_xg\|_{L^1}\big],
\end{equation}
provided that $f$ is real-valued, 
\begin{equation}\label{ln2}
|\nabla_x f|\geq \mathbf{1}_{{\mathrm{supp}}\,g},\quad\text{ and }\quad\|D_x^\alpha f \cdot\mathbf{1}_{{\mathrm{supp}}\,g}\|_{L^\infty}\lesssim_N\eps^{1-|\alpha|},\,2\leq |\alpha|\leq N+1.
\end{equation}
\end{lemma}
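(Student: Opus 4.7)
\textbf{Proof plan for Lemma \ref{tech5}.} This is a standard non-stationary phase estimate, and the plan is to iterate an adjoint integration-by-parts operator $N$ times. Define
\[
L g := \frac{1}{iK} \nabla \cdot \Big( \frac{\nabla f}{|\nabla f|^2}\, g \Big),
\]
so that $L^\ast(e^{iKf}) = e^{iKf}$ in the sense that integration by parts against a test function $g$ supported where $|\nabla f| \geq 1$ gives
\[
\int_{\mathbb{R}^d} e^{iKf} g \, dx = \int_{\mathbb{R}^d} e^{iKf} (Lg) \, dx.
\]
Iterating $N$ times yields $\int e^{iKf} g \, dx = \int e^{iKf}\, L^N g\, dx$, and the claim reduces to the pointwise/$L^1$ estimate
\[
\| L^N g \|_{L^1} \lesssim_N (K\varepsilon)^{-N} \sum_{|\alpha|\leq N} \varepsilon^{|\alpha|} \| D^\alpha_x g \|_{L^1}.
\]

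The core estimate is to control the coefficients arising from $L^N$. First I would establish, by induction on $|\beta|$ and repeated use of the quotient rule, that for every multi-index $\beta$ with $|\beta|\leq N$,
\[
\Big| D^\beta_x \Big( \frac{\partial_i f}{|\nabla f|^2} \Big) \cdot \mathbf{1}_{\mathrm{supp}\, g} \Big| \lesssim_\beta \varepsilon^{-|\beta|}\qquad\text{on }\mathrm{supp}\,g,
\]
using $|\nabla f|\geq 1$ and $|D^\alpha f| \lesssim \varepsilon^{1-|\alpha|}$ for $2\leq|\alpha|\leq N+1$ (the worst case is when all derivatives fall on a single copy of $f$, giving $\varepsilon^{1-(|\beta|+1)}=\varepsilon^{-|\beta|}$, which cancels any potential gain from factors with more than one $f$-derivative since the constraint $|\nabla f|\geq 1$ means further powers of $|\nabla f|^{-1}$ are harmless). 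Expanding $L^N g$ by Leibniz, each of the $N$ derivatives either lands on one of the coefficients $\partial_i f/|\nabla f|^2$ (contributing $\varepsilon^{-1}$ that combines with the prefactor $K^{-1}$ to give $(K\varepsilon)^{-1}$) or lands on $g$ (producing one of the $\|D^\alpha g\|_{L^1}$ terms, weighted by the remaining $(K\varepsilon)^{-(N-|\alpha|)}$ and by $\varepsilon^{-(N-|\alpha|)}\cdot K^{-N}$ bookkeeping that rearranges to $\varepsilon^{|\alpha|}(K\varepsilon)^{-N}$).

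Summing over the at most $C_N$ terms produced by Leibniz and integrating in $x$ yields
\[
\int |L^N g| \, dx \lesssim_N (K\varepsilon)^{-N} \sum_{|\alpha|\leq N} \varepsilon^{|\alpha|} \| D^\alpha_x g\|_{L^1},
\]
which combined with the boundedness of $e^{iKf}$ gives \eqref{ln1}. The only mildly delicate step is the induction establishing $|D^\beta (\partial_i f/|\nabla f|^2)| \lesssim \varepsilon^{-|\beta|}$ cleanly; this is pure bookkeeping via the Fa\`a di Bruno formula (or a direct induction) and is the place where the hypothesis on derivatives of $f$ up to order $N+1$, rather than $N$, is used (since $L$ already contains one derivative of $f$, so $L^N$ involves $N+1$ derivatives on $f$). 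No other obstacle arises, as the argument is purely local on $\mathrm{supp}\,g$.
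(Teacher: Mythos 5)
Your proof is correct and is the standard non-stationary-phase argument; the paper states Lemma \ref{tech5} without proof, and this is exactly the intended route (iterate the adjoint of $L$, then bound the coefficients of $L^N g$ using $|\nabla f|\ge 1$ and the derivative bounds on $f$ up to order $N+1$). The one point worth making explicit in your induction is that $|\nabla f|$ has no a priori upper bound, so the first-order factors $\partial_k f$ that the quotient rule places in the numerators must each be absorbed by the accompanying extra powers of $|\nabla f|^{-2}$ in the denominators (each term of $D^\beta(\partial_i f/|\nabla f|^2)$ has at least as many denominator powers of $|\nabla f|$ as numerator factors of order one); once that invariant is tracked, the bookkeeping closes and yields $(K\varepsilon)^{-N}\sum_{|\alpha|\le N}\varepsilon^{|\alpha|}\|D^\alpha_x g\|_{L^1}$ as you state.
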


Let $\Lambda(\xi):=|\xi|^{1/2}$ denote the dispersion relation associated to gravity water waves. The proof of Theorem \ref{LongTermDet} is based on the following Strichartz estimates on the torus $\mathbb{T}_R$:

\begin{lemma}\label{LemStrichartz}
If $R\geq 1$, $k\in\mathbb{Z}$ and $T\leq R2^{k^-/2}$ then, for any $f\in L^2(\mathbb{T}_R)$,
\begin{equation}\label{Strc2}
\|e^{-it\Lambda} P_kf\|_{L^4_tL^\infty_x(\mathbb{T}_R\times [-T,T])}\lesssim 2^{3k/8}\|f\|_{L^2(\mathbb{T}_R)}.
\end{equation}
\end{lemma}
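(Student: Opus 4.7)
The plan is to reduce the Strichartz bound \eqref{Strc2} to a dispersive estimate on the convolution kernel of $e^{-it\Lambda}P_k$ on $\T_R$, and then close the argument by a standard $TT^*$ argument combined with 1D Hardy--Littlewood--Sobolev.

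First I would apply the Poisson summation formula exactly as in \eqref{Pkconv2} to write the kernel of $e^{-it\Lambda} P_k$ on $\T_R$ as a superposition of translates of its Euclidean analogue:
\begin{equation*}
K^{\T_R}_{k,t}(x) = \sum_{n\in\Z} K^{\R}_{k,t}(x-2\pi nR),
\qquad K^{\R}_{k,t}(y) := \frac{1}{2\pi}\int_{\R}\varphi_k(\xi)\, e^{-it|\xi|^{1/2}+iy\xi}\,d\xi.
\end{equation*}
The goal is to prove the dispersive bound
\begin{equation}\label{Stplan1}
\|K^{\T_R}_{k,t}\|_{L^\infty(\T_R)}\lesssim \min\!\big(2^k,\;|t|^{-1/2}2^{3k/4}\big)\qquad\text{ for }|t|\leq T\leq R2^{k^-/2}.
\end{equation}
For $K^{\R}_{k,t}$ the phase $\phi(\xi)=-t|\xi|^{1/2}+y\xi$ satisfies $|\phi''(\xi)|\sim |t|2^{-3k/2}$ on $\mathrm{supp}\,\varphi_k$, so van der Corput's lemma yields $|K^{\R}_{k,t}(y)|\lesssim |t|^{-1/2}2^{3k/4}$ uniformly in $y$; combined with the trivial estimate $\|K^{\R}_{k,t}\|_{L^\infty}\lesssim 2^k$ this controls the $n=0$ translate. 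For $|y|\gtrsim |t|2^{-k/2}$ one instead has $|\phi'(\xi)|\gtrsim |y|$ on $\mathrm{supp}\,\varphi_k$ while $|\phi^{(j)}(\xi)|\lesssim |t|2^{k/2-jk}$, so Lemma \ref{tech5} applied with $K=|y|$ and $\varepsilon=2^{-k}$ produces the rapid decay
\begin{equation*}
|K^{\R}_{k,t}(y)|\lesssim_N 2^k\big(1+2^k|y|\big)^{-N}, \qquad N\geq 0.
\end{equation*}

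Next I would use this dichotomy on the periodic sum. Representing $x\in\T_R$ by its representative in $[-\pi R,\pi R]$, one has $|x-2\pi nR|\geq \pi R|n|$ for $n\neq 0$. The hypothesis $T\leq R 2^{k^-/2}$ is precisely what guarantees $|t|2^{-k/2}\leq R$, so every non-zero translate lies in the rapid-decay region of $K^{\R}_{k,t}$. Using also $2^kR\gtrsim 1$ (from $P_k\neq 0$), the tail is bounded by
\begin{equation*}
\sum_{n\neq 0}2^k\big(1+2^kR|n|\big)^{-N}\lesssim 2^k(2^kR)^{-N},
\end{equation*}
which for $N$ large is dominated by the stationary-phase bound $|t|^{-1/2}2^{3k/4}$ when $|t|\gtrsim 2^{-k/2}$ and by $2^k$ otherwise, yielding \eqref{Stplan1}. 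The delicate point of the proof sits here: the specific threshold $T\leq R 2^{k^-/2}$ (rather than, say, $T\leq R 2^{k/2}$) arises from the combined requirement that all non-zero translates $x-2\pi nR$ fall outside the stationary band $|y|\lesssim |t|2^{-k/2}$, and some case analysis distinguishing $k\geq 0$ from $k\leq 0$ will be needed to verify this.

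Finally, I would convert \eqref{Stplan1} into \eqref{Strc2} by a $TT^*$ argument. The operator $T f(t,x):=(e^{-it\Lambda}P_kf)(x)$, viewed from $L^2(\T_R)$ into $L^4_{[-T,T]}L^\infty_x$, has adjoint $T^*g=\int_{-T}^T e^{is\Lambda}P_k g(s,\cdot)\,ds$, so $TT^*$ is the space-time convolution with $K^{\T_R}_{k,t-s}$. Minkowski's inequality and \eqref{Stplan1} yield
\begin{equation*}
\|TT^*g(t,\cdot)\|_{L^\infty_x}\lesssim 2^{3k/4}\int_{-T}^{T}|t-s|^{-1/2}\|g(s,\cdot)\|_{L^1_x}\,ds,
\end{equation*}
and the 1D Hardy--Littlewood--Sobolev inequality (with exponents $1/(4/3)-1/4=1/2$) gives $\|TT^*\|_{L^{4/3}_tL^1_x\to L^4_tL^\infty_x}\lesssim 2^{3k/4}$. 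Taking square roots yields $\|T\|_{L^2\to L^4_tL^\infty_x}\lesssim 2^{3k/8}$, which is the claimed bound.
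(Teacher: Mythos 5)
Your proposal is correct and follows essentially the same route as the paper: Poisson summation to reduce the periodic kernel to its Euclidean analogue, stationary phase for the main term and non-stationary phase (Lemma \ref{tech5}) for the translates — with $T\leq R2^{k^-/2}$ ensuring these fall outside the stationary band — and then $TT^*$ with Hardy--Littlewood--Sobolev. The only organizational difference is that the paper performs the duality/$TT^*$ reduction first and also disposes of the regime $2^kR\lesssim 1$ upfront by Sobolev embedding, which sidesteps the borderline constant-chasing you flag for the nonzero translates.
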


\begin{proof}
We adapt the standard proof of Strichartz estimates to our large box setting. Using Sobolev embedding
\begin{equation*}
\|e^{-it\Lambda} P_kf\|_{L^4_tL^\infty_x(\mathbb{T}_R\times [-T,T])}\lesssim T^{1/4}2^{k/2}\|f\|_{L^2(\mathbb{T}_R)},
\end{equation*}
so the desired bounds \eqref{Strc2} follow if $2^k\lesssim R^{-1}$. So we may assume that $R2^k\geq 2^{10}$. Let $K_k$ denote the kernel of the operator $e^{-it\Lambda}P_k$, i.e.
\begin{equation}\label{Strc3}
\begin{split}
&e^{-it\Lambda}P_kf(x)=\int_{\mathbb{T}_R}f(y)K_k(x-y,t),\\
&K_k(z,t):=\frac{1}{2\pi R}\sum_{\xi\in\mathbb{Z}/R}\varphi_k(\xi)e^{-it\Lambda(\xi)}e^{i\xi z}.
\end{split}
\end{equation}
Then
\begin{equation*}
\|e^{-it\Lambda} P_kf\|_{L^4_tL^\infty_x(\mathbb{T}_R\times [-T,T])}\lesssim\sup_{\|g\|_{L^{4/3}_tL^1_x(\mathbb{T}_R\times [-T,T])}\leq 1}\Big|\int_{\mathbb{T}_R^2\times [-T,T]}f(y)K_k(x-y,t)g(x,t)\,dtdxdy\Big|.
\end{equation*}
Therefore, for \eqref{Strc2} it suffices to prove that
\begin{equation}\label{Strc4}
\|Ig\|_{L^2_y}\lesssim 2^{3k/8}\|g\|_{L^{4/3}_tL^1_x},\qquad \text{ where }\,\,Ig(y):=\int_{\mathbb{T}_R\times [-T,T]}K_k(x-y,t)g(x,t)\,dtdx.
\end{equation}

The formula for the kernel $K_k$ in \eqref{Strc3} shows that
\begin{equation}\label{Strc5}
\begin{split}
&\int_{\mathbb{T}_R}K_k(x-y,t)\overline{K_k(z-y,s)}\,dy=K'_k(x-z,t-s),\\
&K'_k(w,t):=\frac{1}{2\pi R}\sum_{\xi\in\mathbb{Z}/R}\varphi^2_k(\xi)e^{-it\Lambda(\xi)}e^{i\xi w},
\end{split}
\end{equation}
for any $x,z\in\mathbb{T}_R$ and $t,s\in\mathbb{R}$. Then
\begin{equation*}
\int_{\mathbb{T}_R}|Ig(y)|^2\,dy=\int_{\mathbb{T}^2_R\times [-T,T]^2}g(x,t)\overline{g(z,s)}K'_k(x-z,t-s)\,dtdsdxdz.
\end{equation*}
By fractional integration, for \eqref{Strc4} it suffices to prove that
\begin{equation}\label{Strc7}
|K'_k(w,t)|\lesssim 2^{3k/4}|t|^{-1/2}\qquad\text{ for any }t\in[-2T,2T]\text{ and }w\in\mathbb{T}_R.
\end{equation}

To prove the dispersive bounds \eqref{Strc7} we need to use assumption $T\leq R2^{k^-/2}$. In view of the definition \eqref{Strc5} it suffices to show that
\begin{equation}\label{Strc9}
\frac{1}{2\pi R}\Big|\sum_{n\in\mathbb{Z}}\varphi^2_0(n/(2^kR))e^{-itR^{-1/2}\sqrt{|n|}}e^{in w}\Big|\lesssim 2^{3k/4}|t|^{-1/2}
\end{equation}
for any $t\in[-2T,2T]$ and $w\in[0,2\pi]$. We use the Poisson summation formula for the function $f(x):=(2\pi R)^{-1}\varphi^2_0(x/(2^kR))e^{-itR^{-1/2}\sqrt{|x|}}e^{ix w}$, thus
\begin{equation}\label{Strc9.5}
\frac{1}{2\pi R}\sum_{n\in\mathbb{Z}}\varphi^2_0(n/(2^kR))e^{-itR^{-1/2}\sqrt{|n|}}e^{in w}=\sum_{n\in\mathbb{Z}}\widehat{f}(2\pi n),
\end{equation}
where $\widehat{f}$ denotes the Fourier transform of $f$ on $\mathbb{R}$. We write now
\begin{equation}\label{Strc10}
\begin{split}
\widehat{f}(\xi)&=\frac{1}{2\pi R}\int_{\mathbb{R}}\varphi^2_0(x/(2^kR))e^{-itR^{-1/2}\sqrt{|x|}}e^{-ix (\xi-w)}\,dx\\
&=\frac{2^k}{2\pi}\int_{\mathbb{R}}\varphi^2_0(x)e^{-i(t2^{k/2})\sqrt{|x|}}e^{-i(2^kR) (\xi-w)x}\,dx.
\end{split}
\end{equation}
If $|n|\geq 10$ then we recall that $|t2^{k/2}|\leq 2R2^k$ and $|w|\leq 2\pi$, and use integration by parts (Lemma \ref{tech5} with $K\approx r^kR$, $\eps\approx 1$) to see that $|\widehat{f}(2\pi n)|\lesssim 2^k(2^kR)^{-2}n^{-2}$. On the other hand, if $|n|\leq 10$ then we use general stationary phase estimates to show that $|\widehat{f}(2\pi n)|\lesssim 2^k|t2^{k/2}|^{-1/2}$. The desired bounds \eqref{Strc9} follow using also \eqref{Strc9.5}.
\end{proof}

\subsection{Proof of Theorem \ref{LongTermDet}}\label{LongTermDetpr}
Given a solution of \eqref{gWW} with data as in \eqref{mai1}, 
we use the Strichartz estimate \ref{Strc2} and the energy estimate \eqref{MainGrowth}
to prove the following bootstrap argument, which implies Theorem \ref{LongTermDet}: let $N_1:=N_0-3\geq 4$,
$$u := h+i|\partial_x|^{1/2}\omega, \qquad \omega = \phi - T_Bh,$$ 
and assume that $u\in C([0,T]:H^{N_0})$, $T\leq T_{R,\varep} := c_0^2 \e^{-3} \min \big( \e^{-3}, R^{3/4} \big)$, is a solution satisfying
\begin{align}\label{mai3'}
\begin{split}
\sup_{t\in[0,T]}\big[{\big\|u(t) \big\|}_{H^{N_0}}+\big\|\,|\partial_x|^{1/2}\phi(t) \big\|_{H^{N_0-1/2}}\big] &\leq 2C_0\e,
  \qquad ,
\\
\sup_{t_1\leq t_2\in[0,T],\,|t_2-t_1|\leq T_S}\|\delta(.)\|_{L^4_{[t_1,t_2]}}&\leq 2C^2_0 \e,\\
\delta(s):=\big\|u(s) \big\|_{\dot{W}^{N_1,-1/4}_x}&+\big\|\,|\partial_x|^{1/2}\phi(s)\big\|_{\dot{W}^{N_1-1/2,-1/4}_x},
\end{split}
\end{align} 
where $C_0\gg 1$ and $c_0=C_0^{-20}\ll 1$ are suitable constants, and $T_S := c_0\min(\e^{-4},R)$. Then
\begin{align}
\label{mai3conc1}
\sup_{t\in[0,T]}\big[{\big\|u(t) \big\|}_{H^{N_0}}+\big\|\,|\partial_x|^{1/2}\phi(t) \big\|_{H^{N_0-1/2}}\big]&\leq C_0\e,
\\
\label{mai3conc2}
\sup_{t_1\leq t_2\in[0,T],\,|t_2-t_1|\leq T_S} \|\delta(.)\|_{L^4_{[t_1,t_2]}}&\leq C^2_0 \e.
\end{align}

\subsubsection*{Proof of \eqref{mai3conc1}}
With $\overline{C}$ denoting sufficiently large constants (depending only on the implicit constants in \eqref{MainGrowth} and \eqref{Strc}, it follows from \eqref{MainGrowth} that for any $t\in[0,T]$
\begin{align*}
{\| u (t) \|}_{H^{N_0}}^2 & \leq \overline{C}\e^2 + \overline{C}(C_0\e)^2 \int_0^t [\delta(s)]^3 \, ds
\\ 
& \leq \overline{C}\e^2 + \overline{C}(C_0\e)^2 \cdot T^{1/4} 
  \cdot {\|\delta(.)\|}_{L^4_{[0,T]}}^3
\\ 
& \leq \overline{C}\big[\e^2 + (C_0\e)^2 \cdot T_{R,\e}^{1/4} 
  \cdot \big( (T_{R,\e}/T_S) \cdot (C_0^2\e)^4 \big)^{3/4} \big]
\\
& \leq \overline{C}\big[ \e^2 + C_0^8\cdot \e^5 \cdot T_{R,\e} \cdot T_S^{-3/4} \big].
\end{align*}
For the last inequality we have split the time integral over $[0,T]$
into at most $T_{R,\e}/T_S$ many intervals of size $T_S$ and used the assumption on the Strichartz norm \eqref{mai3'}.
The desired bounds on $u$ in \eqref{mai3conc1} follow by choosing $C_0$ large enough and then $c_0=C_0^{-20}$ small enough.

The desired bounds on $|\partial_x|^{1/2}\phi$ follow if $\varepsilon$ is sufficiently small, since $|\partial_x|^{1/2}\phi=\Im(u)+|\partial_x|^{1/2}T_Bh$ and $\|B(s)\|_{\dot{W}^{N_0-7/4,-1/4}_x}\leq\overline{C}C_0\varepsilon$ for any $s\in[0,T]$ (due to \eqref{DefBV2} and Sobolev embedding).

\subsubsection*{Proof of \eqref{mai3conc2}}
We first need to introduce a new variable satisfying a cubic equation.
Let 
\begin{align}\label{defv}
v:= h+i|\partial_x|^{1/2}\phi + \frac{1}{2}|\partial_x|\Big( \frac{\partial_x}{|\partial_x|}h \cdot \frac{\partial_x}{|\partial_x|} h\Big)
  + i\frac{\partial_x}{|\partial_x|^{1/2}} \Big( \frac{\partial_x}{|\partial_x|} h\cdot |\partial_x|\phi \Big),
\end{align}
compare with \cite[Lemma 5.1]{IoPu2}. It follows from the assumptions \eqref{mai3'} and Sobolev embedding that
\begin{equation}\label{pac1}
\|h(s)\|_{\dot{W}^{N_0-3/4.-1/4}}+\|\,|\partial_x|^{1/2}\phi(s)\|_{\dot{W}^{N_0-5/4.-1/4}}+\|B(s)\|_{\dot{W}^{N_0-7/4.-1/4}}\leq\overline{C}C_0\varepsilon,
\end{equation}
for any $s\in[0,T]$. The definition \eqref{defv} and the product estimates in Lemma \ref{algeProp} show that
\begin{align}\label{Struv0}
\begin{split}
{\| (u-v)(t)\|}_{{\dot{W}^{N_1,-1/4}_x}} \leq  \overline{C}C_0\e \delta(t)
\end{split}
\end{align}
for all $t\in[0,T]$, Therefore, if $\e$ is sufficiently small and $I\subseteq [0,T]$ is an interval with $|I| \leq T_S$,
\begin{align}\label{Struv}
\begin{split}
{\| u\big\|}_{L^4_t(I){\dot{W}^{N_1,-1/4}_x}}
  \leq {\| v\big\|}_{L^4_t(I){\dot{W}^{N_1,-1/4}_x}}+C^2_0\varepsilon/4.
\end{split}
\end{align}

On the other hand, the point of the quadratic correction in definition \eqref{defv} is that the variable $v$ solves a cubic equation
\begin{align}\label{cubicv}
(\partial_t + i |\partial_x|^{1/2})v = \mathcal{N}_3,
\end{align}
where $ \mathcal{N}_3$ is a cubic and higher order nonlinearity in $(h,\phi)$ that
satisfies the bounds
\begin{align}\label{cubicN0}
{\big\|\mathcal{N}_3(t)\big\|}_{H^{N_0-2}} \leq\overline{C}
  \big({\| h(t)\|}_{H^{N_0}}+\|\,|\partial_x|^{1/2}\phi(t)\|_{H^{N_0-1/2}}\big)\delta(t)^2\leq\overline{C}(C_0\e)\delta(t)^2
\end{align}
for any $t\in[0,T]$. Indeed, this follows easily from \eqref{expand0} and \eqref{expand2},  or from the analysis in \cite[Section 5]{IoPu2}. 
In particular, for any intervals $I\subseteq [0,T]$ with $|I|\leq T_S$ we have
\begin{align}\label{cubicN}
{\big\|\mathcal{N}_3 \big\|}_{L^2_{t}(I)H^{N_0-2}_x} \leq\overline{C}C_0^5\e^3.
\end{align}

We apply the Strichartz estimate \eqref{Strc2} to the Duhamel formula associated to \eqref{cubicv}:
for any $I=[t_1,t_2] \subseteq [0,T]$, with $|I| \leq T_S$, we have
\begin{align*}
{\| v \|}_{L^4_t(I)\dot{W}^{N_1,-1/4}_x}
  & \leq \overline{C}{\| v(t_1) \|}_{H^{N_1+1/2}}
  + \overline{C}{\Big\| \int_{t_1}^{t} e^{i(t-s)|\partial_x|^{1/2}} \mathcal{N}_3(s)\,ds \Big\|}_{L^4_t(I)\dot{W}^{N_1,-1/4}_x}
  \\
  \nonumber
  & \leq\overline{C}{\| u(t_1) \|}_{H^{N_0}}
  + \overline{C}{\big\| \mathcal{N}_3 \big\|}_{L^{1}_t(I)H^{N_1+1/2}_x}
  \\
  \nonumber
  & \leq \overline{C}C_0\e + \overline{C}|I|^{1/2} {\big\| \mathcal{N}_3 \big\|}_{L^{2}_t(I)H^{N_0-2}_x}  \\
  & \leq \overline{C}C_0\e + \overline{C}|T_S|^{1/2} \cdot C_0^5 \e^3,
\end{align*}
having used \eqref{cubicN} for the last inequality.
Since $T_S \leq c_0\e^{-4}$ and $c_0=C_0^{-20}$ is small enough,
and using also \eqref{Struv}, we obtain
\begin{equation*}
{\| u\big\|}_{L^4_t(I){\dot{W}^{N_1,-1/4}_x}}+{\| v\big\|}_{L^4_t(I){\dot{W}^{N_1,-1/4}_x}}
  \leq C^2_0\varepsilon/2,
  \end{equation*}
for any interval $I\subseteq [0,T]$ with $|I| \leq T_S$. The desired bounds \eqref{mai3conc2} follow using also \eqref{pac1}, since $|\partial_x|^{1/2}\phi=\Im(u)+|\partial_x|^{1/2}T_Bh$.

\medskip
\section{Proofs of Lemmas \ref{DNmainpro} and \ref{BVexpands}}\label{TechnicalProofs}

\subsection{Proof of Lemma \ref{DNmainpro}} We adapt the approach in \cite[Appendix B]{IoPu4}.

{\bf{Step 1.}} We use complex analysis to define our main objects. Given any function $f:\T_R\to\mathbb{C}$ let $f^\#$ denote its $2\pi R$-periodic extension to $\R$. In particular, let $h^\#$ denote the periodic extension of the height function $h$. Let
\begin{equation}\label{na2}
\gamma(x):=x+ih^\#(x)\qquad\text{ and }\qquad\Omega^\#:=\{x+iy\in\mathbb{C}:y<h^\#(x)\}.
\end{equation}

For any $f\in H^2(\mathbb{T}_R)$ we define the {\it{perturbed Hilbert transform}}
\begin{equation}\label{na3}
\begin{split}
&(\mathcal{H}_\gamma f)(\alpha):=\frac{1}{\pi i}\mathrm{p.v.} \int_{\mathbb{R}}
\frac{f^\#(\beta)\gamma'(\beta)}{\gamma(\alpha)-\gamma(\beta)}\,d\beta:=\lim_{\eps\to 0}\frac{1}{\pi i}\int_{|\beta-\alpha|\in[\eps,\eps^{-1}]}
\frac{f^\#(\beta)\gamma'(\beta)}{\gamma(\alpha)-\gamma(\beta)}\,d\beta.
\end{split}
\end{equation}
Clearly $\mathcal{H}_\gamma f$ is $2\pi R$-periodic. For any $z\in\Omega^\#$ and $f\in H^2(\mathbb{T}_R)$ we define
\begin{equation}\label{na4}
F_f(z):=\frac{1}{2\pi i}\int_{\mathbb{R}}
\frac{f^\#(\beta)\gamma'(\beta)}{z-\gamma(\beta)}\,d\beta:=\lim_{\lambda\to\infty}\frac{1}{2\pi i}\int_{|\beta-\Re z|\leq\lambda}
\frac{f^\#(\beta)\gamma'(\beta)}{z-\gamma(\beta)}\,d\beta.
\end{equation}
Clearly, $F_f$ is an analytic function in $\Omega^\#$. For $\eps>0$ let
\begin{equation}\label{na5}
(\mathcal{T}_\gamma^\eps f)(\alpha):=F_f(\gamma(\alpha)-i\eps)=\frac{1}{2\pi i}\int_{\mathbb{R}}
\frac{f^\#(\beta)\gamma'(\beta)}{\gamma(\alpha)-i\eps-\gamma(\beta)}\,d\beta.
\end{equation}
It is easy to see that
\begin{equation}\label{na7}
\lim_{\eps\to 0}\mathcal{T}_\gamma^\eps f=\frac{1}{2}(I+\mathcal{H}_\gamma)f
\end{equation}
uniformly on $\mathbb{R}$, for any $f\in H^2(\mathbb{T}_R)$.

{\bf{Step 2.}} The perturbed Hilbert transform can be used to derive explicit formulas for $G(h)\phi$. To justify the calculations we need a technical lemma:

\begin{lemma}\label{Wboundedness}
Assume that $\varep,A$ are as in Proposition \ref{DNmainpro} and $f\in H^3(\T_R)$ satisfies
\begin{equation}\label{na23.1}
\||\partial_x|^{1/2}f\|_{\dot{W}^{1,0}}\leq 1.
\end{equation}
Let
\begin{equation}\label{na20}
(H_0 f)(\alpha):=\frac{-1}{\pi }\mathrm{p.v.} \int_{\mathbb{R}}\frac{f^\#(\beta)}{\alpha-\beta}\,d\beta,
\end{equation}
denote the standard Hilbert transform, and consider the real operators $T_1$ and $T_2$ defined by
\begin{equation}\label{na21}
\begin{split}
&(T_1 f)(\alpha):=\frac{1}{\pi}\int_{\mathbb{R}}
\frac{h^\#(\alpha)-h^\#(\beta)-\partial_\be h^\#(\beta)(\alpha-\beta)}{|\gamma(\alpha)-\gamma(\beta)|^2}f^\#(\beta)\,d\beta,\\
&(T_2 f)(\alpha):=\frac{1}{\pi}\int_{\mathbb{R}}
\frac{h^\#(\alpha)-h^\#(\beta)-\partial_\be h^\#(\beta)(\alpha-\beta)}{|\gamma(\alpha)-\gamma(\beta)|^2}\frac{h^\#(\alpha)-h^\#(\beta)}{\alpha-\beta}f^\#(\beta)\,d\beta.
\end{split}
\end{equation}
For $n\in\mathbb{Z}_+$, we define the real operators
\begin{equation}\label{na8}
 (R_nf)(\alpha):=\frac{1}{\pi}\int_{\mathbb{R}}
\frac{h^\#(\alpha)-h^\#(\beta)-\partial_\be h^\#(\beta)(\alpha-\beta)}{(\alpha-\beta)^2}\Big(\frac{h^\#(\alpha)-h^\#(\beta)}{\alpha-\beta}\Big)^nf^\#(\beta)\,d\beta.
\end{equation}

Then
\begin{equation}\label{na22}
\mathcal{H}_\gamma=iH_0-T_1+iT_2,
\end{equation}
and
\begin{equation}\label{na23}
T_1=\sum_{n\geq 0}(-1)^{n}R_{2n},\qquad T_2=\sum_{n\geq 0}(-1)^{n}R_{2n+1}.
\end{equation}
Moreover, for $a\in\{0,1,2\}$
\begin{equation}\label{na23.7}
\sum_{n\geq a}\|R_nf\|_{\dot{H}^{N_0,1/10}}\lesssim A\varep^{a},\qquad \sum_{n\geq a}\|R_nf\|_{\dot{W}^{N_1,1/10}}\lesssim \varep^{a+1}.
\end{equation}
\end{lemma}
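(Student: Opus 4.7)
\medskip

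\textbf{Proof proposal for Lemma \ref{Wboundedness}.}

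My plan is to split the argument into two algebraic identities followed by a multilinear estimate.

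For the decomposition \eqref{na22}, I would rationalize the kernel of $\mathcal H_\gamma$ using
$|\gamma(\alpha)-\gamma(\beta)|^2=(\alpha-\beta)^2+(h^\#(\alpha)-h^\#(\beta))^2$, writing
\[
\frac{\gamma'(\beta)}{\gamma(\alpha)-\gamma(\beta)}
 =\frac{(1+i\partial_\beta h^\#(\beta))\,[(\alpha-\beta)-i(h^\#(\alpha)-h^\#(\beta))]}{|\gamma(\alpha)-\gamma(\beta)|^2}.
\]
Separating real and imaginary parts, the imaginary part is exactly $-\pi T_1$ up to the factor $1/(\pi i)$, while the real part can be reorganized via the algebraic identity
\[
\frac{\alpha-\beta+\partial_\beta h^\#(\beta)(h^\#(\alpha)-h^\#(\beta))}{|\gamma(\alpha)-\gamma(\beta)|^2}
  =\frac{1}{\alpha-\beta}-\frac{h^\#(\alpha)-h^\#(\beta)}{\alpha-\beta}\cdot\frac{h^\#(\alpha)-h^\#(\beta)-\partial_\beta h^\#(\beta)(\alpha-\beta)}{|\gamma(\alpha)-\gamma(\beta)|^2},
\]
which isolates the leading Hilbert transform $iH_0$ from the commutator-type piece $iT_2$. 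For the series expansions \eqref{na23}, I would expand geometrically
\[
\frac{1}{|\gamma(\alpha)-\gamma(\beta)|^2}=\frac{1}{(\alpha-\beta)^2}\sum_{n\geq 0}(-1)^n\Big(\frac{h^\#(\alpha)-h^\#(\beta)}{\alpha-\beta}\Big)^{2n},
\]
valid pointwise since $\|\partial_xh\|_{L^\infty}\lesssim\varepsilon\ll 1$, and substitute into $T_1$ and $T_2$; even powers match $R_{2n}$ (giving $T_1$) and the extra factor $(h^\#(\alpha)-h^\#(\beta))/(\alpha-\beta)$ in $T_2$ produces the odd powers. Dominated convergence justifies the interchange of sum and integral under the hypotheses on $h$.

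For the quantitative bounds \eqref{na23.7}, I would view $R_n$ as a multilinear operator with $(n+1)$ inputs of $h$ and one input of $f$. The numerator $h^\#(\alpha)-h^\#(\beta)-\partial_\beta h^\#(\beta)(\alpha-\beta)$ vanishes to second order on the diagonal, so that after the substitution $h^\#(\alpha)-h^\#(\beta)=(\alpha-\beta)\int_0^1\partial_\beta h^\#(\beta+s(\alpha-\beta))\,ds$ and a similar second-order Taylor expansion for the numerator, the kernel of $R_n$ becomes smooth with symbol satisfying Coifman--Meyer-type differential bounds of the form $2^{\underline k\cdot\alpha}|\partial^\alpha_\xi m|\lesssim 1$. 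I would then decompose $h=\sum_k P_kh$ and $f=\sum_k P_kf$ dyadically, estimate each piece using Lemma~\ref{touse} and Lemma~\ref{algeProp} (placing the highest frequency among the $h$'s in $H^{N_0}$ and the remaining $n$ copies in $\widetilde W^{N_1}$, together with $|\partial_x|^{1/2}f$ in $\dot W^{1,0}$), and sum. Each factor of $h$ in $\widetilde W^{N_1}$ contributes one $\varepsilon$, which both gives the factor $\varepsilon^{n+1}$ in the $L^\infty$-type bound and ensures geometric convergence of $\sum_{n\geq a}$; for the Sobolev bound, one of the $h$ copies is placed in $H^{N_0}$ with cost $A$ and the remaining $n$ in $\widetilde W^{N_1}$, yielding $A\varepsilon^n$. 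The low-frequency weight $b=1/10$ on both sides is absorbed by the improved decay at small frequencies coming from the gain of two derivatives in the numerator.

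The main obstacle is the case $n=0,1$, where the number of $h$'s is smallest and there is the least room to absorb derivative losses, and one is essentially estimating Calder\'on commutator-type operators in both Sobolev and $L^\infty$-based norms simultaneously. The critical point is to exploit the second-order vanishing of $h^\#(\alpha)-h^\#(\beta)-\partial_\beta h^\#(\beta)(\alpha-\beta)$ at the diagonal, which effectively allows one to trade one of the two $1/(\alpha-\beta)$ factors for a derivative landing on $h$; this provides the extra factor of $\varepsilon$ needed at $n=0$ to match the claimed bound $\lesssim \varepsilon$ in $\dot W^{N_1,1/10}$, and prevents losses beyond $\|h\|_{\widetilde W^{N_1}}\cdot\|f\|_{\widetilde W^0}$ type estimates. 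Once this is set up correctly, the remaining bounds follow from repeated application of the paraproduct estimates already recorded in Section~\ref{SecPrelim}.
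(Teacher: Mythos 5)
Your treatment of the identities \eqref{na22}--\eqref{na23} is fine (the paper dismisses these as immediate from the definitions), but your argument for the quantitative bounds \eqref{na23.7} has a genuine gap. The left-hand side of \eqref{na23.7} measures $R_nf$ at regularity $N_0$ (resp.\ $N_1$), while $f$ carries only about one derivative of regularity via \eqref{na23.1}; so the scheme ``place the highest-frequency $h$ in $H^{N_0}$, the rest in $\widetilde W^{N_1}$, and $f$ in $\dot W^{1,0}$'' can only work if the output frequency of $R_nf$ is always dominated by the largest $h$-frequency. A Coifman--Meyer-type bound $2^{\underline k\cdot\alpha}|\partial^\alpha m|\lesssim 1$ does not exclude the configuration where $f$ (and hence the output) sits at frequency $2^l$ with $l\gg 1$ while every copy of $h$ sits at frequency $O(1)$: there the weight $2^{N_0 l}$ in the output norm cannot be paid by $\|P_lf\|_{L^\infty}\lesssim 2^{-3l/2}$, and your estimate fails. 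What actually saves the day is a support property of the multipliers that you never establish: in Fourier variables the symbol of $R_n$ contains $\int_{\R} e^{-i\xi\rho}\prod_{l}\frac{e^{i\eta_l\rho}-1}{\rho}\,d\rho$, which vanishes unless $|\eta_1|+\cdots+|\eta_{n+1}|\geq|\xi|$ (see \eqref{na202}; for $n=0,1$ the explicit formulas \eqref{na15} and \eqref{na19} give the even stronger vanishing when the $h$-frequencies are dominated by $\xi$). This, and not the second-order vanishing of the numerator, is the mechanism that prevents derivative loss on $f$. The second-order vanishing is exploited in the paper differently, through the symmetrization \eqref{na13.2}, which converts it into a full derivative $(\xi-\eta_1-\cdots-\eta_{n+1})$ landing on $f$ and thereby matches the hypothesis \eqref{na23.1}.

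Even granting the support property, the remaining hard regime --- exactly one copy of $h$ at the dominant frequency and all other inputs at much lower frequency --- is not closed by a generic appeal to the paraproduct lemmas of Section \ref{SecPrelim}: the paper devotes a separate induction on $n$ (Lemma \ref{na149}), carried out in physical space with the kernels $L_k(y,\rho)=(K_k(y+\rho)-K_k(y))/\rho$ and the splitting $L_k=\widetilde L_k+K_k'$, precisely to obtain bounds for these $(n+2)$-linear operators that are geometric in $n$ and avoid losing a derivative onto the high-frequency copy of $h$ (the crude kernel bound is explicitly said to be insufficient for \eqref{na213}). You have correctly identified the shape of the estimate and which norms to distribute where, but the two load-bearing ingredients --- the frequency-support property of the symbols and the induction handling the unbalanced configuration --- are absent from your outline, and without at least the first of them the proposed argument does not close.
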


This is similar to \cite[Lemma B.3]{IoPu4}, and we sketch the proof in subsection \ref{zxc100} below. 

{\bf{Step 3.}} We can return now to the proof of Proposition \ref{DNmainpro}. For $\alpha\in[-2,2]$ let $\mathcal{E}^\alpha$ denote the space defined by the norm
\begin{equation*}
\|f\|_{\mathcal{E}^\al}:=A^{-1}\|f\|_{\dot{H}^{N_0+\alpha,1/2}}+\varep^{-1}\|f\|_{\dot{W}^{N_1+\alpha,1/2}}.
\end{equation*}
The Cauchy integral formula applied to the analytic function $F_f$ defined in \eqref{na4} and the limit in \eqref{na7} show that
\begin{equation}\label{na31}
\mathcal{H}_\gamma^2=I\text{ on }\mathcal{E}^{-2}.
\end{equation}

We are now ready to define the conjugate function $\psi$. Indeed, recall that $\|T_1\|_{\mathcal{E}^{0}\to\mathcal{E}^{0}}\leq 1/2$ (due to \eqref{na23}--\eqref{na23.7}), and define $\psi\in\mathcal{E}^{0}$ such that
\begin{equation}\label{na35}
(I+T_1)\psi=(H_0+T_2)\phi.
\end{equation}
Clearly, $\psi$ is real-valued. Let $P:=(I-\mathcal{H}_\gamma)(-i\phi+\psi)$ and notice that $(I+\mathcal{H}_\gamma)P=0$, as a consequence
of \eqref{na31}. Moreover, $\Re P=0$, as a consequence of \eqref{na35}. Therefore $P=0$, using again the fact that $T_1$ is a contraction on $\mathcal{E}^{0}$. To summarize, this shows that if $\phi\in \mathcal{E}^{0}$ is a real-valued function then there is a unique real-valued function $\psi\in \mathcal{E}^{0}$ such that
\begin{equation}\label{na32}
(I-\mathcal{H}_\gamma)(\phi+i\psi)=0.
\end{equation}
Moreover, the function $F:\Omega^\#\to\mathbb{C}$,
\begin{equation}\label{na33}
F(z):=\frac{1}{2\pi i}\int_{\mathbb{R}}
\frac{(\phi+i\psi)^\#(\beta)\gamma'(\beta)}{z-\gamma(\beta)}\,d\beta
\end{equation}
in a bounded analytic function in $\Omega^\#$, satisfying $F(z+2\pi R)=F(z)$, which extends to a $C^1$ function in $\overline{\Omega^\#}$
with the property that $F(x+ih^\#(x))=(\phi+i\psi)^\#(x)$ for any $x\in\mathbb{R}$. 

We decompose now $F=G+iH$ and define
\begin{equation*}
v_1:=\partial_xG=\partial_yH,\qquad v_2:=\partial_yG=-\partial_xH.
\end{equation*}
Notice that $G(x+ih^\#(x))=\phi^\#(x)$ and $H(x+ih^\#(x))=\psi^\#(x)$. Thus
\begin{equation*}
-\partial_x\psi^\#=\partial_yG(x+ih^\#(x))-\partial_xh^\#(x)\partial_xG(x+ih^\#(x))=(G(h)\phi)^\#(x).
\end{equation*}
Let $\widetilde{V}(x):=v_1(x+ih^\#(x))$ and $\widetilde{B}(x):=v_2(x+ih^\#(x))$. Taking derivatives (and dropping the superscripts $\#$ since the functions involved are all $2\pi R$-periodic on $\mathbb{R}$), we have $\partial_x\phi=\widetilde{V}+h_x\widetilde{B}$ and $-\partial_x\psi=\widetilde{B}-h_x\widetilde{V}$, where $h_x=\partial_xh$. Therefore
\begin{equation}\label{na37}
\widetilde{V}=\frac{\partial_x\phi+h_x\partial_x\psi}{1+h_x^2}=V,\qquad \widetilde{B}=\frac{h_x\partial_x\phi-\partial_x\psi}{1+h_x^2}=B,
\end{equation}
where $V,B$ are defined in \eqref{DefBV}. In addition, the definition \eqref{na3} shows that
\begin{equation*}
\partial_x\mathcal{H}_\gamma f=\gamma_x\mathcal{H}_\gamma(\partial_xf/\gamma_x),
\end{equation*}
where $\gamma_x=\partial_x\gamma=1+ih_x$. Therefore the identity \eqref{na32} gives
\begin{equation*}
 (I-\mathcal{H}_\gamma)\Big(\frac{\partial_x\phi+i\partial_x\psi}{\gamma_x}\Big)=0.
\end{equation*}
Notice also that $\partial_x\phi+i\partial_x\psi=(1+ih_x)(V-iB)$, as a consequence of \eqref{na37}. Therefore
\begin{equation*}
 (I-\mathcal{H}_\gamma)\big(V-iB\big)=0.
\end{equation*}

To summarize, we proved the identities
\begin{equation}\label{na41}
G(h)\phi=-\partial_x\psi,\qquad V=\frac{\partial_x\phi+h_x\partial_x\psi}{1+h_x^2},\qquad B=\frac{h_x\partial_x\phi-\partial_x\psi}{1+h_x^2},
\end{equation}
and
\begin{equation}\label{na42}
\partial_x\phi=V+h_xB,\qquad -\partial_x\psi=B-h_xV,\qquad (I-\mathcal{H}_\gamma)\big(V-iB\big)=0.
\end{equation}

Using now the definition \eqref{na35}, for any $g\in\{\partial_x\phi,\partial_x\psi\}$,
\begin{equation}\label{na42.5}
A^{-1}\|g\|_{\dot{H}^{N_0-1,-1/2}}+\varep^{-1}\|g\|_{\dot{W}^{N_1-1,-1/2}}\lesssim 1.
\end{equation}
Using also Lemma \ref{algeProp}, for any $f\in\{\partial_x\phi,\partial_x\psi,V,B\}$ we have
\begin{equation}\label{na43}
A^{-1}\|f\|_{H^{N_0-1}}+\varep^{-1}\|f\|_{\widetilde{W}^{N_1-1}}\lesssim 1.
\end{equation}

{\bf{Step 4.}} It remains to prove the paralinearization identities in part (iii), in particular the quartic bounds \eqref{DNmain4} for the remainders $G_{\geq 4}$ defined according to \eqref{DNmainformula}. The main issue is to avoid loss of derivative, so we will prove first that
\begin{equation}\label{ctu5.5}
\big\|P_{\geq 0}G_{\geq 4}\big\|_{H^{N_0}}\lesssim A\varep^3\quad\text{ and }\quad\big\|P_{\geq 0}G_{\geq 4}\big\|_{\wt{W}^{N_1}}\lesssim \varep^4.
\end{equation}

Taking real and imaginary parts of the identity $(I-\mathcal{H}_\gamma)\big(V-iB\big)=0$ we have
\begin{equation}\label{na74}
V-H_0B=-T_1V+T_2B,\qquad -B-H_0V=T_2V+T_1B.
\end{equation}
Using also the formulas $-\partial_x\psi=B-h_xV$, see \eqref{na42}, and $|\partial_x|\phi=-H_0(\partial_x\phi)$, we have
\begin{equation}\label{na75}
\begin{split}
|\partial_x|\phi-G(h)\phi&=-H_0(\partial_x\phi)-B+h_xV\\
&=T_2V+T_1B+h_xV-H_0(h_xB).
\end{split}
\end{equation}

Let
\begin{equation}\label{na76}
D:=T_2V+T_1B=-H_0V-B.
\end{equation}
With $G_2$ defined as in \eqref{DNmain2}, let 
\begin{equation}\label{na77}
\begin{split}
G_{\geq 3}&:=G(h)\phi-|\partial_x|\phi+|\partial_x|T_Bh+\partial_xT_Vh-G_2=I+II,\\
I:&=-h_xV-H_0(h_xH_0V)-|\partial_x|T_{H_0V}h+\partial_xT_Vh-G_2+R_0(H_0V),\\
II:&=-D-R_0(H_0V)-H_0(h_xD)-|\partial_x|T_Dh.
\end{split}
\end{equation}

The multiplier of the Hilbert transform $H_0$ is $\eta\to i\,\sgn(\eta)$. Using the definitions and \eqref{na16}, we write
\begin{equation*}
\widehat{I}(\xi)=\frac{1}{2\pi R}\sum_{\eta\in\Z/R}\widehat{h}(\eta)\widehat{V}(\xi-\eta)p(\xi,\eta)-\widehat{G_2}(\xi)
\end{equation*}
where
\begin{equation*}
\begin{split}
p(\xi,\eta):&=-i\eta+i\eta\sgn(\xi)\sgn(\xi-\eta)-i|\xi|\sgn(\xi-\eta)\chi(\xi-\eta,\eta)+i\xi\chi(\xi-\eta,\eta)\\
&-i(\xi-\eta)[1-\sgn(\xi)\sgn(\xi-\eta)]\\
&=-i\xi[1-\chi(\xi-\eta,\eta)][1-\sgn(\xi)\sgn(\xi-\eta)].
\end{split}
\end{equation*}
Using also the formula $V=\partial_x\phi-h_xB$ (see \eqref{na42}), we have
\begin{equation}\label{ctu1}
\widehat{I}(\xi)=\frac{1}{2\pi R}\sum_{\eta\in\Z/R}
  \widehat{h}(\eta)\mathcal{F}(V-\partial_x\phi)(\xi-\eta)p(\xi,\eta)
  =\frac{-1}{2\pi R}\sum_{\eta\in\Z/R}\widehat{h}(\eta)\mathcal{F}(h_xB)(\xi-\eta)p(\xi,\eta).
\end{equation}

Using the definitions we calculate now
\begin{equation}\label{ctu2}
\begin{split}
II&=-T_2V-T_1B-R_0(H_0V)-[H_0(h_xT_1B)+|\partial_x|T_{T_1B}h]-[H_0(h_xT_2V)+|\partial_x|T_{T_2V}h]\\
&=-R_1V-R_0B-R_0(H_0V)-[H_0(h_xR_0B)+|\partial_x|T_{R_0B}h]+G_{\geq 4}^1,
\end{split}
\end{equation}
where
\begin{equation}\label{ctu3}
\begin{split}
G_{\geq 4}^1&:=-(T_2-R_1)V-(T_1-R_0)B-[H_0(h_x(T_1-R_0)B)+|\partial_x|T_{(T_1-R_0)B}h]\\
&-[H_0(h_xT_2V)+|\partial_x|T_{T_2V}h].
\end{split}
\end{equation}
Notice that, for any $F\in H^2$,
\begin{equation}\label{ctu4}
\mathcal{F}\big\{H_0(h_xF)+|\partial_x|T_Fh\big\}(\xi)=\frac{1}{2\pi R}\sum_{\eta\in\Z/R}\widehat{h}(\eta)\widehat{F}(\xi-\eta)\,\sgn(\xi)[\xi\chi(\xi-\eta,\eta)-\eta].
\end{equation}
Assuming now that $F\in\{(T_1-R_0)B,\,T_2V\}$, we notice that $\|F\|_{\dot{H}^{N_0,1/10}}\lesssim A\varep^2$ and $\|F\|_{\dot{W}^{N_1,1/10}}\lesssim\varep^3$, as a consequence of \eqref{na23.7}. Using the formula \eqref{ctu4} it is easy to see that
\begin{equation*}
\begin{split}
&\big\|P_{\geq 0}\big\{H_0(h_xF)+|\partial_x|T_Fh\big\}\big\|_{H^{N_0}}\lesssim A\varep^3,\\
&\big\|P_{\geq 0}\big\{H_0(h_xF)+|\partial_x|T_Fh\big\}\big\|_{\widetilde{W}^{N_1}}\lesssim \varep^4,
\end{split}
\end{equation*}
for $G\in\{(T_1-R_0)B,\,T_2V\}$. Using again \eqref{na23.7} 
together with the bounds \eqref{na43} we see that the terms $P_{\geq 0}(T_2-R_1)V$ and $P_{\geq 0}(T_1-R_0)V$ 
satisfy similar quartic $H^{N_0}$ bounds. 
Therefore $P_{\geq 0}G_{\geq 4}^1$ is an acceptable quartic error,
\begin{equation}\label{ctu5}
\big\|P_{\geq 0}G_{\geq 4}^1\big\|_{H^{N_0}}
  \lesssim A\varep^3\quad\text{ and }\quad\big\|P_{\geq 0}G_{\geq 4}^1\big\|_{\wt{W}^{N_1}}\lesssim \varep^4.
\end{equation}

We examine now the main terms in the right-hand side of \eqref{ctu2}. Using \eqref{na74} we write
\begin{equation*}
\begin{split}
&-R_1V-R_0B-R_0(H_0V)=-R_1(H_0B)+R_0(R_0B)+G_{\geq 4}^2,\\
&G_{\geq 4}^2:=R_1(T_1V-T_2B)+R_0(T_2V)+R_0(T_1-R_0)B.
\end{split}
\end{equation*}
Using \eqref{na23.7} and \eqref{na43} as before, we see that $P_{\geq 0}G_{\geq 4}^2$ is an acceptable quartic error, satisfying bounds similar to \eqref{ctu5}. Moreover, using \eqref{na19}, \eqref{na16}, and \eqref{ctu4}, we write the cubic terms in $II$ in the form
\begin{equation*}
\begin{split}
-\mathcal{F}[R_1(H_0B)](\xi)&=\frac{1}{(2\pi R)^2}\sum_{\eta,\rho\in\Z/R}\widehat{h}(\eta)\widehat{h}(\rho)\widehat{B}(\xi-\eta-\rho)p_1(\xi,\eta,\rho),\\
\mathcal{F}[R_0(R_0B)](\xi)&=\frac{1}{(2\pi R)^2}\sum_{\eta,\rho\in\Z/R}\widehat{h}(\eta)\widehat{h}(\rho)\widehat{B}(\xi-\eta-\rho)p_2(\xi,\eta,\rho),\\
-\mathcal{F}[H_0(h_xR_0B)+|\partial_x|T_{R_0B}h](\xi)&=\frac{1}{(2\pi R)^2}\sum_{\eta,\rho\in\Z/R}\widehat{h}(\eta)\widehat{h}(\rho)\widehat{B}(\xi-\eta-\rho)p_3(\xi,\eta,\rho),
\end{split}
\end{equation*}
where
\begin{equation*}
\begin{split}
p_1(\xi,\eta,\rho)&:=(1/2)|\xi-\eta-\rho|[|\xi|+|\xi-\eta-\rho|-|\xi-\eta|-|\xi-\rho|],\\
p_2(\xi,\eta,\rho)&:=[\sgn(\xi)-\sgn(\xi-\eta)](\xi-\eta)[\sgn(\xi-\eta)-\sgn(\xi-\eta-\rho)](\xi-\eta-\rho),\\
p_3(\xi,\eta,\rho)&:=-\sgn(\xi)[\xi\chi(\xi-\eta,\eta)-\eta][\sgn(\xi-\eta)-\sgn(\xi-\eta-\rho)](\xi-\eta-\rho).
\end{split}
\end{equation*}
Recalling also \eqref{ctu1}, we have
\begin{equation*}
\begin{split}
\widehat{I}(\xi)&=\frac{1}{(2\pi R)^2}\sum_{\eta,\rho\in\Z/R}\widehat{h}(\eta)\widehat{h}(\rho)\widehat{B}(\xi-\eta-\rho)p_4(\xi,\eta,\rho),\\
p_4(\xi,\eta,\rho)&:=-\xi\rho[1-\chi(\xi-\eta,\eta)][1-\sgn(\xi)\sgn(\xi-\eta)].
\end{split}
\end{equation*}

Using also \eqref{na77} and symmetrizing the multipliers in $\eta$ and $\rho$, we get
\begin{equation*}
\mathcal{F}\{G_{\geq 3}\}(\xi)=\frac{1}{(2\pi R)^2}\sum_{\eta,\rho\in\Z/R}\widehat{h}(\eta)\widehat{h}(\rho)\widehat{B}(\xi-\eta-\rho)p^\ast(\xi,\eta,\rho)+\mathcal{F}\{G_{\geq 4}^1\}(\xi)+\mathcal{F}\{G^2_{\geq 4}\}(\xi),
\end{equation*}
where
\begin{equation*}
\begin{split}
p^\ast&(\xi,\eta,\rho):=\frac{1}{2}|\xi|[1-\chi(\xi-\eta,\eta)][|\xi-\eta|-|\xi-\eta-\rho|-\rho\,\sgn(\xi)]\\
&+\frac{1}{2}|\xi|[1-\chi(\xi-\rho,\rho)][|\xi-\rho|-|\xi-\eta-\rho|-\eta\,\sgn(\xi)]+\frac{1}{2}\big[|\xi||\xi-\eta-\rho|-\xi(\xi-\eta-\rho)\big].
\end{split}
\end{equation*}
We notice now that the symbol $p^\ast$ is symmetric in $\eta$ and $\rho$ and smoothing, in the sense that $p^\ast(\xi,\eta,\rho)=0$ if $|\eta|+|\rho|\leq 2^{-40}|\xi|$ or if $|\eta|+|\xi-\eta-\rho|\leq 2^{-40}|\xi|$ or if $|\rho|+|\xi-\eta-\rho|\leq 2^{-40}|\xi|$. Moreover, for any $k, k_1, k_2, k_3\in\Z$ we have
\begin{equation*}
\big\|\mathcal{F}^{-1}\big[p^\ast(\xi,\eta,\rho)\varphi_k(\xi)\varphi_{\leq k_1}(\eta)\varphi_{\leq k_2}(\rho)\varphi_{\leq k_3}(\xi-\eta-\rho)\big]\big\|_{L^1(\T_R^3)}\lesssim 2^k2^{\max(k_1,k_2,k_3)},
\end{equation*}
These bounds follow easily from the explicit formula of $p^\ast$ above. Therefore, using \eqref{na42} and \eqref{na35} we may replace $B$ with $-\partial_x\psi$ and then with $|\partial_x|\phi$ at the expenses of acceptable quartic  errors. The desired bounds \eqref{ctu5.5} follow.

{\bf{Step 5.}} Finally, we show that the low frequencies of the quartic remainders satisfy
\begin{equation}\label{ctu5.6}
\|P_{\leq 0}G_{\geq 4}\|_{\dot{H}^{N_0,-3/4}}\lesssim A\varep^3,\qquad \|P_{\leq 0}G_{\geq 4}\|_{\dot{W}^{N_1,-3/4}}\lesssim \varep^4.
\end{equation}
The proof is easier than before, but we need to write the remainders $G_{\geq 4}$ differently, as perfect derivatives. We start from the definition \eqref{na35}, which we rewrite in the form
\begin{equation*}
(I+R_0)\psi=(H_0+R_1)\phi+(T_2-R_1)\phi-(T_1-R_0)\psi.
\end{equation*}
We apply the operator $-\partial_x(I-R_0+R_0^2)$ to both sides, so
\begin{equation}\label{ctu5.7}
\begin{split}
-\partial_x\psi&=-\partial_x(I-R_0+R_0^2)H_0\phi-\partial_xR_1\phi+G_{\geq 4}^4,\\
G_{\geq 4}^4&:=\partial_x(R_0-R_0^2)R_1\phi-\partial_x(I-R_0+R_0^2)(T_2-R_1)\phi\\
&+\partial_x(I-R_0+R_0^2)(T_1-R_0)\psi+\partial_xR_0^3\psi.
\end{split}
\end{equation}
Recalling now that $G(h)\phi=-\partial_x\psi$ we have
\begin{equation}\label{ctu5.8}
\begin{split}
G(h)\phi&-\big\{|\partial_x|\omega- \partial_x(T_V h) + G_2(\phi,h) + G_3(\phi,h,h)\big\}\\
&=\big\{\partial_xR_0H_0\phi+|\partial_x|T_Bh+\partial_xT_Vh-G_2(\phi,h)\big\}\\
&-\big\{\partial_xR_0^2H_0\phi+\partial_xR_1\phi+G_3(\phi,h,h)\big\}+G_{\geq 4}^4.
\end{split}
\end{equation}

It follows from the formulas in \eqref{ctu5.7} and Lemma \ref{Wboundedness} that
\begin{equation}\label{ctu5.9}
\|G_{\geq 4}^4\|_{\dot{H}^{N_0-1,-3/4}}\lesssim A\varep^3,\qquad \|G_{\geq 4}^4\|_{\dot{W}^{N_1-1,-3/4}}\lesssim\varep^4,
\end{equation}
and
\begin{equation}\label{ctu5.10}
\begin{split}
&G(h)\phi=|\partial_x|\phi+\partial_xR_0H_0\phi+[G(h)\phi]_{\geq 3},\\
&\|[G(h)\phi]_{\geq 3}\|_{\dot{H}^{N_0-1,-3/4}}\lesssim A\varep^2,\qquad \|[G(h)\phi]_{\geq 3}\|_{\dot{W}^{N_1-1,-3/4}}\lesssim\varep^3.
\end{split}
\end{equation}
Using the identities \eqref{na41} and Lemma \ref{algeProp} (i) it follows that
\begin{equation}\label{ctu5.11}
\begin{split}
&V=\partial_x\phi-\partial_xh|\partial_x|\phi+V_{\geq 3},\\
&B=|\partial_x|\phi+\partial_xR_0H_0\phi+\partial_xh\partial_x\phi+B_{\geq 3},
\end{split}
\end{equation}
where the cubic remainders $V_{\geq 3}$ and $B_{\geq 3}$ satisfy
\begin{equation}\label{ctu5.12}
\|V_{\geq 3}\|_{H^{N_0-1}}+\|B_{\geq 3}\|_{H^{N_0-1}}\lesssim A\varep^2,\qquad \|V_{\geq 3}\|_{\wt{W}^{N_1-1}}+\|B_{\geq 3}\|_{\wt{W}^{N_1-1}}\lesssim\varep^3.
\end{equation}

We examine now the formula \eqref{ctu5.8}. The quadratic term in the right-hand side is
\begin{equation*}
\partial_xR_0H_0\phi+|\partial_x|T_{|\partial_x|\phi}h+\partial_xT_{\partial_x\phi}h-G_2(\phi,h),
\end{equation*}
which is identically equal to $0$ due to the formulas \eqref{na16} and \eqref{DNmain2}. The cubic term is 
\begin{equation*}
|\partial_x|T_{\partial_xR_0H_0\phi+\partial_xh\partial_x\phi}h+\partial_xT_{-\partial_xh|\partial_x|\phi}h-\big\{\partial_xR_0^2H_0\phi+\partial_xR_1\phi+G_3(\phi,h,h)\big\},
\end{equation*}
due to the expansion \eqref{ctu5.11}. An explicit calculation using the formulas \eqref{DNmain3}, \eqref{na13.3}, \eqref{na15}, and \eqref{na19} shows that this expression vanishes identically as well. The remaining terms $|\partial_x|T_{B_\geq 3}h$ and $\partial_xT_{V_\geq 3}h$ are acceptable quartic remainders, satisfying bounds similar to \eqref{ctu5.9} as a consequence of \eqref{ctu5.12}. The desired bounds \eqref{ctu5.6} follow using \eqref{ctu5.8} and \eqref{ctu5.9}.

\subsection{Proof of Lemma \ref{Wboundedness}}\label{zxc100} The identities \eqref{na22}-\eqref{na23} follow directly from definitions. The main point is to prove the bounds \eqref{na23.7}. 

{\bf{Step 1.}} We rewrite first the operators $R_n$ in the form
\begin{equation*}
 (R_nf)(\alpha)=\frac{1}{\pi}\int_{\mathbb{R}}
\frac{h^\#(\alpha)-h^\#(\al+\rho)+\rho{h'}^\#(\al+\rho)}{\rho^2}\Big(\frac{h^\#(\alpha+\rho)-h^\#(\alpha)}{\rho}\Big)^nf^\#(\al+\rho)\,d\rho.
\end{equation*}
Notice that
\begin{equation*}
\frac{h^\#(\alpha+\rho)-h^\#(\alpha)}{\rho}=\frac{1}{2\pi R}\sum_{\eta\in\Z/R}\widehat{h}(\eta)e^{i\eta\alpha}\frac{e^{i\eta\rho}-1}{\rho}.
\end{equation*}
Therefore, we can take the Fourier transform on $\T_R$ and rewrite
\begin{equation}\label{na12}
\begin{split}
\mathcal{F}\big[R_nf\big](\xi)=\frac{1}{\pi(2\pi R)^{n+1}}&\sum_{\eta_1,\ldots,\eta_{n+1}\in\Z/R}
\widetilde{M}_{n+1}(\xi;\eta_1,\ldots,\eta_{n+1})\\
&\times\widehat{f}(\xi-\eta_1-\ldots-\eta_{n+1})\widehat{h}(\eta_1)\cdot\ldots\cdot\widehat{h}(\eta_{n+1}),
\end{split}
\end{equation}
where
\begin{equation}\label{na13}
\begin{split}
\widetilde{M}_{n+1}(\xi;\eta_1,\ldots,\eta_{n+1}):=\int_{\mathbb{R}}\frac{e^{-i\xi\rho}}{\rho}\frac{e^{i\eta_{n+1}\rho}-1-i\eta_{n+1}\rho}{\rho}\prod_{l=1}^n\frac{e^{i\eta_l\rho}-1}{\rho}\,d\rho.
\end{split}
\end{equation}

As in \cite[Subsection B.2]{IoPu4} we can symmetrize in the variables $\eta_1,\ldots,\eta_{n+1}$ to get more fa\-vo\-rable formulas. Indeed, letting
\begin{equation}\label{na13.2}
M_{n+1}(\xi;\underline{\eta}):=\frac{-i(\xi-\eta_1-\ldots-\eta_{n+1})}{n+1}\int_{\mathbb{R}}e^{-i\xi\rho}\prod_{l=1}^{n+1}\frac{e^{i\eta_l\rho}-1}{\rho}\,d\rho,
\end{equation}
where $\underline{\eta}:=(\eta_1,\ldots,\eta_{n+1})$, it follows by symmetrization from \eqref{na12} that
\begin{equation}\label{na13.3}
\begin{split}
\widehat{R_nf}(\xi)=\frac{1}{\pi(2\pi R)^{n+1}}\sum_{\eta_1,\ldots,\eta_{n+1}\in \Z/R}M_{n+1}(\xi;\underline{\eta})&\widehat{f}(\xi-\eta_1-\ldots-\eta_{n+1})\widehat{h}(\eta_1)\cdot\ldots\cdot\widehat{h}(\eta_{n+1}).
\end{split}
\end{equation}

{\bf{Step 2.}} We consider first the operators $R_0$ and $R_1$, for which we can derive explicit formulas. Recall the general (formal) identities
\begin{equation}\label{na14}
\int_{\mathbb{R}}e^{-i\xi x}\,dx=2\pi\delta_0(\xi),\qquad \int_{\mathbb{R}}e^{-i\xi x}\frac{1}{x}\,dx=-i\pi\,\mathrm{sgn}(\xi),\qquad\int_{\mathbb{R}}\frac{e^{-i\xi x}-1}{x^2}\,dx=-\pi\,|\xi|,
\end{equation}
for any $\xi\in\mathbb{R}$. Using these formulas, the symbol $M_1(\xi;\eta_1)$ can be calculated easily,
\begin{equation}\label{na15}
M_{1}(\xi;\eta_{1})=\pi(\xi-\eta_1)\big[\,\mathrm{sgn}(\xi)-\,\mathrm{sgn}(\xi-\eta_1)\big].
\end{equation}
Therefore
\begin{equation}\label{na16}
\widehat{R_0f}(\xi)=\frac{1}{2\pi R}\sum_{\eta_1\in\Z/R}\big[\,\mathrm{sgn}(\xi)-\,\mathrm{sgn}(\xi-\eta_1)\big](\xi-\eta_1)\widehat{f}(\xi-\eta_1)\widehat{h}(\eta_1).
\end{equation}

The main observation is that the multiplier of the operator $R_0$ vanishes when $|\eta_1|\leq|\xi|$. Recalling that $\||\partial_x|^{1/2}f\|_{\dot{W}^{1,0}}\lesssim 1$ we can use Lemma \ref{touse} to estimate, for $p\in\{2,\infty\}$,
\begin{equation*}
\begin{split}
\|P_kR_0f\|_{L^p}&\lesssim \sum_{k_2+8\geq \max(k,k_1)}2^{k_1}\|P'_{k_1}f\|_{L^\infty}\|P'_{k_2}h\|_{L^p}\lesssim \sum_{k_2+8\geq k}\|P'_{k_2}h\|_{L^p},
\end{split}
\end{equation*}
for any $k\in\overline{\mathbb{Z}}$. Thus
\begin{equation}\label{na16.4}
\|R_0f\|_{\dot{H}^{N_0,1/10}}\lesssim A,\qquad \|R_0f\|_{\dot{W}^{N_1,1/10}}\lesssim \varep.
\end{equation}

We can analyze similarly the operator $R_1$. Using \eqref{na14}, its multiplier is
\begin{equation}\label{na19}
M_{2}(\xi;\eta_{1},\eta_2)= \frac{i\pi}{2} (\xi-\eta_1-\eta_{2}) \big[|\xi-\eta_1-\eta_2|+|\xi|-|\xi-\eta_1|-|\xi-\eta_2|\big].
\end{equation}
As before, we observe that $M_{2}(\xi;\eta_{1},\eta_2)=0$ if $|\eta_1|+|\eta_2|\leq|\xi|$. Moreover, it is easy to see that
\begin{equation*}
\Big\|\mathcal{F}^{-1}\big[M_{2}(\xi;\eta_{1},\eta_2)\varphi_{k_1}(\eta_1)\varphi_{k_2}(\eta_2)\varphi_{k_3}(\xi-\eta_1-\eta_2)\varphi_k(\xi)\big]\Big\|_{L^1(\mathbb{R}^3)}\lesssim 2^{k_3}2^{\min(k_1,k_2)}.
\end{equation*}
Therefore, using Lemma \ref{touse}, the assumption $\||\partial_x|^{1/2}f\|_{\dot{W}^{1,0}}\lesssim 1$, and the bounds \eqref{Asshu} on $h$,
\begin{equation*}
\begin{split}
\|P_kR_1f\|_{L^p}&\lesssim \sum_{k_2+10\geq\max(k,k_1,k_3)}2^{k_1+k_3}\|P'_{k_1}h\|_{L^\infty}\|P'_{k_2}h\|_{L^p}\|P'_{k_3}f\|_{L^\infty}\lesssim \e\sum_{k_2+10\geq k}\|P'_{k_2}h\|_{L^p},
\end{split}
\end{equation*}
for $p\in\{2,\infty\}$ and any $k\in\overline{\mathbb{Z}}$. Therefore
\begin{equation}\label{na19.8}
\|R_1f\|_{\dot{H}^{N_0,1/10}}\lesssim A\varep,\qquad \|R_1f\|_{\dot{W}^{N_1,1/10}}\lesssim \varep^2.
\end{equation}

{\bf{Step 3.}} To analyze the operators $R_n$, $n\geq 2$, we prove in fact the slightly stronger bounds \eqref{na152} below, which clearly suffice to  complete the proof of the desired estimates \eqref{na23.7}.

\begin{lemma}\label{na149}
For any $n\geq 1$ and $\underline{k}=(k_1,\ldots,k_{n+1})\in\overline{\mathbb{Z}}^{n+1}$ we define the operators $F_{n;\underline{k}}$ by
\begin{equation}\label{na150}
\begin{split}
\mathcal{F}\big(F_{n;\underline{k}}(f_{\underline{k}})\big)(\xi):=\frac{1}{(2\pi R)^{n+1}}&\sum_{\eta_1,\ldots,\eta_{n+1}\in\Z/R}M'_{n+1}(\xi;\underline{\eta})\varphi_{k_1}(\eta_1)\ldots \varphi_{k_{n+1}}(\eta_{n+1})\\
&\times\widehat{f_{\underline{k}}}(\xi-\eta_1-\ldots-\eta_{n+1})\widehat{h}(\eta_1)\cdot\ldots\cdot\widehat{h}(\eta_{n+1}),
\end{split}
\end{equation}
where 
\begin{equation}\label{na150.1}
M'_{n+1}(\xi;\underline{\eta}):=\int_{\mathbb{R}}e^{-i\xi\rho}\prod_{l=1}^{n+1}\frac{e^{i\eta_l\rho}-1}{\rho}\,d\rho.
\end{equation}
Assume that $h$ satisfies the bounds in \eqref{Asshu} and the functions $f_{\underline{k}}$ satisfy the uniform bounds
\begin{equation}\label{na151}
\|f_{\underline{k}}\|_{\widetilde{W}^{1/2}}\leq 1.
\end{equation}
Then there is a constant $C_0\geq 1$ such that if $n\geq 2$, $p\in\{2,\infty\}$, and $l\in\overline{\Z}$ then
\begin{equation}\label{na152}
\sum_{\underline{k}\in\overline{\mathbb{Z}}^{n+1}}\big\|P_l[F_{n;\underline{k}}(f_{\underline{k}})]\big\|_{L^p}\leq (C_0\e)^{n}\big(\sum_{l'\geq l-6n}\|P_{l'}h\|_{L^p}\big).
\end{equation}
\end{lemma}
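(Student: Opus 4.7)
The plan is to reduce to a physical-space representation of $F_{n;\underline{k}}$ and then use pointwise divided-difference bounds. Starting from the definition \eqref{na150}--\eqref{na150.1} of $M'_{n+1}$ and interchanging the order of the $\rho$-integral with the Fourier sums, together with the elementary identities
$$\frac{1}{2\pi R}\sum_{\eta\in\mathbb{Z}/R}\frac{e^{i\eta\rho}-1}{\rho}e^{i(x-\rho)\eta}\widehat{P_{k_l}h}(\eta)=\frac{P_{k_l}h(x)-P_{k_l}h(x-\rho)}{\rho}$$
and a Fourier inversion identity for $f_{\underline{k}}$, one obtains
\begin{equation}\label{planphys}
F_{n;\underline{k}}(f_{\underline{k}})(x)=\int_{\mathbb{R}}f_{\underline{k}}(x-\rho)\prod_{l=1}^{n+1}\frac{P_{k_l}h(x)-P_{k_l}h(x-\rho)}{\rho}\,d\rho.
\end{equation}

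The next key input is a Paley--Wiener support property of the kernel: rewriting $\frac{e^{i\eta_l\rho}-1}{\rho}=i\eta_l\int_0^1 e^{is_l\eta_l\rho}\,ds_l$ and applying Fubini gives
$$M'_{n+1}(\xi;\underline{\eta})=2\pi\prod_l(i\eta_l)\int_{[0,1]^{n+1}}\delta\bigl(\xi-\textstyle\sum_l s_l\eta_l\bigr)\,ds_1\cdots ds_{n+1},$$
so the $\xi$-support of $M'_{n+1}(\cdot;\underline{\eta})$ lies in $[-\sum_l|\eta_l|,\sum_l|\eta_l|]$. When $n\geq 2$ and $\max_l k_l<l-6n$, we have $\sum_l|\eta_l|\leq(n+1)2^{l-6n+2}<2^{l-1}$ on the support of $\varphi_{k_1}\cdots\varphi_{k_{n+1}}$, so $P_lF_{n;\underline{k}}(f_{\underline{k}})\equiv 0$. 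This is precisely the source of the summation constraint $l'\geq l-6n$ in \eqref{na152}.

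For the remaining indices, by symmetry in the $k_l$'s we may take $k_1=\max_l k_l\geq l-6n$. Applying Minkowski in $L^p_x$ to \eqref{planphys}, using the pointwise bound
$$\big|[P_{k_l}h(x)-P_{k_l}h(x-\rho)]/\rho\big|\lesssim\min(2^{k_l},|\rho|^{-1})\|P_{k_l}h\|_{L^\infty}\quad(l\geq 2),$$
the analogous $L^p_x$-bound $\|[P_{k_1}h(\cdot)-P_{k_1}h(\cdot-\rho)]/\rho\|_{L^p_x}\lesssim\min(2^{k_1},|\rho|^{-1})\|P'_{k_1}h\|_{L^p}$ for $l=1$, and $\|f_{\underline{k}}\|_{L^\infty}\leq\|f_{\underline{k}}\|_{\widetilde{W}^{1/2}}\leq 1$, one gets after carrying out the $\rho$-integral (by a dyadic decomposition in $|\rho|$ peeled off according to the sorted ordering of the $k_l$'s)
$$\|P_lF_{n;\underline{k}}(f_{\underline{k}})\|_{L^p}\lesssim(|k_1-k_2|+C)\|P'_{k_1}h\|_{L^p}\prod_{l=2}^{n+1}\bigl(2^{k_l}\|P_{k_l}h\|_{L^\infty}\bigr).$$
Since $N_1\geq 4$, the quantities $2^{k_l}\|P_{k_l}h\|_{L^\infty}\lesssim\min(2^{k_l},2^{-(N_1-1)k_l^+})\varepsilon$ are geometrically summable in each $k_l\in\overline{\mathbb{Z}}$ (uniformly in $R$, since $P_k\equiv 0$ for $2^kR\leq 1/2$), producing the factor $(C\varepsilon)^n$ after summation in $k_2,\ldots,k_{n+1}$. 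The remaining sum over $k_1\geq l-6n$ yields the claimed $\sum_{l'\geq l-6n}\|P_{l'}h\|_{L^p}$, provided $C_0$ is chosen large enough to absorb the geometric-series constants.

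The main technical difficulty will be to avoid an uncontrolled logarithmic loss in the $\rho$-integral when several of the $k_l$'s are of comparable size, and to maintain uniformity in $R$ when summing over low frequencies $k_l\leq 0$ on the large torus. Both are handled by the sorted dyadic decomposition indicated above (which produces only a single logarithmic factor $|k_1-k_2|+1$ that is absorbed by the geometric decay of the $l\geq 3$ factors) together with the support condition $P_k\equiv 0$ for $2^kR\leq 1/2$ from \eqref{Pkconv1}.
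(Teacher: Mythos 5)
Your reduction to the physical-space representation, the support argument forcing $\max_j k_j\geq l-6n$, and the intermediate bound
$\|P_lF_{n;\underline{k}}(f_{\underline{k}})\|_{L^p}\lesssim(1+|k_1-\widetilde{k}_n|)\,\|P'_{k_1}h\|_{L^p}\prod_{l\geq 2}\big(2^{k_l}\|P_{k_l}h\|_{L^\infty}\big)$
(with $\widetilde{k}_n$ the second-largest index) all match the paper's argument; this is exactly \eqref{na205}--\eqref{na209}. The gap is in the final summation. The logarithmic factor produced by the $\rho$-integral depends only on the two largest frequencies, so it cannot be ``absorbed by the geometric decay of the $l\geq 3$ factors''; it would have to be absorbed by the decay in $\widetilde{k}_n$ itself, and there it fails. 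Since $2^{k}\|P_kh\|_{L^\infty}\lesssim\varepsilon\min(2^{k},2^{-(N_1-1)k})$ is peaked at $k=0$ rather than at $k=k_1$, one gets $\sum_{\widetilde{k}_n\leq k_1-6n}(1+|k_1-\widetilde{k}_n|)\,2^{\widetilde{k}_n}\|P_{\widetilde{k}_n}h\|_{L^\infty}\approx\varepsilon(1+k_1^+)$, so your final estimate carries an unbounded extra factor $(1+k_1^+)$ multiplying $\|P'_{k_1}h\|_{L^p}$, and no universal $C_0$ yields \eqref{na152}. The paper flags precisely this obstruction: ``the bounds \eqref{na209} cannot be used directly to prove \eqref{na213}, due to derivative loss.''

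The missing ingredient is a cancellation in the highest-frequency difference quotient, which the paper implements by induction on $n$. One splits $L_{k_{n+1}}(y,\rho)=\widetilde{L}_{k_{n+1}}(y,\rho)+K'_{k_{n+1}}(y)$ with $\widetilde{L}_k(y,\rho)=[K_k(y+\rho)-K_k(y)-\rho K'_k(y)]/\rho$; the Taylor-remainder kernel gains the factor $2^k|\rho|$, which removes the logarithm on the critical range of $\rho$, while the piece built on $K'_{k_{n+1}}$ is exactly $F_{n-1;\underline{k}}$ applied to the modified input $g=(P_{\leq k_1-6n}f_{\underline{k}})\cdot(P'_{k_{n+1}}h\ast K'_{k_{n+1}})$, which satisfies \eqref{na151} up to a factor $\varepsilon 2^{-|k_{n+1}|/2}$, so the induction hypothesis (with base case $n=1$ coming from the explicit computation for $R_1$) closes the argument. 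Your proposal contains no substitute for this cancellation, so as written it only proves the weaker estimate with the logarithmic loss in the top frequency.
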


\begin{proof}
Notice that
\begin{equation}\label{na201}
M'_{n+1}(\xi,\eta_1,\ldots,\eta_{n+1})=0\qquad\text{ if }\qquad \eta_1\cdot\ldots\cdot\eta_{n+1}=0.
\end{equation}
Taking partial derivatives in $\underline\eta$ it follows that
\begin{equation}\label{na202}
M'_{n+1}(\xi,\eta_1,\ldots,\eta_{n+1})=0\qquad\text{ if }\qquad |\eta_1|+\ldots+|\eta_{n+1}|\leq|\xi|.
\end{equation}

Using the Fourier inversion formula we rewrite, in the physical space,
\begin{equation}\label{na205}
F_{n,\underline{k}}(f_{\underline{k}})(x)=\int_{\mathbb{R}}\int_{\T_R^{n+1}}f_{\underline{k}}(x-\rho)\prod_{l=1}^{n+1}P'_{k_l}h(x-\rho-y_l)\cdot \prod_{l=1}^{n+1}L_{k_l}(y_l,\rho)\,d\underline{y}d\rho ,
\end{equation}
where, with $K_k$ defined as in \eqref{Pkconv1},
\begin{equation}\label{na206}
L_k(y,\rho):=\frac{1}{2\pi R}\sum_{\eta\in\Z/R}\varphi_k(\eta)e^{iy\eta}\frac{e^{i\eta\rho}-1}{\rho}=\frac{K_k(y+\rho)-K_k(y)}{\rho}.
\end{equation}
Using the identities \eqref{Pkconv2} it is easy to see that
\begin{equation}\label{na207}
\int_{\T_R}\big|L_k(y,\rho)\big|\,dy\lesssim \min\big(|\rho|^{-1},2^k\big).
\end{equation}
Therefore, assuming that $\underline{k}=(k_1,\ldots,k_{n+1})\in\Z^{n+1}$ and letting $\widetilde{k}_1\leq\ldots\leq\widetilde{k}_{n+1}$ denote the nondecreasing rearrangement of the integers $k_1,\ldots,k_{n+1}$, we have
\begin{equation}\label{na208}
\int_{\mathbb{R}}\int_{\T_R^{n+1}}\Big|\prod_{l=1}^{n+1}L_{k_l}(y_l,\rho)\Big|\, d\underline{y} d\rho\leq C^{n+1} 2^{\widetilde{k}_1+\ldots+\widetilde{k}_n}(1+|\widetilde{k}_{n+1}-\widetilde{k}_n|),
\end{equation}
for some constant $C\geq 1$ (which is allowed to change from now on from line to line). 

Therefore, using \eqref{na205}, for any $\underline{k}\in\Z^{n+1}$ and $p\in\{2,\infty\}$,
\begin{equation}\label{na209}
\big\|F_{n,\underline{k}}(f_{\underline{k}})\big\|_{L^p}\leq C^{n+1}(1+|\widetilde{k}_{n+1}-\widetilde{k}_n|)\cdot\|f_{\underline{k}}\|_{L^\infty}\|P'_{\widetilde{k}_{n+1}}h\|_{L^p}\prod_{l=1}^n\big(2^{\widetilde{k}_l}\|P'_{\widetilde{k}_{l}}h\|_{L^\infty}\big).
\end{equation}
We notice that the bounds \eqref{na209} suffice to prove \eqref{na152} if $l\leq 0$, due to the identity \eqref{na202}. On the other hand, if $l\geq 0$ then these bounds suffice to control the part of the sum in \eqref{na152} corresponding to $P_{>\widetilde{k}_{n+1}-6n}f_{\underline{k}}$, and also the part of the sum corresponding to multi-indices $\underline{k}$ for which $\widetilde{k}_n\geq\widetilde{k}_{n+1}-6n$. After these reductions for \eqref{na152} it remains to prove that
\begin{equation}\label{na213}
\sum_{k_2,\ldots,k_{n+1}\leq k_1-6n}\Big\|F_{n,\underline{k}}(P_{\leq k_1-6n}f_{\underline{k}})\Big\|_{L^p}\leq (C_1\varep)^{n} \|P'_{k_1}h\|_{L^p},
\end{equation}
for $p\in\{2,\infty\}$, $k_1\geq -6$, and some constant $C_1\geq 1$.

The bounds \eqref{na209} cannot be used directly to prove \eqref{na213}, due to derivative loss. We use instead induction over $n$. The case $n=1$ follows from our earlier discussion of the operator $R_1$. To prove the induction step we decompose
\begin{equation*}
L_{k_{n+1}}(y_{n+1},\rho)=\widetilde{L}_{k_{n+1}}(y_{n+1},\rho)+K'_{k_{n+1}}(y_{n+1}),
\end{equation*}
where $K'_k(y)=(\partial_y K_k)(y)$ and 
\begin{equation*}
\widetilde{L}_k(y,\rho):=\frac{K_k(y+\rho)-K_k(y)-\rho K'_k(y)}{\rho}.
\end{equation*}
Then we decompose
\begin{equation*}
F_{n,\underline{k}}(P_{\leq k_1-6n}f_{\underline{k}})=F^1_{n,\underline{k}}(P_{\leq k_1-6n}f_{\underline{k}})
+F^2_{n,\underline{k}}(P_{\leq k_1-6n}f_{\underline{k}}),
\end{equation*}
where
\begin{equation}\label{na216}
F^1_{n,\underline{k}}(g)(x):=\int_{\mathbb{R}}\int_{\mathbb{T}_R^{n+1}}g(x-\rho)\prod_{l=1}^{n+1}P'_{k_l}h(x-\rho-y_l)\cdot \widetilde{L}_{k_{n+1}}(y_{n+1},\rho)\prod_{l=1}^{n}L_{k_l}(y_l,\rho)\, d\underline{y}d\rho,
\end{equation}
and
\begin{equation}\label{na217}
F^2_{n,\underline{k}}(g)(x):=\int_{\mathbb{R}}\int_{\mathbb{T}_R^{n+1}}g(x-\rho)\prod_{l=1}^{n+1}P'_{k_l}h(x-\rho-y_l)\cdot K'_{k_{n+1}}(y_{n+1})\prod_{l=1}^{n}L_{k_l}(y_l,\rho)\, d\underline{y}d\rho.
\end{equation}

Using \eqref{Pkconv2} it is easy to see that the kernels $\widetilde{L}_k$ satisfy the better estimates
\begin{equation*}
\int_{\mathbb{T}_R}\big|\widetilde{L}_k(y,\rho)\big|\,dy\lesssim 2^k|\rho|\min\big(|\rho|^{-1},2^k\big),
\end{equation*}
compare with \eqref{na207}. Using also \eqref{na207} it follows that if $k_2,\ldots,k_{n+1}\in (-\infty,k_1-6n]$ then
\begin{equation*}
\int_{\mathbb{R}}\int_{\mathbb{T}_R^{n+1}}\Big|\widetilde{L}_{k_{n+1}}(y_{n+1},\rho)\prod_{l=1}^{n}L_{k_l}(y_l,\rho)\Big|\, d\underline{y}d\rho\leq C^{n+1}(1+|\widetilde{k}_n-\widetilde{k}_{n-1}|)2^{\widetilde{k}_1+\ldots+\widetilde{k}_n},
\end{equation*}
where $\widetilde{k}_1\leq\ldots\leq\widetilde{k}_{n}$ denote the nondecreasing rearrangement of the integers $k_2,\ldots,k_{n+1}$. This is slightly stronger than the inequality \eqref{na208}. Therefore
\begin{equation*}
\Big\|F^1_{n,\underline{k}}(P_{\leq k_1-6n}f_{\underline{k}})\Big\|_{L^p}\leq C^{n+1}(1+|\widetilde{k}_n-\widetilde{k}_{n-1}|)\cdot\|f_{\underline{k}}\|_{L^\infty}\|P'_{k_1}h\|_{L^p}\prod_{l=2}^{n+1}\big(2^{k_l}\|P'_{k_{l}}h\|_{L^\infty}\big).
\end{equation*}
Therefore, for $p\in\{2,\infty\}$ and $k_1\geq -6$,
\begin{equation}\label{na218}
\sum_{k_2,\ldots,k_{n+1}\leq k_1-6n}\Big\|F^1_{n,\underline{k}}(P_{\leq k_1-6n}f_{\underline{k}})\Big\|_{L^p}\leq (C\varep)^{n} \|P'_{k_1}h\|_{L^p}.
\end{equation}

We estimate now the contributions of the terms $F^2_{n,\underline{k}}(P_{\leq k_1-6n}f_{\underline{k}})$. We integrate the variable $y_{n+1}$ in the defining formula \eqref{na217}. Let
\begin{equation*}
g^{k_{n+1}}_{(k_1,\ldots,k_{n})}(z):=P_{\leq k_1-6n}f_{(k_1,\ldots,k_{n+1})}(z)\int_{\mathbb{T}_R}P'_{k_{n+1}}h(z-y_{n+1})K'_{k_{n+1}}(y_{n+1})\,dy_{n+1}.
\end{equation*}
Using Lemma \ref{algeProp} (i) and the assumptions \eqref{na151} and \eqref{Asshu}, it is easy to see that
\begin{equation*}
\|g^{k_{n+1}}_{(k_1,\ldots,k_{n})}\|_{\widetilde{W}^{1/2}}\lesssim\varep 2^{-|k_{n+1}|/2},
\end{equation*}
for any $k_1,\ldots,k_{n+1}$ as in \eqref{na213}. Since 
\begin{equation*}
F^2_{n,\underline{k}}(P_{\leq k_1-6n}f_{\underline{k}})=F_{n-1,\underline{k}}(P_{\leq k_1-6n+6}g_{\underline{k}}),
\end{equation*}
the induction hypothesis shows that
\begin{equation*}
\sum_{k_2,\ldots,k_{n+1}\leq k_1-6n}\Big\|F^2_{n,\underline{k}}(P_{\leq k_1-6n}f_{\underline{k}})\Big\|_{L^p}\lesssim \sum_{k_{n+1}\in\Z}(C_1\varep)^{n-1}\varep 2^{-|k_{n+1}|/2} \|P'_{k_1}h\|_{L^p}.
\end{equation*}
The desired bounds \eqref{na213} follow, using also \eqref{na218} provided that $C_1$ is sufficiently large.
\end{proof}

\subsection{Proof of Lemma \ref{BVexpands}} Using \eqref{na16} we have
\begin{equation}\label{exn1}
\partial_xR_0H_0\phi=-\partial_x(h\partial_x\phi)-|\partial_x|(h|\partial_x|\phi).
\end{equation}
The identities \eqref{expand1} and the bounds \eqref{expand2} for the cubic remainders $B_{\geq 3}$ and $V_{\geq 3}$ follow from \eqref{ctu5.11}--\eqref{ctu5.12}. The identities \eqref{expand0} and the bounds \eqref{expand2} for the remainders $\dot{h}_{\geq 3}$ and $\dot{\phi}_{\geq 3}$ follow from the evolution equations \eqref{gWW}, the identities and inequalities \eqref{ctu5.10}, and Lemma \ref{algeProp} (i).

The remaining conclusions of Lemma \ref{BVexpands} follow easily from the following:

\begin{lemma}\label{dotpsi}
Assume $(h,\phi)\in C([T_1,T_2]:H^{N_0}\times\dot{H}^{N_0,1/2})$ is a solution of the system \eqref{gWW} satisfying the bounds \eqref{maineqas}. We define $\psi\in\mathcal{E}^0$ as in \eqref{na35}. Then
\begin{equation}\label{exn2}
\begin{split}
&\partial_t\psi=-H_0h+\dot{\psi}_2+\dot{\psi}_{\geq 3},\\
&\dot{\psi}_2:=\big[|\partial_x|\phi\partial_x\phi-h\partial_xh\big]-H_0\Big[\frac{1}{2}(|\partial_x|\phi)^2+\frac{1}{2}(\partial_x\phi)^2-h|\partial_x|h\Big],\\
&\|\dot{\psi}_{\geq 3}\|_{\dot{H}^{N_0-1,1/10}}\lesssim A[\varep(t)]^2,\qquad \|\dot{\psi}_{\geq 3}\|_{\dot{W}^{N_1-1,1/10}}\lesssim [\varep(t)]^3.
\end{split}
\end{equation}
Moreover
\begin{equation}\label{exn3}
\begin{split}
&\partial^2_t\psi=-\partial_x\phi+\ddot{\psi}_2+\ddot{\psi}_{\geq 3},\\
&\ddot{\psi}_2:=-\partial_xh|\partial_x|\phi-|\partial_x|h\partial_x\phi+2H_0\big[\partial_xh\partial_x\phi+|\partial_x|h|\partial_x|\phi\big],\\
&\|\ddot{\psi}_{\geq 3}\|_{\dot{H}^{N_0-2,1/10}}\lesssim A[\varep(t)]^2,\qquad \|\ddot{\psi}_{\geq 3}\|_{\dot{W}^{N_1-2,1/10}}\lesssim [\varep(t)]^3.
\end{split}
\end{equation}
\end{lemma}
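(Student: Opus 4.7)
The key idea is to combine complex analysis with the Bernoulli equation. The function $F = G+iH$ defined in \eqref{na33} is holomorphic in $\Omega^\#(t)$, extends continuously to the boundary with $G|_{\mathrm{bdy}} = \phi$ and $H|_{\mathrm{bdy}} = \psi$, and decays at infinity; by uniqueness of the harmonic extension, $G$ coincides with the velocity potential $\Phi$ of the fluid. For each fixed $z\in\Omega^\#(t)$ the pointwise time derivative $\partial_t F$ is again holomorphic, so the argument leading to \eqref{na32} shows that its boundary values $\alpha := (\partial_t G)|_{\mathrm{bdy}}$ and $\beta := (\partial_t H)|_{\mathrm{bdy}}$ satisfy $(I+T_1)\beta = H_0\alpha + T_2\alpha$. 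From $\psi(x,t) = H(x,h(x,t),t)$ and the Cauchy--Riemann identity $\partial_y H|_{\mathrm{bdy}} = v_1 = V$ (see \eqref{na37}), the chain rule gives $\partial_t\psi = \beta + h_t V$.

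\textbf{Expansion of $\dot\psi_2$.} The Bernoulli identity $\partial_t\Phi + \frac{1}{2}|\nabla\Phi|^2+y+p = 0$ in $\Omega^\#$, with $p|_{\mathrm{bdy}} = 0$, gives $\alpha = -h-\frac{1}{2}(V^2+B^2)$; substituting $V = \partial_x\phi+V_{\geq 3}$, $B = |\partial_x|\phi+B_{\geq 3}$ from \eqref{expand1} and invoking Lemma \ref{algeProp} yields $\alpha = -h - \frac{1}{2}[(\partial_x\phi)^2+(|\partial_x|\phi)^2]+\alpha_{\geq 3}$ with an acceptable cubic remainder. Inverting $(I+T_1)$ via its Neumann series (convergent for $\varep$ small by Lemma \ref{Wboundedness}) gives $\beta = H_0\alpha + T_2\alpha - T_1 H_0\alpha + \mathrm{(cubic)}$. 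At the quadratic level in $(h,\phi)$, contributions from $T_1^2$, $T_1T_2$, and from the higher operators $R_n$ with $n\geq 2$ inside $T_1, T_2$ are all cubic (each $R_n$ factor carries at least one $h$, and $R_1(-h)$ is already trilinear in $h$). The surviving quadratic correction is $-T_1 H_0\alpha_{\mathrm{lin}} = R_0 H_0 h$, and a direct Fourier computation from the symbol \eqref{na16} gives the identity $R_0 H_0 h = H_0(h|\partial_x|h) - h\partial_x h$. Adding $h_t V|_{\mathrm{quad}} = |\partial_x|\phi\cdot\partial_x\phi$ reproduces \eqref{exn2}; the cubic remainder bounds in $\dot H^{N_0-1,1/10}\cap\dot W^{N_1-1,1/10}$ follow from \eqref{na23.7} with $a=2$ together with \eqref{expand2} and Lemma \ref{algeProp}.

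\textbf{Second derivative.} Differentiating \eqref{exn2} in time gives $\partial_t^2\psi = -H_0\partial_t h + \partial_t\dot\psi_2 + \partial_t\dot\psi_{\geq 3}$. Using $H_0|\partial_x| = \partial_x$ and $\partial_t h = |\partial_x|\phi+\dot h_2+\dot h_{\geq 3}$, the first term produces the linear piece $-\partial_x\phi$ and the quadratic piece $-H_0\dot h_2 = -|\partial_x|(h\partial_x\phi)+\partial_x(h|\partial_x|\phi)$ (using $H_0\partial_x = -|\partial_x|$). The quadratic part of $\partial_t\dot\psi_2$ is obtained by differentiating each factor of $\dot\psi_2$ and substituting the linear approximations $\phi_t \to -h$, $h_t \to |\partial_x|\phi$. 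Collecting the resulting bilinear terms, the $h\partial_x|\partial_x|\phi$ pieces cancel between $\partial_x(h|\partial_x|\phi)$ and the time derivative of $-h\partial_x h$, and the remaining combination simplifies through the operator identities $H_0\partial_x = \partial_xH_0 = -|\partial_x|$ and $|\partial_x|^2 = -\partial_x^2$ into the compact form \eqref{exn3}. The cleanest verification is at the level of Fourier symbols: after applying $\partial_x(fg)=\partial_x f\cdot g+f\partial_x g$ and the sign identities, the difference of the two sides reduces to the trivial identity $(\eta+\rho)\sgn(\eta+\rho)=|\eta+\rho|$.

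\textbf{Main obstacle.} The main difficulty lies in this final algebraic reduction: expressions such as $-|\partial_x|(h\partial_x\phi)$, $H_0(h\partial_x^2\phi)$, and $H_0(\partial_x h\partial_x\phi)$ are related only through non-local operator identities (not local Leibniz rules), and the sign-sensitive identities $\partial_x H_0 = -|\partial_x|$, $|\partial_x|^2 = -\partial_x^2$ must be applied with care to avoid spurious terms. A secondary subtlety is maintaining the cubic remainders in the stronger norm $\dot W^{N_1-1,1/10}$ (rather than a plain $L^\infty$-type norm without derivative gain), which relies crucially on the smoothing estimate $\sum_{n\geq 2}\|R_n f\|_{\dot W^{N_1,1/10}}\lesssim \varep^3$ from \eqref{na23.7} at $a=2$.
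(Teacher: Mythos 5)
Your computation of the quadratic symbols is correct, and your final algebraic verification of $\ddot\psi_2$ (reducing everything via $\partial_xH_0=-|\partial_x|$, $H_0|\partial_x|=\partial_x$, $|\partial_x|^2=-\partial_x^2$) matches the paper's formula. But your route to \eqref{exn2} is genuinely different from the paper's. You apply the static Plemelj relation $(I+T_1)\beta=(H_0+T_2)\alpha$ to the boundary values of the holomorphic function $\partial_tF$, identifying $\alpha=(\partial_tG)|_{\mathrm{bdy}}=-h-\tfrac12(V^2+B^2)$ via Bernoulli and recovering $\partial_t\psi=\beta+h_tV$ by the chain rule. The paper instead differentiates the operator identity $(I+T_1)\psi=(H_0+T_2)\phi$ in time, obtaining $(I+T_1)\dot\psi=(H_0+T_2)\dot\phi+\dot T_2\phi-\dot T_1\psi$, where $\dot T_1,\dot T_2$ are built from the operators $\dot R_n$ of \eqref{exn5} (one copy of $h$ replaced by $\partial_th$), and then inverts $(I+T_1)$ to leading order. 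Your approach trades the operator bookkeeping for a physical input (Bernoulli); since the lemma's hypothesis is only that $(h,\phi)$ solves \eqref{gWW}, you should derive $\alpha=-h-\tfrac12(V^2+B^2)$ from the chain rule $\partial_t\phi=\alpha+h_tB$ together with \eqref{equdtphi} and $h_t=B-h_xV$, rather than invoking the pressure. Both routes lead to the same quadratic extraction $H_0\alpha_{\mathrm{quad}}+R_0H_0h+h_tV|_{\mathrm{quad}}$, and your identity $R_0H_0h=H_0(h|\partial_x|h)-h\partial_xh$ is correct.

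The genuine gap is in the second-derivative step. You write $\partial_t^2\psi=-H_0\partial_th+\partial_t\dot\psi_2+\partial_t\dot\psi_{\geq 3}$ and implicitly treat $\partial_t\dot\psi_{\geq 3}$ as an acceptable cubic remainder in $\dot H^{N_0-2,1/10}\cap\dot W^{N_1-2,1/10}$. This does not follow from the bounds \eqref{exn2} on $\dot\psi_{\geq 3}$ alone: a time derivative of a remainder is not controlled by the remainder, and the terms in $\dot\psi_{\geq 3}$ contain the kernels $R_n$ whose time derivatives put $\partial_t^2h$ on one of the $h$ slots. To close this you need estimates on the operators with one and two time derivatives falling on the kernel --- precisely the bounds \eqref{exn8}--\eqref{exn9} for $\dot R_n$, $\ddot R_{n,1}$, $\ddot R_{n,2}$ that the paper sets up in Step 1 --- together with the a priori control \eqref{exn4} on $\partial_t^2h+|\partial_x|h$ and $\partial_t^2\phi+|\partial_x|\phi$. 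The paper sidesteps your difficulty entirely by differentiating the relation $(I+T_1)\psi=(H_0+T_2)\phi$ twice (equation \eqref{exn14}), which yields the a priori bound \eqref{exn15} on $\ddot\psi$ and then an explicit identification of its linear and quadratic parts. Your approach is completable, but as written the treatment of $\partial_t\dot\psi_{\geq 3}$ is missing, and it is not a one-line fix.
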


Assuming this lemma, we can easily complete the proof of Lemma \ref{BVexpands}. Indeed, the identities \eqref{na41} show that
\begin{equation*}
\partial_tV=\frac{\partial^2_{xt}\phi+\partial_xh\partial^2_{xt}\psi+\partial^2_{xt}h\partial_x\psi}{1+(\partial_xh)^2}-\frac{2\partial_xh\partial^2_{xt}h(\partial_x\phi+\partial_xh\partial_x\psi)}{[1+(\partial_xh)^2]^2}
\end{equation*}
and
\begin{equation*}
\partial_tB=\frac{-\partial^2_{xt}\psi+\partial_xh\partial^2_{xt}\phi+\partial^2_{xt}h\partial_x\phi}{1+(\partial_xh)^2}-\frac{2\partial_xh\partial^2_{xt}h(-\partial_x\psi+\partial_xh\partial_x\phi)}{[1+(\partial_xh)^2]^2}.
\end{equation*}
The desired identities \eqref{expand3}--\eqref{expand4} follow using the expansions \eqref{expand0}, \eqref{ctu5.10}, and \eqref{exn2}, and then using Lemma \ref{algeProp} (i) to estimate the cubic remainders $\dot{B}_{\geq 3}$ and $\dot{V}_{\geq 3}$ as claimed in \eqref{expand6.5}. 

We take one more time derivative to calculate
\begin{equation*}
\begin{split}
\partial^2_tB&=\frac{-\partial^3_{xtt}\psi+2\partial^2_{xt}h\partial^2_{xt}\phi+\partial_{x}h\partial^3_{xtt}\phi+\partial^3_{xtt}h\partial_x\phi}{1+(\partial_xh)^2}-\frac{2\partial_xh\partial^2_{xt}h(-\partial^2_{xt}\psi+\partial_xh\partial^2_{xt}\phi+\partial^2_{xt}h\partial_x\phi)}{[1+(\partial_xh)^2]^2}\\
&-\frac{2(\partial^2_{xt}h)^2(-\partial_x\psi+\partial_xh\partial_x\phi)+2\partial_xh\partial^2_{xt}h(-\partial^2_{xt}\psi+\partial^2_{xt}h\partial_x\phi+\partial_{x}h\partial^2_{xt}\phi)}{[1+(\partial_xh)^2]^2}\\
&+\frac{8(\partial_xh\partial^2_{xt}h)^2(-\partial_x\psi+\partial_xh\partial_x\phi)}{[1+(\partial_xh)^2]^3}.
\end{split}
\end{equation*}
We observe that only the expression in the first fraction contributes linear and quadratic terms. All the terms in the last 3 fractions are cubic and higher order, so they can be incorporated as part of the remainder $\ddot{B}_{\geq 3}$ and estimated using Lemma \ref{algeProp} (i). Moreover, notice that
\begin{equation}\label{exn4}
\begin{split}
\|\partial_t^2h+|\partial_x|h\|_{H^{N_0-2}}\lesssim A\varepsilon(t),\qquad \|\partial_t^2h+|\partial_x|h\|_{\wt{W}^{N_1-2}}\lesssim [\varepsilon(t)]^2,\\
\|\partial_t^2\phi+|\partial_x|\phi\|_{H^{N_0-2}}\lesssim A\varepsilon(t),\qquad \|\partial_t^2\phi+|\partial_x|\phi\|_{\wt{W}^{N_1-2}}\lesssim [\varepsilon(t)]^2,
\end{split}
\end{equation}
using again the main system \eqref{gWW} and the identities \eqref{expand0} and \eqref{exn2}. We can then use these bounds and the identities \eqref{expand0} and \eqref{exn3} to extract the linear and the quadratic terms from the first fraction in the formula of $\partial_t^2B$ above, and control the cubic remainder as claimed in \eqref{expand5}--\eqref{expand6.5}. This completes the proof of Lemma \ref{BVexpands}.

\subsubsection{Proof of Lemma \ref{dotpsi}} We start from the defining formula $(I+T_1)\psi=(H_0+T_2)\phi$ and recall the main definitions \eqref{na23} and \eqref{na13.2}--\eqref{na13.3}. 

{\bf{Step 1.}} For $n\geq 0$ we define the operators $\dot{R}_n$, $\ddot{R}_{n,1}$, and $\ddot{R}_{n,2}$ by the formulas
\begin{equation}\label{exn5}
\begin{split}
\widehat{\dot{R}_nf}(\xi)=\frac{1}{\pi(2\pi R)^{n+1}}\sum_{\eta_1,\ldots,\eta_{n+1}\in \Z/R}&M_{n+1}(\xi;\underline{\eta})\widehat{f}(\xi-\eta_1-\ldots-\eta_{n+1})\\
\times&\widehat{h}(\eta_1)\cdot\ldots\cdot\widehat{h}(\eta_{n})\cdot\widehat{(\partial_th)}(\eta_{n+1}),
\end{split}
\end{equation}
\begin{equation}\label{exn6}
\begin{split}
\widehat{\ddot{R}_{n,1}f}(\xi)=\frac{1}{\pi(2\pi R)^{n+1}}\sum_{\eta_1,\ldots,\eta_{n+1}\in \Z/R}&M_{n+1}(\xi;\underline{\eta})\widehat{f}(\xi-\eta_1-\ldots-\eta_{n+1})\\
\times&\widehat{h}(\eta_1)\cdot\ldots\cdot\widehat{h}(\eta_{n})\cdot\widehat{(\partial_t^2h)}(\eta_{n+1}),
\end{split}
\end{equation}
\begin{equation}\label{exn7}
\begin{split}
\widehat{\ddot{R}_{n,2}f}(\xi)=\frac{1}{\pi(2\pi R)^{n+1}}\sum_{\eta_1,\ldots,\eta_{n+1}\in \Z/R}&M_{n+1}(\xi;\underline{\eta})\widehat{f}(\xi-\eta_1-\ldots-\eta_{n+1})\\
\times&\widehat{h}(\eta_1)\cdot\ldots\cdot\widehat{h}(\eta_{n-1})\cdot\widehat{(\partial_th)}(\eta_{n})\cdot\widehat{(\partial_th)}(\eta_{n+1}).
\end{split}
\end{equation}

We claim that if $f\in H^3(\T_R)$ satisfies $\||\partial_x|^{1/2}f\|_{\dot{W}^{1,0}}\leq 1$ then
\begin{equation}\label{exn8}
\sum_{n\geq a}2^n\|\dot{R}_nf\|_{\dot{H}^{N_0-1,1/10}}\lesssim A\varep^{a},\qquad \sum_{n\geq a}2^n\|\dot{R}_nf\|_{\dot{W}^{N_1-1,1/10}}\lesssim \varep^{a+1}
\end{equation}
and
\begin{equation}\label{exn9}
\sum_{n\geq a}2^n\|\ddot{R}_{n,b}f\|_{\dot{H}^{N_0-2,1/10}}\lesssim A\varep^{a},\qquad \sum_{n\geq a}2^n\|\ddot{R}_{n,b}f\|_{\dot{W}^{N_1-2,1/10}}\lesssim \varep^{a+1}
\end{equation}
for $a\in\{0,1\}$ and $b\in\{1,2\}$. These are similar to the bounds \eqref{na23.7} in Lemma \ref{Wboundedness} once we notice that the functions $\partial_th$ and $\partial_t^2h$ satisfy estimates of the form
\begin{equation*}
\|\partial_t^lh(t)\|_{H^{N_0-l}}\lesssim A,\qquad \|\partial_t^lh(t)\|_{\wt{W}^{N_1-l}}\lesssim\varep, \qquad l\in\{1,2\},
\end{equation*}
for any $t\in[T_1,T_2]$ (see \eqref{expand0}, \eqref{expand2}, and \eqref{exn4}).

{\bf{Step 2.}} We prove now the expansions in Lemma \ref{dotpsi}. It follows from \eqref{na35} that
\begin{equation}\label{exn10}
(I+T_1)\dot{\psi}=(H_0+T_2)\dot{\phi}+\dot{T}_2\phi-\dot{T}_1\psi,
\end{equation}
where $\dot\psi:=\partial_t\psi$, $\dot{\phi}:=\partial_t\phi$, and
\begin{equation}\label{exn11}
\dot{T}_1:=\sum_{n\geq 0}(-1)^{n}(2n+1)\dot{R}_{2n},\qquad \dot{T}_2:=\sum_{n\geq 0}(-1)^{n}(2n+2)\dot{R}_{2n+1}.
\end{equation}
It follows from \eqref{na23.7}, \eqref{exn8}, and \eqref{exn10} that for any $t\in[T_1,T_2]$
\begin{equation}\label{exn12}
\|\dot{\psi}(t)\|_{\dot{H}^{N_0-1,1/10}}\lesssim A,\qquad \|\dot{\psi}(t)\|_{\dot{W}^{N_1-1,1/10}}\lesssim\varep(t).
\end{equation}
We apply the operator $I-R_0$ to \eqref{exn10} to see that
\begin{equation*}
\dot{\psi}-R_0^2\dot{\psi}=(I-R_0)\big\{(H_0+T_2)\dot{\phi}+\dot{T}_2\phi-\dot{T}_1\psi-(T_1-R_0)\dot\psi\big\},
\end{equation*}
which can be rewritten in the form
\begin{equation*}
\dot{\psi}=(I-R_0)H_0\dot{\phi}-\dot{R}_0\psi+(I-R_0)\big\{T_2\dot{\phi}+\dot{T}_2\phi-(\dot{T}_1-\dot{R}_0)\psi-(T_1-R_0)\dot\psi\big\}+R_0\dot{R}_0\psi+R_0^2\dot{\psi}.
\end{equation*}
We notice that most of the terms in the formula above are cubic and higher order acceptable remainders, due to \eqref{exn8}, and only the first two terms in the right-hand side give linear and quadratic contributions. Using also \eqref{expand0} these contributions are
\begin{equation*}
H_0(-h+\dot{\phi}_2)+R_0H_0h-\dot{R}_0\psi.
\end{equation*}
The identities \eqref{exn2} follow since $R_0f=-H_0(h\partial_xf)-h|\partial_x|f$ and $\dot{h}-|\partial_x|\phi$ and $\psi-H_0\phi$ are suitable quadratic terms (see \eqref{expand0}). 

The proof of the expansion \eqref{exn3} is similar. We start from from \eqref{na35} and take 2 time derivatives, so
\begin{equation}\label{exn14}
(I+T_1)\ddot{\psi}=(H_0+T_2)\ddot{\phi}+2\dot{T}_2\dot\phi+\ddot{T}_2\phi-2\dot{T}_1\dot\psi-\ddot{T}_1\psi,
\end{equation}
where $\ddot\psi:=\partial^2_t\psi$, $\ddot{\phi}:=\partial^2_t\phi$, and $\ddot{T}_1, \ddot{T}_2$ are defined naturally as in \eqref{exn11}. Using \eqref{na23.7} and \eqref{exn8}--\eqref{exn9} it follows that for any $t\in[T_1,T_2]$
\begin{equation}\label{exn15}
\|\ddot{\psi}(t)\|_{\dot{H}^{N_0-2,1/10}}\lesssim A,\qquad \|\ddot{\psi}(t)\|_{\dot{W}^{N_1-2,1/10}}\lesssim\varep(t).
\end{equation}
As before, we apply the operator $I-R_0$ to \eqref{exn14} to see that $\ddot{\psi}$ is equal to
\begin{equation*}
(I-R_0)H_0\ddot{\phi}-2\dot{R}_0\dot\psi-\ddot{R}_0\psi
\end{equation*}
up to acceptable cubic remainders. 
Then we notice that $\psi-H_0\phi$, $\dot{\psi}+H_0h$, $\dot{h}-|\partial_x|\phi$,
and $\ddot{h}+|\partial_x|h$ satisfy suitable quadratic bounds, while
$\ddot{\phi}+|\partial_x|\phi-\partial_x(h\partial_x\phi)-|\partial_x|(h|\partial_x|\phi)
  -\partial_xh\partial_x\phi+|\partial_x|h|\partial_x|\phi$ is an acceptable cubic remainder. 
The desired expansion \eqref{exn3} follows after some simplifications of the quadratic terms, 
which completes the proof of the lemma.

\medskip
\section{Vanishing at the Benjamin-Feir resonances}\label{uyt22sec}

In this section we prove Proposition \ref{BFprop}. Assume that $(y_1,y_2,y_3,y_4)\in\mathbb{H}^3_\infty$ satisfies
\begin{equation}\label{verf3}
\begin{split}
&y_1\in(-\infty,0],\qquad y_2,y_3,y_4\in[0,\infty),\qquad|y_1|\geq y_3\geq y_4\geq y_2,\\
&y_3\geq|y_1|/8,\qquad y_2\geq y_4/8.
\end{split}
\end{equation}
We would like to calculate explicitly the symmetrized symbols $SK_1^a(y_1,y_2,y_3,y_4)$, $a\in\{1,\ldots,9\}$, and $SK_2^b(y_1,y_2,y_3,y_4)$, $b\in\{1,\ldots,4\}$. 

Since $y_1+y_2+y_3+y_4=0$, it follows from \eqref{verf3} that many of the $\chi$ coefficients vanish, i.e.
\begin{equation}\label{verf4}
0=\chi(y_1,y_j)=\chi(y_3,y_j)=\chi(y_4,y_2)=\chi(y_2,y_4)
\end{equation}
for any $j\in\{1,2,3,4\}$. Similarly, for any $j\neq k\in\{1,2,3,4\}$,
\begin{equation}\label{verf5}
\begin{split}
&0=\chi(y_1,y_j+y_k)=\chi(y_3,y_j+y_k)=\chi(y_2,y_1+y_3)=\chi(y_4,y_1+y_3),\\
&0=\chi(y_j+y_k,y_2)=\chi(y_j+y_k,y_4),\\
&0=\chi(y_2+y_3,y_1)=\chi(y_3+y_4,y_1)=\chi(y_1+y_2,y_3)=\chi(y_1+y_4,y_3).
\end{split}
\end{equation}
For simplicity of notation let $\chi_{i,j}:=\chi(y_i,y_j)$, 
$\chi_{i,j+k}:=\chi(y_i,y_j+y_k)$, $\chi_{i+j,k}:=\chi(y_i+y_j,y_k)$, $i,j,k\in\{1,2,3,4\}$.
Using \eqref{chieq} it is easy to see that the nontrivial cutoffs $\chi$ satisfy
\begin{equation}\label{verf5.5}
\begin{split}
&\chi_{2,1+4}=\chi_{2,3},\qquad\chi_{2,3+4}=\chi_{2,1},\qquad\chi_{4,1+2}=\chi_{4,3},\qquad\chi_{4,3+2}=\chi_{4,1},\\
&\chi_{2+4,1}=\chi_{2+4,3}=:\chi_{2+4}.
\end{split}
\end{equation}

We can also calculate explicitly the symbols $n^{rr}_2$, $n^{ii}_2$, and $n^{ri}_2$ 
defined in \eqref{Nh}--\eqref{Nii2}: for $j\in\{2,4\}$ we have
\begin{equation}\label{verf6}
\begin{split}
&n_2^{rr}(y_1,y_3)=n_2^{rr}(y_3,y_1)=n_2^{rr}(y_2,y_4)=n_2^{rr}(y_4,y_2)=0,\\
&n_2^{rr}(y_1,y_j)=n_2^{rr}(y_j,y_1)=(1/4)|y_1+y_j|^{1/2}|y_j|\chi_{j,1},\\
&n_2^{rr}(y_3,y_j)=n_2^{rr}(y_j,y_3)=(1/4)|y_3+y_j|^{1/2}|y_j|\chi_{j,3},
\end{split}
\end{equation}
\begin{equation}\label{verf7}
\begin{split}
&n_2^{ii}(y_1,y_3)=n_2^{ii}(y_3,y_1)=0,\\
&n_2^{ii}(y_2,y_4)=n_2^{ii}(y_4,y_2)=|y_2+y_4|^{1/2}|y_2|^{1/2}|y_4|^{1/2},\\
&n_2^{ii}(y_1,y_j)=n_2^{ii}(y_j,y_1)=-|y_1+y_j|^{1/2}|y_1|^{1/2}|y_j|^{1/2}\chi_{j,1}/2,\\
&n_2^{ii}(y_3,y_j)=n_2^{ii}(y_j,y_3)=|y_3+y_j|^{1/2}|y_3|^{1/2}|y_j|^{1/2}(1-\chi_{j,3}/2),
\end{split}
\end{equation}
\begin{equation}\label{verf8}
\begin{split}
&n_2^{ri}(y_1,y_3)=-2|y_1+y_3||y_3|^{1/2},\qquad n_2^{ri}(y_3,y_1)=0,\\
&n_2^{ri}(y_2,y_4)=n_2^{ri}(y_4,y_2)=0,\\
&n_2^{ri}(y_1,y_j)=|y_1+y_j||y_j|^{1/2}(-2+\chi_{j,1})-|y_j|^{3/2}\chi_{j,1}/2,\qquad n_2^{ri}(y_j,y_1)=-|y_1|^{1/2}|y_j|\chi_{j,1}/2,\\
&n_2^{ri}(y_3,y_j)=|y_j|^{1/2}(|y_3|+|y_j|/2)\chi_{j,3},\qquad n_2^{ri}(y_j,y_3)=-|y_3|^{1/2}|y_j|\chi_{j,3}/2.
\end{split}
\end{equation}

For simplicity of notation let $m_j:=m_{N_0}(y_j)$, $m_{j+k}:=m_{N_0}(y_j+y_k)$, $j,k\in\{1,2,3,4\}$. Recall that the symbols $A'$ and $B'''$ are symmetric in all three variables. Using \eqref{sus30}, \eqref{slov26}, and the identities \eqref{verf4}--\eqref{verf5.5} we calculate explicitly 
\begin{equation}\label{A1ex}
4A'(y_1,y_2,y_3+y_4)=\frac{|y_1|^{1/2}\big\{m_1^2(-2+\chi_{2,1})+m_{3+4}^2\chi_{2,1}\big\}}{3|y_2|^{1/2}|y_3+y_4|^{1/2}},
\end{equation}
\begin{equation}\label{A2ex}
4A'(y_1,y_4,y_3+y_2)=\frac{|y_1|^{1/2}\big\{m_1^2(-2+\chi_{4,1})+m_{3+2}^2\chi_{4,1}\big\}}{3|y_4|^{1/2}|y_3+y_2|^{1/2}},
\end{equation}
\begin{equation}\label{A3ex}
4A'(y_3,y_2,y_1+y_4)=\frac{|y_1+y_4|^{1/2}\big\{m_3^2\chi_{2,3}+m_{1+4}^2(-2+\chi_{2,3})\big\}}{3|y_3|^{1/2}|y_2|^{1/2}},
\end{equation}
\begin{equation}\label{A4ex}
4A'(y_3,y_4,y_1+y_2)=\frac{|y_1+y_2|^{1/2}\big\{m_3^2\chi_{4,3}+m_{1+2}^2(-2+\chi_{4,3})\big\}}{3|y_3|^{1/2}|y_4|^{1/2}},
\end{equation}
\begin{equation}\label{A5ex}
4A'(y_2,y_4,y_1+y_3)=\frac{-2|y_1+y_3|^{1/2}m_{1+3}^2}{3|y_2|^{1/2}|y_4|^{1/2}},
\end{equation}
\begin{equation}\label{A6ex}
4A'(y_1,y_3,y_2+y_4)=\frac{|y_1|^{1/2}\big\{m_1^2(-2+\chi_{2+4})+m_3^2\chi_{2+4}\big\}}{3|y_3|^{1/2}|y_2+y_4|^{1/2}},
\end{equation}
\begin{equation}\label{BB1ex}
4B'''(y_1,y_2,y_3+y_4)=4|y_2|m_2^2+4|y_3+y_4|m_{3+4}^2-2\chi_{2,1}\big[|y_1|m_1^2+|y_1|m_{3+4}^2-2|y_2|m_1m_{3+4}\big],
\end{equation}
\begin{equation}\label{BB2ex}
4B'''(y_1,y_4,y_3+y_2)=4|y_4|m_4^2+4|y_3+y_2|m_{3+2}^2-2\chi_{4,1}\big[|y_1|m_1^2+|y_1|m_{3+2}^2-2|y_4|m_1m_{3+2}\big],
\end{equation}
\begin{equation}\label{BB3ex}
4B'''(y_3,y_2,y_1+y_4)=4|y_3|m_3^2+4|y_2|m_{2}^2-2\chi_{2,3}\big[|y_1+y_4|m_{1+4}^2+|y_1+y_4|m_{3}^2-2|y_2|m_3m_{1+4}\big],
\end{equation}
\begin{equation}\label{BB4ex}
4B'''(y_3,y_4,y_1+y_2)=4|y_3|m_3^2+4|y_4|m_{4}^2-2\chi_{4,3}\big[|y_1+y_2|m_{1+2}^2+|y_1+y_2|m_{3}^2-2|y_4|m_3m_{1+2}\big],
\end{equation}
\begin{equation}\label{BB5ex}
4B'''(y_2,y_4,y_1+y_3)=4|y_2|m_2^2+4|y_4|m_4^2,
\end{equation}
\begin{equation}\label{BB6ex}
4B'''(y_1,y_3,y_2+y_4)=4|y_3|m_3^2+4|y_2+y_4|m_{2+4}^2-2\chi_{2+4}\big[|y_1|m_1^2+|y_1|m_{3}^2-2|y_2+y_4|m_1m_{3}\big].
\end{equation}

The symbols $B''$ are symmetric in the first 2 variables. Using \eqref{slov24}--\eqref{slov25} we calculate
\begin{equation}\label{B1ex}
\begin{split}
4B''(y_1,y_2,y_3+y_4)&=-2|y_1|^{1/2}|y_2|^{1/2}\big[2m_2^2+\chi_{2,1}(m_1^2-m_{3+4}^2)\big],\\
4B''(y_3+y_4,y_1,y_2)&=-2|y_1|^{1/2}|y_3+y_4|^{1/2}\big[2m_{3+4}^2-\chi_{2,1}(m_1^2+m_{3+4}^2)\big],\\
4B''(y_3+y_4,y_2,y_1)&=4|y_2|^{1/2}|y_3+y_4|^{1/2}\big[m_{3+4}^2+m_2^2-\chi_{2,1}m_{3+4}^2\big],
\end{split}
\end{equation}
\begin{equation}\label{B2ex}
\begin{split}
4B''(y_1,y_4,y_3+y_2)&=-2|y_1|^{1/2}|y_4|^{1/2}\big[2m_4^2+\chi_{4,1}(m_1^2-m_{3+2}^2)\big],\\
4B''(y_3+y_2,y_1,y_4)&=-2|y_1|^{1/2}|y_3+y_2|^{1/2}\big[2m_{3+2}^2-\chi_{4,1}(m_1^2+m_{3+2}^2)\big],\\
4B''(y_3+y_2,y_4,y_1)&=4|y_4|^{1/2}|y_3+y_2|^{1/2}\big[m_{3+2}^2+m_4^2-\chi_{4,1}m_{3+2}^2\big],
\end{split}
\end{equation}
\begin{equation}\label{B3ex}
\begin{split}
4B''(y_3,y_2,y_1+y_4)&=4|y_3|^{1/2}|y_2|^{1/2}\big[m_3^2+m_2^2-\chi_{2,3}m_3^2\big],\\
4B''(y_1+y_4,y_3,y_2)&=-2|y_1+y_4|^{1/2}|y_3|^{1/2}\big[2m_3^2-\chi_{2,3}(m_{1+4}^2+m_3^2)\big],\\
4B''(y_1+y_4,y_2,y_3)&=-2|y_1+y_4|^{1/2}|y_2|^{1/2}\big[2m_2^2+\chi_{2,3}(m_{1+4}^2-m_3^2)\big],
\end{split}
\end{equation}
\begin{equation}\label{B4ex}
\begin{split}
4B''(y_3,y_4,y_1+y_2)&=4|y_3|^{1/2}|y_4|^{1/2}\big[m_3^2+m_4^2-\chi_{4,3}m_3^2\big],\\
4B''(y_1+y_2,y_3,y_4)&=-2|y_1+y_2|^{1/2}|y_3|^{1/2}[2m_3^2-\chi_{4,3}(m_{1+2}^2+m_3^2)],\\
4B''(y_1+y_2,y_4,y_3)&=-2|y_1+y_2|^{1/2}|y_4|^{1/2}[2m_4^2+\chi_{4,3}(m_{1+2}^2-m_3^2)],
\end{split}
\end{equation}
\begin{equation}\label{B5ex}
\begin{split}
4B''(y_2,y_4,y_1+y_3)&=4|y_2|^{1/2}|y_4|^{1/2}(m_2^2+m_4^2),\\
4B''(y_1+y_3,y_2,y_4)&=-4|y_1+y_3|^{1/2}|y_2|^{1/2}m_2^2,\\
4B''(y_1+y_3,y_4,y_2)&=-4|y_1+y_3|^{1/2}|y_4|^{1/2}m_4^2,
\end{split}
\end{equation}
\begin{equation}\label{B6ex}
\begin{split}
4B''(y_1,y_3,y_2+y_4)&=-2|y_1|^{1/2}|y_3|^{1/2}\big[2m_3^2-\chi_{2+4}(m_1^2+m_3^2)\big],\\
4B''(y_2+y_4,y_1,y_3)&=-2|y_1|^{1/2}|y_2+y_4|^{1/2}\big[2m_{2+4}^2+\chi_{2+4}(m_1^2-m_3^2)\big],\\
4B''(y_2+y_4,y_3,y_1)&=4|y_3|^{1/2}|y_2+y_4|^{1/2}\big[m_3^2+m_{2+4}^2-\chi_{2+4}m_3^2\big].
\end{split}
\end{equation}

\smallskip
\subsection{The symbols $SK_i^j$}\label{uyt22seccalc}
We calculate now the symmetrized symbols $SK$,
where $K$ is any of the symbols from Lemma \ref{quartic3},
and evaluate them at frequencies $(y_1,y_2,y_3,y_4)$ satisfying \eqref{verf3}. Recall the main formulas
\begin{equation}\label{mainSKform}
\begin{split}
&SK(y_1,y_2,y_3,y_4)=3\widetilde{K}(y_1,y_2,y_3,y_4)+3\widetilde{K}(y_1,y_2,y_4,y_3)-3\widetilde{K}(y_3,y_4,y_1,y_2)-3\widetilde{K}(y_3,y_4,y_2,y_1),\\
&3\widetilde{K}(\xi,\eta,\rho,\si)=K(\xi,\eta,\rho,\sigma) + K(\eta,\rho,\xi,\sigma) + K(\rho,\xi,\eta,\sigma),
\end{split}
\end{equation}
see \eqref{Ksymm} and \eqref{verf1}. For simplicity of notation let $x_j:=|y_j|^{1/2}$, $j\in\{1,2,3,4\}$. 

\smallskip
{\bf{Contributions of $K_1^1$}}. 
Using \eqref{verf4}--\eqref{verf5.5} we calculate the components
\begin{equation*}
\begin{split}
K_1^1(y_1,y_2,y_3,y_4)=(1/2)m_1^2\chi_{2,3}\chi_{4,1}\cdot |y_1||y_2||y_4|^{1/2}-m_1^2\chi_{2+4}\cdot |y_1||y_2||y_4|^{1/2},
\end{split}
\end{equation*}
\begin{equation*}
\begin{split}
K_1^1(y_2,y_1,y_3,y_4)=2m_2^2|y_3||y_2||y_4|^{1/2}-2m_2^2\chi_{4,3}|y_3+y_4||y_2||y_4|^{1/2},
\end{split}
\end{equation*}
\begin{equation*}
\begin{split}
K_1^1(y_3,y_2,y_1,y_4)=m_3^2|y_4|^{1/2}|y_2||y_3|\big\{2-\chi_{2+4}-\chi_{2,1}+\chi_{2,1}\chi_{4,3}/2\big\}.
\end{split}
\end{equation*}
\begin{equation*}
\begin{split}
K_1^1(y_1,y_2,y_4,y_3)=0,
\end{split}
\end{equation*}
\begin{equation*}
\begin{split}
K_1^1(y_2,y_1,y_4,y_3)=m_2^2|y_2||y_4||y_3|^{1/2}(2-\chi_{4,1}),
\end{split}
\end{equation*}
\begin{equation*}
\begin{split}
K_1^1(y_4,y_2,y_1,y_3)=m_4^2|y_4||y_2||y_3|^{1/2}(2-\chi_{2,1}).
\end{split}
\end{equation*}
\begin{equation*}
\begin{split}
K_1^1(y_1,y_3,y_4,y_2)=(1/2)m_1^2\chi_{4,3}\chi_{2,1}\cdot |y_1||y_4||y_2|^{1/2}-m_1^2\chi_{2+4}\cdot |y_1||y_4||y_2|^{1/2},
\end{split}
\end{equation*}
\begin{equation*}
\begin{split}
K_1^1(y_3,y_1,y_4,y_2)=m_3^2|y_2|^{1/2}|y_4||y_3|\big\{2-\chi_{2+4}-\chi_{4,1}+\chi_{2,3}\chi_{4,1}/2\big\},
\end{split}
\end{equation*}
\begin{equation*}
\begin{split}
K_1^1(y_4,y_1,y_3,y_2)=2m_4^2|y_3||y_4||y_2|^{1/2}-2m_4^2\chi_{2,3}|y_3+y_2||y_4||y_2|^{1/2},
\end{split}
\end{equation*}
\begin{equation*}
\begin{split}
K_1^1(y_2,y_3,y_4,y_1)=K_1^1(y_3,y_2,y_4,y_1)=K_1^1(y_4,y_2,y_3,y_1)=0.
\end{split}
\end{equation*}

In terms of the variables $x_j=|y_j|^{1/2}$, $j\in\{1,2,3,4\}$, the formulas above show that
\begin{equation}\label{Sk11}
4SK_1^1(y_1,y_2,y_3,y_4)=\sum_{l\in\{1,\ldots,4\}}4H_{1,l}(y_1,y_2,y_3,y_4),
\end{equation}
where
\begin{equation}\label{H11xy}
\begin{split}
4H_{1,1}(y_1,y_2,&y_3,y_4)=m_1^2\chi_{2+4}\cdot 4x_1^2x_2x_4(x_4-x_2)\\
&+m_1^2\chi_{2,3}\chi_{4,1}\cdot 2x_1^2x_2^2x_4-m_1^2\chi_{4,3}\chi_{2,1}\cdot 2x_1^2x_4^2x_2,
\end{split}
\end{equation}
\begin{equation}\label{H12xy}
\begin{split}
4H_{1,2}(y_1,y_2,&y_3,y_4)=m_2^2\cdot 8x_2^2x_3x_4(x_3+x_4)\\
&-m_2^2\chi_{4,3}\cdot 8(x_3^2+x_4^2)x_2^2x_4-m_2^2\chi_{4,1}\cdot 4x_3x_2^2x_4^2,
\end{split}
\end{equation}
\begin{equation}\label{H13xy}
\begin{split}
4&H_{1,3}(y_1,y_2,y_3,y_4)=m_3^2\cdot 8x_3^2x_2x_4(x_2-x_4)-m_3^2\chi_{2+4}\cdot 4x_3^2x_2x_4(x_2-x_4)\\
&-m_3^2\chi_{2,1}\cdot 4x_3^2x_2^2x_4+m_3^2\chi_{2,1}\chi_{4,3}\cdot 2x_3^2x_2^2x_4+m_3^2\chi_{4,1}\cdot 4x_3^2x_4^2x_2-m_3^2\chi_{4,1}\chi_{2,3}\cdot 2x_3^2x_4^2x_2,
\end{split}
\end{equation}
\begin{equation}\label{H14xy}
\begin{split}
4H_{1,4}(y_1,y_2,&y_3,y_4)=m_4^2\cdot 8x_4^2x_3x_2(x_2-x_3)\\
&+m_4^2\chi_{2,3}\cdot 8(x_3^2+x_2^2)x_4^2x_2-m_4^2\chi_{2,1}\cdot 4x_3x_2^2x_4^2.
\end{split}
\end{equation}

{\bf{Contributions of $K_1^2$}}. The nontrivial components are
\begin{equation*}
\begin{split}
K_1^2(y_2,y_1,y_3,y_4)=&-m_1m_{3+4}|y_2||y_3+y_4||y_4|^{1/2}\chi_{2,1}\chi_{4,3}\\
&+m_3m_{1+4}|y_2||y_1+y_4||y_4|^{1/2}\chi_{2,3}(2-\chi_{4,1}),
\end{split}
\end{equation*}
\begin{equation*}
\begin{split}
K_1^2(y_4,y_1,y_3,y_2)=&-m_1m_{3+2}|y_4||y_3+y_2||y_2|^{1/2}\chi_{4,1}\chi_{2,3}\\
&+m_3m_{1+2}|y_4||y_1+y_2||y_2|^{1/2}\chi_{4,3}(2-\chi_{2,1}).
\end{split}
\end{equation*}
Therefore, in terms of the variables $x_j=|y_j|^{1/2}$,
\begin{equation}\label{Sk12}
\begin{split}
4&SK_1^2(y_1,y_2,y_3,y_4)=4m_1m_{3+2}x_4^2x_2(x_3^2+x_2^2)\chi_{2,3}\chi_{4,1}-4m_1m_{3+4}x_2^2x_4(x^2_3+x^2_4)\chi_{4,3}\chi_{2,1}\\
&+4m_3m_{1+2}x_4^2x_2(x^2_3+x^2_4)\chi_{4,3}(\chi_{2,1}-2)-4m_3m_{1+4}x_2^2x_4(x^2_3+x^2_2)\chi_{2,3}(\chi_{4,1}-2).
\end{split}
\end{equation}

{\bf{Contributions of $K_1^3$}}. The nontrivial components are
\begin{equation*}
\begin{split}
K_1^{3}(y_1,y_2,y_3,y_4)=-(1/2)m_{3+2}^2|y_2||y_1+y_4||y_4|^{1/2}\chi_{2,3}(2-\chi_{4,1}),
\end{split}
\end{equation*}
\begin{equation*}
\begin{split}
K_1^{3}(y_3,y_2,y_1,y_4)=(1/2)m_{1+2}^2|y_2||y_3+y_4||y_4|^{1/2}\chi_{2,1}\chi_{4,3}.
\end{split}
\end{equation*}
\begin{equation*}
\begin{split}
K_1^{3}(y_1,y_3,y_4,y_2)=-(1/2)m_{3+4}^2|y_4||y_1+y_2||y_2|^{1/2}\chi_{4,3}(2-\chi_{2,1}),
\end{split}
\end{equation*}
\begin{equation*}
\begin{split}
K_1^{3}(y_3,y_1,y_4,y_2)=(1/2)m_{1+4}^2|y_4||y_3+y_2||y_2|^{1/2}\chi_{2,3}\chi_{4,1},
\end{split}
\end{equation*}
Since $m_{1+2}=m_{3+4}$ and $m_{1+4}=m_{3+2}$ it follows that
\begin{equation}\label{Sk13}
\begin{split}
4SK_1^3(y_1,y_2,y_3,y_4)=&-2m_{1+4}^2x_2x_4(x_3^2+x_2^2)\chi_{2,3}\big\{x_2(2-\chi_{4,1})+x_4\chi_{4,1}\big\}\\
&+2m_{1+2}^2x_2x_4(x_3^2+x_4^2)\chi_{4,3}\big\{x_4(2-\chi_{2,1})+x_2\chi_{2,1}\big\}.
\end{split}
\end{equation}

{\bf{Contributions of $K_1^4$}}. The nontrivial components are
\begin{equation*}
K_1^4(y_2,y_1,y_3,y_4)=2m_1m_3|y_2||y_4|^{3/2}\chi_{2+4},
\end{equation*}
\begin{equation*}
K_1^4(y_4,y_1,y_3,y_2)=2m_1m_3|y_4||y_2|^{3/2}\chi_{2+4},
\end{equation*}
where we used the definition \eqref{ite4}. Therefore
\begin{equation}\label{Sk14}
4SK_1^4(y_1,y_2,y_3,y_4)=m_1m_3\chi_{2+4}\cdot 8x_2^2x_4^2(x_4-x_2).
\end{equation}

{\bf{Contributions of $K_1^5$}}. The nontrivial components are
\begin{equation*}
K_1^5(y_1,y_2,y_3,y_4)=-(1/2)m_1m_{3+2}|y_2||y_4|^{3/2}\chi_{2,3}\chi_{4,1},
\end{equation*}
\begin{equation*}
K_1^5(y_3,y_2,y_1,y_4)=-(1/2)m_3m_{1+2}|y_2||y_4|^{3/2}\chi_{2,1}\chi_{4,3}.
\end{equation*}
\begin{equation*}
K_1^5(y_1,y_3,y_4,y_2)=-(1/2)m_1m_{3+4}|y_4||y_2|^{3/2}\chi_{4,3}\chi_{2,1},
\end{equation*}
\begin{equation*}
K_1^5(y_3,y_1,y_4,y_2)=-(1/2)m_3m_{1+4}|y_4||y_2|^{3/2}\chi_{4,1}\chi_{2,3},
\end{equation*}
Therefore
\begin{equation}\label{Sk15}
\begin{split}
4SK_1^5(y_1,y_2,y_3,y_4)=&-2m_1m_{3+2}\chi_{4,1}\chi_{2,3}\cdot x_2^2x_4^3+2m_1m_{3+4}\chi_{2,1}\chi_{4,3}\cdot x_4^2x_2^3\\
&-2m_3m_{1+2}\chi_{2,1}\chi_{4,3}\cdot x_2^2x_4^3+2m_3m_{1+4}\chi_{4,1}\chi_{2,3}\cdot x_4^2x_2^3.
\end{split}
\end{equation}

{\bf{Contributions of $K_1^6$}}. The nontrivial components are
\begin{equation*}
K_1^6(y_2,y_1,y_4,y_3)=-(1/2)m_3(m_{1+4}-m_3)|y_3|^{1/2}|y_2||y_4|\chi_{4,1}\chi_{2,3},
\end{equation*}
\begin{equation*}
K_1^6(y_4,y_2,y_1,y_3)=-(1/2)m_3(m_{1+2}-m_3)|y_3|^{1/2}|y_2||y_4|\chi_{2,1}\chi_{4,3}.
\end{equation*}
\begin{equation*}
K_1^6(y_2,y_3,y_4,y_1)=-(1/2)m_1(m_{3+4}-m_1)|y_1|^{1/2}|y_2||y_4|\chi_{4,3}\chi_{2,1},
\end{equation*}
\begin{equation*}
K_1^6(y_4,y_2,y_3,y_1)=-(1/2)m_1(m_{3+2}-m_1)|y_1|^{1/2}|y_2||y_4|\chi_{2,3}\chi_{4,1},
\end{equation*}
Therefore, in terms of the variables $x_j=|y_j|^{1/2}$,
\begin{equation}\label{Sk16}
\begin{split}
4SK_1^6(y_1,&y_2,y_3,y_4)=2x_2^2x_4^2x_1m_1\big\{m_{3+4}\chi_{2,1}\chi_{4,3}+m_{3+2}\chi_{2,3}\chi_{4,1}\big\}\\
&-2x_2^2x_4^2x_3m_3\big\{m_{1+4}\chi_{2,3}\chi_{4,1}+m_{1+2}\chi_{2,1}\chi_{4,3}\big\}\\
&-2x_2^2x_4^2x_1m_1^2[\chi_{2,1}\chi_{4,3}+\chi_{2,3}\chi_{4,1}]+2x_2^2x_4^2x_3m_3^2[\chi_{2,1}\chi_{4,3}+\chi_{2,3}\chi_{4,1}].
\end{split}
\end{equation}

{\bf{Contributions of $K_1^7$}}. The nontrivial components are
\begin{equation*}
K_1^7(y_1,y_2,y_4,y_3)=2m_3(m_1-m_3)\chi_{2+4}|y_3|^{1/2}|y_2||y_4|,
\end{equation*}
\begin{equation*}
K_1^7(y_3,y_2,y_4,y_1)=2m_1(m_3-m_1)\chi_{2+4}|y_1|^{1/2}|y_2||y_4|,
\end{equation*}
where we used the definition \eqref{ite3}. Therefore
\begin{equation}\label{Sk17}
4SK_1^7(y_1,y_2,y_3,y_4)=-8m_1m_3x^2_2x^2_4(x_1-x_3)\chi_{2+4}+8\big(x_1m_1^2-x_3m_3^2\big)x_2^2x_4^2\chi_{2+4}.
\end{equation}

{\bf{Contributions of $K_1^{8}$}}. Using the formulas \eqref{ku113} and \eqref{verf6}--\eqref{verf8} we calculate 
\begin{equation*}
\begin{split}
K_1^{8}(y_1,y_2,y_3,y_4)=&-3A'(y_3,y_2,y_1+y_4)\\
&\times|y_3|^{1/2}|y_3+y_2|^{1/2}|y_2|^{1/2}|y_4|^{1/2}\big\{|y_3+y_2|(4-2\chi_{4,1})+|y_4|\chi_{4,1}\big\},
\end{split}
\end{equation*}
\begin{equation*}
\begin{split}
K_1^{8}(y_2,y_1,y_3,y_4)=0,
\end{split}
\end{equation*}
\begin{equation*}
\begin{split}
K_1^{8}(y_3,y_2,&y_1,y_4)=+3A'(y_3,y_4,y_1+y_2)\cdot |y_3+y_4|^{1/2}|y_3|^{1/2}|y_4||y_2|\chi_{2,1}\\
&+3A'(y_1,y_2,y_3+y_4)\cdot |y_1|^{1/2}|y_2|^{1/2}|y_3+y_4|^{1/2}|y_4|^{1/2}(2|y_3|+|y_4|)\chi_{4,3},
\end{split}
\end{equation*}
\begin{equation*}
\begin{split}
K_1^{8}(y_1,y_2,y_4,y_3)=-12A'(y_2,y_4,y_1+y_3)\cdot |y_2|^{1/2}|y_4|^{1/2}|y_3|^{1/2}|y_2+y_4|^{3/2},
\end{split}
\end{equation*}
\begin{equation*}
\begin{split}
K_1^{8}(y_2,y_1,y_4,y_3)=&+3A'(y_3,y_2,y_1+y_4)\cdot |y_3+y_2|^{1/2}|y_2|^{1/2}|y_3||y_4|\chi_{4,1}\\
&-3A'(y_1,y_4,y_3+y_2)\cdot |y_1|^{1/2}|y_4|^{1/2}|y_3+y_2|^{1/2}|y_3|^{1/2}|y_2|\chi_{2,3},
\end{split}
\end{equation*}
\begin{equation*}
\begin{split}
K_1^{8}(y_4,y_2,y_1,y_3)=&+3A'(y_3,y_4,y_1+y_2)\cdot |y_3+y_4|^{1/2}|y_4|^{1/2}|y_3||y_2|\chi_{2,1}\\
&-3A'(y_1,y_2,y_3+y_4)\cdot |y_1|^{1/2}|y_2|^{1/2}|y_3+y_4|^{1/2}|y_3|^{1/2}|y_4|\chi_{4,3}.
\end{split}
\end{equation*}
\begin{equation*}
\begin{split}
K_1^{8}(y_1,y_3,y_4,y_2)=&-3A'(y_3,y_4,y_1+y_2)\\
&\times|y_3|^{1/2}|y_3+y_4|^{1/2}|y_4|^{1/2}|y_2|^{1/2}\big\{|y_3+y_4|(4-2\chi_{2,1})+|y_2|\chi_{2,1}\big\},
\end{split}
\end{equation*}
\begin{equation*}
\begin{split}
K_1^{8}(y_3,y_1,&y_4,y_2)=+3A'(y_3,y_2,y_1+y_4)\cdot |y_3+y_2|^{1/2}|y_3|^{1/2}|y_2||y_4|\chi_{4,1}\\
&+3A'(y_1,y_4,y_3+y_2)\cdot |y_1|^{1/2}|y_4|^{1/2}|y_3+y_2|^{1/2}|y_2|^{1/2}(2|y_3|+|y_2|)\chi_{2,3},
\end{split}
\end{equation*}
\begin{equation*}
\begin{split}
K_1^{8}(y_4,y_1,y_3,y_2)=0,
\end{split}
\end{equation*}
\begin{equation*}
\begin{split}
K_1^{8}(y_2,y_3,y_4,y_1)=&+3A'(y_1,y_2,y_3+y_4)\cdot |y_3+y_4|^{3/2}|y_2|^{1/2}|y_4|\chi_{4,3}\\
&-3A'(y_3,y_4,y_1+y_2)\cdot |y_1|^{1/2}|y_3|^{1/2}|y_2||y_4|^{1/2}|y_3+y_4|^{1/2}\chi_{2,1},
\end{split}
\end{equation*}
\begin{equation*}
\begin{split}
K_1^{8}(y_3,y_2,y_4,y_1)=0,
\end{split}
\end{equation*}
\begin{equation*}
\begin{split}
K_1^{8}(y_4,y_2,y_3,y_1)=&+3A'(y_1,y_4,y_3+y_2)\cdot |y_3+y_2|^{3/2}|y_4|^{1/2}|y_2|\chi_{2,3}\\
&-3A'(y_3,y_2,y_1+y_4)\cdot |y_1|^{1/2}|y_3|^{1/2}|y_4||y_2|^{1/2}|y_3+y_2|^{1/2}\chi_{4,1},
\end{split}
\end{equation*}

Therefore, using \eqref{verf1},
\begin{equation}\label{Sk18}
4SK_1^{8}(y_1,y_2,y_3,y_4)=\sum_{l\in\{1,\ldots,5\}}4I_{1,l}(y_1,y_2,y_3,y_4)
\end{equation}
where
\begin{equation}\label{I11x}
\begin{split}
I_{1,1}(y_1,y_2,&y_3,y_4):=3A'(y_1,y_2,y_3+y_4)\cdot |y_2|^{1/2}|y_4|^{1/2}|y_3+y_4|^{1/2}\chi_{4,3}\\
&\times\big\{-|y_4|^{1/2}|y_3+y_4|+|y_1|^{1/2}(2|y_3|+|y_4|)-|y_1|^{1/2}|y_3|^{1/2}|y_4|^{1/2}\big\},
\end{split}
\end{equation}
\begin{equation}\label{I12x}
\begin{split}
I_{1,2}(y_1,y_2,&y_3,y_4):=-3A'(y_1,y_4,y_3+y_2)\cdot |y_2|^{1/2}|y_4|^{1/2}|y_3+y_2|^{1/2}\chi_{2,3}\\
&\times\big\{|y_2|^{1/2}|y_3+y_2|+|y_1|^{1/2}(2|y_3|+|y_2|)+|y_1|^{1/2}|y_3|^{1/2}|y_2|^{1/2}\big\},
\end{split}
\end{equation}
\begin{equation}\label{I13x}
\begin{split}
&I_{1,3}(y_1,y_2,y_3,y_4):=-3A'(y_3,y_2,y_1+y_4)\cdot |y_3|^{1/2}|y_3+y_2|^{1/2}|y_2|^{1/2}|y_4|^{1/2}\\
&\times\big\{4|y_3+y_2|+\chi_{4,1}\big(|y_4|-2|y_3+y_2|-|y_3|^{1/2}|y_4|^{1/2}+|y_2|^{1/2}|y_4|^{1/2}-|y_1|^{1/2}|y_4|^{1/2}\big)\big\},
\end{split}
\end{equation}
\begin{equation}\label{I14x}
\begin{split}
&I_{1,4}(y_1,y_2,y_3,y_4):=3A'(y_3,y_4,y_1+y_2)\cdot |y_3|^{1/2}|y_3+y_4|^{1/2}|y_4|^{1/2}|y_2|^{1/2}\\
&\times\big\{4|y_3+y_4|+\chi_{2,1}\big(|y_2|-2|y_3+y_4|+|y_3|^{1/2}|y_2|^{1/2}+|y_2|^{1/2}|y_4|^{1/2}+|y_1|^{1/2}|y_2|^{1/2}\big)\big\},
\end{split}
\end{equation}
\begin{equation}\label{I15x}
\begin{split}
I_{1,5}(y_1,y_2,y_3,y_4):=-3A'(y_2,y_4,y_1+y_3)\cdot 4|y_2|^{1/2}|y_4|^{1/2}|y_3|^{1/2}|y_2+y_4|^{3/2}.
\end{split}
\end{equation}

Using the formulas \eqref{A1ex}--\eqref{A6ex} we can rewrite, in terms of the variables $x_j=|y_j|^{1/2}$
\begin{equation}\label{I11xy}
\begin{split}
4I_{1,1}(y_1,y_2,&y_3,y_4)=-m_1^2\chi_{4,3}\cdot 2x_1x_4\big[-x_4(x_3^2+x_4^2)+x_1(2x_3^2+x_4^2-x_3x_4)\big]\\
&+m_1^2\chi_{4,3}\chi_{2,1}\cdot x_1x_4\big[-x_4(x_3^2+x_4^2)+x_1(2x_3^2+x_4^2-x_3x_4)\big]\\
&+m_{3+4}^2\chi_{4,3}\chi_{2,1}\cdot x_1x_4\big[-x_4(x_3^2+x_4^2)+x_1(2x_3^2+x_4^2-x_3x_4)\big],
\end{split}
\end{equation}
\begin{equation}\label{I12xy}
\begin{split}
4I_{1,2}(y_1,y_2,&y_3,y_4)=m_1^2\chi_{2,3}\cdot 2x_1x_2\big[x_2(x_3^2+x_2^2)+x_1(2x_3^2+x_2^2+x_3x_2)\big]\\
&-m_1^2\chi_{4,1}\chi_{2,3}\cdot x_1x_2\big[x_2(x_3^2+x_2^2)+x_1(2x_3^2+x_2^2+x_3x_2)\big]\\
&-m_{3+2}^2\chi_{4,1}\chi_{2,3}\cdot x_1x_2\big[x_2(x_3^2+x_2^2)+x_1(2x_3^2+x_2^2+x_3x_2)\big],
\end{split}
\end{equation}
\begin{equation}\label{I13xy}
\begin{split}
4I_{1,3}(y_1,y_2,&y_3,y_4)=m_{1+4}^2\cdot 8x_4(x_3^2+x_2^2)^2\\
&-m_3^2\chi_{2,3}\cdot 4x_4(x_3^2+x_2^2)^2-m_{1+4}^2\chi_{2,3}\cdot 4x_4(x_3^2+x_2^2)^2\\
&-m_{1+4}^2\chi_{4,1}\cdot 2x_4(x_3^2+x_2^2)(x_1x_4+x_3x_4+2x_3^2+2x_2^2-x_4^2-x_2x_4)\\
&+m_3^2\chi_{2,3}\chi_{4,1}\cdot x_4(x_3^2+x_2^2)(x_1x_4+x_3x_4+2x_3^2+2x_2^2-x_4^2-x_2x_4)\\
&+m_{1+4}^2\chi_{2,3}\chi_{4,1}\cdot x_4(x_3^2+x_2^2)(x_1x_4+x_3x_4+2x_3^2+2x_2^2-x_4^2-x_2x_4),
\end{split}
\end{equation}
\begin{equation}\label{I14xy}
\begin{split}
4I_{1,4}(y_1,y_2,&y_3,y_4)=-m_{1+2}^2\cdot 8x_2(x_3^2+x_4^2)^2\\
&+m_3^2\chi_{4,3}\cdot 4x_2(x_3^2+x_4^2)^2+m_{1+2}^2\chi_{4,3}\cdot 4x_2(x_3^2+x_4^2)^2\\
&-m_{1+2}^2\chi_{2,1}\cdot 2x_2(x_3^2+x_4^2)(x_1x_2+x_3x_2-2x_3^2-2x_4^2+x_2^2+x_2x_4)\\
&+m_3^2\chi_{2,1}\chi_{4,3}\cdot x_2(x_3^2+x_4^2)(x_1x_2+x_3x_2-2x_3^2-2x_4^2+x_2^2+x_2x_4)\\
&+m_{1+2}^2\chi_{2,1}\chi_{4,3}\cdot x_2(x_3^2+x_4^2)(x_1x_2+x_3x_2-2x_3^2-2x_4^2+x_2^2+x_2x_4),
\end{split}
\end{equation}
\begin{equation}\label{I15xy}
\begin{split}
4I_{1,5}(y_1,y_2,&y_3,y_4)=m_{1+3}^2\cdot 8x_3(x_2^2+x_4^2)^2.
\end{split}
\end{equation}

{\bf{Contributions of $K_1^{9}$}}. Using the formulas \eqref{ku114} and \eqref{verf6}--\eqref{verf8} we calculate 
\begin{equation*}
\begin{split}
K_1^{9}(y_1,y_2,y_3,y_4)=&-(1/2)B''(y_3+y_2,y_4,y_1)\cdot |y_2||y_3+y_2|^{1/2}\chi_{2,3}\\
&-(1/2)B'''(y_3,y_2,y_1+y_4)\cdot |y_4|^{1/2}\big\{|y_3+y_2|(4-2\chi_{4,1})+|y_4|\chi_{4,1}\big\},
\end{split}
\end{equation*}
\begin{equation*}
\begin{split}
K_1^{9}(y_2,y_1,y_3,y_4)=0,
\end{split}
\end{equation*}
\begin{equation*}
\begin{split}
K_1^{9}(y_3,y_2,y_1,y_4)=&-(1/2)B''(y_1+y_2,y_4,y_3)\cdot |y_2||y_3+y_4|^{1/2}\chi_{2,1}\\
&+(1/2)B'''(y_1,y_2,y_3+y_4)\cdot (2|y_3|+|y_4|)|y_4|^{1/2}\chi_{4,3},
\end{split}
\end{equation*}
\begin{equation*}
\begin{split}
K_1^{9}(y_1,y_2,y_4,y_3)=&-2B'''(y_2,y_4,y_1+y_3)\cdot|y_2+y_4||y_3|^{1/2},
\end{split}
\end{equation*}
\begin{equation*}
\begin{split}
K_1^{9}(y_2,y_1,y_4,y_3)=&-(1/2)B''(y_1+y_4,y_3,y_2)\cdot|y_4||y_3+y_2|^{1/2}\chi_{4,1}\\
&-(1/2)B'''(y_1,y_4,y_3+y_2)\cdot|y_2||y_3|^{1/2}\chi_{2,3},
\end{split}
\end{equation*}
\begin{equation*}
\begin{split}
K_1^{9}(y_4,y_2,y_1,y_3)=&-(1/2)B''(y_1+y_2,y_3,y_4)\cdot|y_2||y_3+y_4|^{1/2}\chi_{2,1}\\
&-(1/2)B'''(y_1,y_2,y_3+y_4)\cdot|y_4||y_3|^{1/2}\chi_{4,3},
\end{split}
\end{equation*}
\begin{equation*}
\begin{split}
K_1^{9}(y_1,y_3,y_4,y_2)=&-(1/2)B''(y_3+y_4,y_2,y_1)\cdot|y_4||y_3+y_4|^{1/2}\chi_{4,3}\\
&-(1/2)B'''(y_3,y_4,y_1+y_2)\cdot |y_2|^{1/2}\big\{|y_3+y_4|(4-2\chi_{2,1})+|y_2|\chi_{2,1}\big\},
\end{split}
\end{equation*}
\begin{equation*}
\begin{split}
K_1^{9}(y_3,y_1,y_4,y_2)=&-(1/2)B''(y_1+y_4,y_2,y_3)\cdot |y_4||y_3+y_2|^{1/2}\chi_{4,1}\\
&+(1/2)B'''(y_1,y_4,y_3+y_2)\cdot (2|y_3|+|y_2|)|y_2|^{1/2}\chi_{2,3},
\end{split}
\end{equation*}
\begin{equation*}
\begin{split}
K_1^{9}(y_4,y_1,y_3,y_2)=0,
\end{split}
\end{equation*}
\begin{equation*}
\begin{split}
K_1^{9}(y_2,y_3,y_4,y_1)=&-(1/2)B''(y_3+y_4,y_1,y_2)\cdot |y_4||y_3+y_4|^{1/2}\chi_{4,3}\\
&-(1/2)B'''(y_3,y_4,y_1+y_2)\cdot|y_2||y_1|^{1/2}\chi_{2,1},
\end{split}
\end{equation*}
\begin{equation*}
\begin{split}
K_1^{9}(y_3,y_2,y_4,y_1)=0,
\end{split}
\end{equation*}
\begin{equation*}
\begin{split}
K_1^{9}(y_4,y_2,y_3,y_1)=&-(1/2)B''(y_3+y_2,y_1,y_4)\cdot |y_2||y_3+y_2|^{1/2}\chi_{2,3}\\
&-(1/2)B'''(y_3,y_2,y_1+y_4)\cdot|y_4||y_1|^{1/2}\chi_{4,1}.
\end{split}
\end{equation*}
Therefore, using \eqref{verf1},
\begin{equation}\label{Sk110}
4SK_1^{9}(y_1,y_2,y_3,y_4)=\sum_{l\in\{1,\ldots,5\}}4J_{1,l}(y_1,y_2,y_3,y_4)
\end{equation}
where
\begin{equation}\label{Sk11x}
\begin{split}
4&J_{1,1}(y_1,y_2,y_3,y_4):=2B'''(y_1,y_2,y_3+y_4)\cdot |y_4|^{1/2}(2|y_3|+|y_4|-|y_4|^{1/2}|y_3|^{1/2})\chi_{4,3}\\
&+2\big[B''(y_3+y_4,y_2,y_1)+B''(y_3+y_4,y_1,y_2)\big]\cdot |y_4||y_3+y_4|^{1/2}\chi_{4,3},
\end{split}
\end{equation}\begin{equation}\label{Sk12x}
\begin{split}
4&J_{1,2}(y_1,y_2,y_3,y_4):=-2B'''(y_1,y_4,y_3+y_2)\cdot |y_2|^{1/2}(2|y_3|+|y_2|+|y_2|^{1/2}|y_3|^{1/2})\chi_{2,3}\\
&-2\big[B''(y_3+y_2,y_4,y_1)-B''(y_3+y_2,y_1,y_4)\big]\cdot |y_2||y_3+y_2|^{1/2}\chi_{2,3},
\end{split}
\end{equation}
\begin{equation}\label{Sk13x}
\begin{split}
4&J_{1,3}(y_1,y_2,y_3,y_4)\\
&:=-2B'''(y_3,y_2,y_1+y_4)\cdot |y_4|^{1/2}\big\{|y_3+y_2|(4-2\chi_{4,1})+(|y_4|-|y_4|^{1/2}|y_1|^{1/2})\chi_{4,1}\big\}\\
&+2\big[B''(y_1+y_4,y_2,y_3)-B''(y_1+y_4,y_3,y_2)\big]\cdot |y_4||y_3+y_2|^{1/2}\chi_{4,1},
\end{split}
\end{equation}
\begin{equation}\label{Sk14x}
\begin{split}
4&J_{1,4}(y_1,y_2,y_3,y_4)\\
&:=2B'''(y_3,y_4,y_1+y_2)\cdot |y_2|^{1/2}\big\{|y_3+y_4|(4-2\chi_{2,1})+(|y_2|+|y_2|^{1/2}|y_1|^{1/2})\chi_{2,1}\big\}\\
&-2\big[B''(y_1+y_2,y_4,y_3)+B''(y_1+y_2,y_3,y_4)\big]\cdot |y_2||y_3+y_4|^{1/2}\chi_{2,1},
\end{split}
\end{equation}
\begin{equation}\label{Sk15x}
\begin{split}
4J_{1,5}(y_1,y_2,y_3,y_4):=-8B'''(y_2,y_4,y_1+y_3)\cdot|y_2+y_4||y_3|^{1/2}.
\end{split}
\end{equation}

With $x_j:=|y_j|^{1/2}$ as before, we use \eqref{BB1ex} and \eqref{B1ex} to rewrite the formula \eqref{Sk11x} as 
\begin{equation}\label{Sk11xy}
\begin{split}
4J_{1,1}&(y_1,y_2,y_3,y_4)=m_2^2\chi_{4,3}\cdot 2x_2x_4\big[x_4(x_3^2+x_4^2)+x_2(2x_3^2+x_4^2-x_3x_4)\big]\\
&+m_{3+4}^2\chi_{4,3}\cdot 2x_4(x_3^2+x_4^2)\big[(x_2-x_1)x_4+2x_3^2+x_4^2-x_3x_4\big]\\
&+m_1m_{3+4}\chi_{2,1}\chi_{4,3}\cdot 2x_2^2x_4(2x_3^2+x_4^2-x_3x_4)\\
&+m_{1}^2\chi_{2,1}\chi_{4,3}\cdot x_1x_4\big[x_4(x_3^2+x_4^2)-x_1(2x_3^2+x_4^2-x_3x_4)\big]\\
&+m_{3+4}^2\chi_{2,1}\chi_{4,3}\cdot x_4\big[x_4(x_1-2x_2)(x_3^2+x_4^2)-x_1^2(2x_3^2+x_4^2-x_3x_4)\big].
\end{split}
\end{equation}
Using \eqref{BB2ex} and \eqref{B2ex} we can rewrite the formula \eqref{Sk12x} as 
\begin{equation}\label{Sk12xy}
\begin{split}
4J_{1,2}&(y_1,y_2,y_3,y_4)=-m_4^2\chi_{2,3}\cdot 2x_2x_4\big[x_2(x_3^2+x_2^2)+x_4(2x_3^2+x_2^2+x_3x_2)\big]\\
&-m_{3+2}^2\chi_{2,3}\cdot 2x_2(x_3^2+x_2^2)\big[(x_4+x_1)x_2+2x_3^2+x_2^2+x_3x_2\big]\\
&-m_1m_{3+2}\chi_{4,1}\chi_{2,3}\cdot 2x_2x_4^2(2x_3^2+x_2^2+x_3x_2)\\
&+m_{1}^2\chi_{4,1}\chi_{2,3}\cdot x_1x_2\big[x_2(x_3^2+x_2^2)+x_1(2x_3^2+x_2^2+x_3x_2)\big]\\
&+m_{3+2}^2\chi_{4,1}\chi_{2,3}\cdot x_2\big[x_2(x_1+2x_4)(x_3^2+x_2^2)+x_1^2(2x_3^2+x_2^2+x_3x_2)\big].
\end{split}
\end{equation}
Using \eqref{BB3ex} and \eqref{B3ex} we can rewrite the formula \eqref{Sk13x} as 
\begin{equation}\label{Sk13xy}
\begin{split}
&4J_{1,3}(y_1,y_2,y_3,y_4)=-m_2^2\cdot 8x_4x_2^2(x_3^2+x_2^2)-m_{3}^2\cdot 8x_4x_3^2(x_3^2+x_2^2)\\
&+m_3^2\chi_{2,3}\cdot 4x_4(x_3^2+x_2^2)^2+m_{1+4}^2\chi_{2,3}\cdot 4x_4(x_3^2+x_2^2)^2-m_3m_{1+4}\chi_{2,3}\cdot 8x_2^2x_4(x_3^2+x_2^2)\\
&+m_2^2\chi_{4,1}\cdot 2x_2x_4\big[x_2(x_1x_4-x_4^2+2x_3^2+2x_2^2)-x_4(x_3^2+x_2^2)\big]\\
&+m_3^2\chi_{4,1}\cdot 2x_4x_3\big[x_3(x_1x_4-x_4^2+2x_3^2+2x_2^2)+x_4(x_3^2+x_2^2)\big]\\
&-m_3^2\chi_{2,3}\chi_{4,1}\cdot x_4(x_3^2+x_2^2)\big[x_1x_4-x_4^2+2x_3^2+2x_2^2+x_3x_4-x_2x_4\big]\\
&-m_{1+4}^2\chi_{2,3}\chi_{4,1}\cdot x_4(x_3^2+x_2^2)\big[x_1x_4-x_4^2+2x_3^2+2x_2^2+x_3x_4+x_2x_4\big]\\
&+m_3m_{1+4}\chi_{2,3}\chi_{4,1}\cdot 2x_2^2x_4(x_1x_4-x_4^2+2x_3^2+2x_2^2).
\end{split}
\end{equation}
Using \eqref{BB4ex} and \eqref{B4ex} we can rewrite the formula \eqref{Sk14x} as 
\begin{equation}\label{Sk14xy}
\begin{split}
&4J_{1,4}(y_1,y_2,y_3,y_4)=m_4^2\cdot 8x_2x_4^2(x_3^2+x_4^2)+m_{3}^2\cdot 8x_2x_3^2(x_3^2+x_4^2)\\
&-m_3^2\chi_{4,3}\cdot 4x_2(x_3^2+x_4^2)^2-m_{1+2}^2\chi_{4,3}\cdot 4x_2(x_3^2+x_4^2)^2+m_3m_{1+2}\chi_{4,3}\cdot 8x_2x_4^2(x_3^2+x_4^2)\\
&+m_4^2\chi_{2,1}\cdot 2x_2x_4\big[x_4(x_1x_2+x_2^2-2x_3^2-2x_4^2)+x_2(x_3^2+x_4^2)\big]\\
&+m_3^2\chi_{2,1}\cdot 2x_2x_3\big[x_3(x_1x_2+x_2^2-2x_3^2-2x_4^2)+x_2(x_3^2+x_4^2)\big]\\
&-m_3^2\chi_{2,1}\chi_{4,3}\cdot x_2(x_3^2+x_4^2)\big[x_1x_2+x_2^2-2x_3^2-2x_4^2+x_2x_3+x_2x_4\big]\\
&-m_{1+2}^2\chi_{2,1}\chi_{4,3}\cdot x_2(x_3^2+x_4^2)\big[x_1x_2+x_2^2-2x_3^2-2x_4^2+x_2x_3-x_2x_4\big]\\
&+m_3m_{1+2}\chi_{2,1}\chi_{4,3}\cdot 2x_2x_4^2(x_1x_2+x_2^2-2x_3^2-2x_4^2).
\end{split}
\end{equation}
Finally, using \eqref{BB5ex} we can rewrite the formula \eqref{Sk15x} as 
\begin{equation}\label{Sk15xy}
4J_{1,5}(y_1,y_2,y_3,y_4):=-m_2^2\cdot 8x_3x_2^2(x_2^2+x_4^2)-m_4^2\cdot 8x_3x_4^2(x_2^2+x_4^2).
\end{equation}

{\bf{Contributions of $K_2^1$}}. Using \eqref{verf4}--\eqref{verf5.5} we calculate the components
\begin{equation*}
\begin{split}
K_2^1(y_1,y_2,y_3,y_4)=-m_1^2|y_1|^{1/2}|y_3|^{1/2}|y_2|^{1/2}|y_4|(2-\chi_{2+4}),
\end{split}
\end{equation*}
\begin{equation*}
\begin{split}
K_2^1(y_2,y_1,y_3,y_4)=0,
\end{split}
\end{equation*}
\begin{equation*}
\begin{split}
K_2^1(y_3,y_2,y_1,y_4)=m_3^2|y_1|^{1/2}|y_3|^{1/2}|y_4||y_2|^{1/2}\chi_{2+4},
\end{split}
\end{equation*}
\begin{equation*}
\begin{split}
&K_2^1(y_1,y_2,y_4,y_3)=-2m_1^2|y_1|^{1/2}|y_2|^{1/2}|y_4|^{1/2}|y_3|-m_1^2\chi_{2+4}|y_1|^{1/2}|y_2|^{1/2}|y_4|^{1/2}|y_2+y_4|\\
&+m_1^2|y_1|^{1/2}|y_2|^{1/2}|y_4|^{1/2}|y_3+y_4|\chi_{4,3}(2-\chi_{2,1})+m_1^2|y_1|^{1/2}|y_4|^{1/2}|y_2|^{1/2}|y_3+y_2|\chi_{2,3}(2-\chi_{4,1}),
\end{split}
\end{equation*}
\begin{equation*}
\begin{split}
K_2^1(y_2,y_1,y_4,y_3)=0,
\end{split}
\end{equation*}
\begin{equation*}
\begin{split}
K_2^1(y_4,y_2,y_1,y_3)=0.
\end{split}
\end{equation*}
\begin{equation*}
\begin{split}
K_2^1(y_1,y_3,y_4,y_2)=-m_1^2|y_1|^{1/2}|y_3|^{1/2}|y_4|^{1/2}|y_2|(2-\chi_{2+4}),
\end{split}
\end{equation*}
\begin{equation*}
\begin{split}
K_2^1(y_3,y_1,y_4,y_2)=m_3^2|y_1|^{1/2}|y_3|^{1/2}|y_2||y_4|^{1/2}\chi_{2+4},
\end{split}
\end{equation*}
\begin{equation*}
\begin{split}
K_2^1(y_4,y_1,y_3,y_2)=0,
\end{split}
\end{equation*}
\begin{equation*}
\begin{split}
K_2^1(y_2,y_3,y_4,y_1)=-2m_2^2|y_3|^{1/2}|y_2|^{3/2}|y_4|^{1/2},
\end{split}
\end{equation*}
\begin{equation*}
\begin{split}
K_2^1(&y_3,y_2,y_4,y_1)=-2m_3^2|y_3|^{3/2}|y_2|^{1/2}|y_4|^{1/2}-m_3^2\chi_{2+4}|y_3|^{1/2}|y_2|^{1/2}|y_4|^{1/2}|y_2+y_4|\\
&+2m_{3}^2|y_3|^{1/2}|y_2|^{1/2}|y_4|^{1/2}|y_3+y_2|\chi_{2,3}+2m_{3}^2|y_3|^{1/2}|y_4|^{1/2}|y_2|^{1/2}|y_3+y_4|\chi_{4,3}\\
&-m_3^2|y_3|^{1/2}|y_2|^{1/2}|y_4|^{1/2}|y_1+y_4|\chi_{4,1}\chi_{2,3}-m_3^2|y_3|^{1/2}|y_4|^{1/2}|y_2|^{1/2}|y_1+y_2|\chi_{2,1}\chi_{4,3}
\end{split}
\end{equation*}
\begin{equation*}
\begin{split}
K_2^1(y_4,y_2,y_3,y_1)=-2m_4^2|y_3|^{1/2}|y_4|^{3/2}|y_2|^{1/2},
\end{split}
\end{equation*}

Therefore, using \eqref{verf1},
\begin{equation}\label{Sk21}
4SK_2^1(y_1,y_2,y_3,y_4)=\sum_{l\in\{1,\ldots,4\}}4H_{2,l}(y_1,y_2,y_3,y_4),
\end{equation}
where
\begin{equation}\label{H21xy}
\begin{split}
4H_{2,1}(y_1,y_2,y_3,&y_4)=-m_1^2\cdot 8(x_1x_3x_2x_4^2-x_1x_3x_4x_2^2+x_1x_2x_4x_3^2)\\
&+m_1^2\chi_{2+4}\cdot 4[x_1x_3x_2x_4^2-x_1x_3x_4x_2^2-x_1x_2x_4(x_2^2+x_4^2)]\\
&+m_1^2\chi_{2,3}\cdot 8x_1x_2x_4(x_3^2+x_2^2)-m_1^2\chi_{2,3}\chi_{4,1}\cdot 4x_1x_2x_4(x_3^2+x_2^2)\\
&+m_1^2\chi_{4,3}\cdot 8x_1x_4x_2(x_3^2+x_4^2)-m_1^2\chi_{4,3}\chi_{2,1}\cdot 4x_1x_4x_2(x_3^2+x_4^2),
\end{split}
\end{equation}
\begin{equation}\label{H22xy}
\begin{split}
4H_{2,2}(y_1,y_2,y_3,&y_4)=m_2^2\cdot 8x_3x_4x_2^3,
\end{split}
\end{equation}
\begin{equation}\label{H23xy}
\begin{split}
4H_{2,3}(y_1,y_2,y_3,&y_4)=m_3^2\cdot 8x_3^3x_2x_4\\
&+m_3^2\chi_{2+4}\cdot 4\big[x_3x_2x_4(x_2^2+x_4^2)-x_1x_3x_4x_2^2+x_1x_3x_2x_4^2\big]\\
&-m_{3}^2\chi_{2,3}\cdot 8x_3x_2x_4(x_3^2+x_2^2)+m_3^2\chi_{4,1}\chi_{2,3}\cdot 4x_3x_2x_4(x_3^2+x_2^2)\\
&-m_{3}^2\chi_{4,3}\cdot 8x_3x_4x_2(x_3^2+x_4^2)+m_3^2\chi_{2,1}\chi_{4,3}\cdot 4x_3x_4x_2(x_3^2+x_4^2),
\end{split}
\end{equation}
\begin{equation}\label{H24xy}
\begin{split}
4H_{2,4}(y_1,y_2,y_3,&y_4)=m_4^2\cdot 8x_3x_2x_4^3.
\end{split}
\end{equation}

{\bf{Contributions of $K_2^2$}}.
From the formula \eqref{ku24}, and using the definition \eqref{ite3},
it is easy to see that there are no nontrivial components. Thus
\begin{equation}\label{Sk22}
4SK_2^2(y_1,y_2,y_3,y_4)=0.
\end{equation}

{\bf{Contributions of $K_2^{3}$}}. Using the formulas \eqref{ku25} and \eqref{verf6}--\eqref{verf8} we calculate 
\begin{equation*}
\begin{split}
K_2^{3}(y_1,y_2,y_3,y_4)=+6A'(y_1,y_4,y_3+y_2)\cdot |y_2|^{1/2}|y_3|^{1/2}|y_4|^{1/2}|y_3+y_2|^{3/2}(2-\chi_{2,3}),
\end{split}
\end{equation*}\begin{equation*}
\begin{split}
K_2^{3}(y_2,y_1,y_3,y_4)=0,
\end{split}
\end{equation*}
\begin{equation*}
\begin{split}
K_2^{3}(y_3,y_2,y_1,y_4)=&-6A'(y_3,y_4,y_1+y_2)\cdot |y_3+y_4|^{1/2}|y_4|^{1/2}|y_2|^{1/2}|y_1|^{1/2}|y_3|\chi_{2,1}\\
&-3A'(y_1,y_2,y_3+y_4)\cdot |y_3+y_4|^{1/2}|y_3|^{1/2}|y_4||y_2|\chi_{4,3},
\end{split}
\end{equation*}
\begin{equation*}
\begin{split}
K_2^{3}(y_1,y_2,y_4,y_3)=+12A'(y_1,y_3,y_2+y_4)\cdot |y_2|^{1/2}|y_3|^{1/2}||y_4|^{1/2}|y_2+y_4|^{3/2},
\end{split}
\end{equation*}
\begin{equation*}
\begin{split}
K_2^{3}(y_2,y_1,y_4,y_3)=&-6A'(y_3,y_2,y_1+y_4)\cdot |y_3+y_2|^{1/2}|y_3|^{1/2}|y_4|^{1/2}|y_1|^{1/2}|y_2|\chi_{4,1}\\
&+3A'(y_1,y_4,y_3+y_2)\cdot |y_3+y_2|^{1/2}|y_2|^{1/2}|y_4|(2|y_3|+|y_2|)\chi_{2,3},
\end{split}
\end{equation*}
\begin{equation*}
\begin{split}
K_2^{3}(y_4,y_2,y_1,y_3)=&-6A'(y_3,y_4,y_1+y_2)\cdot |y_3+y_4|^{1/2}|y_3|^{1/2}|y_2|^{1/2}|y_1|^{1/2}|y_4|\chi_{2,1}\\
&+3A'(y_1,y_2,y_3+y_4)\cdot |y_3+y_4|^{1/2}|y_4|^{1/2}|y_2|(2|y_3|+|y_4|)\chi_{4,3},
\end{split}
\end{equation*}
\begin{equation*}
\begin{split}
K_2^{3}(y_1,y_3,y_4,y_2)=+6A'(y_1,y_2,y_3+y_4)\cdot |y_2|^{1/2}|y_3|^{1/2}|y_4|^{1/2}|y_3+y_4|^{3/2}(2-\chi_{4,3}),
\end{split}
\end{equation*}
\begin{equation*}
\begin{split}
K_2^{3}(y_3,y_1,y_4,y_2)=&-6A'(y_3,y_2,y_1+y_4)\cdot |y_3+y_2|^{1/2}|y_2|^{1/2}|y_4|^{1/2}|y_1|^{1/2}|y_3|\chi_{4,1}\\
&-3A'(y_1,y_4,y_3+y_2)\cdot |y_3+y_2|^{1/2}|y_3|^{1/2}|y_2||y_4|\chi_{2,3},
\end{split}
\end{equation*}
\begin{equation*}
\begin{split}
K_2^{3}(y_4,y_1,y_3,y_2)=0,
\end{split}
\end{equation*}
\begin{equation*}
\begin{split}
&K_2^{3}(y_2,y_3,y_4,y_1)=K_2^{3}(y_3,y_2,y_4,y_1)=K_2^{3}(y_4,y_2,y_3,y_1)=0.
\end{split}
\end{equation*}

Therefore, using \eqref{verf1},
\begin{equation}\label{Sk23}
4SK_2^{3}(y_1,y_2,y_3,y_4)=\sum_{l\in\{1,2,3,4,6\}}4I_{2,l}(y_1,y_2,y_3,y_4)
\end{equation}
where
\begin{equation}\label{I21x}
\begin{split}
I_{2,1}(y_1,y_2,&y_3,y_4):=3A'(y_1,y_2,y_3+y_4)\cdot |y_3+y_4|^{1/2}|y_2|^{1/2}|y_4|^{1/2}\\
&\times\big\{-2|y_3+y_4||y_3|^{1/2}(2-\chi_{4,3})+\chi_{4,3}|y_2|^{1/2}(2|y_3|+|y_4|-|y_3|^{1/2}|y_4|^{1/2})\big\},
\end{split}
\end{equation}
\begin{equation}\label{I22x}
\begin{split}
I_{2,2}(y_1,y_2,&y_3,y_4):=3A'(y_1,y_4,y_3+y_2)\cdot |y_3+y_2|^{1/2}|y_2|^{1/2}|y_4|^{1/2}\\
&\times\big\{2|y_3+y_2||y_3|^{1/2}(2-\chi_{2,3})+\chi_{2,3}|y_4|^{1/2}(2|y_3|+|y_2|+|y_3|^{1/2}|y_2|^{1/2})\big\},
\end{split}
\end{equation}
\begin{equation}\label{I23x}
\begin{split}
I_{2,3}(y_1,y_2,y_3,y_4)&:=-6A'(y_3,y_2,y_1+y_4)\cdot |y_3+y_2|^{1/2}|y_3|^{1/2}|y_1|^{1/2}\\
&\qquad\qquad\times |y_4|^{1/2}|y_2|^{1/2}(|y_2|^{1/2}-|y_3|^{1/2})\chi_{4,1},
\end{split}
\end{equation}
\begin{equation}\label{I24x}
\begin{split}
I_{2,4}(y_1,y_2,y_3,y_4)&:=-6A'(y_3,y_4,y_1+y_2)\cdot |y_3+y_4|^{1/2}|y_3|^{1/2}|y_1|^{1/2}\\
&\qquad\qquad\times |y_2|^{1/2}|y_4|^{1/2}(|y_4|^{1/2}+|y_3|^{1/2})\chi_{2,1},
\end{split}
\end{equation}
\begin{equation}\label{I26x}
\begin{split}
I_{2,6}(y_1,y_2,y_3,y_4):=12A'(y_1,y_3,y_2+y_4)\cdot |y_2|^{1/2}|y_3|^{1/2}||y_4|^{1/2}|y_2+y_4|^{3/2}.
\end{split}
\end{equation}

Using the formulas \eqref{A1ex}--\eqref{A6ex} and letting $x_j=|y_j|^{1/2}$ as before, we can rewrite
\begin{equation}\label{I21xy}
\begin{split}
4I_{2,1}(y_1,y_2,&y_3,y_4)=m_1^2\cdot 8x_1x_3x_4(x_3^2+x_4^2)\\
&-m_1^2\chi_{2,1}\cdot 4x_1x_3x_4(x_3^2+x_4^2)-m_{3+4}^2\chi_{2,1}\cdot 4x_1x_3x_4(x_3^2+x_4^2)\\
&-m_1^2\chi_{4,3}\cdot 2x_1x_4[2x_3(x_3^2+x_4^2)+x_2(2x_3^2+x_4^2-x_3x_4)]\\
&+m_1^2\chi_{4,3}\chi_{2,1}\cdot x_1x_4[2x_3(x_3^2+x_4^2)+x_2(2x_3^2+x_4^2-x_3x_4)]\\
&+m_{3+4}^2\chi_{4,3}\chi_{2,1}\cdot x_1x_4[2x_3(x_3^2+x_4^2)+x_2(2x_3^2+x_4^2-x_3x_4)],
\end{split}
\end{equation}
\begin{equation}\label{I22xy}
\begin{split}
4I_{2,2}(y_1,y_2,&y_3,y_4)=-m_1^2\cdot 8x_1x_3x_2(x_3^2+x_2^2)\\
&+m_1^2\chi_{4,1}\cdot 4x_1x_3x_2(x_3^2+x_2^2)+m_{3+2}^2\chi_{4,1}\cdot 4x_1x_3x_2(x_3^2+x_2^2)\\
&-m_1^2\chi_{2,3}\cdot 2x_1x_2[-2x_3(x_3^2+x_2^2)+x_4(2x_3^2+x_2^2+x_3x_2)]\\
&+m_1^2\chi_{2,3}\chi_{4,1}\cdot x_1x_2[-2x_3(x_3^2+x_2^2)+x_4(2x_3^2+x_2^2+x_3x_2)]\\
&+m_{3+2}^2\chi_{2,3}\chi_{4,1}\cdot x_1x_2[-2x_3(x_3^2+x_2^2)+x_4(2x_3^2+x_2^2+x_3x_2)],
\end{split}
\end{equation}
\begin{equation}\label{I23xy}
\begin{split}
4I_{2,3}(y_1,y_2,&y_3,y_4)=-m_{1+4}^2\chi_{4,1}\cdot 4x_1x_4(x_3-x_2)(x_3^2+x_2^2)\\
&+m_3^2\chi_{4,1}\chi_{2,3}\cdot 2x_1x_4(x_3-x_2)(x_3^2+x_2^2)\\
&+m_{1+4}^2\chi_{4,1}\chi_{2,3}\cdot 2x_1x_4(x_3-x_2)(x_3^2+x_2^2),
\end{split}
\end{equation}
\begin{equation}\label{I24xy}
\begin{split}
4I_{2,4}(y_1,y_2,&y_3,y_4)=m_{1+2}^2\chi_{2,1}\cdot 4x_1x_2(x_3+x_4)(x_3^2+x_4^2)\\
&-m_3^2\chi_{2,1}\chi_{4,3}\cdot 2x_1x_2(x_3+x_4)(x_3^2+x_4^2)\\
&-m_{1+2}^2\chi_{2,1}\chi_{4,3}\cdot 2x_1x_2(x_3+x_4)(x_3^2+x_4^2),
\end{split}
\end{equation}
\begin{equation}\label{I26xy}
\begin{split}
4I_{2,6}(y_1,y_2,&y_3,y_4)=-m_1^2\cdot 8x_1x_2x_4(x_2^2+x_4^2)\\
&+m_1^2\chi_{2+4}\cdot 4x_1x_2x_4(x_2^2+x_4^2)+m_3^2\chi_{2+4}\cdot 4x_1x_2x_4(x_2^2+x_4^2).
\end{split}
\end{equation}

{\bf{Contributions of $K_2^{4}$}}. Using the formulas \eqref{ku26} and \eqref{verf6}--\eqref{verf8} we calculate 
\begin{equation*}
\begin{split}
K_2^{4}(y_1,y_2,y_3,y_4)=&-B''(y_3+y_2,y_1,y_4)\cdot |y_2|^{1/2}|y_3|^{1/2}|y_3+y_2|^{1/2}(2-\chi_{2,3})\\
&+(1/2)B''(y_3,y_2,y_1+y_4)\cdot |y_4||y_1|^{1/2}\chi_{4,1},
\end{split}
\end{equation*}
\begin{equation*}
\begin{split}
K_2^{4}(y_2,y_1,y_3,y_4)=0,
\end{split}
\end{equation*}
\begin{equation*}
\begin{split}
K_2^{4}(y_3,y_2,y_1,y_4)=&+B''(y_1+y_2,y_3,y_4)\cdot|y_2|^{1/2}|y_1|^{1/2}|y_3+y_4|^{1/2}\chi_{2,1}\\
&+(1/2)B''(y_1,y_2,y_3+y_4)\cdot|y_4||y_3|^{1/2}\chi_{4,3},
\end{split}
\end{equation*}
\begin{equation*}
\begin{split}
K_2^{4}(y_1,y_2,y_4,y_3)=&-2B''(y_2+y_4,y_1,y_3)\cdot|y_2|^{1/2}|y_4|^{1/2}|y_2+y_4|^{1/2},
\end{split}
\end{equation*}
\begin{equation*}
\begin{split}
K_2^{4}(y_2,y_1,y_4,y_3)=&+B''(y_1+y_4,y_2,y_3)\cdot|y_4|^{1/2}|y_1|^{1/2}|y_3+y_2|^{1/2}\chi_{4,1}\\
&-(1/2)B''(y_1,y_4,y_3+y_2)\cdot(2|y_3|+|y_2|)|y_2|^{1/2}\chi_{2,3},
\end{split}
\end{equation*}
\begin{equation*}
\begin{split}
K_2^{4}(y_4,y_2,y_1,y_3)=&+B''(y_1+y_2,y_4,y_3)\cdot|y_2|^{1/2}|y_1|^{1/2}|y_3+y_4|^{1/2}\chi_{2,1}\\
&-(1/2)B''(y_1,y_2,y_3+y_4)\cdot(2|y_3|+|y_4|)|y_4|^{1/2}\chi_{4,3},
\end{split}
\end{equation*}
\begin{equation*}
\begin{split}
K_2^{4}(y_1,y_3,y_4,y_2)=&-B''(y_3+y_4,y_1,y_2)\cdot|y_4|^{1/2}|y_3|^{1/2}|y_3+y_4|^{1/2}(2-\chi_{4,3})\\
&+(1/2)B''(y_3,y_4,y_1+y_2)\cdot|y_2||y_1|^{1/2}\chi_{2,1},
\end{split}
\end{equation*}
\begin{equation*}
\begin{split}
K_2^{4}(y_3,y_1,y_4,y_2)=&+B''(y_1+y_4,y_3,y_2)\cdot|y_4|^{1/2}|y_1|^{1/2}|y_3+y_2|^{1/2}\chi_{4,1}\\
&+(1/2)B''(y_1,y_4,y_3+y_2)\cdot|y_2||y_3|^{1/2}\chi_{2,3},
\end{split}
\end{equation*}
\begin{equation*}
\begin{split}
K_2^{4}(y_4,y_1,y_3,y_2)=0,
\end{split}
\end{equation*}
\begin{equation*}
\begin{split}
K_2^{4}(y_2,y_3,y_4,y_1)=&-B''(y_3+y_4,y_2,y_1)\cdot|y_3|^{1/2}|y_4|^{1/2}|y_3+y_4|^{1/2}(2-\chi_{4,3})\\
&+(1/2)B''(y_3,y_4,y_1+y_2)\cdot |y_2|^{1/2}\big\{|y_3+y_4|(4-2\chi_{2,1})+|y_2|\chi_{2,1}\big\},
\end{split}
\end{equation*}
\begin{equation*}
\begin{split}
K_2^{4}(y_3,y_2,y_4,y_1)=&-2B''(y_2+y_4,y_3,y_1)\cdot|y_2|^{1/2}|y_4|^{1/2}|y_2+y_4|^{1/2}\\
&+2B''(y_2,y_4,y_1+y_3)\cdot|y_2+y_4||y_3|^{1/2},
\end{split}
\end{equation*}
\begin{equation*}
\begin{split}
K_2^{4}(y_4,y_2,y_3,y_1)=&-B''(y_3+y_2,y_4,y_1)\cdot|y_3|^{1/2}|y_2|^{1/2}|y_3+y_2|^{1/2}(2-\chi_{2,3})\\
&+(1/2)B''(y_3,y_2,y_1+y_4)\cdot |y_4|^{1/2}\big\{|y_3+y_2|(4-2\chi_{4,1})+|y_4|\chi_{4,1}\big\}.
\end{split}
\end{equation*}

Therefore
\begin{equation}\label{Sk25}
4SK_2^{4}(y_1,y_2,y_3,y_4)=\sum_{l\in\{1,\ldots,6\}}4J_{2,l}(y_1,y_2,y_3,y_4)
\end{equation}
where
\begin{equation}\label{Sk21x}
\begin{split}
4&J_{2,1}(y_1,y_2,y_3,y_4):=-2B''(y_1,y_2,y_3+y_4)\cdot|y_4|^{1/2}\big\{2|y_3|+|y_4|-|y_3|^{1/2}|y_4|^{1/2}\big\}\chi_{4,3}\\
&+4[B''(y_3+y_4,y_1,y_2)+B''(y_3+y_4,y_2,y_1)]\cdot|y_4|^{1/2}|y_3|^{1/2}|y_3+y_4|^{1/2}(2-\chi_{4,3}),
\end{split}
\end{equation}
\begin{equation}\label{Sk22x}
\begin{split}
4&J_{2,2}(y_1,y_2,y_3,y_4):=-2B''(y_1,y_4,y_3+y_2)\cdot|y_2|^{1/2}\big\{2|y_3|+|y_2|+|y_3|^{1/2}|y_2|^{1/2}\big\}\chi_{2,3}\\
&-4[B''(y_3+y_2,y_1,y_4)-B''(y_3+y_2,y_4,y_1)]\cdot |y_2|^{1/2}|y_3|^{1/2}|y_3+y_2|^{1/2}(2-\chi_{2,3}),
\end{split}
\end{equation}
\begin{equation}\label{Sk23x}
\begin{split}
4&J_{2,3}(y_1,y_2,y_3,y_4)\\
&:=-2B''(y_3,y_2,y_1+y_4)\cdot |y_4|^{1/2}\big\{|y_3+y_2|(4-2\chi_{4,1})+(|y_4|-|y_4|^{1/2}|y_1|^{1/2})\chi_{4,1}\big\}\\
&-4[B''(y_1+y_4,y_3,y_2)-B''(y_1+y_4,y_2,y_3)]\cdot|y_4|^{1/2}|y_1|^{1/2}|y_3+y_2|^{1/2}\chi_{4,1},
\end{split}
\end{equation}
\begin{equation}\label{Sk24x}
\begin{split}
4&J_{2,4}(y_1,y_2,y_3,y_4)\\
&:=-2B''(y_3,y_4,y_1+y_2)\cdot |y_2|^{1/2}\big\{|y_3+y_4|(4-2\chi_{2,1})+(|y_2|+|y_2|^{1/2}|y_1|^{1/2})\chi_{2,1}\big\}\\
&+4[B''(y_1+y_2,y_3,y_4)+B''(y_1+y_2,y_4,y_3)]\cdot|y_2|^{1/2}|y_1|^{1/2}|y_3+y_4|^{1/2}\chi_{2,1},
\end{split}
\end{equation}
\begin{equation}\label{Sk25x}
\begin{split}
4J_{2,5}(y_1,y_2,y_3,y_4):=-8B''(y_2,y_4,y_1+y_3)\cdot|y_2+y_4||y_3|^{1/2},&
\end{split}
\end{equation}
\begin{equation}\label{Sk26x}
\begin{split}
4J_{2,6}(y_1,&y_2,y_3,y_4):=8[B''(y_2+y_4,y_3,y_1)-B''(y_2+y_4,y_1,y_3)]\cdot|y_2|^{1/2}|y_4|^{1/2}|y_2+y_4|^{1/2}.
\end{split}
\end{equation}

Using the identities \eqref{B1ex}--\eqref{B6ex} we rewrite these formulas in terms of the variables $x_j$ as
\begin{equation}\label{Sk21xy}
\begin{split}
4J_{2,1}(y_1,y_2,&y_3,y_4)=m_2^2\cdot 8x_3x_4x_2(x_3^2+x_4^2)+m_{3+4}^2\cdot 8x_3x_4(x_2-x_1)(x_3^2+x_4^2)\\
&+m_1^2\chi_{2,1}\cdot 4x_3x_4x_1(x_3^2+x_4^2)+m_{3+4}^2\chi_{2,1}\cdot 4x_3x_4(x_1-2x_2)(x_3^2+x_4^2)\\
&+m_2^2\chi_{4,3}\cdot 2x_2x_4\big[x_1(2x_3^2+x_4^2-x_3x_4)-2x_3(x_3^2+x_4^2)\big]\\
&+m_{3+4}^2\chi_{4,3}\cdot 4x_3x_4(x_1-x_2)(x_3^2+x_4^2)\\
&+m_{1}^2\chi_{2,1}\chi_{4,3}\cdot x_4x_1\big[x_2(2x_3^2+x_4^2-x_3x_4)-2x_3(x_3^2+x_4^2)\big]\\
&+m_{3+4}^2\chi_{2,1}\chi_{4,3}\cdot x_4\big[2x_3(2x_2-x_1)(x_3^2+x_4^2)-x_1x_2(2x_3^2+x_4^2-x_3x_4)\big],
\end{split}
\end{equation}
\begin{equation}\label{Sk22xy}
\begin{split}
4J_{2,2}(&y_1,y_2,y_3,y_4)=m_4^2\cdot 8x_3x_2x_4(x_3^2+x_2^2)+m_{3+2}^2\cdot 8x_3x_2(x_4+x_1)(x_3^2+x_2^2)\\
&-m_1^2\chi_{4,1}\cdot 4x_3x_2x_1(x_3^2+x_2^2)-m_{3+2}^2\chi_{4,1}\cdot 4x_3x_2(x_1+2x_4)(x_3^2+x_2^2)\\
&+m_4^2\chi_{2,3}\cdot 2x_2x_4\big[x_1(2x_3^2+x_2^2+x_3x_2)-2x_3(x_3^2+x_2^2)\big]\\
&-m_{3+2}^2\chi_{2,3}\cdot 4x_3x_2(x_1+x_4)(x_3^2+x_2^2)\\
&+m_{1}^2\chi_{2,3}\chi_{4,1}\cdot x_2x_1\big[x_4(2x_3^2+x_2^2+x_3x_2)+2x_3(x_3^2+x_2^2)\big]\\
&+m_{3+2}^2\chi_{2,3}\chi_{4,1}\cdot x_2\big[2x_3(2x_4+x_1)(x_3^2+x_2^2)-x_1x_4(2x_3^2+x_2^2+x_3x_2)\big],
\end{split}
\end{equation}
\begin{equation}\label{Sk23xy}
\begin{split}
4J_{2,3}(&y_1,y_2,y_3,y_4)=-m_2^2\cdot 8x_2x_3x_4(x_3^2+x_2^2)-m_{3}^2\cdot 8x_2x_3x_4(x_3^2+x_2^2)\\
&+m_3^2\chi_{2,3}\cdot 8x_2x_3x_4(x_3^2+x_2^2)\\
&+m_2^2\chi_{4,1}\cdot 2x_2x_4\big[x_3(x_1x_4-x_4^2+2x_3^2+2x_2^2)-2x_1(x_3^2+x_2^2)\big]\\
&+m_3^2\chi_{4,1}\cdot 2x_4x_3\big[x_2(x_1x_4-x_4^2+2x_3^2+2x_4^2)+2x_1(x_3^2+x_2^2)\big]\\
&-m_3^2\chi_{2,3}\chi_{4,1}\cdot 2x_4\big[x_3x_2(x_1x_4-x_4^2+2x_3^2+2x_2^2)+x_1(x_3-x_2)(x_3^2+x_2^2)\big]\\
&-m_{1+4}^2\chi_{2,3}\chi_{4,1}\cdot 2x_1x_4(x_3+x_2)(x_3^2+x_2^2),
\end{split}
\end{equation}
\begin{equation}\label{Sk24xy}
\begin{split}
4J_{2,4}(&y_1,y_2,y_3,y_4)=-m_4^2\cdot 8x_2x_3x_4(x_3^2+x_4^2)-m_{3}^2\cdot 8x_2x_3x_4(x_3^2+x_4^2)\\
&+m_3^2\chi_{4,3}\cdot 8x_2x_3x_4(x_3^2+x_4^2)\\
&-m_4^2\chi_{2,1}\cdot 2x_2x_4\big[x_3(x_1x_2+x_2^2-2x_3^2-2x_4^2)+2x_1(x_3^2+x_4^2)\big]\\
&-m_3^2\chi_{2,1}\cdot 2x_2x_3\big[x_4(x_1x_2+x_2^2-2x_3^2-2x_4^2)+2x_1(x_3^2+x_4^2)\big]\\
&+m_3^2\chi_{2,1}\chi_{4,3}\cdot 2x_2\big[x_3x_4(x_1x_2+x_2^2-2x_3^2-2x_4^2)+x_1(x_3+x_4)(x_3^2+x_4^2)\big]\\
&+m_{1+2}^2\chi_{2,1}\chi_{4,3}\cdot 2x_1x_2(x_3-x_4)(x_3^2+x_4^2),
\end{split}
\end{equation}
\begin{equation}\label{Sk25xy}
\begin{split}
4J_{2,5}(y_1,y_2,y_3,y_4)=-m_2^2\cdot 8x_2x_3x_4(x_2^2+x_4^2)-m_4^2\cdot 8x_2x_3x_4(x_2^2+x_4^2),
\end{split}
\end{equation}
\begin{equation}\label{Sk26xy}
\begin{split}
4J_{2,6}(y_1,&y_2,y_3,y_4)=m_3^2\cdot 8x_2x_3x_4(x_2^2+x_4^2)+m_{2+4}^2\cdot 8x_2x_4(x_1+x_3)(x_2^2+x_4^2)\\
&-m_3^2\chi_{2+4}\cdot 4x_2x_4(x_2^2+x_4^2)(x_1+2x_3)+m_1^2\chi_{2+4}\cdot 4x_1x_2x_4(x_2^2+x_4^2).
\end{split}
\end{equation}

\smallskip
\subsection{Proof of Proposition \ref{BFprop}}\label{uyt22secsymm}
We restrict now to the case of resonant frequencies and prove the identity \eqref{BFprop2}. In terms of the variables $x_j$ the two main conditions $y_1+y_2+y_3+y_4=0$ and $|y_1|^{1/2}+|y_2|^{1/2}-|y_3|^{1/2}-|y_4|^{1/2}=0$ show that
\begin{equation}\label{sumg3}
x_1=x_3+x_4-x_2\,\,\text{ and }\,\,x_2x_3+x_2x_4-x_3x_4=0.
\end{equation}

We now re-arrange the formulas in \S\ref{uyt22seccalc} above to calculate
\begin{equation}\label{sumg1}
\begin{split}
\sum_{a\in\{1,\ldots,9\}}&4SK_1^a(y_1,y_2,y_3,y_4)-\sum_{a\in\{1,\ldots,4\}}4SK_2^a(y_1,y_2,y_3,y_4)
\\
&=X_1m_1^2+X_2m_2^2+X_3m_3^2+X_4m_4^2+X_5m_{1+2}^2+X_6m_{1+4}^2+X_7m_{1+3}^2
\\
&\quad\,+Y_1m_1m_{3+2}+Y_2m_1m_{3+4}+Y_3m_3m_{1+2}+Y_4m_3m_{1+4}+Y_5m_1m_3.
\end{split}
\end{equation}
Notice how we are organizing the terms according to the factors $m_1,m_2$ and so on.
We will then organize each of these expressions using the cutoffs $\chi$, see for example \eqref{sumgx11}.

\smallskip
{\it The coefficient $X_1$}. 
This is obtained by examining the identities \eqref{H11xy}, \eqref{Sk16}, 
\eqref{Sk17}, \eqref{I11xy}, \eqref{I12xy}, \eqref{Sk11xy}, \eqref{Sk12xy}, \eqref{H21xy},
\eqref{I21xy}, \eqref{I22xy}, \eqref{I26xy}, \eqref{Sk21xy}, \eqref{Sk22xy}, \eqref{Sk26xy}:
\begin{equation}\label{sumgx11}
\begin{split}
X_1&=X_1^1+\chi_{2+4}\cdot X_1^2+\chi_{2,1}\cdot X_1^3+\chi_{4,3}\cdot X_1^4+\chi_{2,1}\chi_{4,3}\cdot X_1^5\\
&+\chi_{2,3}\cdot X_1^6+\chi_{4,1}\cdot X_1^7+\chi_{2,3}\chi_{4,1}\cdot X_1^8,
\end{split}
\end{equation}
where
\begin{equation*}
\begin{split}
X_1^1&=8(x_1x_3x_2x_4^2-x_1x_3x_4x_2^2+x_1x_2x_4x_3^2)-8x_1x_3x_4(x_3^2+x_4^2)\\
&+8x_1x_3x_2(x_3^2+x_2^2)+8x_1x_2x_4(x_2^2+x_4^2),
\end{split}
\end{equation*}
\begin{equation*}
\begin{split}
X_1^2&=4x_1^2x_2x_4(x_4-x_2)+8x_1x_2^2x_4^2-4[x_1x_3x_2x_4^2-x_1x_3x_4x_2^2-x_1x_2x_4(x_2^2+x_4^2)]\\
&-4x_1x_2x_4(x_2^2+x_4^2)-4x_1x_2x_4(x_2^2+x_4^2),
\end{split}
\end{equation*}
\begin{equation*}
\begin{split}
X_1^3&=4x_1x_3x_4(x_3^2+x_4^2)-4x_3x_4x_1(x_3^2+x_4^2),
\end{split}
\end{equation*}
\begin{equation*}
\begin{split}
X_1^4&=-2x_1x_4\big[-x_4(x_3^2+x_4^2)+x_1(2x_3^2+x_4^2-x_3x_4)\big]-8x_1x_2x_4(x_3^2+x_4^2)\\
&+2x_1x_4[2x_3(x_3^2+x_4^2)+x_2(2x_3^2+x_4^2-x_3x_4)],
\end{split}
\end{equation*}
\begin{equation*}
\begin{split}
X_1^5&=-2x_1^2x_4^2x_2-2x_2^2x_4^2x_1+x_1x_4\big[-x_4(x_3^2+x_4^2)+x_1(2x_3^2+x_4^2-x_3x_4)\big]\\
&+x_1x_4\big[x_4(x_3^2+x_4^2)-x_1(2x_3^2+x_4^2-x_3x_4)\big]+4x_1x_2x_4(x_3^2+x_4^2)\\
&-x_1x_4[2x_3(x_3^2+x_4^2)+x_2(2x_3^2+x_4^2-x_3x_4)]\\
&-x_4x_1\big[x_2(2x_3^2+x_4^2-x_3x_4)-2x_3(x_3^2+x_4^2)\big],
\end{split}
\end{equation*}
\begin{equation*}
\begin{split}
X_1^6&=2x_1x_2\big[x_2(x_3^2+x_2^2)+x_1(2x_3^2+x_2^2+x_3x_2)\big]-8x_1x_2x_4(x_3^2+x_2^2)\\
&+2x_1x_2[-2x_3(x_3^2+x_2^2)+x_4(2x_3^2+x_2^2+x_3x_2)],
\end{split}
\end{equation*}
\begin{equation*}
\begin{split}
X_1^7&=-4x_1x_3x_2(x_3^2+x_2^2)+4x_3x_2x_1(x_3^2+x_2^2),
\end{split}
\end{equation*}
\begin{equation*}
\begin{split}
X_1^8&=2x_1^2x_2^2x_4-2x_2^2x_4^2x_1-x_1x_2\big[x_2(x_3^2+x_2^2)+x_1(2x_3^2+x_2^2+x_3x_2)\big]\\
&+x_1x_2\big[x_2(x_3^2+x_2^2)+x_1(2x_3^2+x_2^2+x_3x_2)\big]+4x_1x_2x_4(x_3^2+x_2^2)\\
&-x_1x_2[-2x_3(x_3^2+x_2^2)+x_4(2x_3^2+x_2^2+x_3x_2)]\\
&-x_2x_1\big[x_4(2x_3^2+x_2^2+x_3x_2)+2x_3(x_3^2+x_2^2)\big].
\end{split}
\end{equation*}
Clearly, $X_1^3=X_1^7=0$. Also, using the identities \eqref{sumg3} it is easy to see that $X_1^1=X_1^2=X_1^4=X_1^5=X_1^6=X_1^8=0$; for example
\begin{equation*}
\begin{split}
X_1^1&=8x_1(x_2^2+x_3^2+x_4^2)(x_2x_4+x_2x_3-x_3x_4)=0.
\end{split}
\end{equation*}
Therefore $X_1=0$ as desired.

\smallskip
{\it The coefficient $X_2$}. 
The coefficient $X_2$ in \eqref{sumg1} is obtained by examining the identities \eqref{H12xy}, \eqref{Sk11xy}, 
\eqref{Sk13xy}, \eqref{Sk15xy}, \eqref{H22xy}, \eqref{Sk21xy}, \eqref{Sk23xy}, \eqref{Sk25xy},
\begin{equation}\label{sumgx21}
\begin{split}
X_2&=X_2^1+\chi_{4,1}\cdot X_2^2+\chi_{4,3}\cdot X_2^3,
\end{split}
\end{equation}
where
\begin{equation*}
\begin{split}
X_2^1&=8x_2^2x_3x_4(x_3+x_4)-8x_4x_2^2(x_3^2+x_2^2)-8x_3x_2^2(x_2^2+x_4^2)\\
&-8x_3x_4x_2^3-8x_3x_4x_2(x_3^2+x_4^2)+8x_2x_3x_4(x_3^2+x_2^2)+8x_2x_3x_4(x_2^2+x_4^2),
\end{split}
\end{equation*}
\begin{equation*}
\begin{split}
X_2^2&=-4x_3x_2^2x_4^2+2x_2x_4\big[x_2(x_1x_4-x_4^2+2x_3^2+2x_2^2)-x_4(x_3^2+x_2^2)\big]\\
&-2x_2x_4\big[x_3(x_1x_4-x_4^2+2x_3^2+2x_2^2)-2x_1(x_3^2+x_2^2)\big],
\end{split}
\end{equation*}
\begin{equation*}
\begin{split}
X_2^3&=-8(x_3^2+x_4^2)x_2^2x_4+2x_2x_4\big[x_4(x_3^2+x_4^2)+x_2(2x_3^2+x_4^2-x_3x_4)\big]\\
&-2x_2x_4\big[x_1(2x_3^2+x_4^2-x_3x_4)-2x_3(x_3^2+x_4^2)\big],
\end{split}
\end{equation*}
Using again the identities \eqref{sumg3} we verify easily that $X_2^1=X_2^2=X_2^3=0$, thus $X_2=0$ as desired.

\smallskip
{\it The coefficient $X_3$}. 
The coefficient $X_3$ is obtained by examining the identities \eqref{H13xy}, \eqref{Sk16}, \eqref{Sk17}, \eqref{I13xy}, \eqref{I14xy}, \eqref{Sk13xy}, \eqref{Sk14xy}, \eqref{H23xy}, \eqref{I23xy}, \eqref{I24xy}, \eqref{I26xy}, \eqref{Sk23xy}, \eqref{Sk24xy}, \eqref{Sk26xy},
\begin{equation}\label{sumgx31}
\begin{split}
X_3&=X_3^1+\chi_{2+4}\cdot X_3^2+\chi_{2,1}\cdot X_3^3+\chi_{4,3}\cdot X_3^4+\chi_{2,1}\chi_{4,3}\cdot X_3^5\\
&+\chi_{2,3}\cdot X_3^6+\chi_{4,1}\cdot X_3^7+\chi_{2,3}\chi_{4,1}\cdot X_3^8
\end{split}
\end{equation}
where
\begin{equation*}
\begin{split}
X_3^1&=8x_3^2x_2x_4(x_2-x_4)-8x_4x_3^2(x_3^2+x_2^2)+8x_2x_3^2(x_3^2+x_4^2)-8x_3^3x_2x_4\\
&+8x_2x_3x_4(x_3^2+x_2^2)+8x_2x_3x_4(x_3^2+x_4^2)-8x_2x_3x_4(x_2^2+x_4^2),
\end{split}
\end{equation*}
\begin{equation*}
\begin{split}
X_3^2&=-4x_3^2x_2x_4(x_2-x_4)-8x_3x_2^2x_4^2-4\big[x_3x_2x_4(x_2^2+x_4^2)-x_1x_3x_4x_2^2+x_1x_3x_2x_4^2\big]\\
&-4x_1x_2x_4(x_2^2+x_4^2)+4x_2x_4(x_2^2+x_4^2)(x_1+2x_3),
\end{split}
\end{equation*}
\begin{equation*}
\begin{split}
X_3^3&=-4x_3^2x_2^2x_4+2x_2x_3\big[x_3(x_1x_2+x_2^2-2x_3^2-2x_4^2)+x_2(x_3^2+x_4^2)\big]\\
&+2x_2x_3\big[x_4(x_1x_2+x_2^2-2x_3^2-2x_4^2)+2x_1(x_3^2+x_4^2)\big],
\end{split}
\end{equation*}
\begin{equation*}
\begin{split}
X_3^4&=4x_2(x_3^2+x_4^2)^2-4x_2(x_3^2+x_4^2)^2+8x_3x_4x_2(x_3^2+x_4^2)-8x_2x_3x_4(x_3^2+x_4^2),
\end{split}
\end{equation*}
\begin{equation*}
\begin{split}
X_3^5&=2x_3^2x_2^2x_4+2x_2^2x_4^2x_3+x_2(x_3^2+x_4^2)(x_1x_2+x_3x_2-2x_3^2-2x_4^2+x_2^2+x_2x_4)\\
&-x_2(x_3^2+x_4^2)\big[x_1x_2+x_2^2-2x_3^2-2x_4^2+x_2x_3+x_2x_4\big]\\
&-4x_3x_4x_2(x_3^2+x_4^2)+2x_1x_2(x_3+x_4)(x_3^2+x_4^2)\\
&-2x_2\big[x_3x_4(x_1x_2+x_2^2-2x_3^2-2x_4^2)+x_1(x_3+x_4)(x_3^2+x_4^2)\big],
\end{split}
\end{equation*}
\begin{equation*}
\begin{split}
X_3^6&=-4x_4(x_3^2+x_2^2)^2+4x_4(x_3^2+x_2^2)^2+8x_3x_2x_4(x_3^2+x_2^2)-8x_2x_3x_4(x_3^2+x_2^2),
\end{split}
\end{equation*}
\begin{equation*}
\begin{split}
X_3^7&=4x_3^2x_4^2x_2+2x_4x_3\big[x_3(x_1x_4-x_4^2+2x_3^2+2x_2^2)+x_4(x_3^2+x_2^2)\big]\\
&- 2x_4x_3\big[x_2(x_1x_4-x_4^2+2x_3^2+2x_4^2)+2x_1(x_3^2+x_2^2)\big],
\end{split}
\end{equation*}
\begin{equation*}
\begin{split}
X_3^8&=-2x_3^2x_4^2x_2+2x_2^2x_4^2x_3+x_4(x_3^2+x_2^2)(x_1x_4+x_3x_4+2x_3^2+2x_2^2-x_4^2-x_2x_4)\\
&-x_4(x_3^2+x_2^2)\big[x_1x_4-x_4^2+2x_3^2+2x_2^2+x_3x_4-x_2x_4\big]\\
&-4x_3x_2x_4(x_3^2+x_2^2)-2x_1x_4(x_3-x_2)(x_3^2+x_2^2)\\
&+2x_4\big[x_3x_2(x_1x_4-x_4^2+2x_3^2+2x_2^2)+x_1(x_3-x_2)(x_3^2+x_2^2)\big].
\end{split}
\end{equation*}
Using again the identities \eqref{sumg3} we verify that $X_3^j=0$, $j\in\{1,\ldots,8\}$, thus $X_3=0$ as desired.

\smallskip
{\it The coefficient $X_4$}. 
The coefficient $X_4$ is obtained by examining the identities \eqref{H14xy}, 
\eqref{Sk12xy}, \eqref{Sk14xy}, \eqref{Sk15xy}, \eqref{H24xy}, \eqref{Sk22xy}, \eqref{Sk24xy}, \eqref{Sk25xy},
\begin{equation}\label{sumgx41}
\begin{split}
X_4&=X_4^1+\chi_{2,1}\cdot X_4^2+\chi_{2,3}\cdot X_4^3,
\end{split}
\end{equation}
where
\begin{equation*}
\begin{split}
X_4^1&=8x_4^2x_3x_2(x_2-x_3)+8x_2x_4^2(x_3^2+x_4^2)-8x_3x_4^2(x_2^2+x_4^2)-8x_3x_2x_4^3\\
&-8x_3x_2x_4(x_3^2+x_2^2)+8x_2x_3x_4(x_3^2+x_4^2)+8x_2x_3x_4(x_2^2+x_4^2),
\end{split}
\end{equation*}
\begin{equation*}
\begin{split}
X_4^2&=-4x_3x_2^2x_4^2+2x_2x_4\big[x_4(x_1x_2+x_2^2-2x_3^2-2x_4^2)+x_2(x_3^2+x_4^2)\big]\\
&+2x_2x_4\big[x_3(x_1x_2+x_2^2-2x_3^2-2x_4^2)+2x_1(x_3^2+x_4^2)\big],
\end{split}
\end{equation*}
\begin{equation*}
\begin{split}
X_4^3&=8(x_3^2+x_2^2)x_4^2x_2-2x_2x_4\big[x_2(x_3^2+x_2^2)+x_4(2x_3^2+x_2^2+x_3x_2)\big]\\
&-2x_2x_4\big[x_1(2x_3^2+x_2^2+x_3x_2)-2x_3(x_3^2+x_2^2)\big].
\end{split}
\end{equation*}
Using again the identities \eqref{sumg3} we verify that $X_4^1=X_4^2=X_4^3=0$, thus $X_4=0$ as desired.

\smallskip
{\it The coefficient $X_5$}. 
Since $m_{1+2}=m_{3+4}$, the coefficient $X_5$ is obtained by examining the identities \eqref{Sk13}, \eqref{I11xy}, \eqref{I14xy}, \eqref{Sk11xy}, \eqref{Sk14xy}, \eqref{I21xy}, \eqref{I24xy}, \eqref{Sk21xy}, \eqref{Sk24xy},
\begin{equation}\label{sumgx51}
\begin{split}
X_5&=X_5^1+\chi_{2,1}\cdot X_5^2+\chi_{4,3}\cdot X_5^3+\chi_{2,1}\chi_{4,3}\cdot X_5^4,
\end{split}
\end{equation}
where
\begin{equation*}
\begin{split}
X_5^1&=-8x_2(x_3^2+x_4^2)^2-8x_3x_4(x_2-x_1)(x_3^2+x_4^2),
\end{split}
\end{equation*}
\begin{equation*}
\begin{split}
X_5^2&=-2x_2(x_3^2+x_4^2)(x_1x_2+x_3x_2-2x_3^2-2x_4^2+x_2^2+x_2x_4)+4x_1x_3x_4(x_3^2+x_4^2)\\
&-4x_1x_2(x_3+x_4)(x_3^2+x_4^2)-4x_3x_4(x_1-2x_2)(x_3^2+x_4^2),
\end{split}
\end{equation*}
\begin{equation*}
\begin{split}
X_5^3&=4x_2x_4^2(x_3^2+x_4^2)+4x_2(x_3^2+x_4^2)^2+2x_4(x_3^2+x_4^2)\big[(x_2-x_1)x_4+2x_3^2+x_4^2-x_3x_4\big]\\
&-4x_2(x_3^2+x_4^2)^2-4x_3x_4(x_1-x_2)(x_3^2+x_4^2),
\end{split}
\end{equation*}
\begin{equation*}
\begin{split}
X_5^4&=2x_2x_4(x_3^2+x_4^2)(x_2-x_4)+x_1x_4\big[-x_4(x_3^2+x_4^2)+x_1(2x_3^2+x_4^2-x_3x_4)\big]\\
&+x_2(x_3^2+x_4^2)(x_1x_2+x_3x_2-2x_3^2-2x_4^2+x_2^2+x_2x_4)\\
&-x_1x_4[2x_3(x_3^2+x_4^2)+x_2(2x_3^2+x_4^2-x_3x_4)]+2x_1x_2(x_3+x_4)(x_3^2+x_4^2)\\
&+x_4\big[x_4(x_1-2x_2)(x_3^2+x_4^2)-x_1^2(2x_3^2+x_4^2-x_3x_4)\big]\\
&-x_2(x_3^2+x_4^2)\big[x_1x_2+x_2^2-2x_3^2-2x_4^2+x_2x_3-x_2x_4\big]\\
&-x_4\big[2x_3(2x_2-x_1)(x_3^2+x_4^2)-x_1x_2(2x_3^2+x_4^2-x_3x_4)\big]-2x_1x_2(x_3-x_4)(x_3^2+x_4^2).
\end{split}
\end{equation*}
Using the identities \eqref{sumg3} we verify that $X_5^1=X_5^2=X_5^3=X_5^4=0$, thus $X_5=0$ as desired.

\smallskip
{\it The coefficient $X_6$}. 
Similarly, since $m_{1+4}=m_{3+2}$, the coefficient $X_6$ is obtained by examining the identities \eqref{Sk13}, \eqref{I12xy}, \eqref{I13xy}, \eqref{Sk12xy}, \eqref{Sk13xy}, \eqref{I22xy}, \eqref{I23xy}, \eqref{Sk22xy}, \eqref{Sk23xy},
\begin{equation}\label{sumgx61}
\begin{split}
X_6&=X_6^1+\chi_{4,1}\cdot X_6^2+\chi_{2,3}\cdot X_6^3+\chi_{4,1}\chi_{2,3}\cdot X_6^4,
\end{split}
\end{equation}
where
\begin{equation*}
\begin{split}
X_6^1&=8x_4(x_3^2+x_2^2)^2-8x_3x_2(x_4+x_1)(x_3^2+x_2^2),
\end{split}
\end{equation*}
\begin{equation*}
\begin{split}
X_6^2&=-2x_4(x_3^2+x_2^2)(x_1x_4+x_3x_4+2x_3^2+2x_2^2-x_4^2-x_2x_4)-4x_1x_3x_2(x_3^2+x_2^2)\\
&+4x_1x_4(x_3-x_2)(x_3^2+x_2^2)+4x_3x_2(x_1+2x_4)(x_3^2+x_2^2),
\end{split}
\end{equation*}
\begin{equation*}
\begin{split}
X_6^3&=-4x_2^2x_4(x_3^2+x_2^2)-4x_4(x_3^2+x_2^2)^2-2x_2(x_3^2+x_2^2)\big[(x_4+x_1)x_2+2x_3^2+x_2^2+x_3x_2\big]\\
&+4x_4(x_3^2+x_2^2)^2+4x_3x_2(x_1+x_4)(x_3^2+x_2^2),
\end{split}
\end{equation*}
\begin{equation*}
\begin{split}
X_6^4&=-2x_2x_4(x_3^2+x_2^2)(x_4-x_2)-x_1x_2\big[x_2(x_3^2+x_2^2)+x_1(2x_3^2+x_2^2+x_3x_2)\big]\\
&+x_4(x_3^2+x_2^2)(x_1x_4+x_3x_4+2x_3^2+2x_2^2-x_4^2-x_2x_4)\\
&-x_1x_2[-2x_3(x_3^2+x_2^2)+x_4(2x_3^2+x_2^2+x_3x_2)]-2x_1x_4(x_3-x_2)(x_3^2+x_2^2)\\
&+x_2\big[x_2(x_1+2x_4)(x_3^2+x_2^2)+x_1^2(2x_3^2+x_2^2+x_3x_2)\big]\\
&-x_4(x_3^2+x_2^2)\big[x_1x_4-x_4^2+2x_3^2+2x_2^2+x_3x_4+x_2x_4\big]\\
&-x_2\big[2x_3(2x_4+x_1)(x_3^2+x_2^2)-x_1x_4(2x_3^2+x_2^2+x_3x_2)\big]+2x_1x_4(x_3+x_2)(x_3^2+x_2^2).
\end{split}
\end{equation*}
Using again the identities \eqref{sumg3} we verify that $X_6^1=X_6^2=X_6^3=X_6^4=0$, thus $X_6=0$ as desired.

\smallskip
{\it The coefficient $X_7$}. 
The coefficient $X_7$ is obtained by examining the identities \eqref{I15xy}, \eqref{Sk26xy}, 
\begin{equation*}
\begin{split}
X_7&=8x_3(x_2^2+x_4^2)^2-8x_2x_4(x_1+x_3)(x_2^2+x_4^2)=0,
\end{split}
\end{equation*}
where we used \eqref{sumg3} in the last line.

\smallskip
{\it The coefficients $Y_j$, $j=1,\dots 5$}. 
 The coefficient $Y_1$ is obtained by examining the identities \eqref{Sk12}, \eqref{Sk15}, \eqref{Sk16}, \eqref{Sk12xy},
\begin{equation*}
\begin{split}
Y_1&=4x_4^2(x_3^2+x_2^2)x_2\cdot\chi_{2,3}\chi_{4,1}-2x_2^2x_4^3\cdot\chi_{2,3}\chi_{4,1}+2x_2^2x_4^2x_1\cdot\chi_{2,3}\chi_{4,1}\\
&\qquad\qquad-\chi_{4,1}\chi_{2,3}\cdot 2x_2x_4^2(2x_3^2+x_2^2+x_3x_2)=0.
\end{split}
\end{equation*}

The coefficient $Y_2$ is obtained by examining the identities \eqref{Sk12}, \eqref{Sk15}, \eqref{Sk16}, \eqref{Sk11xy},
\begin{equation*}
\begin{split}
Y_2&=-4x_2^2(x_3^2+x_4^2)x_4\cdot\chi_{4,3}\chi_{2,1}+2x_4^2x_2^3\cdot\chi_{4,3}\chi_{2,1}+2x_2^2x_4^2x_1\cdot\chi_{4,3}\chi_{2,1}\\
&\qquad\qquad+\chi_{2,1}\chi_{4,3}\cdot 2x_2^2x_4(2x_3^2+x_4^2-x_3x_4)=0.
\end{split}
\end{equation*}

The coefficient $Y_3$ is obtained by examining the identities \eqref{Sk12}, \eqref{Sk15}, \eqref{Sk16}, \eqref{Sk14xy},
\begin{equation*}
\begin{split}
Y_3&=4x_4^2(x_3^2+x_4^2)x_2\cdot\chi_{4,3}(\chi_{2,1}-2)-2x_2^2x_4^3\cdot\chi_{2,1}\chi_{4,3}-2x_2^2x_4^2x_3\cdot\chi_{2,1}\chi_{4,3}\\
&\qquad\qquad+\chi_{4,3}\cdot 8x_2x_4^2(x_3^2+x_4^2)+\chi_{2,1}\chi_{4,3}\cdot 2x_2x_4^2(x_1x_2+x_2^2-2x_3^2-2x_4^2)=0.
\end{split}
\end{equation*}

The coefficient $Y_4$ is obtained by examining the identities \eqref{Sk12}, \eqref{Sk15}, \eqref{Sk16}, \eqref{Sk14xy},
\begin{equation*}
\begin{split}
Y_4&=-4x_2^2(x_3^2+x_2^2)x_4\cdot\chi_{2,3}(\chi_{4,1}-2)+2x_4^2x_2^3\cdot\chi_{4,1}\chi_{2,3}-2x_2^2x_4^2x_3\cdot\chi_{2,3}\chi_{4,1}\\
&\qquad\qquad-\chi_{2,3}\cdot 8x_2^2x_4(x_3^2+x_2^2)+\chi_{2,3}\chi_{4,1}\cdot 2x_2^2x_4(x_1x_4-x_4^2+2x_3^2+2x_2^2)=0.
\end{split}
\end{equation*}

The coefficient $Y_5$ is obtained by examining the identities \eqref{Sk17}, \eqref{Sk14},
\begin{equation*}
\begin{split}
Y_5&=-8x_2^2x_4^2(x_1-x_3)\chi_{2+4}+8\chi_{2+4}(x_2^2x_4^3-x_4^2x_2^3)=0.
\end{split}
\end{equation*}

Therefore we verified that $X_1=\ldots=X_7=Y_1=\ldots=Y_5=0$, and the conclusion of Proposition \ref{BFprop} follows from \eqref{sumg1}.

\end{document}